\tikzstyle{none}=[]
\tikzstyle{new style 0}=[fill=white, draw=black, shape=circle]
\tikzstyle{new edge style 0}=[<-]
\tikzstyle{new edge style 1}=[->]
\tikzstyle{new edge style 2}=[-, dashed]
\tikzstyle{new edge style 3}=[-, thick]
\def\ecref{\cref*}
\newif\ifpdf \pdffalse \ifx\pdfoutput\undefined\else\ifx\pdfoutput\relax\else\ifnum\pdfoutput<1 \else\pdftrue\fi\fi\fi
\crefname{figure}{Figure}{Figures}
\let\over\@@over
\let\atop\@@atop
\let\above\@@above
\let\overwithdelims\@@overwithdelims
\let\atopwithdelims\@@atopwithdelims
\let\abovewithdelims\@@abovewithdelims
\def\eqalign#1{\null\,\vcenter{\openup\jot\m@th
  \ialign{\strut\hfil$\displaystyle{##}$&$\displaystyle{{}##}$\hfil
      \crcr#1\crcr}}\,}
\newskip\xcentering
\def\eqalignno#1{\displ@y \tabskip\xcentering
  \halign to\displaywidth{\hfil$\@lign\displaystyle{##}$\tabskip\z@skip
    &$\@lign\displaystyle{{}##}$\hfil\tabskip\xcentering
    &\llap{$\@lign##$}\tabskip\z@skip\crcr
    #1\crcr}}
\def\eqlabel#1{\refstepcounter{equation}\label{#1}\ifmmode\ifinner\else\eqno\fi\fi\hbox{\@eqnnum}} 
\let\proof@qed\displaymath@qed
\theoremstyle{definition}
\newtheorem{theorem}[equation]{Theorem}
\newtheorem{definition}[equation]{Definition}
\newtheorem{lemma}[equation]{Lemma}
\newtheorem{corollary}[equation]{Corollary}
\newtheorem{proposition}[equation]{Proposition}
\newtheorem{construction}[equation]{Construction}
\newtheorem{remark}[equation]{Remark}
\newtheorem{example}[equation]{Example}
\newtheorem{notation}[equation]{Notation}
\numberwithin{equation}{subsection}
\mathchardef\pp"2403 
\mathchardef\ppdom"2400 
\def\Cc{{\mathcal C}}
\newcommand{\into}{\hookrightarrow}
\newcommand{\dual}{{\rm Dual}}
\def\hom{\mathop{\rm Hom}\nolimits}
\def\map{\mathop{\rm Map}\nolimits}
\def\GL{{\rm GL}}
\def\O{{\rm O}}
\newcommand{\RR}{{\bf R}}
\newcommand{\NN}{{\bf N}}
\newcommand{\ZZ}{{\bf Z}}
\newcommand{\CC}{{\bf C}}
\def\inj{{\sf inj}}
\def\glob{{\sf glob}}
\def\uple{{\sf uple}}
\def\tdeloop{{\rm B}}
\def\MT{{\sf MT}}
\def\MTcla{{\rm MT}}
\def\cK{{\cal K}}
\newcommand{\sset}{{s\mathscr{S}\mathrm{et}}}
\newcommand{\set}{{\mathscr{S}\mathrm{et}}}
\newcommand{\PSh}{{\mathscr{PS}\mathrm{h}}}
\def\sPSh{\PSh_\Delta}
\newcommand{\Sp}{{\sf Sp}}
\newcommand{\cartsp}{{\sf Cart}}
\newcommand{\FEmbCart}{{\sf FEmbCart}}
\def\FEmb{{\sf FEmb}}
\def\Emb{{\sf Emb}}
\def\id{{\rm id}}
\def\Fun{\mathop{\rm Fun}\nolimits}
\def\Funmon{\Fun^\otimes}
\def\Vect{{\rm Vect}}
\def\Bord{{\rm Bord}}
\def\Cat{{\rm Cat}}
\def\FFT{{\sf FFT}}
\def\op{{\rm op}}
\def\csp{{\sf B_\smallint}} 
\def\deloop{{\bf B}}
\def\fdeloop{{\sf B}}
\def\smcat{{\rm C^∞Cat}^\otimes}
\def\smcatglob{{\rm C^∞Cat}^{\otimes,\glob}}
\def\smcatuple{{\rm C^∞Cat}^{\otimes,\uple}}
\def\catdual{{\rm Cat}^{\otimes,\vee}}
\def\smcatdual{{\rm C^{\infty}Cat}^{\otimes,\vee}}
\def\smcatinv{{\rm C^{\infty}Cat}^{\otimes,\times}}
\def\sm{{\rm C}^∞}
\def\hocolim{\mathop{\rm hocolim}}
\def\holim{\mathop{\rm holim}}
\def\sing{\mathord{\rm sing}}
\def\hq{{/\!/}}
\def\core{\mathop{\rm core}}
\def\Ho{\mathop{\sf Ho}\nolimits}
\def\lconst{{\sf \check Cech,flc}}
\def\loc{{\sf \check Cech}}
\def\Sing{\mathop{\rm Sing}\nolimits}
\def\unit{{\rm unit}}
\def\Cnec{\mathfrak{C}^{\rm nec}}
\def\Cut{{\sf Cut}}
\def\DiffCut{{\sf DiffCut}}
\def\unitable{{\rm unit}}
\def\Ht{\widetilde{H}}
\def\Ot{\widetilde{O}}
\def\Hb{\overline{H}}
\def\Ob{\overline{O}}
\def\ltoarr#1{\mathop{\count0=#1 \loop\ifnum\count0>0 \smash-\mkern-7mu \advance\count0 -1 \repeat \mathord\rightarrow}\limits} 
\def\longto#1^#2{\mathrel{\ltoarr{#1}^{#2}}} 
\def\lgetsarr#1{\mathop{\mathord\leftarrow \count0=#1 \loop\ifnum\count0>0 \mkern-7mu\smash-\advance\count0 -1 \repeat}\limits} 
\def\longgets#1^#2{\mathrel{\lgetsarr{#1}\limits^{#2}}} 
\def\gmatrix#1#2{\null\,\vcenter{\normalbaselines
        \ialign{#1\crcr
                \mathstrut\crcr\noalign{\kern-\baselineskip}
                #2\crcr\mathstrut\crcr\noalign{\kern-\baselineskip}}}\,}
\def\sqmatrix{\gmatrix{\hfil$##$&\enspace\hfil$##$\hfil\enspace&$##$\hfil}}
\def\cdbl{\def\normalbaselines{\baselineskip20pt \lineskip3pt \lineskiplimit3pt }}
\def\sqcd{\cdbl\let\vagap\;\sqmatrix}
     \def\vagap{}
\def\Sd{\mathop{\rm Sd}\nolimits}
\def\BBord{\mathfrak{Bord}}
\def\ldf{\mathbb{L}}
\def\rdf{\mathbb{R}}
\def\tboxtimes{\mathbin{\tilde\boxtimes}}
\def\Map{{\cal M}{\rm ap}}
\mathchardef\colon="303A 
\title{The geometric cobordism hypothesis}
\author{Daniel Grady \\ \href{http://www.gradydaniel.com/}{gradydaniel.com} \and Dmitri Pavlov \\ \href{https://dmitripavlov.org/}{dmitripavlov.org}} 
\date{Department of Mathematics and Statistics, Texas Tech University}
\begin{document}


\maketitle

\begin{abstract}
We prove a generalization of the cobordism hypothesis of Baez–Dolan and Hopkins–Lurie
for bordisms with arbitrary geometric structures, such as Riemannian or Lorentzian metrics, complex and symplectic structures,
smooth maps to a fixed target manifold,
principal bundles with connections, or geometric string structures.
Our methods rely on the locality property for fully extended functorial field theories established in arXiv:2011.01208,
reducing the problem to the special case of geometrically framed bordism categories.
As an application, we upgrade the classification of invertible fully extended topological field theories
by Bökstedt–Madsen and Schommer-Pries to nontopological field theories,
generalizing the work of Galatius–Madsen–Tillmann–Weiss to arbitrary geometric structures.
\end{abstract}

\tableofcontents

\section{Introduction}

The cobordism hypothesis is a beautiful statement about the nature of topological field theories.
It can be formulated by saying that the $d$-dimensional fully extended framed bordism category is the free symmetric monoidal $(\infty,d)$-category with duals on one object.
The intuition behind the cobordism hypothesis comes from the observation of Baez--Dolan \cite{BaezDolan} that it should be possible to decompose any $d$-dimensional smooth manifold by cutting it along higher codimensional submanifolds and the resulting cells in the decomposition should have a simple algebraic interpretation: either they are identity morphisms or they can be identified with the unit, counit, or triangle identity, implementing a duality in higher dimensions. 

Despite its conceptually clear meaning, a detailed proof of the cobordism hypothesis has remained somewhat elusive.
The seminal paper of Lurie \cite{Lurie.TFT} outlined a proof of a version of the topological cobordism hypothesis, however a complete proof has yet to emerge in the literature.
Part of the reason for this difficulty appears to be that precise formulations of both the algebraic and geometric structures involved in the statement have only recently appeared in the literature.
For example, a precise formulation of the $(\infty,d)$-category of bordisms was not available
until the work of Lurie \cite{Lurie.TFT} and Calaque--Scheimbauer \cite{CalaqueScheimbauer}. 

It is natural to ask to what extent the cobordism hypothesis holds in the geometric setting, i.e., for nontopological field theories.
Indeed, such a generalization is crucial in physics applications, where one typically does not expect realistic quantum field theories to be topological in nature.
However, given the state of the topological cobordism hypothesis, an analogue of the cobordism hypothesis in the geometric case presents a number of difficulties.
In particular, any proof of such a geometric cobordism hypothesis should provide a proof of the topological cobordism hypothesis,
since topological structures on cobordisms should be a special case of more general geometric structures. 

In Grady--Pavlov \cite{GradyPavlov}, we introduced a smooth variant $\BBord_d^{\cal S}$ of the symmetric monoidal $(\infty,d)$-category of bordisms.
This bordism category satisfies the following properties:
\begin{itemize}
\setlength\itemsep{0em}
\item It is extended: we have noninvertible $k$-morphisms, given by $k$-dimensional bordisms for $0\leq k\leq d$,
and invertible $k$-morphisms for $k>d$, given by diffeomorphisms as well as isotopies that move the source and target within a given bordism.
\item It is smooth: we have a notion of a smooth family of bordisms, which a functorial field theory is required to map to a smooth family of values.
\item It incorporates geometric structures on bordisms, including higher geometric structures with (higher) gauge automorphisms such as principal bundles with connection and bundle $n$-gerbes with connection.
\item It incorporates $d$-thin homotopies between geometric structures,
where a $d$-thin homotopy is a homotopy implemented using a smooth map of rank at most~$d$.
\end{itemize}
Geometric structures are encoded by simplicial presheaves ${\cal S}$ on the site $\FEmb_d$  (\cref{fembdef}).
The objects of $\FEmb_d$ are submersions with $d$-dimensional fibers and morphisms are fiberwise open embeddings.
Covering families are given by open covers on total spaces. Thus, geometric structures can be restricted along open embeddings, glued along open covers, and allow for smooth deformations parametrized by any~$\RR^n$.
In particular, we can encode the following structures in this formalism:
\begin{itemize}
\setlength\itemsep{0em}
\item Smooth maps to some fixed target manifold.
\item Riemannian and pseudo-Riemannian metrics, possibly with restrictions on Ricci or sectional curvature.
\item Conformal, complex, symplectic, contact, K\"ahler structures.
\item Principal $G$-bundles and vector bundles with connection.
\item Bundle $n$-gerbes with connection.
\item Foliations, possibly equipped with additional structures such as transversal metrics.
\item Geometric spin-c, string, fivebrane, and ninebrane structures.
\item Topological structures, such as orientation, spin, string, framing, etc. 
\end{itemize}

We proved the following locality property of corresponding field theories in Grady--Pavlov \cite[\ecref{EL-mainthm}]{GradyPavlov}.

\begin{theorem}\label{local}
Fix $d\geq 0$.
Let $\mathscr{C}$ be a smooth symmetric monoidal $(\infty,d)$-category (\cref{smcatglob}).
The smooth symmetric monoidal $(\infty,d)$-category of symmetric monoidal functors $\Funmon(\BBord_d^{{\cal S}},\mathscr{C})$
satisfies descent with respect to the target ${\cal S}\in \sPSh(\FEmb_d)$.
That is to say, the functor
$$\FFT_{d,\mathscr{C}}:\sPSh(\FEmb_d)^\op→\smcat_{∞,d},\qquad {\cal S}↦\FFT_{d,\mathscr{C}}({\cal S})≔\Funmon(\BBord_d^{{\cal S}},\mathscr{C})$$
is an ∞-sheaf, i.e., it preserves homotopy limits.
\end{theorem}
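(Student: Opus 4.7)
The plan is to prove \cref{local} by reducing the descent statement for $\FFT_{d,\mathscr{C}}$ to a cocontinuity statement for the bordism functor ${\cal S} \mapsto \BBord_d^{\cal S}$, and then invoking the standard fact that mapping out of a colimit produces a limit. Concretely, I would try to establish that the assignment
$$\sPSh(\FEmb_d) \longrightarrow \smcatglob_{∞,d}, \qquad {\cal S} \longmapsto \BBord_d^{\cal S}$$
preserves homotopy colimits. Given this, for any homotopy colimit ${\cal S} \simeq \hocolim_i {\cal S}_i$ in $\sPSh(\FEmb_d)$ one has $\BBord_d^{\cal S} \simeq \hocolim_i \BBord_d^{{\cal S}_i}$, and applying $\Funmon(-,\mathscr{C})$, which sends colimits in its first argument to limits, yields
$$\FFT_{d,\mathscr{C}}({\cal S}) = \Funmon(\BBord_d^{\cal S}, \mathscr{C}) \simeq \holim_i \Funmon(\BBord_d^{{\cal S}_i}, \mathscr{C}) = \holim_i \FFT_{d,\mathscr{C}}({\cal S}_i),$$
which is precisely the $\infty$-sheaf property after passing to the opposite category in the source.

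To set up the cocontinuity claim, I would first unpack the construction of $\BBord_d^{\cal S}$: a $k$-morphism is (roughly) a $k$-dimensional bordism $M$, viewed as a submersion in $\FEmb_d$, together with an ${\cal S}$-structure, that is, a map of simplicial presheaves from the representable associated to $M$ (or to the relevant $\FEmb_d$-object encoding $M$ and its collars) into ${\cal S}$. Since an ${\cal S}$-structure is a \emph{map into} ${\cal S}$, and since every ${\cal S} \in \sPSh(\FEmb_d)$ is canonically a homotopy colimit of representables, the assignment ${\cal S} \mapsto \BBord_d^{\cal S}$ should be realizable as the left Kan extension of the "tautological" assignment $U \mapsto \BBord_d^{y(U)}$ (where $y(U)$ is the representable on $U \in \FEmb_d$) along the Yoneda embedding. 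In other words, I would expect $\BBord_d^{\cal S}$ to admit an ∞-Grothendieck-style description as a colimit over the ∞-category of elements of ${\cal S}$, from which cocontinuity in ${\cal S}$ is manifest.

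The main obstacle is making this cocontinuity argument rigorous given the intricate simplicial-presheaf model for $\BBord_d^{\cal S}$. The bordism category carries several interacting layers of structure — smooth families parametrized by $\cartsp$, collars and cutting data, geometric decorations by ${\cal S}$, invertible higher morphisms implemented by $d$-thin homotopies — and one must check that the geometric realization producing $\BBord_d^{\cal S}$ from its simplicial model genuinely commutes with the homotopy colimit in ${\cal S}$, rather than merely with ordinary colimits. I would attempt this by arguing levelwise: at each simplicial level, the space of decorated $k$-morphisms is a mapping space into ${\cal S}$ out of a representable (or a finite colimit of representables), and mapping out of representables \emph{into} a simplicial presheaf commutes with homotopy limits of presheaves on the nose; then a Reedy / bar-resolution argument should assemble this levelwise compatibility into the required global statement.

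Finally, I would verify separately that $\Funmon(-,\mathscr{C})$ sends homotopy colimits of smooth symmetric monoidal $(\infty,d)$-categories to homotopy limits in $\smcat_{∞,d}$ — this is a general feature of internal-hom-type constructions in a presentable $\infty$-category, but the "smooth" variant may require a small additional check that enriched symmetric monoidal functors are compatible with the smooth structure. With these two ingredients — cocontinuity of $\BBord_d^{(-)}$ and limit-preservation of $\Funmon(-,\mathscr{C})$ — the theorem follows by composition.
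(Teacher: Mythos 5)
Your two-step architecture --- (i) homotopy cocontinuity of ${\cal S}\mapsto\BBord_d^{\cal S}$, followed by (ii) the derived internal hom $\Funmon(-,\mathscr{C})$ turning homotopy colimits into homotopy limits --- is exactly the architecture behind this theorem. Note, however, that the present paper does not prove \cref{local} at all: it is imported wholesale as the main theorem of the companion paper \cite{GradyPavlov}, and it is used there and here precisely in the form ``$\BBord_d^{(-)}$ is a left Quillen functor out of $\sPSh(\FEmb_d)_\loc$'' (see \cref{bordtensor} and the alternative proof of \cref{mainthm.geometric}). Step (ii) and the reduction to representables are the easy, formal part, and your treatment of them is fine.

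The gap is in step (i), and it is the entire content of the theorem. Your Kan-extension/levelwise argument only establishes preservation of homotopy colimits computed in the \emph{unlocalized} (objectwise injective) model structure: each level of the nerve defining $\BBord_d^{\cal S}$ is a coproduct of filtered colimits of evaluations of ${\cal S}$ at germs, and evaluation commutes with objectwise homotopy colimits, so that part is essentially formal, as you say. But the theorem asserts that $\FFT_{d,\mathscr{C}}$ is an $\infty$-\emph{sheaf}, i.e., satisfies descent for the Grothendieck topology on $\FEmb_d$. Equivalently, $\BBord_d^{(-)}$ must be left Quillen for the \v Cech-local model structure, i.e., it must send the local weak equivalence $\hocolim_{[n]}\check C_n(\{{\cal S}_\alpha\})\to{\cal S}$ associated to an open cover to a weak equivalence of bordism categories. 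By objectwise cocontinuity this reduces to showing that the canonical map from $\hocolim_{[n]}\BBord_d^{\check C_n}$ to $\BBord_d^{\cal S}$ is a weak equivalence, which is the genuinely geometric statement that every smooth family of bordisms with an ${\cal S}$-structure can be cut, using the cut-tuple machinery and Morse-theoretic decompositions, into pieces each carrying a structure pulled back from a single member of the cover, compatibly on overlaps and coherently in families. This codescent argument is what the cited main theorem of \cite{GradyPavlov} actually proves, and it does not follow from the representable/bar-resolution formalism in your sketch.
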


Using \cref{local}, we prove the following generalization of the cobordism hypothesis
(see \cref{mainthm} for a detailed statement and proof).
Informally, \cref{intro.mainthm} states that the smooth symmetric monoidal $(∞,d)$-category with duals
$\BBord_d^{\RR^d\times U\to U}$ is freely generated by a single $U$-family of objects (which are given by points, interpreted as 0-bordisms).

\begin{theorem}[Geometric framed cobordism hypothesis]\label{intro.mainthm}
Fix $d\geq 0$ and $U\in\cartsp$, i.e., $U≅\RR^n$ (see \cref{smoothcats}).
Let $\mathscr{C}$ be a smooth symmetric monoidal $(\infty,d)$-category with duals (\cref{smcatdual}).
We have an equivalence of smooth symmetric monoidal $(\infty,d)$-categories
$$\Funmon(\BBord_d^{\RR^d\times U\to U},\mathscr{C})\simeq \Map(U,\mathscr{C}^{\times}),$$
where $\mathscr{C}^{\times}$ denotes the invertible part (or the core) of $\mathscr{C}$ (\cref{coreadj})
and $\Map(U,-)$ denotes the powering over simplicial presheaves on $\cartsp$ (\cref{power.cat}). 
In particular, the left side is a smooth symmetric monoidal $∞$-groupoid.
\end{theorem}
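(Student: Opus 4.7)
The plan is to construct an explicit evaluation functor
$$\text{ev}\colon \Funmon(\BBord_d^{\RR^d\times U\to U},\mathscr{C}) \to \Map(U,\mathscr{C}^{\times})$$
that sends a symmetric monoidal functor $F$ to the smooth $U$-family of objects obtained by applying $F$ to the tautological $U$-parametrized framed 0-bordism. (Here the structure $\RR^d\times U\to U$, viewed as a representable on $\FEmb_d$, equips each bordism with a framing together with a smooth map to $U$, and the identity section of $\RR^d\times U\to U$ picks out a canonical 0-bordism.) The goal is to show that $\text{ev}$ is an equivalence of smooth symmetric monoidal $(\infty,d)$-categories; the final clause about the left hand side being an $\infty$-groupoid then follows, since the right hand side manifestly is.

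To handle the $U$-parametrization, I would first pass to a fiberwise statement over $\cartsp$. Since both sides are simplicial presheaves on $\cartsp$, the equivalence can be checked after evaluation at each $V\in\cartsp$. The powering identifies $\Map(U,\mathscr{C}^{\times})(V)\simeq \mathscr{C}^{\times}(U\times V)$, while on the left the test object $V$ is absorbed into the base of the submersion, yielding $\Funmon(\BBord_d^{\RR^d\times W\to W},\mathscr{C})$ for $W\coloneqq U\times V$, evaluated at the point. This reduces the theorem to the assertion that for an arbitrary Cartesian space $W$ the underlying $(\infty,d)$-category of $\Funmon(\BBord_d^{\RR^d\times W\to W},\mathscr{C})$ is equivalent to the $(\infty,d)$-category of $W$-families of invertible objects of $\mathscr{C}$.

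The core of the argument is then the universal property of $\BBord_d^{\RR^d \times W \to W}$ as freely generated (among smooth symmetric monoidal $(\infty,d)$-categories with duals) by its tautological $W$-family of framed points. I would establish this by a parametric Morse/handle decomposition: every smooth $W$-family of framed $k$-bordisms ($1\le k\le d$) admits, locally in $W$, a decomposition into elementary framed handles; each elementary $j$-handle corresponds, in the presence of duals in $\mathscr{C}$, to a canonical morphism built from unit/counit data of duals and adjoints, hence is forced by the value of $F$ on the tautological $W$-family of points. The higher cells of $\BBord_d^{\RR^d \times W \to W}$ — diffeomorphisms, isotopies, and in particular the $d$-thin homotopies built into the construction of \cite{GradyPavlov} — then serve to absorb the dependence on the choice of decomposition, ensuring that the reconstruction of $F$ is coherent.

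The main obstacle will be verifying this handle-decomposition argument smoothly in $W$-families together with the resulting $(\infty,d)$-categorical coherence. Concretely, one must show that any two parametric decompositions of the same family of bordisms are connected by a zig-zag of elementary moves (births, deaths, handle slides, and local trivializations of the framing) that correspond to higher morphisms in $\BBord_d^{\RR^d \times W \to W}$, so that the assembled duality data is independent of choice up to contractible ambiguity. This is where the precise design of $\BBord_d^{{\cal S}}$ — in particular the inclusion of smooth families and $d$-thin homotopies — does the essential work, letting Morse-theoretic flexibility translate into $(\infty,d)$-categorical equivalences. Translating parametric Morse theory into the coherence data required at every level $k\le d$ is the delicate step that the full proof must execute with care.
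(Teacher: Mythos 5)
Your overall strategy---evaluation at the tautological point, reduction to a freeness statement for $\BBord_d^{\RR^d\times U\to U}$, and handle decompositions whose elementary pieces are forced by duality data---matches the paper's in outline, but the proposal stops exactly where the proof has to start, and the one concrete route you sketch is the one the paper is engineered to avoid. The step you defer (``any two parametric decompositions are connected by a zig-zag of births, deaths, handle slides\dots'') is parametrized Cerf theory for framed generalized Morse functions, i.e.\ the part of Lurie's outline that has never been completed. The paper instead filters $\BBord_d^{\RR^d\times U\to U}$ by Morse index ($B_{-1}\subset B_0\subset\cdots\subset B_d$, \cref{index.filtration}), proves each step is a homotopy pushout along a handle subcategory by a necklace-category/simplicial-Whitehead argument (\cref{bkpushout,handlecutout}), and identifies the handle attachment $O_{k-1}\to H_k$ with a homotopy cobase change of the inclusion of the free-walking adjunction unit into the free-walking adjunction (\cref{handleequiv}). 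The ``contractible ambiguity'' you need is then not obtained by comparing decompositions at all: it is imported from Riehl--Verity's theorem that the space of homotopy coherent adjunctions extending a given left adjoint is contractible, together with the exchange principle (\cref{exchange}) to verify the unit condition. Without a replacement for this mechanism your argument has no way to show that the reconstructed functor is independent of choices.

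Second, your dimensional reduction is missing. After stripping off all handles one is left with $B_{-1}$, whose objects are $(d-1)$-bordisms equipped with embeddings of their $\RR$-thickenings into $\RR^d$, i.e.\ the geometric structure $\iota_{d-1}^*(\RR^d\times U\to U)$, which is \emph{not} the standard $(d-1)$-framing. Closing the induction therefore requires (i) the cobordism hypothesis in dimension $d-1$ for this nonstandard structure, which forces one to invoke the full geometric (not merely framed) statement in dimension $d-1$ and hence the locality theorem of \cite{GradyPavlov}, and (ii) the homotopy pushout computing $\O(d)$ from two copies of $\O(d-1)$ glued along $\O(d-1)\times_{\O(d-2)}\O(d-1)$ (\cref{inductive.reduction}) to convert that structure back into the standard one. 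Your proposal never confronts either point; relatedly, the opening ``absorb $V$ into the base'' reduction is not literally correct (a $V$-point of $\BBord_d^{\RR^d\times U\to U}$ is a $V$-family embedded fiberwise over a map $V\to U$, not a single bordism for the structure $\RR^d\times(U\times V)\to U\times V$), though that part is repairable.
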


We remark that we can ignore the sheaf structure over $\cartsp$ on both sides of the weak equivalence, which yields $\Map(U,\mathscr{C}^⨯)(\RR^0)\simeq \mathscr{C}^⨯(U)$.
The geometric structure appearing in \cref{intro.mainthm} is given by fiberwise open embeddings into $\RR^d$.
Concretely, a $k$-morphism in the bordism category $\BBord_d^{\RR^d\times U\to U}$ is a germ of a $k$-dimensional submanifold of a $d$-dimensional manifold.
The geometric structure encodes an immersion of this germ into $\RR^d$.

In the general case, the smooth symmetric monoidal $\infty$-groupoid $\mathscr{C}^{\times}$
must be enhanced to a presheaf of smooth symmetric monoidal $\infty$-groupoids $\mathscr{C}^⨯_d$ on the site $\FEmb_d$, using \cref{intro.mainthm}.
This is similar to the topological case, where $\mathscr{C}^{\times}$ admits an action of the group $\O(d)$ through the topological framed cobordism hypothesis.
The categories $\cartsp$ and $Γ$ can be omitted from the definition of $\mathscr{C}^⨯_d$ below if the corresponding structures are not needed,
in which case $\mathscr{C}^⨯_d$ can be taken to be simply a presheaf of simplicial sets.

\begin{definition}\label{refinement}
(See \cref{extcx} for a precise formulation.)
Fix $d\geq 0$ and a smooth symmetric monoidal $(∞,d)$-category with duals $\mathscr{C}\in \smcatdual_{\infty,d}$ (\cref{smcatdual}).
We define a presheaf of smooth symmetric monoidal $∞$-groupoids
$$\mathscr{C}^{\times}_d:\FEmb_d^\op\to \sPSh(\cartsp⨯Γ), \qquad\mathscr{C}^{\times}_d(\RR^d\times U\to U)≔\Funmon(\BBord_d^{\RR^d\times U\to U},\mathscr{C})^⨯.$$
\end{definition}

We remark that $\mathscr{C}^{\times}_d(\RR^d\times U\to U)\simeq \Map(U,\mathscr{C}^{\times})$ by \cref{intro.mainthm},
but this weak equivalence is {\it not\/} natural because $\mathscr{C}^{\times}$ is not functorial
with respect to morphisms in $\FEmb_d$.

We can now formulate the geometric cobordism hypothesis for arbitrary geometric structures (see \cref{mainthm.geometric} for a detailed statement and proof).

\begin{theorem}\label{intro.mainthm.geometric}
Fix $d\geq 0$,
$\mathscr{C}\in \smcatdual_{\infty,d}$ (\cref{smcatdual}),
and ${\cal S}\in \sPSh(\FEmb_d)$.
We have a natural equivalence of smooth symmetric monoidal $(\infty,d)$-categories
$$\Funmon(\BBord_d^{{\cal S}},\mathscr{C})\simeq \Map_{\FEmb_d}({\cal S},\mathscr{C}^{\times}_d).$$
The left side computes the smooth symmetric monoidal $(∞,d)$-category of functors.
The right side
is a smooth symmetric monoidal $∞$-groupoid
given by the derived mapping object of presheaves on $\FEmb_d$ (\cref{power.end.cat}).
In particular, the smooth symmetric monoidal $(∞,d)$-category on the left side is a smooth symmetric monoidal $∞$-groupoid.
\end{theorem}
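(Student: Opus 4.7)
The plan is to reduce the statement to the framed case already proved in \cref{intro.mainthm} by means of a descent argument in the variable ${\cal S}$. Both
$$\mathcal{F}({\cal S}) := \Funmon(\BBord_d^{{\cal S}}, \mathscr{C}) \qquad \text{and} \qquad \mathcal{G}({\cal S}) := \Map_{\FEmb_d}({\cal S}, \mathscr{C}^{\times}_d)$$
define contravariant functors $\sPSh(\FEmb_d)^\op \to \smcat_{\infty,d}$ that send homotopy colimits in ${\cal S}$ to homotopy limits: for $\mathcal{F}$ this is precisely \cref{local}, while for $\mathcal{G}$ it is a formal property of the derived mapping object of simplicial presheaves. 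Since $\sPSh(\FEmb_d)$ is freely generated under homotopy colimits by representables, it therefore suffices to produce a natural equivalence $\mathcal{F} \simeq \mathcal{G}$ on objects of the form ${\cal S} = \FEmb_d(-, Y \to X)$.

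Every submersion $Y \to X$ is locally trivial, so we can choose an open cover $\{V_\alpha \to Y\}$ with each composite $V_\alpha \to X$ fiberwise diffeomorphic to $\RR^d \times U_\alpha \to U_\alpha$ for some $U_\alpha \in \cartsp$. The Čech nerve of this cover exhibits $\FEmb_d(-, Y \to X)$ as a homotopy colimit in $\sPSh(\FEmb_d)$ of representables of the framed form $\FEmb_d(-, \RR^d \times V \to V)$. Reapplying the descent property of $\mathcal{F}$ and $\mathcal{G}$ therefore reduces the problem to the single case ${\cal S} = \FEmb_d(-, \RR^d \times U \to U)$ for $U \in \cartsp$. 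Here the Yoneda lemma together with \cref{refinement} gives
$$\mathcal{G}(\FEmb_d(-, \RR^d \times U \to U)) \simeq \mathscr{C}^{\times}_d(\RR^d \times U \to U) = \Funmon(\BBord_d^{\RR^d \times U \to U}, \mathscr{C})^\times,$$
and by \cref{intro.mainthm} the right-hand side is already a smooth symmetric monoidal $\infty$-groupoid, so the core is the identity and this equals $\mathcal{F}(\FEmb_d(-, \RR^d \times U \to U))$.

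A natural comparison map $\mathcal{F} \to \mathcal{G}$ can be constructed explicitly: to a functor $F \in \Funmon(\BBord_d^{{\cal S}}, \mathscr{C})$ and a section $s\colon \FEmb_d(-, Y \to X) \to {\cal S}$ one associates the composite $\BBord_d^{Y \to X} \to \BBord_d^{{\cal S}} \xrightarrow{F} \mathscr{C}$, using the functoriality of ${\cal S} \mapsto \BBord_d^{{\cal S}}$. By construction this map is the identity on framed representables under the identifications above, hence an equivalence there; combined with the descent reduction, it is an equivalence on all of $\sPSh(\FEmb_d)$. The final ``in particular'' statement drops out automatically, since $\mathcal{F}({\cal S})$ is then a homotopy limit of framed $\mathcal{F}$-values, each of which is an $\infty$-groupoid by \cref{intro.mainthm}.

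The principal obstacle is the coherence of this reduction: one must verify that every representable of a submersion really is equivalent to the homotopy colimit of its framed Čech nerve in the specific Čech-local model structure used to define $\BBord_d^{{\cal S}}$ (so that \cref{local} applies as claimed), and one must assemble the pointwise identifications of \cref{intro.mainthm} into a coherently natural transformation of smooth symmetric monoidal $(\infty,d)$-categories over $\FEmb_d$, rather than a mere pointwise equivalence. Once these coherence issues are settled, \cref{local} and \cref{intro.mainthm} do essentially all of the work.
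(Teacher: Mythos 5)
Your argument is correct in outline and matches the alternative proof sketched by the paper in the remark immediately following Theorem~\ref{mainthm.geometric}: both reduce to representables by homotopy cocontinuity, invoke the locality theorem~\ref{local} on the field-theory side, and then evaluate via the enriched Yoneda lemma. The paper's \emph{primary} proof is a cleaner model-categorical argument: it observes that $\mathcal{S}\mapsto\Funmon(\BBord_d^{\mathcal{S}},\mathscr{C})^\times$ and $\mathcal{S}\mapsto\Map_{\FEmb_d}(\mathcal{S},\mathscr{C}^\times_d)$ are \emph{both} simplicial right Quillen functors $\sPSh(\FEmb_d)^\op\to\sPSh(\cartsp\times\Gamma)$ (using Theorem~\ref{local} and right Quillen bifunctoriality of $\Funmon$ and $\Map_{\FEmb_d}$), which agree on representables tautologically by the definition of $\mathscr{C}^\times_d$ in Definition~\ref{extcx} and enriched Yoneda. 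This sidesteps your ``principal obstacle'': there is no Čech nerve to assemble and no comparison map to verify natural, since two right Quillen functors agreeing on representables automatically agree. Note also a small shift in where the framed cobordism hypothesis enters: the paper's argument compares the \emph{cores} on both sides, making the Yoneda step a literal identity and reserving Theorem~\ref{mainthm} for the ``in particular'' claim that $\Funmon(\BBord_d^{\mathcal{S}},\mathscr{C})$ is an $\infty$-groupoid; you instead compare the full internal hom with the mapping object, so the framed cobordism hypothesis is already needed at the representable level to identify $\Funmon(\BBord_d^{\RR^d\times U\to U},\mathscr{C})$ with its core. Both routes work, but the Quillen-adjunction argument resolves the coherence issues you flag at the end essentially for free, whereas your route requires (as you note) that the Čech hocolim decomposition of a general representable by framed ones be checked in the relevant Čech-local model structure (Proposition~\ref{fembfembcart}) and that the explicit comparison map be shown to be a natural transformation between right Quillen functors, not just a pointwise weak equivalence.
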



We remark that forcing the smooth symmetric monoidal $(∞,d)$-category~$\mathscr{C}$ to have duals
does not create issues for practical applications.
In particular, quantum mechanics on infinite-dimensional Hilbert spaces
can be easily encoded by our formalism, even though
all dualizable Hilbert spaces are finite-dimensional.
Indeed,
any category~$C$ admits a faithful functor $C→L(C)$,
where $L(C)$ is the free symmetric monoidal category with duals on~$C$
and the functor $C→L(C)$ is the unit of the adjunction.
Applying this construction to the category of Hilbert spaces and bounded linear maps
produces a target category that has duals and is capable of encoding quantum mechanics on infinite-dimensional Hilbert spaces.
We also remark that our presentation of duals as a left Bousfield localization
yields a localization functor~$L$ that computes the free smooth symmetric monoidal $(\infty,d)$-category with duals
on a smooth $(\infty,d)$-category, which allows us to perform this trick in general.

In the special case of topological structures, we also obtain a proof (\cref{top.cob.hyp,lurie.formulation}) of the topological cobordism hypothesis of Baez--Dolan in the formulation of Hopkins–Lurie.
In contrast to Lurie, we crucially rely on \cref{local}, which allow us to reduce to the geometric framed case.
In the geometric framed case, the strategy of our proof of \cref{mainthm} resembles Lurie \cite[Proof of Theorem 2.4.6]{Lurie.TFT}. 
We also provide proofs of generalizations of Lurie \cite[Claim 3.4.12 and Claim 3.4.17]{Lurie.TFT} in \cref{bkequivalence},
reusing Lurie \cite[Proposition 3.4.20]{Lurie.TFT} in the proof.
Our methods are different in that we do not use framed generalized Morse functions directly.
Instead, we rely on the cutting and gluing lemmas established in Grady--Pavlov \cite{GradyPavlov}, which provide a convenient framework to access tools from Morse theory.

The right sides of \cref{refinement} and \cref{intro.mainthm.geometric} can often be computed in practice.
In \cref{refinement.deligne}, we consider the example (joint work of Stephan Stolz, Peter Teichner, and the second author)
of the target $(∞,d)$-category $\deloop^d A$, where $A$ is an abelian Lie group.
This category has a single $k$-morphism for $0\le k<d$ and its $d$-morphisms form the abelian Lie group~$A$.
As it turns out, $(\deloop^d A)^\times_d$ can be computed as the fiberwise Deligne complex valued in~$A$
and $\map({\cal S},\mathscr{C}^{\times}_d)$ is space of $A$-banded bundle $(d-1)$-gerbes with connections over~${\cal S}$.
This is a rigorous expression of the idea of higher parallel transport for bundle $(d-1)$-gerbes with connection
along $k$-manifolds with corners of arbitrary codimension, where $0\le k\le d$.

The abstract tool that allows us to compute $\mathscr{C}^⨯_d$ for this and many other choices of~$\mathscr{C}$ is the following simple consequence
of \cref{intro.mainthm},
which allows one to easily identify the action of $\O(d)$ (or rather, the equivalent homotopical information)
on the (smooth) $∞$-groupoid of fully dualizable objects in a (smooth) symmetric monoidal $(∞,d)$-category.
As before, $\cartsp$ and $Γ$ can be omitted from the statement below if the corresponding structures are not relevant for the problem at hand.

\begin{corollary}
\label{identify.refinement}
Fix $d\geq 0$ and a fibrant object $\mathscr{C}\in \smcatdual_{\infty,d}$ (\cref{smcatdual}).
Recall the simplicial presheaf $\mathscr{C}^⨯_d$ from \cref{refinement}.
Suppose 
$$F:\FEmb_d^\op\times \cartsp^\op\times \Gamma^\op\to\sset$$ 
is a simplicial presheaf that satisfies the descent condition with respect to $\FEmb_d$ and $f:F→\mathscr{C}^⨯_d$ is a morphism of simplicial presheaves.
Then $f$ is a local weak equivalence if and only if for any $(T=\RR^d⨯U→U)∈\FEmb_d$, the composition of induced maps
$$F(T→U)→\mathscr{C}^⨯_d(T→U)=\Funmon(\BBord_d^{T\to U},\mathscr{C})→\Map(U,\mathscr{C}^⨯)$$
is a local weak equivalence of simplicial presheaves on $\cartsp⨯Γ$.
\end{corollary}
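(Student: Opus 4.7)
The plan is to reduce the assertion to \cref{intro.mainthm} by combining it with the descent hypotheses on both sides. The central observation is that the objects $\RR^d\times U\to U$ with $U\in\cartsp$ form a basis for the Čech topology on $\FEmb_d$: any submersion $X\to Y$ with $d$-dimensional fibers is, by the local normal form for submersions, locally diffeomorphic to a projection $\RR^d\times V\to V$ with $V\subseteq Y$ open, and refining each such $V$ by Cartesian charts produces a covering family in $\FEmb_d$ whose members all have this shape.

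Given this, I would first verify that $\mathscr{C}^⨯_d$ itself satisfies $\FEmb_d$-descent. This should follow from \cref{local} applied with varying $\mathcal{S}\in \sPSh(\FEmb_d)$ equal to the representables, once one observes that covers in $\FEmb_d$ correspond to Čech covers of representable simplicial presheaves, together with the fact that taking cores of smooth symmetric monoidal $(\infty,d)$-categories commutes with homotopy limits. Since both $F$ and $\mathscr{C}^⨯_d$ are then $\FEmb_d$-sheaves, the morphism $f$ is a local weak equivalence precisely when its evaluation at each object of a generating basis is a local weak equivalence of simplicial presheaves on $\cartsp\times\Gamma$; by the previous paragraph, it suffices to check this for $T=\RR^d\times U\to U$ with $U\in\cartsp$.

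Next I would invoke \cref{intro.mainthm} to identify $\mathscr{C}^⨯_d(\RR^d\times U\to U)=\Funmon(\BBord_d^{\RR^d\times U\to U},\mathscr{C})^⨯$ with $\Map(U,\mathscr{C}^⨯)$ in $\sPSh(\cartsp\times\Gamma)$, so that composing $f$ at $\RR^d\times U\to U$ with this equivalence yields precisely the map appearing in the statement of the corollary. The two conditions therefore coincide object-by-object in $\FEmb_d$, yielding the equivalence claimed.

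The main subtlety to handle is the remark following \cref{refinement}: the equivalence from \cref{intro.mainthm} is not natural in the $\FEmb_d$-variable. This is unproblematic here because descent has already reduced the check to individual objects of $\FEmb_d$, and at that point only $\cartsp\times\Gamma$-naturality is required, which the equivalence does preserve. The step I expect to require the most care is the verification of $\FEmb_d$-descent for $\mathscr{C}^⨯_d$, since it involves recognizing the representable $\RR^d\times U\to U\in\sPSh(\FEmb_d)$ as an input to \cref{local} and commuting the core functor past homotopy limits; the remainder of the argument is a direct assembly of the pieces.
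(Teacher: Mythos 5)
Your proof is correct and matches the argument the paper leaves implicit: establish $\FEmb_d$-descent for $\mathscr{C}^{\times}_d$ from \cref{local} (together with the fact that taking cores, being a derived evaluation at $\mathbf{0}\in\Delta^{\times d}$, commutes with homotopy limits), reduce to checking $f$ objectwise on objects of the form $\RR^d\times U\to U$, and then apply \cref{intro.mainthm} to replace $\mathscr{C}^{\times}_d(T\to U)$ by $\Map(U,\mathscr{C}^\times)$; your remark about the non-naturality of the framed equivalence correctly identifies why the descent reduction must come first. The one step that deserves to be made explicit is that the reduction to the objects $\RR^d\times U\to U$ rests on the site comparison $\FEmbCart_d\simeq\FEmb_d$ of \cref{fembfembcart}, whose verification via Hoyois's comparison lemma requires \emph{good} covers (finite intersections again of the basis form), not merely covers by basis objects; your phrase ``refining each such $V$ by Cartesian charts'' gestures at this but the good-cover condition is what actually makes the descent argument close up.
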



We emphasize that in order to apply \cref{identify.refinement}
we do not ever need to compute spaces of field theories:
all that is needed is a guess for what such a space should be (organized into a simplicial presheaf~$F$),
together with a map~$f$ to actual field theories.
The geometric framed cobordism hypothesis (\cref{identify.refinement}) then allows us to identify $F(T→U)$ with $\mathscr{C}^⨯_d(T→U)$
by looking at $\Map(U,\mathscr{C}^⨯)$ only, which is often much easier to do in practice, since the simplicial presheaf
$\Map(U,\mathscr{C}^⨯)\in \sPSh(\cartsp\times \Gamma)$ makes no use of field theories whatsoever.

We also remark that
although fiberwise homotopy locally constant simplicial presheaves on $\FEmb_d$ are Quillen equivalent to presheaves on $\cartsp$ valued in
spaces equipped with an action of the group~$\O(d)$,
the equivalence is by no means trivial (see \cref{fiberwiseshape} and its dependencies)
and the model that uses simplicial presheaves on $\FEmb_d$
is particularly well-suited for the purposes of \cref{identify.refinement},
enabling applications such as \cref{refinement.deligne}.
For another example (principal $G$-bundles) see \cref{principal.bundles}.

Specializing to the case of invertible field theories, we obtain the following corollary,
which generalizes the work of Galatius--Madsen--Tillmann--Weiss \cite{GMTW},
Bökstedt--Madsen \cite{BokstedtMadsen},
and (in the extended case only) Schommer-Pries \cite{SchommerPries.ITFT}.
See \cref{invertible.tft,madsensphere,madsengstr} for details and proofs.

\begin{theorem}
Fix $d\geq 0$ and ${\cal S}\in \sPSh(\FEmb_d)$.
Set
$$\MT({\cal S}) ≔ \ldf\cK\BBord_d^{{\cal S}},$$
where $\ldf\cK$ (\cref{def.K.functor}) inverts all $k$-morphisms ($0<k≤d$) and objects in a smooth symmetric monoidal $(∞,d)$-category.
If
$\mathscr{C}\in \smcat_{\infty,d}$ is a fibrant object
in the model structure of \cref{sheaf.spectra.models},
i.e., a smooth group-like symmetric monoidal ∞-groupoid,
then
we have a natural weak equivalence
$$\ldf\cK\Funmon(\BBord_d^{\cal S},\mathscr{C})\simeq \rdf\hom(\MT({\cal S}),\ldf\cK(\mathscr{C})),$$
where $\hom(-,-)$ denotes the (derived) internal hom in $\PSh(\cartsp,\Sp_{\geq 0})_\loc$ (see \cref{sheaf.spectra.models}).

Furthermore, for any Lie group~$G$ with a representation $\rho:G→\GL(d)$ we have a weak equivalence $$\ldf\csp\MT((\RR^d→\RR^0)\hq \sm(-, G))≃Σ^d \MTcla G ,$$
where $\MTcla$ denotes the classical Madsen–Tillmann spectrum
and $\csp$ is the shape functor (\cref{shape.functor}).
Thus, $\MT$ is a geometric refinement of the Madsen--Tillmann spectrum (see \cref{madsengstr}).
\end{theorem}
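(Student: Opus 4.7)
The plan is to prove the two asserted equivalences in turn.

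For the first, I would exploit the adjunction between $\ldf\cK$ and the inclusion of grouplike objects into $\smcat_{\infty,d}$. Since $\mathscr{C}$ is fibrant in the model structure of \cref{sheaf.spectra.models}, it is already a smooth group-like symmetric monoidal $\infty$-groupoid, so the unit $\mathscr{C}\to \ldf\cK(\mathscr{C})$ is a weak equivalence. One first checks that the symmetric monoidal functor category $\Funmon(\BBord_d^{\cal S},\mathscr{C})$ inherits the grouplike property from its target, so $\ldf\cK$ acts as an equivalence on it. Combining these observations with the adjunction and the definition $\MT({\cal S})=\ldf\cK\BBord_d^{\cal S}$ yields
$$\ldf\cK\Funmon(\BBord_d^{\cal S},\mathscr{C})\simeq \Funmon(\BBord_d^{\cal S},\mathscr{C})\simeq \Funmon(\MT({\cal S}),\mathscr{C})\simeq \Funmon(\MT({\cal S}),\ldf\cK(\mathscr{C})).$$
Under the Quillen equivalence from \cref{sheaf.spectra.models} between smooth grouplike symmetric monoidal $\infty$-groupoids and presheaves of connective spectra on $\cartsp$, the symmetric monoidal functor category between two such objects corresponds to the derived internal hom $\rdf\hom$ in $\PSh(\cartsp,\Sp_{\geq 0})_\loc$, yielding the claimed right-hand side.

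For the second equivalence, I would first unfold the structure $(\RR^d\to\RR^0)\hq\sm(-,G)$: its value at $(T\to U)\in \FEmb_d$ classifies smooth maps from the fiber of $T\to U$ to $BG$ together with a reduction of the tangent $\GL(d)$-classifier along $B\rho$, i.e., a tangential $(G,\rho)$-structure. Consequently $\BBord_d^{(\RR^d\to\RR^0)\hq\sm(-,G)}$ is a smooth extended refinement of the classical $d$-dimensional bordism category with tangential $(G,\rho)$-structure; applying $\ldf\cK$ produces the associated grouplike $\infty$-groupoid and $\ldf\csp$ extracts its underlying classical spectrum. I would then invoke the Galatius--Madsen--Tillmann--Weiss theorem \cite{GMTW}, together with its Bökstedt--Madsen refinement \cite{BokstedtMadsen} and Schommer-Pries's extended version \cite{SchommerPries.ITFT}, to identify this spectrum with $\Sigma^d\MTcla G$, the shift by $d$ reflecting the codimension-$0$ top bordisms.

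The main obstacle is the second part: while the underlying GMTW identification is classical, one must reconcile our construction---built from $\FEmb_d$-sheaves, cutting/gluing, and higher-categorical inversion via $\ldf\cK$---with the classical Pontryagin--Thom construction on spaces of submanifolds of $\RR^\infty$. My plan would be to use the first equivalence to characterize $\ldf\csp\ldf\cK\BBord_d^{\cal S}$ as the classifying spectrum for invertible ${\cal S}$-structured field theories on smooth test spaces, and then to verify that on ${\cal S}=(\RR^d\to\RR^0)\hq\sm(-,G)$ this cohomology theory represents the same functor as $\Sigma^d\MTcla G$. The dimension shift and the reduction of smooth $\FEmb_d$-data to the homotopy type of $BG$ are the two most delicate points; both are ultimately carried out in \cref{madsengstr}.
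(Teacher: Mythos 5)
Your treatment of the first equivalence is essentially the paper's (\cref{invertible.tft}): one moves the left derived Quillen equivalence $\ldf\cK$ inside the derived internal hom, using that $\mathscr{C}$ is already local. A minor imprecision is that $\Funmon(\MT({\cal S}),\mathscr{C})$ does not literally typecheck, since $\MT({\cal S})$ lives in $\PSh(\cartsp,\Sp_{\geq 0})$ while $\Funmon$ is the internal hom of $\sPSh(\cartsp\times\Gamma\times\Delta^{\times d})$; the intended chain is $\ldf\cK\Funmon(\BBord_d^{\cal S},\mathscr{C})\simeq\rdf\hom(\ldf\cK\BBord_d^{\cal S},\ldf\cK\mathscr{C})$, relying on the chain of Quillen equivalences in \cref{sheaf.spectra.models}. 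Also, the fact that $\Funmon(\BBord_d^{\cal S},\mathscr{C})$ is an $\infty$-groupoid (not merely grouplike) is not a formal consequence of $\mathscr{C}$ being one: in the paper this is one of the outputs of the geometric cobordism hypothesis (\cref{mainthm.geometric}), so you should cite it.

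The second equivalence is where your route genuinely diverges and has a gap. You propose to identify $\ldf\csp\MT((\RR^d\to\RR^0)\hq\sm(-,G))$ with $\Sigma^d\MTcla G$ by invoking GMTW, Bökstedt--Madsen, and Schommer-Pries. But that would require first establishing that the smooth $\FEmb_d$-sheaf-theoretic bordism category $\BBord_d^{(\RR^d\to\RR^0)\hq\sm(-,G)}$ has the same shape as the classical cobordism category of manifolds with tangential $(G,\rho)$-structure, and that comparison is precisely the nontrivial content of the result; you acknowledge it is ``the most delicate'' point and defer it back to \cref{madsengstr}, so the proposal does not actually close the argument, and moreover the paper explicitly frames this theorem as a \emph{generalization} of those results rather than a consequence of them. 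The paper instead proves the claim from scratch via the framed geometric cobordism hypothesis: \cref{madsensphere} computes $\MT(\RR^d\times U\to U)\simeq\mathbf{S}\otimes j(U)$ together with the $\O(d)$-action by the $J$-homomorphism (this is where \cref{mainthm} is used), then homotopy cocontinuity of $\MT$ and compatibility with the $\sPSh(\cartsp)$-tensoring (\cref{bordtensor}, \cref{MTtensor}) give $\MT((\RR^d\times U\to U)\hq\sm(-,G))\simeq(\mathbf{S}\otimes j(U))\hq\sm(-,G)$, and finally the shape functor collapses this to $\mathbf{S}\hq\Sing G$, which is identified with $\Sigma^d\MTcla G$ by a standard property of Thom spectra, not by GMTW. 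So the $d$-fold suspension is accounted for by the $\O(d)$-equivariance of the equivalence $\MT(\RR^d\times U\to U)\simeq\mathbf{S}\otimes j(U)$, not by ``codimension-$0$ top bordisms'' as you suggest.
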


We conclude by listing some key insights that enable the proofs of \cref{intro.mainthm} and \cref{intro.mainthm.geometric}
and distinguish our approach from the existing approaches to the cobordism hypothesis.
\begin{itemize}
\item We invoke the locality property (\cref{local})
to establish an Quillen adjunction $\BBord_d^{(-)}⊣(-)^⨯_d$, at the level of the Čech-local model structure on simplicial presheaves.  
This allows us to compute field theories $\FFT_{d,{\cal S},\mathscr{C}}=\Funmon(\BBord_d^{\cal S},\mathscr{C})$
as the derived mapping object $\Map_{\FEmb_d}({\cal S},\mathscr{C}^⨯_d)$.
Hence, we reduce the problem to computing the mapping object, eliminating $(∞,d)$-categories from the picture.
This contrasts with the existing approaches, e.g., Lurie \cite[Remark~2.4.20]{Lurie.TFT} deduces the topological case of the locality principle
from the cobordism hypothesis.
\item
We exploit the fact that in the geometric framed case,
multisimplices in $\BBord_d$ admit a simple geometric description:
bordisms in the $(∞,d)$-category of bordisms $\BBord_d^{\RR^d⨯U→U}$ are embedded fiberwise into $\RR^d\times U$ as an open subset.
\item We encode $d$-thin homotopies (and higher homotopies) for geometric structures on bordisms, corresponding to the “$∞$” in $(∞,d)$-category.
Here a $d$-thin homotopy is a homotopy given by a smooth map of rank at most~$d$.
This forces functorial field theories to be invariant under $d$-thin homotopies.
In particular, bordisms given by $d$-dimensional cubes become invertible,
corresponding to the fact that parallel transport maps in differential geometry and time evolution operators in physics are invertible.
Additionally, higher holonomy for bundle $(d-1)$-gerbes with connection is invariant under $d$-thin homotopies.
\item We encode the action of the orthogonal group on fully dualizable objects
using the structure of a presheaf on the site $\FEmb_d$ of $d$-dimensional manifolds and open embeddings.
This formulation immediately connects to the bordism category
and allows us to exploit the homotopy cocontinuity of $\BBord_d$ to reduce to the case of geometrically framed bordisms. 
\item The technical machinery of Grady–Pavlov \cite{GradyPavlov}
is used to establish the filtrations and pushout squares for the geometric framed bordism category in \cref{codescent}.
This provides an easy way to cut bordisms into elementary pieces.
\end{itemize}

\subsection{Previous work on functorial field theories}

The original definition of functorial field theories by Graeme Segal \cite{Segal.CFT.note, Segal.CFT} was already nontopological:
the 2-dimensional bordisms were equipped with a conformal structure.
See also an early survey by Gawędzki \cite{Gawedzki}. 

Conformal field theories proved to be very difficult to construct (but see Pickrell \cite{Pickrell} for a related example of a nonextended 2-dimensional field theory
in the setting of Riemannian metrics and Runkel–Szegedy \cite{RunkelSzegedy} for an example using volume forms on 2-manifolds),
so much of the subsequent work was focused on the case of topological field theories,
with the early works of Witten \cite{Witten.TQFT, Witten.Jones}, Kontsevich \cite{Kontsevich}, Atiyah \cite{Atiyah} followed by many others.
In particular, extended functorial field theories were proposed by Lawrence \cite{Lawrence} and Freed \cite{Freed.Ext} in the topological context.
We refer the reader to the survey of Baez–Lauda \cite{BaezLauda} for further details and discussion of many other important papers. 

Field theories where bordisms are equipped with a map to some fixed space
were considered by Segal \cite{Segal.Elliptic}, Freed–Quinn \cite{FreedQuinn}, Turaev \cite{Turaev}. Smooth functorial field theories were defined by Stolz--Teichner \cite{StolzTeichner.Elliptic, StolzTeichner.SUSY}.
Such theories allow for much more general geometric structures.
Berwick-Evans and the second author \cite{BerwickEvansPavlov} defined smooth 1-dimensional
topological field theories valued in vector spaces and proved that they are equivalent to vector bundles with connection.
Ludewig–Stoffel \cite{LudewigStoffel} proved a similar statement with bordism categories
having germs of paths for objects and the target category being sheaves of vector spaces with certain properties.

In a forthcoming work, Stolz, Teichner, and the second author \cite{PavlovStolzTeichner}
compute (using the results of this paper) the simplicial set of $d$-dimensional smooth fully extended functorial field theories
with target $\deloop^d A$, where $A$ denotes the representable presheaf of an abelian Lie group~$A$.
The resulting simplicial set is weakly equivalent to the simplicial set of $A$-banded bundle $(d-1)$-gerbes with connection. 

There is surprisingly little literature on nontopological extended field theories.
The first appearance of a smooth symmetric monoidal $(\infty,d)$-category of bordisms seems to be Grady--Pavlov \cite{GradyPavlov},
where we prove the locality property for extended field theories.
This bordism category was inspired by the 1-category introduced by Stolz--Teichner \cite{StolzTeichner.Elliptic, StolzTeichner.SUSY}
and allows for a very general class of geometric structures on bordisms. 

\subsection{Previous work on the topological cobordism hypothesis}

Considerable work has been done on various special cases of the {\it topological\/} cobordism hypothesis,
corresponding to the case when the space of geometric structures on a $d$-manifold is invariant under homotopy equivalences.
This allows for structures like orientations, spin and string structures, but excludes
structures with geometric data such as connections, differential forms, or Riemannian metrics.

The topological cobordism hypothesis was formulated by Baez–Dolan \cite[\S7]{BaezDolan},
where the framed $n$-dimensional topological case is stated as “Extended TQFT Hypothesis, Part~I”.
(The term “cobordism hypothesis” also appears in this paper, albeit only once.)
Later, Costello \cite{Costello} and Lurie \cite{Lurie.TFT} (in collaboration with Hopkins) refined its statement, replacing $n$-categories
with $(∞,n)$-categories, and allowing for bordisms with maps to arbitrary targets.
Baez–Lauda \cite{BaezLauda} survey a lot of the preceding history of functorial field theory.
See also the surveys by Bergner \cite{Bergner} and Freed \cite{Freed.CH}.

Costello \cite{Costello} proved a variant of the 2-dimensional topological cobordism hypothesis
(with the $(∞,1)$-category of chain complexes as a target),
where 2-bordisms cannot have connected components with empty source.
Schommer-Pries \cite{SchommerPries.Thesis} proved the 2-dimensional topological cobordism hypothesis
in the case when the target is an ordinary bicategory (i.e., a 2-truncated $(∞,2)$-category)
and the geometric structure is either trivial or given by orientation.
Lurie \cite{Lurie.TFT} gave an informal account of some ideas relating to the classification of topological field theories,
outlining a proof of a version of the topological cobordism hypothesis in \S\S3.1–3.5.
Eliashberg–Mishachev \cite{EliashbergMishachev} published a result that supersedes \S3.5.
On the other hand, a formal account of \S\S3.1–3.4 has not yet appeared.

Harpaz \cite{Harpaz} considered the 1-dimensional case (with an arbitrary symmetric monoidal $(∞,1)$-category as a target).
Ayala–Francis \cite{AyalaFrancis} gave a conditional proof of the framed topological cobordism (and tangle) hypothesis,
relying on a conjecture about factorization homology,
which is to be proved in a forthcoming series of papers by Ayala–Francis.
A forthcoming paper by Schommer-Pries \cite{SchommerPries.RTH} proves the topological relative tangle hypothesis, generalizing the topological cobordism hypothesis.

In the special case when the symmetric monoidal $(∞,d)$-category of values has only invertible objects and invertible $k$-morphisms for all $k>0$ (i.e., is a Picard $∞$-groupoid),
the topological cobordism hypothesis reduces to showing that the homotopy type of the fully extended bordism category can be computed as the corresponding Madsen–Tillmann spectrum.
This was proved by
Bökstedt–Madsen \cite{BokstedtMadsen} in the case of multiple $(∞,d)$-categories
and Schommer-Pries \cite{SchommerPries.ITFT} in the case of globular $(∞,d)$-categories,
generalizing the work of Galatius--Madsen--Tillmann--Weiss \cite{GMTW} in the nonextended case.

\subsection{Acknowledgments}

We thank Peter Teichner and Stephan Stolz
for launching the program that eventually led to this paper
and providing the second author with a rich collection of insights
without which this work would not be possible,
as well as creating the environment through which he could absorb these insights.
The second author also thanks Arthur Bartels, Chenchang Zhu, and Ulrich Bunke for supporting his work.

We thank Hisham Sati for providing the first author with a wealth of knowledge connecting functorial field theories and generalized cohomology theories with mathematical physics.
The first author also thanks Domenico Fiorenza for useful discussions.

We thank Chris Schommer-Pries for useful discussions about the cobordism hypothesis.
We thank Daniel Brügmann for a careful reading of this paper, including the more technical parts, and for suggesting many improvements.
We thank Yonatan Harpaz for a lot of feedback on an earlier draft of this paper, in particular on \cref{handles.duals}.

This work would not exist without the $n$Lab.
We express our tremendous gratitude to its authors
and personally to Urs Schreiber
for selflessly contributing vast amounts of high-quality material.

\section{Smooth $(\infty,d)$-categories with duals}

In this section, we introduce smooth $(\infty,d)$-categories with duals. We will closely follow Grady--Pavlov \cite{GradyPavlov} for the presentation of smooth $(\infty,d)$-categories (without duals). Much of the material in \cref{smoothcats} was taken directly from Grady--Pavlov \cite{GradyPavlov} and we only include it here to keep the present work somewhat self-contained. 

\subsection{Smooth $(\infty,d)$-categories}\label{smoothcats}

In Grady–Pavlov \cite{GradyPavlov}, we presented the $(\infty,1)$-category of smooth symmetric monoidal $(\infty,d)$-categories
as a left Bousfield localization of a simplicial presheaf category, equipped with the injective model structure.
In this section, we review this construction.
We refer the reader to Grady–Pavlov \cite[\ecref{EL-symminfn}]{GradyPavlov} for further details and exposition. 

Let $\Delta^{\times d}$ denote the $d$-fold product of the simplex category.
Objects are called \emph{multisimplices} and are denoted by
$${\bf m}=([m_1],[m_2],\ldots,[m_d])\in \Delta^{\times d}.$$ 
In particular, ${\bf0}$ denotes the multisimplex $([0],\ldots,[0])$.
Let $\Gamma$ denote the opposite category of pointed finite sets introduced by Segal.
Objects are pointed finite sets of the form $\{*,1,\ldots, n\}$, where $n\in \NN$.
Let $\cartsp$ be the category with objects open subsets of $\RR^d$ that are diffeomorphic to $\RR^d$.
Morphisms are smooth maps between such open subsets. 

We consider the category $$\sPSh(\cartsp⨯Γ⨯\Delta^{\times d})=\Fun((\cartsp⨯Γ⨯\Delta^{\times d})^\op,\sset)\eqlabel{bigspshcat}$$ of simplicial presheaves on the threefold product $\cartsp⨯Γ⨯\Delta^{\times d}$.
\begin{definition}\label{smcatuple}
The category \cref{bigspshcat} admits a model structure by Grady–Pavlov \cite[\ecref{EL-multiple.model.structure}]{GradyPavlov}, given by performing a left Bousfield localization of the injective model structure. We write this model category as 
$$\smcatuple_{∞,d}≔\sPSh(\cartsp⨯Γ⨯\Delta^{\times d})_\uple$$
and refer to it as the {\it multiple injective model structure}.
\end{definition}

The fibrant objects in this model structure are injectively fibrant objects that are $d$-fold complete Segal spaces (in every direction),
satisfy Segal's special $\Gamma$-condition,
and satisfy homotopy descent with respect to good open covers of cartesian spaces.

\begin{remark}
The fibrant objects in the above model category are the higher categorical analogue of double categories, where different composition directions are put on equal footing.
In order to have the correct generalization of a bicategory, fibrant objects must satisfy an additional condition, called \emph{globularity}.
The fibrant objects satisfying this condition have a preferred ordering of composition directions,
which serves as a higher categorical analogue of vertical and horizontal composition in a bicategory.
\end{remark}

We encode the \emph{globular condition} through a further left Bousfield localization.

\begin{definition}\label{smcatglob}
The model category of \cref{smcatuple} admits a further left Bousfield localization, which incorporates globularity (Grady–Pavlov \cite[\ecref{EL-globular.model.structure}]{GradyPavlov}).
We write this model category as 
$$\smcatglob_{∞,d}=\smcat_{∞,d}≔\sPSh(\cartsp⨯Γ⨯\Delta^{\times d})_{\glob}$$
and refer to it as the {\it globular injective model structure}. 
A \emph{smooth symmetric monoidal $(∞,d)$-category} is defined as a fibrant object in $\smcatglob_{∞,d}$.
\end{definition}

Since this condition is less well known than the Segal condition and completeness,
we recall
the local objects in the resulting localization
(details can be found in Grady--Pavlov \cite{GradyPavlov}).
The local objects in this left Bousfield localization are multisimplicial objects~$X$ such that $X_{0}$,
which we interpret as an object in $(d-1)$-fold simplicial objects, is homotopy constant and $X_k$ is a local object in $(d-1)$-fold simplicial spaces for all $k∈Δ$.
For example, for $d=2$, the locality condition boils down to forcing the degeneracy maps $X_{0,0}\to X_{0,b}$ to be weak equivalences.
This makes all vertical morphisms invertible.

\subsection{Functors and the core}

We begin with some notation, which will be used throughout the paper.

\begin{notation}\label{partials} 
Fix $R=L⨯R/L$ a product of two categories and $X\in \sPSh(R)$. 
For any $l\in L$, we can evaluate $X$ on $l$ to obtain a simplicial presheaf on the remaining factor~$R/L$.
We denote the corresponding partial evaluation of $X$ on an object $l\in L$ as $X(l)\in \sPSh(R/L)$.
\end{notation}

A frequently used example will be the following. Let $S\subset \{1,\ldots,d\}$ be a subset and let ${\bf m}\in \Delta^{S}$ be a multisimplex, i.e., a functor ${\bf m}:S\to \Delta$, where $S$ is regarded as a discrete category. Then for a simplicial presheaf $X\in \sPSh(\cartsp\times \Gamma\times \Delta^{\times d})$, the partial evaluation of $X$ on ${\bf m}\in \Delta^{S}$ yields an object
$$X({\bf m})\in \sPSh(\cartsp\times \Gamma\times \Delta^{\{1,\ldots,d\}\setminus S}).$$

To avoid confusion, we will always make the category explicit when applying partial evaluation. For example, if $X\in \sPSh(\cartsp\times \Gamma\times \Delta^{\times d})$ and we fix ${\bf m}\in \Delta^{\times i}$, the notation $X({\bf m})$ should be understood as a simplicial presheaf on the remaining variables in $\cartsp\times \Gamma\times \Delta^{\times (d-i)}$.

\begin{definition}\label{tensoringmultisimp}
Let $S\subset \{1,\ldots,d\}$ and let ${\bf m}\in \Delta^S$ be a multisimplex. Denote by 
$$j_{\Delta^{S}}:\Delta^{ S}\into \PSh(\Delta^{S})\into \sPSh(\Delta^{S})\to \sPSh(\Delta^{\times d})$$
composition of the Yoneda embedding with restriction along the projection $\Delta^{\times d}\to \Delta^S$. The partial evaluation functor (see \cref{partials})
$$\sPSh(\Delta^{\times d})\to \sPSh(\Delta^{\{1,\ldots,d\}\setminus S}), \qquad X\mapsto X({\bf m})$$
admits a left adjoint, given by externally tensoring $j_{\Delta^S}({\bf m})$ with an object in $\sPSh(\Delta^{\{1,\ldots,d\}\setminus S})$. We denote this tensoring by
$$Y\mapsto j_{\Delta^{\times S}}({\bf m}_S)\boxtimes Y.$$ 
Explicitly, the value of $j_{\Delta^{ S}}({\bf m})\boxtimes Y$ at ${\bf n}\in \Delta^{\times d}$ is given by 
$$(j_{\Delta^{S}}({\bf m})\boxtimes Y)({\bf n})=\coprod_{\hom({\bf n}_{ S},{\bf m})} Y({\bf n}_{\{1,\ldots,d\}\setminus S}),$$
where ${\bf n}_{S}\in \Delta^{S}$ denotes the multisimplex given by throwing away all simplices that are not indexed by an element of $S$. This functor is the left Kan extension of the functor $\Delta^{\{1,\ldots,d\}\setminus S}\to \Delta^{\times d}$ that inserts $[m_s]={\bf m}(s)$ into the $s$ slot.
\end{definition}

In the statement of \cref{mainthm}, we will need both the internal hom in $\smcatdual_{\infty,d}$ and the powering over $\sPSh(\cartsp)$.
To emphasize the categorical and symmetric monoidal structure, we use the following notation for the internal hom in $\smcatdual_{\infty,d}$.

\begin{definition}
\label{ihom.cat}
(Grady–Pavlov \cite[\ecref{EL-gamma.smash.product}]{GradyPavlov}.)
Fix $d\geq 0$.
We denote by $\Funmon(-,-)$ the internal hom in $\sPSh(\cartsp⨯Γ⨯Δ^{⨯d})$,
which is adjoint to the symmetric monoidal structure on $\sPSh(\cartsp⨯Γ⨯Δ^{⨯d})$
given by the Day convolution with respect to the smash product on~$Γ$
and cartesian product on $\cartsp⨯Δ^{⨯d}$.
\end{definition}

\begin{remark}
The multiple model structure on $\sPSh(\cartsp⨯Γ⨯Δ^{⨯d})$ (Grady–Pavlov \cite[\ecref{EL-multiple.model.structure}]{GradyPavlov})
is a monoidal model structure (Grady–Pavlov \cite[\ecref{EL-functor.categories}]{GradyPavlov}).
This means that if $\mathscr{C}$ is a fibrant object in this model structure,
then $\Funmon(X,\mathscr{C})$ computes the correct derived internal hom.
Throughout the paper, $\Funmon(-,-)$ will always be derived in the multiple model structure.
In fact, the second argument will always be fibrant, which eliminates the need to derive $\Funmon$.

The globular model structure on $\sPSh(\cartsp⨯Γ⨯Δ^{⨯d})$ (Grady–Pavlov \cite[\ecref{EL-globular.model.structure}]{GradyPavlov})
is obtained by further localizing the multiple model structure $\smcatuple_{∞,d}$.
The resulting model structure is not a monoidal model structure (Grady–Pavlov \cite[\ecref{EL-globular.not.monoidal}]{GradyPavlov}).

However, in our case, computing the derived internal homs $\Funmon(X,\mathscr{C})$
in the multiple model structure
results (\cref{mainthm.geometric}) in simplicial presheaves that are constant in the direction of $Δ^{⨯d}$,
i.e., the relevant multiple $(∞,d)$-categories are $∞$-groupoids.
Such derived internal homs in the multiple model structure
also compute derived internal homs in the globular model structure. 
\end{remark}

We now turn to the various powerings that will be used. 

\begin{definition}\label{power.cat}
Fix some symmetric monoidal categories $L$ and $R/L$ and consider the category $R=L⨯R/L$.
Equip the categories of simplicial presheaves $\sPSh(L)$ and $\sPSh(R)$ with the Day convolution closed monoidal structure.
We define the powering of $Y\in \sPSh(R)$ by $X\in \sPSh(L)$ as 
$$\Map(X,Y)=\hom(p^*X,Y)\in \sPSh(R),$$
where $p:R=L⨯R/L→L$ is the projection functor.
In particular, if $R=L$, then $\Map(X,Y)$ is the internal hom from~$X$ to~$Y$.
For objects $X,Y\in \sPSh(R)$, we reserve the notation 
$$\map(X,Y)\in \sset,$$
with roman font instead of calligraphic, for the usual simplicial enrichment of simplicial presheaves over simplicial sets.  
\end{definition}

In particular, \cref{power.cat} is applicable when $L$ is a factor of $\cartsp⨯Γ⨯Δ^{⨯d}$, using the monoidal structure of \cref{ihom.cat}.
When $\sPSh(R)$ is equipped with the multiple injective model structure, obtained by localizing the injective model structure at those morphisms
in Grady–Pavlov \cite[\ecref{EL-multiple.model.structure}]{GradyPavlov}
that are relevant for the factors present in~$R$, the functor $p^*$ is trivially left Quillen.
Hence, the powering is a Quillen bifunctor.

We prove the main theorem \cref{mainthm.geometric} for the full enrichment in smooth symmetric monoidal $(\infty,d)$-categories.
As a consequence, the mapping object on the right side of the equivalence must be suitably defined.
Having in mind $R=\FEmb_d⨯\cartsp⨯Γ$, $M=\FEmb_d$, $L/M=*$ (as used in \cref{mainthm.geometric}), we make the following definition.

\begin{definition}\label{power.end.cat}
Fix some symmetric monoidal categories $M$, $L/M$, $R/L$, and consider the categories $L=M⨯L/M$ and $R=L⨯R/L=M⨯R/M$, where $R/M=L/M⨯R/L$.
Let $Y\in \sPSh(R)$ and $X\in \sPSh(L)$.
We define the mapping object 
$$\Map_M(X,Y)≔\int_{m\in M}\Map(X(m),Y(m))\in \sPSh(R/M),$$
where $X(m)$ denotes the partial evaluation of \cref{partials}
and $\Map$ denotes the powering of \cref{power.cat}.
For $M=\ast$, we have $\Map_M(X,Y)=\Map(X,Y)$, while for $M=L=R$, we have $\Map_M(X,Y)=\map(X,Y)$. 
\end{definition}

We will also use following functors, which promote an $(\infty,d-1)$ category to an $(\infty,d)$-category. 

\begin{definition}
\label{evconst}
Let $S\subset \{1,\ldots,d\}$ and let $i:\Delta^{\{1,\ldots,d\}\setminus S}\to \Delta^{\times d}$
be the functor that inserts $[0]$ in the simplicial directions indexed by $S$.
Morally, $S$ indexes the directions in which we want to discard noninvertible morphisms.
Restriction along $i$ induces the corresponding functor 
$${\rm ev}_{S}:\sPSh(\cartsp⨯Γ⨯\Delta^{\times d})\to \sPSh(\cartsp⨯Γ⨯\Delta^{\{1,\ldots,d\}\setminus S}),$$ 
which evaluates at ${\bf 0}\in \Delta^{S}$.
The left adjoint ${\rm c}_S\dashv {\rm ev}_{S}$ sends an object $X$ to the simplicial presheaf that is constant on $\Delta^{S}$.
\end{definition}

\begin{lemma}\label{evconstadj}
The adjunction ${\rm c}_S\dashv {\rm ev}_S$ descends to a Quillen adjunction at the level of local injective model structures.
Furthermore ${\rm c}_S$ is itself a right Quillen functor when considered with the multiple projective model structures.
For the globular projective model structure, ${\rm c}_S$ is right Quillen when $S$ is of the form $S=\{i,\dots, d\}$, for some $1\leq i \leq d$. 
\end{lemma}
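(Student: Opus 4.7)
The plan is to exploit a pair of adjunctions involving $c_S$. Because $[0]\in\Delta$ is terminal, the insertion $i\colon \Delta^{\{1,\dots,d\}\setminus S}\to \Delta^{\times d}$ is right adjoint to the projection~$p$ satisfying $p\circ i=\mathrm{id}$, which upgrades to an adjunction $c_S=p^*\dashv i^*=\mathrm{ev}_S$ on presheaf categories; simultaneously $c_S=p^*$ admits a further left adjoint $p_!$ given by left Kan extension, providing the adjunction $p_!\dashv c_S$ that will witness $c_S$ as a right Quillen functor in parts two and three. In all three parts both adjunctions act trivially on the $\cartsp\times\Gamma$ factors.

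For the first claim I would verify that the right adjoint $\mathrm{ev}_S$ preserves fibrant objects of the local injective model structure. These are injectively fibrant simplicial presheaves satisfying Čech descent in the $\cartsp$ factor; since $\mathrm{ev}_S$ is pointwise and does not touch $\cartsp\times\Gamma$, both conditions are inherited. Equivalently, $c_S$ preserves cofibrations (pointwise monomorphisms) and sends the generating local trivial cofibrations, which live in the $\cartsp$ factor, to local trivial cofibrations, so it descends to the Bousfield localization.

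For the second claim $c_S$ is right Quillen between the unlocalized projective structures since projective (trivial) fibrations are pointwise and $c_S$ is pointwise. To descend to the multiple localization it suffices to show that $c_S$ preserves multiply fibrant objects, which must satisfy the Segal, special~$\Gamma$, completeness, and Čech descent conditions. Since $c_S X$ is constant in the $S$-indexed simplicial directions, the Segal and completeness conditions are automatic in those directions, while the special~$\Gamma$ condition, Čech descent, and the conditions in the remaining simplicial directions are inherited from $X$.

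The third claim reduces to checking that $c_S$ additionally preserves the globularity condition; this is the main obstacle and is precisely why one must restrict to final-segment $S$. Globularity is recursive: $X_0$ (evaluated at $0$ in the first simplicial direction) must be homotopy constant in the remaining $(d-1)$ directions, and $X_k$ must be globular in the remaining $(d-1)$-fold structure for each $k\in\Delta$. When $S=\{i,\dots,d\}$ with $i>1$, the first simplicial direction is untouched by $c_S$, so $(c_S X)_0$ is the constant extension in the last $d-i+1$ directions of $X_0$, which is homotopy constant by globularity of $X$; moreover $(c_S X)_k$ lives in the analogous final-segment situation with $d$ replaced by $d-1$, closing an induction on $d$. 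For $i=1$ the object $c_S X$ is entirely constant in the simplicial directions and globularity is immediate. For $S$ not of the form $\{i,\dots,d\}$, inserting constants in a non-final direction produces an $(c_S X)_0$ that reproduces a later slice of $X$, and homotopy-constancy fails in exactly the direction that globularity demands; this is the sharpness of the hypothesis.
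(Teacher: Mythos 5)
Your proposal follows essentially the same route as the paper's (very terse) proof: the two adjunctions ${\rm c}_S\dashv{\rm ev}_S$ and $p_!\dashv {\rm c}_S=p^*$, descent to the localizations by checking preservation of local objects (or, equivalently, that the left adjoint sends localizing maps to local equivalences), and the observation that globularity is the only condition sensitive to which directions $S$ occupies. Your expanded discussion of why final segments $S=\{i,\dots,d\}$ preserve globularity, and why non-final insertions fail, correctly fills in what the paper dismisses with ``clearly.''

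One imprecision in your first step: the localizing maps of the local (multiple) injective model structure do \emph{not} all live in the $\cartsp$ factor — besides Čech descent they include the Segal and completeness maps in the $\Delta$-directions and the special-$\Gamma$ maps. The conclusion is unaffected, since ${\rm ev}_S$ is a partial evaluation at ${\bf 0}\in\Delta^S$ and every one of these conditions is checked levelwise in the directions ${\rm ev}_S$ does not touch (this is exactly the paper's one-line argument), but as written your verification only covers the Čech maps; your second step, where you do list all four conditions, shows you know the full set and should be mirrored in the first.
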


\begin{proof}
The adjunction is clearly Quillen at the level of the injective model structures.
Since both model categories are left proper, we need only check that the right adjoint ${\rm ev}_S$  preserves locally fibrant objects
(see, for example, Lurie \cite[Corollary A.3.7.2]{Lurie.HTT}).
But this is clear, since local objects in the domain of ${\rm ev}_S$ are levelwise (for each ${\bf m}\in \Delta^{\{1,\ldots, d\}\setminus S}$) local objects on the right.

Finally, ${\rm c}_S$ is a right Quillen functor for local projective model structures:
for the projective model structures this is true by definition,
and for the local projective model structures,
it suffices to show that ${\rm c}_S$ preserves local objects. This is clearly true in the multiple model structure. For the globular model structure, ${\rm c}_S$ preserves local objects when $S$ is of the form $S=\{i,\dots, d\}\subset \{1,\ldots,d\}$.
\end{proof}

\begin{definition}
\label{coreadj}
Given a smooth symmetric monoidal $(∞,d)$-category~$\mathscr{C}$,
we denote by $\mathscr{C}^⨯$ the value on~$\mathscr{C}$ of the right derived functor of
$${\rm ev}_{\{1,…,d\}}:\sPSh(\cartsp⨯Γ⨯Δ^{⨯d})\to \sPSh(\cartsp⨯Γ).$$
\end{definition}

Thus, $\mathscr{C}^⨯$ denotes the smooth symmetric monoidal $∞$-groupoid of invertible morphisms in~$\mathscr{C}$, i.e., the core of~$\mathscr{C}$.

\subsection{Duals}

In \cref{smoothcats}, we constructed a model category in which the fibrant objects are smooth symmetric monoidal $(\infty,d)$-categories. 
However, in order to prove (or even state) the cobordism hypothesis, we must be able to say what a $(\infty,d)$-category \emph{with duals} is. 
In this section, we introduce yet a further localized model structure on $\sPSh(\cartsp⨯Γ⨯\Delta^{\times d})$ that encodes duals. 
The fibrant objects in this model structure will be $(\infty,d)$-categories with duals.  

Let ${\sf C}$ be a bicategory. We can think of 1-morphisms in ${\sf C}$ as functors and 2-morphisms in ${\sf C}$ as natural transformations. Hence, it makes sense to talk about an adjunction in ${\sf C}$.  Given two 1-morphisms $f:x\to y$ and $g:y\to x$, and a 2-morphism $u:\id_{x}\to gf$, we call $u$ the \emph{unit of an adjunction} if there is $v:fg\to \id_y$ such that the triangle identities 
$$(f\cong f\id_x\to fgf\to \id_y f\cong f)=\id_f \qquad \text{and} \qquad (g\cong \id_xg\to gfg\to g\id_y\cong g)=\id_g$$
hold in ${\sf C}$.

Given a $d$-fold complete Segal space $X$, we can form the homotopy 2-category $\Ho_2 X$ as follows.
The set of objects of $\Ho_2 X$ is the set of vertices in $X({\bf 0})$, where ${\bf 0}\in \Delta^{\times d}$ is the zero multisimplex.
Consider the subset $S=\{1,3,\ldots,d\}\subset \{1,2,\ldots, d\}$ and let ${\bf 1}_S\in \Delta^S$ be the multisimplex defined by $[1_s]=[0]$ if $s\in S, s\neq 1$ and $[1_1]=[1]$.
Similarly, let ${\bf 0}_S\in \Delta^{S}$ be the multisimplex defines by $[0_s]=[0]$ for all $s\in S$. 
For a pair of objects $x,y\in X({\bf 0})$, the hom categories of the homotopy 2-category are defined as the homotopy category of the 1-fold Segal space given by the pullback:
$$\Ho_2 X(x,y)≔\Ho\left(\{x\}\times_{X({\bf 0}_S)}X({\bf 1}_S)\times_{X({\bf 0}_S)}\times \{y\}\right).$$
This pullback is a homotopy pullback because $X$ is a fibrant object.

Following Lurie \cite{Lurie.TFT}, we make the following definition.
\begin{definition}\label{adjunction}
Let $k\geq 2$. Let $\mathscr{C}$ be an $(\infty,k)$-category and let $\Ho_2\mathscr{C}$ denote its homotopy 2-category. We say that $\mathscr{C}$ admits \emph{adjoints for 1-morphisms} if for all 1-morphisms $f:x\to y$ in $\Ho_2\mathscr{C}$, there is $g:y\to x$ and $u:\id_x\to gf$ such that $u$ is the unit of an adjunction. 
\end{definition}

\begin{definition}
Let $\mathscr{C}\in \smcatglob_{\infty,d}$ (\cref{smcatglob}) be a fibrant object.
We say that $\mathscr{C}$ has adjoints for $k$-morphisms if for every fixed $U\in \cartsp$, for all ${\bf m}\in \Delta^{\times k}$, the $(\infty,d-k)$-category $\mathscr{C}(U,\langle 1\rangle,{\bf m})$ has adjoints for 1-morphisms.
\end{definition}

\begin{definition}
Let $\mathscr{C}\in \smcatglob_{\infty,d}$ (\cref{smcatglob}) be a fibrant object.
We say that $\mathscr{C}$ has duals for objects if the monoidal 1-category $\Ho\mathscr{C}$ has duals for objects. 
\end{definition}

\cref{adjunction} may appear to be incomplete in that we do not consider homotopy coherent adjunctions. However, it turns out that any adjunction in the homotopy 2-category can be lifted to a homotopy coherent adjunction by Riehl–Verity \cite[Theorem 4.3.9]{RiehlVerity}.
Moreover the space of such lifts extending the left adjoint in the adjunction is contractible by Riehl–Verity \cite[Theorem 4.4.18]{RiehlVerity}.

Following Riehl–Verity \cite{RiehlVerity}, we write $\underline{\rm Adj}$ for the simplicial category given by taking nerves of hom-categories of the free adjunction 2-category considered by Schanuel–Street \cite{SchanuelStreet}.
An explicit presentation of this 2-category is constructed in Riehl–Verity \cite[\S3.1]{RiehlVerity}, as the 2-category with two objects, $+$ and $-$, and categories of morphisms are given by \emph{strictly undulating squiggles}.
\begin{center}
\begin{tikzpicture}
\draw[dashed] (0,0) -- (3.5,0);
\draw[dashed] (0,-.5) -- (3.5,-.5);
\draw[dashed] (0,-1) -- (3.5,-1);
\draw[dashed] (0,-1.5) -- (3.5,-1.5);
\draw[dashed] (0,-2) -- (3.5,-2);
\draw[dashed] (0,-2.5) -- (3.5,-2.5);
\draw[dashed] (0,-3) -- (3.5,-3);

\node at (-.3,.25) {$-$};
\node at (-.3,-.25) {$1$};
\node at (-.3,-.75) {$2$};
\node at (-.3,-1.25) {$3$};
\node at (-.3,-1.75) {$4$};
\node at (-.3,-2.25) {$5$};
\node at (-.3,-2.75) {$6$};
\node at (-.3,-3.25) {$+$};

\node at (3.8,0) {$0$};
\node at (3.8,-.5) {$1$};
\node at (3.8,-1) {$2$};
\node at (3.8,-1.5) {$3$};
\node at (3.8,-2) {$4$};
\node at (3.8,-2.5) {$5$};
\node at (3.8,-3) {$6$};

\node at (-.1,-3.75) {$+$};
\node at (.3,-3.75) {$3$};
\node at (.7,-3.75) {$6$};
\node at (1.1,-3.75) {$2$};
\node at (1.5,-3.75) {$3$};
\node at (1.9,-3.75) {$1$};
\node at (2.3,-3.75) {$+$};
\node at (2.7,-3.75) {$4$};
\node at (3.1,-3.75) {$5$};
\node at (3.5,-3.75) {$-$};

\begin{scope}[xshift=-5pt]
\draw[line width=.5mm] (.3,-3.25) -- (.3,-1.25);
\draw[line width=.5mm] (.7,-2.75) -- (.7,-1.25);
\draw[line width=.5mm] (1.1,-2.75) -- (1.1,-.75);
\draw[line width=.5mm] (1.5,-1.25) -- (1.5,-.75);
\draw[line width=.5mm] (1.9,-1.25) -- (1.9,-.25);
\draw[line width=.5mm] (2.3,-3.25) -- (2.3,-.25);
\draw[line width=.5mm] (2.7,-3.25) -- (2.7,-1.75);
\draw[line width=.5mm] (3.1,-2.25) -- (3.1,-1.75);
\draw[line width=.5mm] (3.5,-2.25) -- (3.5,.25);

\draw[line width=.5mm] (.3,-1.25) to[bend left= 80] (.7,-1.25);
\draw[line width=.5mm] (.7,-2.75) to[bend right= 80] (1.1,-2.75);
\draw[line width=.5mm] (1.1,-.75) to[bend left= 80] (1.5,-.75);
\draw[line width=.5mm] (1.5,-1.25) to[bend right= 80] (1.9,-1.25);
\draw[line width=.5mm] (1.9,-.25) to[bend left= 80] (2.3,-.25);
\draw[line width=.5mm] (2.7,-1.75) to[bend left= 80] (3.1,-1.75);
\draw[line width=.5mm] (3.1,-2.25) to[bend right= 80] (3.5,-2.25);

\draw[line width=.5mm, dashed] (2.3,-3.25) to[bend right= 80] (2.7,-3.25);
\draw[line width=.5mm, dashed] (.1,-3.4) to[bend right= 40] (.3,-3.25);
\draw[line width=.5mm, dashed] (3.5,.25) to[bend left= 40] (3.7,.4);
\end{scope}
\end{tikzpicture}
\end{center}
Taking the nerve of the hom categories of $\underline{\rm Adj}$ gives a simplicial computad, i.e., a cofibrant object in the Dwyer--Kan model structure on simplicial categories.
Simplicial computads are characterized by the property that every $n$-arrow
can be expressed uniquely as a composition of elementary arrows, called atomic $n$-arrows, which are closed under degeneracy maps. The 2-category $\underline{\rm Adj}$ has an atomic $0$-arrow $f\in \underline{\rm Adj}(+,-)$, which we call the \emph{left adjoint}. It also has an atomic $0$-arrow $g\in \underline{\rm Adj}(-,+)$, called the \emph{right adjoint}, and two atomic $1$-arrows $\epsilon\in \underline{\rm Adj}(-,-)$ and $\eta\in \underline{\rm Adj}(+,+)$, called the \emph{counit and unit} respectively. Finally, it has atomic 2-arrows $\alpha$ and $\beta$, which implement the \emph{triangle identities}. 

\begin{definition}
\label{subcomputads}
We write $\underline{\epsilon}\into \underline{\rm Adj}$ for the simplicial subcomputad generated by the counit $\epsilon$, $f$ and $g$.
Similarly, write $\underline{f}\into \underline{\rm Adj}$ for the simplicial subcomputad generated by the left adjoint $f$.
We write $\underline{\eta}\into \underline{\rm Adj}$ for the simplicial subcomputad generated by the unit $\eta$, $f$ and $g$.
\end{definition}  

\begin{remark}\label{nerve2cat}
Recall that there is a canonical functor 
$${\cal N}:\sset_{J}\text{-}{\sf Cat}\to \PSh(\Delta^{\times 2})\into \sPSh(\Delta^{\times 2})$$
from the category of simplicial categories, enriched in the Joyal model structure, to bisimplicial sets. The functor ${\cal N}$ takes a small simplicial category, regards it as a simplicial object in categories, and takes the nerve levelwise. The second inclusion regards a bisimplicial set as a bisimplicial space. The composition ${\cal N}$ is a right Quillen equivalence when $\sPSh(\Delta^{\times 2})$ is equipped with the globular Barwick model structure.
\end{remark}

Since we are working with the Segal space formalism, we will need to regard $\underline{\rm Adj}$, $\underline{\epsilon}$, $\underline{\eta}$ and $\underline{f}$ as $d$-fold Segal spaces. Moreover, we will need place these categories in various multisimplicial degrees in order to encode adjoints for higher morphisms. We now use the tensoring in \cref{tensoringmultisimp} to define variants of these simplicial categories that are simplicial presheaves on $\Delta^{\times d}$. 

\begin{definition}\label{dcatadjoints}
Fix $d\geq 0$, $1\leq k\leq d-1$, and ${\bf m}\in \Delta^{\times (k-1)}$.
Let $p_{2}: \Delta^{\times (d-k+1)}\to \Delta^{\times 2}$ denote the projection onto the first $2$ factors of $\Delta^{\times (d-k+1)}$. Consider the composite functor
$$j_{\Delta^{\times (k-1)}}({\bf m})\boxtimes  p_2^*{\cal N}:\sset_{J}\text{-}{\sf Cat}\to  \sPSh(\Delta^{\times 2})\into \sPSh(\Delta^{\times(d-k+1)})\to \sPSh(\Delta^{\times d} ) ,$$
where ${\cal N}$ is the nerve functor (\cref{nerve2cat}) and the tensoring is the left adjoint to partial evaluation described in \cref{tensoringmultisimp}.
We apply 
$j_{\Delta^{\times (k-1)}}({\bf m})\boxtimes  p_2^*{\cal N}$
to the simplicial categories $\underline{\rm Adj}$, $\underline{\eta}$, $\underline{\epsilon}$ and $\underline{f}$.
The resulting objects encode adjoints for $k$-morphisms.
We denote the resulting $(\infty,d)$-categories as  
$${\rm Adj}_{{\bf m}}, \eta_{ \bf m} ,\epsilon_{\bf m}, f_{\bf m},$$ 
respectively.
\end{definition}

\begin{definition}[Localizing maps for the model structure with duals for morphisms]\label{dualmorph}
Fix $d\geq 0$, $1\leq k\leq d-1$ and ${\bf m}\in \Delta^{\times k-1}$.
\begin{enumerate}
\item[(vi)] We consider the morphisms 
$$f_{\bf m}\to {\rm Adj}_{\bf m},$$
obtained by applying the functor $j_{\Delta^{\times (k-1)}}({\bf m})\boxtimes  p_2^*{\cal N}$ in \cref{dcatadjoints} to the morphism of simplicial categories $\underline{f}\into \underline{\rm Adj}$. 
\end{enumerate}
More concretely, the functor first converts the inclusion $\underline{f}\into \underline{\rm Adj}$ to a morphism of 2-fold Segal spaces, via the nerve. It then converts it to a $(d-k+1)$-fold Segal space by taking identity maps in the last $(d-k-1)$ simplicial directions. Finally, it converts it to a $d$-fold Segal space by tensoring with ${\bf m}\in \Delta^{\times (k-1)}$. 
\end{definition}

\begin{remark}
We will often consider maps from $f_{\bf m}$, $\eta_{\bf m}$ and ${\rm Adj}_{\bf m}$ to some smooth symmetric monoidal $(\infty,d)$-category $\mathscr{C}$. In practice, we want to remember the smooth and symmetric monoidal structure in the mapping object. Hence, we will work almost exclusively with the powering of $\sPSh(\cartsp\times \Gamma\times \Delta^{\times d})$ over $\sPSh(\Delta^{\times d})$, as opposed to just the $\infty$-groupoid of maps (\cref{power.cat}).
\end{remark}
 
The following lemma will be helpful in translating between simplicial categories enriched in the Joyal model structure and the Segal space formalism. 

\begin{lemma}\label{simpltosegal}
Given $d≥2$, $1\leq k\leq d-1$, $V\in \cartsp$, $\langle \ell\rangle\in \Gamma$, ${\bf m}\in \Delta^{\times k-1}$, consider the bisimplicial object in simplicial sets
$$\mathscr{D}≔\mathscr{C}(V,\langle\ell\rangle,{\bf m}\boxtimes(-,-)\boxtimes{\bf0}^{d-k-1}).$$

There is a simplicial category ${\sf C}$ such that ${\cal N}({\sf C})\simeq \mathscr{D}$,
where ${\cal N}$ is the nerve functor (\cref{nerve2cat}).
Moreover, the nerve functor induces an equivalence on derived mapping spaces
\begin{equation}\label{simpltosegalmap}
{\cal N}:\map(\underline{\rm Adj}, {\sf C})\overset{\simeq}{\to}\map({\cal N}(\underline{\rm Adj}),\mathscr{D})\simeq \map({\rm Adj}_{\bf m},\mathscr{C}(V,\langle \ell\rangle)).
\end{equation}
\end{lemma}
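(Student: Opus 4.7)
The plan is to invoke the right Quillen equivalence of \cref{nerve2cat} between Joyal-enriched simplicial categories and complete 2-fold Segal spaces in the globular Barwick model structure on $\sPSh(\Delta^{\times 2})$, and then combine it with a straightforward adjunction argument identifying $p_2^*$ with a left adjoint to evaluation at ${\bf 0}^{d-k-1}$. First, I would verify that $\mathscr{D}$ is fibrant in the globular Barwick model structure on $\sPSh(\Delta^{\times 2})$: since $\mathscr{C}$ is fibrant in $\smcatglob_{\infty,d}$, its partial evaluation at $V$, $\langle\ell\rangle$, ${\bf m}$, and ${\bf 0}^{d-k-1}$ inherits the 2-fold Segal conditions, completeness, and globularity in the remaining two simplicial directions, because evaluation at any multisimplex preserves locally fibrant objects for the model structures of \cref{smcatuple} and \cref{smcatglob}. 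Applying \cref{nerve2cat} then produces a fibrant simplicial category ${\sf C}$ with ${\cal N}({\sf C})\simeq \mathscr{D}$, establishing the first claim of the lemma.

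The first weak equivalence in \eqref{simpltosegalmap} is then immediate: $\underline{\rm Adj}$ is cofibrant (as a simplicial computad) and ${\sf C}$ is fibrant, so the right Quillen equivalence ${\cal N}$ induces $\map(\underline{\rm Adj},{\sf C})\simeq \map({\cal N}(\underline{\rm Adj}),{\cal N}({\sf C}))\simeq \map({\cal N}(\underline{\rm Adj}),\mathscr{D})$. For the second equivalence, I would unfold ${\rm Adj}_{\bf m}=j_{\Delta^{\times (k-1)}}({\bf m})\boxtimes p_2^*{\cal N}(\underline{\rm Adj})$ from \cref{dcatadjoints} and apply the tensor--partial-evaluation adjunction of \cref{tensoringmultisimp} to strip off the ${\bf m}$-slot:
$$\map({\rm Adj}_{\bf m},\mathscr{C}(V,\langle \ell\rangle))\simeq \map(p_2^*{\cal N}(\underline{\rm Adj}),\mathscr{C}(V,\langle \ell\rangle)({\bf m})).$$
A direct Yoneda-style computation (using that ${\bf 0}$ is terminal in each of the last $d-k-1$ copies of $\Delta$, so that the map $n_j\to 0$ is unique for every $j$) identifies the right adjoint to $p_2^*\colon \sPSh(\Delta^{\times 2})\to \sPSh(\Delta^{\times (d-k+1)})$ with the evaluation $e_{\bf 0}\colon Y\mapsto Y(-,-,{\bf 0}^{d-k-1})$. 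Since $e_{\bf 0}$ preserves fibrant objects in the localized model structures (again being evaluation at a fixed multisimplex), this adjunction is Quillen and descends to a weak equivalence
$$\map(p_2^*{\cal N}(\underline{\rm Adj}),\mathscr{C}(V,\langle\ell\rangle)({\bf m}))\simeq \map({\cal N}(\underline{\rm Adj}),e_{\bf 0}\mathscr{C}(V,\langle\ell\rangle)({\bf m}))=\map({\cal N}(\underline{\rm Adj}),\mathscr{D}),$$
completing the chain \eqref{simpltosegalmap}.

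The main technical obstacle will be the fibrancy check at the start: one must verify that the Segal, completeness, and globularity conditions restrict cleanly to $\mathscr{D}$ under partial evaluation at $V$, $\langle\ell\rangle$, ${\bf m}$, and ${\bf 0}^{d-k-1}$. Concretely, this amounts to showing that the relevant localizing maps of \cref{smcatuple} and \cref{smcatglob} restrict (after partial evaluation) to the localizing maps of the globular Barwick model structure on $\sPSh(\Delta^{\times 2})$. Once this is in place, all remaining steps are formal consequences of the Quillen equivalence \cref{nerve2cat} and the adjunction $p_2^*\dashv e_{\bf 0}$.
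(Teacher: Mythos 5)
Your overall plan is sound, but there is a genuine gap in the treatment of the first weak equivalence of \cref{simpltosegalmap}, and it is precisely the subtlety the paper goes out of its way to address.

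You write that the first equivalence "is then immediate" because $\underline{\rm Adj}$ is cofibrant, ${\sf C}$ is fibrant, and ${\cal N}$ is a right Quillen equivalence. That reasoning would be valid if $\sset_{J}\text{-}\Cat$ were a \emph{simplicial} model category in the sense of Quillen. It is not: the Joyal model structure is not simplicial, and neither is its transferred model structure on $\sset_J$-enriched categories. Consequently, the simplicial set $\map(\underline{\rm Adj},{\sf C})$ obtained from the naive enrichment is a quasi-category, not a Kan complex, and it does not automatically compute the derived mapping space (the homotopy function complex). Before the Quillen-equivalence argument can be applied, one must know that the left-hand side of \cref{simpltosegalmap} is already a correct model for the derived mapping space. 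The paper supplies this missing ingredient via Riehl--Verity \cite[Lemma 4.4.6]{RiehlVerity}, which shows that $\map(\underline{\rm Adj},{\sf C})$ is in fact a Kan complex; this uses special features of the simplicial computad $\underline{\rm Adj}$ and of the fibrant target ${\sf C}$, and it is not something that holds for arbitrary cofibrant simplicial categories in place of $\underline{\rm Adj}$. Without this, your step "so the right Quillen equivalence ${\cal N}$ induces $\map(\underline{\rm Adj},{\sf C})\simeq\map({\cal N}(\underline{\rm Adj}),{\cal N}({\sf C}))$" is not justified.

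Two minor remarks on the rest. Your route to producing ${\sf C}$ (invoking the Quillen equivalence \cref{nerve2cat} directly, rather than passing through the Joyal--Tierney zeroth row as the paper does) is a legitimate alternative, granting the fibrancy of $\mathscr{D}$ that you verify at the start. For the second equivalence, you identify the right adjoint to $p_2^*$ with evaluation at ${\bf 0}^{d-k-1}$ correctly, but you then frame the equivalence as a Quillen-adjunction-induced weak equivalence; the paper observes it is simply a natural \emph{isomorphism} of simplicial sets coming from the $\boxtimes$ adjunction of \cref{tensoringmultisimp} together with $p_2^*\dashv e_{\bf 0}$, which is cleaner and does not require any fibrancy-preservation check on $e_{\bf 0}$.
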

\begin{proof}
Since $\mathscr{C}$ is assumed to be a local object, the bisimplicial space $\mathscr{D}$ is a 2-fold complete Segal space that satisfies the globular condition. 
It follows from Joyal--Tierney \cite[Theorem 4.11]{JoyalTierney} that $\mathscr{D}$ is equivalent (in the Barwick model structure) to the 2-fold simplicial set $\mathscr{D}^0$ given by taking its zeroth row.  
By globularity, $\mathscr{D}^0$ is equivalent to the nerve (see \cref{nerve2cat}) of the simplicial category ${\sf C}$.

One subtle point is that the model category $\sset_{J}\text{-}\Cat$ is not a simplicial model category because the Joyal model structure is not simplicial.
However, the derived mapping space can be computed by taking the simplicial subset on invertible morphisms in the simplicial enrichment. 
By Riehl–Verity \cite[Lemma 4.4.6]{RiehlVerity}, the simplicial set $\map(\underline{\rm Adj},{\sf C})$ is in fact a Kan complex, hence the mapping space on the left in \cref{simpltosegalmap} is already derived. 

With these observations, the first equivalence in \cref{simpltosegalmap} follows immediately from the fact that the nerve is a right Quillen equivalence (see \cref{nerve2cat}). Since $\mathscr{C}$ is assumed to be fibrant, the second mapping space in \cref{simpltosegalmap} is already derived. Hence the second equivalence is the canonical isomorphism induced by the adjunction described in the paragraph following  \cref{nerve2cat}.
\end{proof}

We also need to treat dual objects in a similar manner (morally corresponding to $k=0$ above),
except that we now have to use $Γ$ instead of the first factor~$Δ$ in~$Δ^{⨯d}$.

\begin{definition}[Localizing maps for the model structure with duals for objects]\label{dualobj}
We consider the following maps, which will encode duals for objects. 
\begin{enumerate} 
\item[(vii)]
Take the free symmetric monoidal category~$C$ with duals on a single object~$*$.
Segal's machine converts $C$ to a $Γ$-object in small categories.
We then further convert this $Γ$-object in small categories to a $Γ$-object in simplicial sets by taking the nerves,
promote it to a simplicial presheaf on $Γ⨯Δ$,
and then pull it back to a simplicial presheaf on $Γ⨯Δ^{⨯d}$.
Consider the subobject $f_⊗⊂\dual_⊗$ generated by the image of the object~$*$ inside~$\dual_⊗$.
The inclusion 
$$f_⊗→\dual_⊗$$ 
is the desired morphism.
\end{enumerate}
\end{definition}

\begin{definition}\label{smcatdual}
We define the model category $\smcatdual_{∞,d}$ as the left Bousfield localization of $\smcatglob_{∞,d}$ (\cref{smcatglob}) at the morphisms given by
externally tensoring (via $\boxtimes$) representables of the form $j(V,\langle \ell\rangle)$, with $V\in \cartsp$ and $\langle \ell\rangle\in \Gamma$, with the morphisms (vi) in \cref{dualmorph} (for arbitrary ${\bf m}$),
and externally tensoring representables of the form $j(V)$ with the morphisms (vii) in \cref{dualobj}.
A \emph{smooth symmetric monoidal $(∞,d)$-category with duals} is defined as a fibrant object in $\smcatdual_{∞,d}$.
\end{definition}

\begin{proposition}
Let $\mathscr{C}$ be a fibrant object in $\smcatdual_{∞,d}$.
Then for all $(V,\langle \ell\rangle)\in \cartsp\times \Gamma$, the $(\infty,d)$-category $\mathscr{C}(V,\langle \ell\rangle)$ admits duals for all $k$-morphisms, with $1\leq k<d$. 
Also, for all $V∈\cartsp$, the symmetric monoidal $(∞,d)$-category $\mathscr{C}(V)$ admits duals for objects.
\end{proposition}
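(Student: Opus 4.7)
My plan is to unfold fibrancy of $\mathscr{C}$ in $\smcatdual_{\infty,d}$ into a family of locality conditions (one for each localizing map in \cref{dualmorph} and \cref{dualobj}), translate each condition via the nerve–Segal space correspondence (\cref{simpltosegal}) into a statement about simplicial categories, and then invoke the universal property of $\underline{\rm Adj}$ of Riehl–Verity to read off the existence of adjoints in the homotopy 2-category. The smooth and symmetric monoidal structure enters only through the powering–partial evaluation adjunction $j(V,\langle\ell\rangle)\boxtimes(-)\dashv(-)(V,\langle\ell\rangle)$, which reduces the locality condition at a representable-tensored morphism to an equivalence of plain simplicial mapping spaces.

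For the $k$-morphism case, fix $1\le k<d$, $(V,\langle\ell\rangle)\in\cartsp\times\Gamma$, and ${\bf m}\in\Delta^{\times(k-1)}$. Fibrancy at the localized map $j(V,\langle\ell\rangle)\boxtimes f_{\bf m}\to j(V,\langle\ell\rangle)\boxtimes{\rm Adj}_{\bf m}$ of \cref{dualmorph} yields, after adjunction, a weak equivalence
$$\map({\rm Adj}_{\bf m},\mathscr{C}(V,\langle\ell\rangle))\overset{\simeq}{\to}\map(f_{\bf m},\mathscr{C}(V,\langle\ell\rangle)).$$
Applying \cref{simpltosegal} to the bisimplicial object $\mathscr{D}=\mathscr{C}(V,\langle\ell\rangle,{\bf m}\boxtimes(-,-)\boxtimes{\bf0}^{d-k-1})$ produces a simplicial category ${\sf C}$ with ${\cal N}({\sf C})\simeq\mathscr{D}$ for which the above corresponds to a weak equivalence $\map(\underline{\rm Adj},{\sf C})\overset{\simeq}{\to}\map(\underline{f},{\sf C})$. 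Because $\underline{f}$ is the simplicial subcomputad generated by a single atomic $0$-arrow, a simplicial functor $\underline{f}\to{\sf C}$ is nothing more than a choice of a $1$-morphism in ${\sf C}$. Surjectivity on $\pi_0$ then says every $1$-morphism extends to a simplicial functor $\underline{\rm Adj}\to{\sf C}$, which by Riehl–Verity \emph{is} a homotopy coherent adjunction in ${\sf C}$ and descends to an adjunction in $\Ho_2{\sf C}\simeq\Ho_2\mathscr{D}$. Since $\mathscr{D}$ is by construction the $(\infty,2)$-category whose adjoints witness adjoints for $k$-morphisms in $\mathscr{C}(V,\langle\ell\rangle)$, varying ${\bf m}\in\Delta^{\times(k-1)}$ concludes this half.

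For duals of objects, fibrancy at the localizing maps of \cref{dualobj} tensored with $j(V)$ yields an equivalence $\map(\dual_\otimes,\mathscr{C}(V))\overset{\simeq}{\to}\map(f_\otimes,\mathscr{C}(V))$ of simplicial sets. Unfolding the Segal construction, a map out of $f_\otimes$ is the choice of a vertex (i.e.\ an object) of the underlying $\Gamma$-space of $\mathscr{C}(V)$, while a map out of $\dual_\otimes$ equips such an object with a dual. Surjectivity on $\pi_0$ then descends to the homotopy symmetric monoidal $1$-category $\Ho\mathscr{C}(V)$ and shows every object has a dual, as required.

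The main obstacle is the bookkeeping in the $k$-morphism case: one must verify that the shift ${\bf m}\in\Delta^{\times(k-1)}$ used in \cref{dcatadjoints} together with the globularity condition in $\smcatglob_{\infty,d}$ really match the standard definition of \textit{adjoints for $k$-morphisms}, so that the adjunction-theoretic content of a lift $\underline{\rm Adj}\to{\sf C}$ is literally an adjoint to the chosen $k$-morphism in the intended hom $(\infty,2)$-category. Once this indexing is pinned down, the combination of \cref{simpltosegal} and Riehl–Verity does the rest of the work.
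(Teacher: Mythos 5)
Your proposal is correct and follows essentially the same route as the paper: unfold fibrancy at the localizing maps of \cref{dualmorph} and \cref{dualobj}, pass through \cref{simpltosegal} to a simplicial category ${\sf C}$, and invoke the Riehl--Verity theory of $\underline{\rm Adj}$. The only cosmetic difference is that the paper factors the restriction $\map(\underline{\rm Adj},{\sf C})\to\map(\underline{f},{\sf C})$ through the subspace ${\rm leftadj}({\sf C})$ via Riehl--Verity's Proposition 4.4.17 and applies 2-out-of-3, whereas you argue directly via surjectivity on $\pi_0$ (which suffices, since admitting a right adjoint is invariant under isomorphism in the homotopy 2-category).
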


\begin{proof}
We use the notation of \cref{simpltosegal}.
Let $\mathscr{C}$ be a fibrant object.
By \cref{simpltosegal}, $\mathscr{D}$ is equivalent to the nerve of a simplicial category ${\sf C}$ that is fibrant in $\sset_J\text{-}\Cat$. 

Let $\underline{f}\into \underline{\rm Adj}$ denote the inclusion of the simplicial computad generated by the left adjoint into $\underline{\rm Adj}$.
By Riehl–Verity \cite[Proposition 4.4.17]{RiehlVerity} there is an induced Kan fibration between Kan complexes 
$$\map(\underline{\rm Adj},{\sf C})\to {\rm leftadj}({\sf C})\subset \map(\underline{f},{\sf C})\simeq \mathscr{D}_{1,0},$$
where ${\rm leftadj}({\sf C})$ is the simplicial subset with 0-simplices given by those morphisms whose image in the homotopy 2-category admits a right adjoint,
1-simplices are isomorphisms, and for $n\geq 2$,
an $n$-simplex in $\map(\underline{f},{\sf C})$ is in ${\rm leftadj}({\sf C})$ precisely when its vertices and 1-simplices are in ${\rm leftadj}({\sf C})$.

Since we have assumed that $\mathscr{C}$ is local with respect to the maps in \cref{dualmorph},
\cref{simpltosegal} implies that we have a commutative diagram of weak equivalences of derived mapping spaces
$$\xymatrix{
\map(\underline{\rm Adj},{\sf C})\ar[r]_-{\simeq}^-{{\cal N}}\ar[d]_{\simeq} & \map({\cal N}(\underline{\rm Adj}),\mathscr{D})\ar[d]^-{\simeq}\ar[r]^-{\simeq} & \map({\rm Adj}_{\bf m},\mathscr{C}(V,\langle \ell\rangle))\ar[d]\ar[d]^-{\simeq}\\
{\rm leftadj}({\sf C})\ar[r]^-{\cal N} &  \map({\cal N}(\underline{f}),\mathscr{D})\ar[r]^-{\simeq} &\map(f_{\bf m},\mathscr{C}(V,\langle \ell\rangle)).\\
}$$
The top map is an equivalence since the mapping simplicial sets on both sides are derived and ${\cal N}$ is a Quillen equivalence.
By the 2-out-of-3 property, the bottom map is a weak equivalence. By definition of ${\rm leftadj}({\sf C})$, this implies that every morphism in the homotopy 2-category admits a left adjoint.
Since ${\bf m}\in \Delta^{\times (k-1)}$ was arbitrary, this proves that $\mathscr{C}(V,\langle \ell\rangle)$ has adjoints for $k$-morphisms. 

Duals for objects is essentially immediate from the fact that $\mathscr{C}$ is assumed to be a local object.
Indeed, since the model structure is simplicial and the map $f_⊗\into \dual_⊗$ in \cref{dualobj} an acyclic cofibration, the induced map $\map(\dual_⊗,\mathscr{C})\to \map(f_⊗,\mathscr{C})$ is an acyclic fibration.
In particular, it is surjective on vertices.
Unwinding the definitions, a map $f_⊗\to \mathscr{C}$ picks out an object $x\in \mathscr{C}(\langle 1\rangle)$.
A lift $\dual_⊗\to \mathscr{C}$ picks out an object $x^{\vee}$ together with a unit and counit map, which exhibit $x^\vee$ as the dual of~$x$ in the homotopy category. 
\end{proof}

In the following definition, we use \cref{power.cat}.

\begin{definition}
\label{unit.notation}
Let $d\geq 2$, $1\leq k<d$, ${\bf m}\in \Delta^{k-1}$, and $\mathscr{C}\in \smcatdual_{\infty,d}$ be a fibrant object.
Denote by $\unit_{\bf m}(\mathscr{C})\subset \Map(\eta_{\bf m},\mathscr{C})$ the subobject of the powering (see \cref{power.cat})
given by the essential image of the restriction map $\Map({\rm Adj}_{\bf m},\mathscr{C})→\Map(\eta_{\bf m},\mathscr{C})$.
\end{definition}

\section{The smooth $(\infty,d)$-category of bordisms with $d$-thin homotopies}

In this section, we review the construction of the $(\infty,d)$-category of bordisms, together with $d$-thin homotopies as higher morphisms.
For more details, we refer the reader to Grady–Pavlov \cite{GradyPavlov}.
To circumvent size issues, we will always assume the underlying set of a smooth manifold to be a subset of $\RR$.

\subsection{The smooth $(\infty,d)$-category of bordisms}

We begin with some elementary definitions, which are necessary to encode geometric structures.

\begin{definition}
Let $\cartsp$ be the category whose objects are open subsets $U\subset \RR^n$ that are diffeomorphic to $\RR^n$, for some $n$. Morphisms are smooth maps $f:U\to V$.
\end{definition}

\begin{definition}\label{fembdef}
Let $\FEmb_d$ be the site with objects submersions $p:M\to U$, with $d$-dimensional fibers and $U$ an object in $\cartsp$. 
Morphisms are smooth bundle maps $f:M\to N$ 
that restrict to open embeddings fiberwise.
Covering families are given by a collection of morphisms
$$\left\{\vcenter{
\xymatrix{
M_{\alpha}\ar[r]\ar[d] & M\ar[d]\\
U_{\alpha}\ar[r] & U\\
}}\right\}
$$ 
such that $\{M_{\alpha}\}$ is an open cover of $M$.
\end{definition}

\begin{definition}
\label{geometric.structure}
A {\it fiberwise $d$-dimensional geometric structure\/} is a simplicial presheaf on $\FEmb_d$.
\end{definition}

As explained in Grady–Pavlov \cite[\ecref{EL-examples.of.geometric.structures}]{GradyPavlov},
this notion of a geometric structure is extremely general
and can encode maps to an arbitrary target smooth manifold,
Riemannian and pseudo-Riemannian metrics (possibly with restrictions on sectional or Ricci curvature),
principal $G$-bundles with connections,
geometric spin and string structures,
as well as homotopical higher structures
used in physics, such as fivebrane structures (Sati–Schreiber–Stasheff \cite{SatiSchreiberStasheff.Fivebrane, SatiSchreiberStasheff.Twisted})
and ninebrane structures (Sati \cite{Sati}).

The basic objects in our bordism category are submersions $M\to U$ with $d$-dimensional fibers, equipped with ``cut manifolds'', which we recall below.
The geometric structures on our bordisms are given by morphisms from $M\to U$ to the moduli stack of geometric structures ${\cal S}$.
One can think that an object in our bordism category is a family of composable bordisms parametrized by a base space $U$, equipped with a family of geometric structures.

\begin{definition}\label{cut}
A \emph{cut} of an an object $p:M\to U$ in $\FEmb_d$ is a triple $(C_{<},C_=,C_>)$ of subsets of $M$ such that there is a smooth map $h:M\to \RR$ and the fiberwise-regular values of the map $(h,p)$ form an open neighborhood of $\{0\}\times U$ in $\RR \times U$. 
Moreover, $h^{-1}(-\infty,0)=C_{<}$, $h^{-1}(0)=C_=$ and $h^{-1}(0,\infty)=C_>$.
We set $$C_\le=C_<\cup C_=, \qquad C_{\ge}=C_>\cup C_=.$$
There is a functor ${\rm Cut}:(\FEmb_d)^\op\to \set$ that associates to an object $p:M\to U$ its set of cuts, and to a morphism the induced map of sets that takes preimages of cuts.
We equip the set of cuts with a natural ordering $\leq$, with $C\leq C'$ if and only if $C_{\leq}\subset C'_{\leq}$.
\end{definition}

\begin{definition}\label{cut.m.tuple}
Fix $d\geq 0$, a simplex $[m]\in \Delta$, and an object $p:M\to U$ in $\FEmb_d$. 
A \emph{cut} $[m]$-\emph{tuple} $C$ for $p:M\to U$ is a collection of cuts $C_j=(C_{<j},C_{=j},C_{>j})$ of $p:M\to U$ indexed by vertices $j\in [m]$ such that 
$$C_0\leq C_1\leq \cdots \leq C_m.$$
We set $$C_{(j,j')}=C_{>j}\cap C_{<j'}, \qquad C_{[j,j']}=C_{\ge j}\cap C_{\le j'}.$$
There is a functor ${\rm CutTuple}:\Delta^\op\times (\FEmb_d)^\op\to \set$ that associates to an object $([m],p:M\to U)$ the set of cut $[m]$-tuples of $p$. 
The functor associates to a morphism the induced map of sets that takes preimages of the cuts and reindexes them according to the map of simplices.
Thus, a face map removes a cut and a degeneracy map duplicates a cut.
\end{definition}

The following illustration depicts a surface that is cut in two directions.
The cuts on the surface are depicted as dashed curves.
There exists two height functions, which together map to $\RR^2$, depicted on the right.
The preimage of the dashed lines give the corresponding cuts on the surface.

\begin{center}
\begin{tikzpicture}[scale=.5]
\draw[rounded corners=25pt](0,1.38)--(1,0)--(0,-1.38);
\draw (0.3,2.4) arc (0:360:0.3 and 1);
\draw  (0.3,-2.4) arc (0:360:0.3 and 1);
\draw[dashed,rounded corners=10pt] (1.5,3.8)--(1.8,3)--(1.8,-3)--(1.5,-3.8);
\draw[dashed,rounded corners=10pt] (1.5,3.8)--(1.2,3)--(1.2,-3)--(1.5,-3.8);
\draw[dashed,rounded corners=5pt] (0.4,2.5)--(2,2.2)--(3,2.6)--(5,2.5)--(6.8,2.5);
\draw[dashed,rounded corners=5pt] (-.3,2.1)--(2,1.7)--(3,2.1)--(5,2)--(7.2,2);
\draw[dashed,rounded corners=5pt] (0.4,-2.5)--(2,-2.2)--(3,-2.6)--(5,-2.5)--(6.8,-2.5);
\draw[dashed,rounded corners=5pt] (-.3,-2.1)--(2,-1.7)--(3,-2.1)--(5,-2)--(7.2,-2);
\draw[dashed] (4.5,2.05) arc (0:360:0.2 and 1.6);
\draw[dashed] (4.5,-2.45) arc (0:360:0.2 and 1.2);
\draw[rounded corners=10pt](0,3.4)--(2,4)--(3,3)--(5,4)--(6,3)--(7,3.5);
\draw[rounded corners=10pt](0,-3.4)--(2,-4)--(3,-3)--(5,-4)--(6,-3)--(7,-3.5);
\draw (7.3,0) arc (0:360:0.3 and 3.48);
\draw (2.8,0.2) arc (190:315:1.7cm and 1.7cm);
\draw (5.5,-0.82) arc (-15:180:1.2cm and 1cm);

\draw (9,-4)--(16,-4);
\draw (16,-4)--(16,4);
\node at (11,-4) {$\bullet$};
\node at (13.5,-4) {$\bullet$};
\node at (16,-2.5) {$\bullet$};
\node at (16,2.5) {$\bullet$};
\draw[dashed] (11,-4)--(11,4);
\draw[dashed] (13.5,-4)--(13.5,4);
\draw[dashed] (11,-4)--(11,4);
\draw[dashed] (16,-2.5)--(9,-2.5);
\draw[dashed] (16,2.5)--(9,2.5);
\node at (11,-4.7) {$t_1^1$};
\node at (13.5,-4.7) {$t_0^1$};
\node at (16.7,-2.5) {$t_0^2$};
\node at (16.7,2.5) {$t_1^2$};
\end{tikzpicture}
\end{center}

\begin{remark}
We remark that our definition of cut tuples allows for more general cuts than the ones depicted above. Cuts in the same simplicial direction may overlap, however we do not allow cuts in the same simplicial direction to cross each other transversally, as this would violate \cref{cut.m.tuple}.
\end{remark}

We define the (non-globular, or $d$-uple) $(\infty,d)$-dimensional bordism category as follows. 

\begin{definition}[$d$-uple bordisms]\label{bord}
Given $d≥0$, we specify an object $\Bord_{d,\uple}$ in the category
$$\smcatuple_{\infty,d}=\PSh_\Delta(\cartsp\times \Gamma \times \Delta^{\times d} )_{\uple}$$
as follows.
For an object $(U,\langle \ell\rangle,{\bf m})\in\cartsp⨯Γ⨯\Delta^{\times d}$,
the simplicial set $\Bord_{d,\uple}(U,\langle \ell\rangle,{\bf m})$
is the nerve of the following groupoid.

\medskip

\noindent {\bf Objects:\/}
An object of the groupoid is a {\it bordism\/} given by the following data.
\begin{enumerate}
\item A $d$-dimensional smooth manifold $M$.
\item For each $1\leq i \leq d$, a cut $[m_i]$-tuple $C^i$ for the projection $p:M\times U\to U$.
\item A choice of map $P:M\times U\to \langle \ell\rangle$, which gives a partition of the set of connected components of $M\times U$ into $\ell$ disjoint subsets
and another subset corresponding to the basepoint (the “trash bin”),
\end{enumerate}
which satisfy the transversality property: 
\begin{enumerate}
\item[$\pitchfork$.] For every subset $S\subset \{1,\ldots,d\}$ and for any $j:S→\ZZ$ such that $0\leq j_i\leq k_i$ for all $i∈S$, there is a smooth map $h_S:M\times U\to \RR^S$ such that for any $i\in S$, the map $$\pi_i\circ h_S:M\times U\to  \RR,$$
where $\pi_i:\RR^S\to \RR$ is the $i$th projection, yields the $j_i$-th cut $C^i_{j_i}$ in the cut tuple $C^i$. 
We require that the fiberwise-regular values of $(h_S,p)$ form an open neighborhood of ${\bf m}\times U\subset \RR^S\times U$, where ${\bf m}\in \Delta^S$ is the multisimplex whose $i$-th component is $[m_i]$.
\end{enumerate}

To simplify notation, we define 
$$C_{[j,{j'}]}≔\bigcap_{i\in S} C_{[j_i,j'_i]}$$
and 
$$C_{(j,{j'})}≔\bigcap_{i\in S} C_{(j_i,j'_i)},$$
where $j,j':S→\ZZ$ and $0\leq j_i\leq j'_i\leq m_i$ for all $i∈S$.
We also set
$$
\core([j,j'],P)= C_{[j,j']} \setminus P^{-1}\{\ast\}, \eqlabel{core}
$$
and we require it to be compact, for all choices of $j,j'$. We will omit $P$ in the notation when it is clear from the context.

\medskip

\noindent{\bf Morphisms:\/}
A morphism of the groupoid is a {\it cut-respecting map\/} that restricts to a diffeomorphism on the germ of each cell in the mesh obtained from the cut tuples (see Grady–Pavlov \cite[\ecref{EL-bord}]{GradyPavlov} for more details).
\end{definition}

The presheaf structure maps in the above definition are obtained in the semi-obvious way.
For maps in~$\Delta$, one simply removes or duplicates a cut, according to a face or degeneracy map.
The structure maps for~$\Gamma$ repartition connected components, possibly throwing more components into oblivion.
For smooth maps, we precompose to get new cut tuples and a new partition of connected components, parametrized over a new base space. 

Now that we have defined the $d$-uple bordism category, we can introduce geometric structures on bordisms as follows. 

\begin{definition}[$d$-uple bordisms with structure]\label{bordstr}
Fix $d≥0$ and ${\cal S}$ a simplicial presheaf on $\FEmb_d$. 
We specify an object $\Bord^{\cal S}_{d,\uple}$ in the category
$$\smcatuple_{\infty,d}=\PSh_\Delta(\cartsp\times \Gamma\times \Delta^{\times d})_{\uple}$$
by taking the diagonal of the nerve of the following presheaf of simplicial groupoids. 

\medskip
\noindent {\bf Objects:\/}
The simplicial set of objects is given by 
$${\rm Ob}≔\coprod_{(M,C,P)}{\cal S}_{\square}(M\times U),$$
where the coproduct ranges over the objects of \cref{bord} and the subscript $\square$ denotes the restriction to the germ of the core.
This groupoid is small because of our convention on manifolds (see Grady--Pavlov \cite[\ecref{EL-small}]{GradyPavlov}).
\medskip

\noindent {\bf Morphisms:\/} The simplicial set of morphisms is given by
$${\rm Mor}≔\coprod_{\varphi:(M,C,P)\to (\tilde M,\tilde C,\tilde P)}\mathcal{S}_{\square}(\tilde M\times \tilde U),$$
where the coproduct is taken over the morphisms in \cref{bord}.
The target map of the groupoid structure sends the component indexed by a germ $\varphi:(M,C,P)\to (\tilde M,\tilde{C},\tilde P)$ to itself by identity. 
The source map pulls back the structure by $\varphi$.
Composition is given by functoriality of ${\cal S}$. 
\end{definition}

The following example will be fundamental in proving the cobordism hypothesis. This is the analogue of the framed case in the topological setting. 

\begin{example}\label{geoframed}
Let us take a representable geometric structure of the form ${\cal S}=\RR^d\times U\in \FEmb_d$. 
A vertex in the simplicial set $\Bord^{\RR^d\times U}_{d}(V,\langle 1\rangle,{\bf m})$ is given by a bordism $(M,C,P)$ and a fiberwise open embeddings $M\times V\to \RR^d\times U$, restricted to the germ of the core. 
A 1-simplex is given by a cut-respecting diffeomorphism $\phi:M\to N$. In this case, the simplicial groupoid in \cref{bordstr} is discrete in both directions, 
i.e., discrete as a simplicial set and as a groupoid.

Alternatively, $\Bord^{\RR^d\times U}_{d}(V,\langle 1\rangle,{\bf m})$ is equivalent to the set whose elements are pairs $(f,N)$, where $f:V\to U$ is a smooth map and $N\subset \RR^d\times U$ is a subset such that the intersection of $N$ with each $\RR^d\times \{f(v)\}$ is an embedded submanifold $N_{v}\subset \RR^d$, varying smoothly with respect to $v\in V$.  
\end{example}

\begin{remark}\label{etalebordisms}
We observe that the codescent property for $\Bord_d^{\cal S}$ established in Grady--Pavlov \cite{GradyPavlov} implies that the geometric structure given by fiberwise open embeddings into $\RR^d$ can be replaced by fiberwise \'etale maps into $\RR^d$, which we denote by $(\RR^d\times U\to U)_{\text{\'et}}$.
Indeed, the latter is precisely the sheafification of the former.
Hence, codescent implies that the canonical map $(\RR^d\times U\to U)\to (\RR^d\times U\to U)_{\text{\'et}}$ induces an equivalence on bordism categories. 
\end{remark}

\subsection{Adding $d$-thin homotopies to bordisms}

In the main theorem, we will use the bordism category in which $d$-thin homotopies, and higher $d$-thin homotopies between these, are added as $k$-morphisms for $k>d$.
We recall the definition of $\BBord_d$ in Grady--Pavlov \cite[\ecref{EL-enrichedbordstr,EL-globbord}]{GradyPavlov}.

\begin{definition}[$d$-uple bordisms with isotopies]
\label{enrichedbordstr}
Fix $d≥0$ and ${\cal S}$ a simplicial presheaf on $\FEmb_d$. 
We specify an object $\BBord^{\cal S}_{d}$ in the category
$$\smcatuple_{\infty,d}=\PSh_\Delta(\cartsp⨯Γ⨯\Delta^{\times d})_{\uple}.$$
Fix $(U,\langle \ell\rangle,{\bf m})\in \cartsp\times \Gamma\times \Delta^{\times d}$.
We define $\Bord^{\cal S}_{d}((U,\langle \ell\rangle,{\bf m})$ by taking the diagonal of the nerve of the following simplicial groupoid.

\begin{enumerate}
\item[(GO)]
The simplicial set of objects is given by 
$${\rm Ob}≔\coprod_{(M,P)}{\cal S}_{\square}(M\times U\to U)\times\Cut_\square(M\times U\to U,P),$$
where the coproduct ranges over pairs $(M,P)$ as in \cref{bord}.
The simplicial set $\Cut(M\times U\to U,P)$
is a Kan complex that has as its $l$-simplices
$Δ^l$-indexed smooth collections~$C$ of cut tuples
such that for any $t\in Δ^l$
the triple $(M,C_t,P)$ is an object in $\Bord_{d,\uple}(U,\langle \ell\rangle,{\bf m})$,
and the subscript $\square$ denotes the germ around the union of cores of~$C_t$ inside~$M$, for all $t\in Δ^l$.
We identify two such $Δ^l$-indexed collections
if they have the same germ around the compact part of $Δ^l=\RR^{l+1}$ (given by $\{x∈\RR_{≥0}^{l+1}\mid \sum_i x_i=1\}$).
This simplicial set is a Kan complex because such germs of families can be glued along their common boundary.
\end{enumerate}

\begin{enumerate}
\item[(GM)] The simplicial set of morphisms is given by
$${\rm Mor}≔\coprod_{(\tilde M,\tilde P)}{\cal S}_{\square}(\tilde M\times U\to U)\times\DiffCut_\square(M\times U\to U,P),$$
where the coproduct is taken over the same pairs as for ${\rm Ob}$.
The simplicial set $\DiffCut(M\times U\to U,P)$
has as its $l$-simplices
an $l$-simplex of $\Cut(M\times U\to U,P)$
together with a germ of a diffeomorphism $M\to \tilde M$
taken around the union over all $t\in Δ^l$ of cores of~$C_t$ inside~$\tilde M$
(that is, the germ is taken for the codomain, not the domain) and that commutes with the maps $P$ and $\tilde P$.
The simplicial structure maps are given by the simplicial structure maps for $\Cut$
together with the appropriate restriction maps for germs of diffeomorphisms.
\end{enumerate}
The target map of the groupoid structure sends the component indexed by $(\tilde M,\tilde P)$ to itself
and discards the data of the germ of a diffeomorphism.
The source map pulls back the geometric structure and the cut tuples
via the given germ of a diffeomorphism $M\to \tilde M$,
landing in the component indexed by $(M,P)$.
Composition is given by functoriality of ${\cal S}$. 

The above simplicial sets assemble to form a presheaf on $\cartsp\times\Gamma\times\Delta^{\times d}$ in the semi-obvious way, using the presheaf structure of ${\cal S}$ on $\FEmb_d$ and restricting to the germ of the core as needed. 
\end{definition}

Deformations in the space of cut tuples can be interpreted as $d$-thin homotopies.
For example, if the geometric structure is given by smooth maps to a fixed smooth manifold,
then a 1-simplex in the space of bordisms yields a $d$-thin homotopy of smooth maps.

\begin{remark}
The presence of $d$-thin homotopies is crucial for the following reason. In order for the inductive argument in \cref{mainthm} to work, embedded cylinders must be homotopic to identities. Any homotopy deforming a $d$-dimensional cylinder to a degenerate cylinder through embeddings into $\RR^d$ necessarily has rank $\leq d$, hence is a $d$-thin homotopy. 

Thus, in the geometric framed case, the incorporation of $d$-thin homotopies is essential.
The codescent property of the $d$-thin bordism category (Grady--Pavlov \cite[\ecref{EL-mainthm}]{GradyPavlov}) implies that $d$-thin homotopies must be incorporated not only in the geometric framed case (i.e., the case of representable presheaves), but also in the case of general geometric structures.  
\end{remark}

To help elucidate the above definition, we conclude this section with a close examination of $\BBord_1^{\RR\times U\to U}$.

\begin{example}
\label{one.dimensional.bordisms}
The bordism category $\BBord_1^{\RR\times U\to U}$ has the following explicit description.

\paragraph{Objects:} Fix $V\in \cartsp$, $\langle 1\rangle\in \Gamma$, $[0]\in\Delta$, and $[l]\in \Delta$.
A simplex $x\in \BBord_1^{\RR\times U\to U}(V,\langle 1\rangle,[0])_{l}$ is given by a smooth $\Delta^l$-indexed family of cuts $C=(C_{<},C_{=},C_{>})$ on $M\times V\to V$, where $M\subset \RR$ is an open submanifold, along with a smooth map $f:V\to U$. The restriction of $C_=\subset M\times V$ to each fiber $v\in V$ is embedded in $\RR\times \{f(v)\}$, via the smooth map $f$ and the canonical inclusion $M\subset \RR$.  

\begin{figure}[ht]
\begin{center}
\tikzset{->-/.style={decoration={
  markings,
  mark=at position #1 with {\arrow[scale=1.5]{>}}},postaction={decorate}}}
\tikzset{-<-/.style={decoration={
  markings,
  mark=at position #1 with {\arrow[scale=-1.5]{>}}},postaction={decorate}}}
\begin{tikzpicture}
\draw [decorate,decoration={brace,amplitude=5pt, raise=4pt},yshift=0pt]
(-.6,.2) -- (.6,.2); 
\node at (0,.7) {germ};
\draw (-2,0) -- (2,0);
\node at (0,0) {$\bullet$};
\node at (0,-.4) {$C_{=}$};
\draw (0,-.2) -- (0,.2);
\draw (-.4,.2) arc (90:270:.2);
\draw (.4,.2) arc (90:-90:.2);
\node at (-1.5,-.4) {$C_{<}$};
\node at (1.5,-.4) {$C_{>}$};
\draw[->] (0,-.7)--(0,-1.7);
\draw[->-=.7] (-2,-2) -- (2,-2);
\node at (.2,-1) {$h$};
\node at (0,-2.4) {$0$};
\draw (0,-1.8) -- (0,-2.2);
\node at (1.5,-1.7) {$\RR$};
\node at (1.5,.3) {$M\subset \RR$};
\end{tikzpicture}
\end{center}
\caption{A point in the bordism category with $V=U=\ast$. 
The cut tuple is provided by a smooth map $h:\RR\to \RR$ and is defined by $C_<=h^{-1}(-\infty,0)$, $C_==h^{-1}(0)$ and $C_>=h^{-1}(0,\infty)$.
The core of this bordism is precisely the image of $C_=$.}
\end{figure}

We call points \emph{negative} if the direction of the cut tuple $C$ is opposite the standard orientation on $\RR$. We call points \emph{positive} if the direction coincides with the standard orientation. 
Signs are not allowed to change in families. Indeed, such a change of sign would imply that the cut tuple $C$ must switch $C_<$ and $C_>$ at some point in the base space. This is forbidden by our definition of a bordism.

\paragraph{Morphisms:}
A simplex $[x,y]\in \BBord_1^{\RR\times U\to U}(V,\langle 1\rangle,[1])_l$ is given by a $\Delta^l$-family of cut tuples on $M\times V\to V$, with $M\subset \RR$ an open submanifold, along with a smooth function $f:V\to U$.
The core of this $\Delta^l$-family of bordisms is the union of a $\Delta^l$-family of fiberwise embedded intervals in $\RR$.
Here there is more than just the sign.
We can also join a positive point to a negative point in $\BBord_1^{\RR\times U\to U}$ by an elbow as follows. 
Fix some small $\epsilon>0$ and parametrize the (fattened) right half of the unit circle in $\RR^2$ by $x(\theta)=\cos(\theta),y(\theta)=\sin(\theta)$ where $-(\pi/2+\epsilon)<\theta<(\pi/2+\epsilon)$. Let $\tilde M$ be the image of this parametrization. 
Let $h:\tilde M\to \RR$ be the projection onto the $x$-axis.
Then the two cuts $C_{=}^0=h^{-1}(0)$ and $C_{=}^1=h^{-1}(2)$ give a 1-dimensional bordism from two points to the empty manifold.
Now consider the fiberwise embedding 
$$e_+:\tilde M\times \Delta^l\times V\to \RR\times U, \qquad e_+((\cos(\theta),\sin(\theta)),t,v)=(\theta,f(v))$$
where $f:V\to U$.
This embedding reverses the direction of one half of the cut $C^0$
and hence defines a bordism from the disjoint union of two points with opposite signs to the empty manifold.
Working in a single fiber, we denote two points of opposite sign by $+$ and~$-$.
Altogether, \cref{1bordisms.framed} depicts elementary 1-simplices in the bordism category. 
\begin{figure}[ht]
\begin{center}
\tikzset{->-/.style={decoration={
  markings,
  mark=at position #1 with {\arrow[scale=1.2]{>}}},postaction={decorate}}}
  \tikzset{-<-/.style={decoration={
  markings,
  mark=at position #1 with {\arrow[scale=-1.2]{>}}},postaction={decorate}}}
\begin{tikzpicture}
\draw[->-=.45,->-=.65,thick] (0,1.4) -- (0,3.4);
\draw[-<-=.35,-<-=.6,thick] (5,1.4) -- (5,3.4);
\node at (0,2.4) {$\bullet$};
\node at (5,2.4) {$\bullet$};
\node at (1.5,3) {$i_+$};
\node at (6.5,3) {$i_-$};
\draw[->] (1,2.4) -- (2,2.4);
\draw[->] (6,2.4) -- (7,2.4);
\draw[->-=.7,thick] (3,1.4) -- (3,3.4);
\draw[->-=.7,thick] (8,1.4) -- (8,3.4);
\node at (3,2.4) {$\bullet$};
\node at (8,2.4) {$\bullet$};

\node at (3.3,3) {$\RR$};
\node at (8.3,3) {$\RR$};
\node at (3.3,0) {$\RR$};
\node at (8.3,0) {$\RR$};

\node at (.3,3) {$M$};
\node at (5.3,3) {$M$};
\node at (.3,-.5) {$M$};
\node at (5.7,-.5) {$M$};

\draw[->-=.1,-<-=.9,thick] (0,0) arc (90:-90:1);
\draw[-<-=.4,thick] (0,0) arc (90:110:1);
\draw[-<-=.4,thick] (0,-2) arc (-90:-110:1);
\draw[->-=.7,thick] (3,-2.4) -- (3,.4);
\node at (3,0) {$\bullet$};
\node at (3,-2) {$\bullet$};

\draw (6,0) arc (90:270:1);
\draw[-<-=.05,->-=.95,thick] (6,0) arc (90:270:1);
\draw[->-=.8,thick] (6,0) arc (90:70:1);
\draw[->-=.8,thick] (6,-2) arc (270:290:1);
\node at (6,0) {$\bullet$};
\node at (6,-2) {$\bullet$};
\node at (6.5,0) {$+$};
\node at (6.5,-2) {$-$};
\draw[->-=.7,thick] (8,-2.4) -- (8,.4);
\node at (8,0) {$\bullet$};
\node at (8,-2) {$\bullet$};
\draw[->] (1.5,-1) -- (2.5,-1);
\draw[->] (6.5,-1) -- (7.5,-1);
\node at (-.6,0) {$-$};
\node at (-.6,-2) {$+$};
\node at (0,0) {$\bullet$};
\node at (0,-2) {$\bullet$};

\node at (2,-.5) {$e_+$};
\node at (7,-.5) {$e_-$};
\end{tikzpicture}
\end{center}
\caption{The image of each displayed embedding (including the cuts) gives a 1-simplex in the bordism category. 
Two morphisms joining these dual objects are depicted on bottom.
The arrows in the codomain depict the orientation of $\RR$.
}
\label{1bordisms.framed}
\end{figure}

Observe that some of the bordisms in \cref{1bordisms.framed} cannot be composed strictly. For example, we cannot obtain a circle, since any bordism obtain by strict gluing is necessarily equipped with a local diffeomorphism to $\RR$. However, we do still obtain a circle in the fibrant replacement.
To see this, consider the proposed composition of bordisms in \cref{1bordisms.framed}.
\begin{equation}\label{d1bords}
\begin{tikzpicture}
\tikzset{->-/.style={decoration={
  markings,
  mark=at position #1 with {\arrow[scale=1.2]{>}}},postaction={decorate}}}
\tikzset{-<-/.style={decoration={
  markings,
  mark=at position #1 with {\arrow[scale=-1.2]{>}}},postaction={decorate}}}
\draw[->-=.1,-<-=.9,thick] (0,0) arc (90:-90:1);
\draw[-<-=.05,->-=.95,thick] (-2,0) arc (90:270:1);
\draw[->-=.95,->-=.15,thick] (-2,-2) .. controls (-1,-2) and (-1,0) ..(0,0);
\node[fill=white] at (-1,-1) {};
\draw[-<-=.85,-<-=.05,thick] (0,-2) .. controls (-1,-2) and (-1,0) ..(-2,0);

\node at (0,0) {$\bullet$};
\node at (0,-2) {$\bullet$};
\node at (-2,0) {$\bullet$};
\node at (-2,-2) {$\bullet$};
\node at (-2,.3) {$p$};

\draw[->-=.4, thick] (-4.5,-3)--(4.5,-3);

\node at (-4,-3) {$\bullet$};
\node at (-2,-3) {$\bullet$};
\node at (0,-3) {$\bullet$};
\node at (2,-3) {$\bullet$};
\node at (4,-3) {$\bullet$};

\node at (-4,-2.6) {$e_+(p)$};
\node at (4,-2.6) {$i_-(p)$};
\end{tikzpicture}
\end{equation}
The bottom line depicts the image of the embeddings corresponding to each bordism.
Observe that the embeddings $e_+$ and $i_-$ do not match on the germ of the point labeled $p$ in the diagram.
However, the embeddings differ only by a translation by some real number $a\in \RR$.
Then the homotopy given by postcomposing the restriction of $e_+$ to the germ at $p$ with the family of translations $f_t(x)=x+ta$ defines a homotopy in the space of embeddings connecting the restriction of $e_+$ to $i_-$, hence a 1-simplex in the corresponding simplicial set of bordisms, using \cref{c.formula}.
By the Segal condition, the above defines a bordism in the fibrant replacement. 

This argument also demonstrates that 1-bordisms given by intervals are invertible in $\BBord_1^{\RR\times U\to U}$.
(This argument continues to work verbatim for cylinders over higher dimensional bordisms, which is used in the proof of \cref{contractiblecylinders}.)

All bordisms in the geometric framed case for $d=1$ are given by composing the elementary bordisms in \cref{d1bords}, although some compositions do not appear until we fibrantly replace the bordism category.
\end{example}

\subsection{Thin homotopies for geometric structures}

In this section, we will define a new model structure on $\sPSh(\FEmb_d)$.
This model category has the property that fibrant objects are invariant under $d$-thin homotopies (see \cref{fembdelta}). 
This allows us to establish a connection with a more traditional formulation of the (topological) cobordism hypothesis,
which involves spaces equipped with an action of the $\infty$-group~$\O(d)$.

\begin{definition}\label{flcms}
Let $\sPSh(\FEmb_d)_\loc$ denote the left Bousfield localization of the injective model structure at Čech covers.
The {\it fiberwise locally constant model structure\/} $\sPSh(\FEmb_d)_\lconst$ is the left Bousfield localization of $\sPSh(\FEmb_d)_\loc$
at representable morphisms of the form $(\RR^d⨯U→\RR^d⨯U,\id_U:U→U)$,
i.e., a smooth $U$-indexed family of open embeddings $\RR^d→\RR^d$,
for any $U\in \cartsp$.
\end{definition}

\begin{definition}\label{fembcartdef}
Given $d≥0$, the site $\FEmbCart_d$
is the full subcategory of $\FEmb_d$ (\cref{fembdef})
on objects isomorphic to projections $\RR^d⨯U→U$,
with the Grothendieck topology of good open covers
(meaning open covers on total spaces such that any finite intersection is empty or isomorphic to an object of $\FEmbCart_d$).
\end{definition}

\begin{proposition}
\label{fembfembcart}
The restriction functor
$$\sPSh(\FEmb_d)_\loc→\sPSh(\FEmbCart_d)_\loc$$
is a right Quillen equivalence.
The restriction functor
$$\sPSh(\FEmb_d)_\lconst→\sPSh(\FEmbCart_d)_\lconst$$
is a right Quillen equivalence.
\end{proposition}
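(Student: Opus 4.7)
The plan is to argue that $\FEmbCart_d \hookrightarrow \FEmb_d$ is a dense subsite in the sense that every object of $\FEmb_d$ admits a good open covering by objects of $\FEmbCart_d$, and then invoke a standard comparison theorem for Čech-local model structures on simplicial presheaves along such inclusions.

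First I would verify the density claim: given any submersion $p : M \to U$ with $d$-dimensional fibers and $U \in \cartsp$, the submersion theorem produces, for every point $x \in M$, an open neighborhood of $p(x)$ of the form $U_\alpha \cong \RR^{\dim U}$ and an open neighborhood $V_\alpha \subset p^{-1}(U_\alpha)$ of $x$ diffeomorphic to $\RR^d \times U_\alpha$ over $U_\alpha$. Refining the resulting cover of $M$ to a good open cover (which exists on any smooth manifold), one obtains a covering family $\{V_\alpha \to M\}$ in $\FEmb_d$ whose members, together with all their finite intersections, lie in $\FEmbCart_d$. Thus $\FEmbCart_d$ is a dense subsite of $\FEmb_d$, and the Grothendieck topologies match.

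Next I would invoke the comparison result for Čech-local model structures: if $i : \mathscr{D} \hookrightarrow \mathscr{C}$ is a fully faithful, dense inclusion of sites, then restriction $i^* : \sPSh(\mathscr{C})_\loc \to \sPSh(\mathscr{D})_\loc$ is a right Quillen equivalence. Formally, $i^*$ is right adjoint to left Kan extension $i_!$, and on injective model structures $i^*$ is right Quillen since it preserves objectwise weak equivalences and fibrations; to upgrade to Čech-local model structures one checks that $i^*$ sends locally fibrant objects to locally fibrant objects (because descent with respect to good covers in $\FEmb_d$ restricts to descent with respect to good covers in $\FEmbCart_d$, using density) and that $i_!$ sends Čech hypercovers to weak equivalences. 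The unit and counit of $i_! \dashv i^*$ are then local weak equivalences: density implies that every representable of $\FEmb_d$ is, locally, a homotopy colimit of representables coming from $\FEmbCart_d$, giving the counit; the unit is essentially the identity by full faithfulness.

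For the second statement, I would check that the additional localizing morphisms defining $\sPSh(\FEmb_d)_\lconst$, namely the $U$-indexed families of open embeddings $\RR^d \to \RR^d$, already live in $\FEmbCart_d$. Hence the restriction functor maps these generating local equivalences into the corresponding generating local equivalences on $\FEmbCart_d$, and the Quillen equivalence on Čech-local model structures descends to a Quillen equivalence on the fiberwise locally constant model structures by the general behavior of left Bousfield localization under Quillen equivalences. The main obstacle I anticipate is the verification that $i^*$ preserves local fibrancy and that $i_!$ preserves Čech-local equivalences; this requires a careful matching of descent conditions along the two sites, but density combined with the fact that good covers in $\FEmb_d$ are generated (up to refinement) by good covers in $\FEmbCart_d$ makes this bookkeeping routine.
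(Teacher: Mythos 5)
Your proof takes essentially the same route as the paper's: the key observation is that $\FEmbCart_d \hookrightarrow \FEmb_d$ is a dense subsite to which a site-comparison theorem for Čech-local model structures applies (the paper invokes Hoyois, Lemma~C.3, whose three hypotheses are exactly full faithfulness, refinability of covers, and density), and the $\lconst$-localizing morphisms of \cref{flcms} are already representables of objects of $\FEmbCart_d$, so the Quillen equivalence descends to the further left Bousfield localization. One minor caveat worth flagging: a generic good open cover of the total space $M$ — in the usual sense of contractible finite intersections — does not automatically produce intersections that are \emph{fiberwise trivial over a cartesian base}, which is what membership in $\FEmbCart_d$ actually requires; one should instead construct the cover directly (for instance via fiberwise geodesically convex neighborhoods for a suitable Riemannian submersion metric on $p:M\to U$) so that the members and all finite intersections are honestly isomorphic to projections $\RR^d\times U'\to U'$.
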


\begin{proof}
For the first part, we apply the comparison lemma of Hoyois \cite[Lemma~C.3]{Hoyois}.
The conditions of the lemma are satisfied:
condition (a) is satisfied by definition of $\FEmbCart_d$,
condition (b) is satisfied because any open cover of an object in $\FEmbCart_d$
by objects in $\FEmb_d$ can be refined to an open cover by objects of $\FEmbCart_d$,
and condition (c) is satisfied because any object of $\FEmb_d$
admits an open cover by objects in $\FEmbCart_d$
such that any finite intersection is empty or belongs to $\FEmbCart_d$.
For the second part, it suffices to observe that the derived left adjoint functor
sends the localizing morphisms of \cref{flcms} to weak equivalences in $\sPSh(\FEmbCart_d)_\lconst$,
whereas the derived right adjoint functor preserves local objects (i.e., fiberwise homotopy locally constant presheaves).
\end{proof}

\begin{definition}
\label{fembdelta}
The site $\FEmb^\Delta_d$ has the same objects as $\FEmb_d$.
Given two objects $T→U$ and $T'→U'$,
the hom-object between them is a simplicial set whose $n$-simplices
are pairs of smooth maps $t:\Delta^n\times T\to T'$ and $u:U→U'$,
where for any $x\in\Delta^n$ the resulting map $t(x,-):T→T'$ together with $u:U→U'$ forms a morphism in $\FEmb_d$.
\end{definition}

\begin{definition}\label{fembcartdeltadef}
Given $d≥0$, the site $\FEmbCart^\Delta_d$
is the full subcategory of $\FEmb^\Delta_d$ (\cref{fembdelta})
on objects isomorphic to projections $\RR^d⨯U→U$,
with the Grothendieck topology of good open covers
(meaning open covers on total spaces such that any finite intersection is empty or isomorphic to an object of $\FEmbCart^\Delta_d$).
\end{definition}

\begin{proposition}
\label{fembdeltafembdeltacart}
The restriction functor
$$\sPSh(\FEmb^Δ_d)_\loc→\sPSh(\FEmbCart^Δ_d)_\loc$$
is a right Quillen equivalence.
\end{proposition}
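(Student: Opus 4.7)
The plan is to mirror the proof of \cref{fembfembcart}, using a simplicially enriched analogue of Hoyois's comparison lemma. I interpret $\sPSh(\FEmb^\Delta_d)$ as enriched simplicial presheaves, that is, simplicial functors $(\FEmb^\Delta_d)^\op \to \sset$, and similarly for $\FEmbCart^\Delta_d$; restriction along the inclusion $\iota \colon \FEmbCart^\Delta_d \hookrightarrow \FEmb^\Delta_d$ then has the expected enriched left adjoint given by left Kan extension.

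First I would verify that $\iota$ is fully faithful in the enriched sense, since both sides define the $n$-simplices of hom-simplicial-sets by the same formula as in \cref{fembdelta} (pairs $(t,u)$ of smooth maps), so no information is lost on the cartesian full subcategory. The Grothendieck topologies are determined at the level of objects and underlying strict morphisms, so they agree with those appearing in \cref{fembfembcart}. Consequently, the three hypotheses of Hoyois's comparison lemma transfer verbatim: (a) $\iota$ is fully faithful; (b) every good cover of a cartesian object by objects of $\FEmb^\Delta_d$ admits a refinement by objects of $\FEmbCart^\Delta_d$; and (c) every object of $\FEmb^\Delta_d$ admits a good cover by cartesian objects whose finite intersections are empty or cartesian.

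The principal obstacle is that Hoyois \cite[Lemma~C.3]{Hoyois} is formulated for ordinary (unenriched) sites. I would overcome this in one of two complementary ways. One option is to establish a direct enriched version of the comparison lemma, noting that the proof is formally identical: the relevant Čech-nerve and sheafification constructions are functorial in the enriched hom structure, and left Bousfield localization at \v{C}ech covers is generated by representables whose behaviour under $\iota^*$ is controlled by full faithfulness. The alternative route is to reduce to the strict case by observing that the hom-simplicial-sets of $\FEmb^\Delta_d$ are built levelwise from strict morphisms in $\FEmb_d$, so that the Čech-local model structures on both $\sPSh(\FEmb^\Delta_d)_\loc$ and $\sPSh(\FEmbCart^\Delta_d)_\loc$ are generated by representables and covers defined at the level of the underlying strict sites, and the Quillen equivalence of \cref{fembfembcart} lifts to the enriched setting upon checking that the derived right adjoint $\iota^*$ preserves Čech-local objects and that the derived left adjoint sends representables of $\FEmbCart^\Delta_d$ to the corresponding representables of $\FEmb^\Delta_d$ along $\iota$. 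In either approach, the hardest bookkeeping lies in verifying that the \v{C}ech-descent condition is preserved by restriction when covers are interpreted in the enriched sense, but this reduces to the fact that the covering sieves themselves only involve ordinary morphisms in $\FEmb_d$, so the argument from \cref{fembfembcart} applies unchanged.
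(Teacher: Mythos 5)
Your overall strategy---transport the comparison-lemma argument of \cref{fembfembcart} to the simplicial sites---is indeed the paper's approach, but the proposal spends most of its effort on a concern that is not the crux, and omits the one observation the paper identifies as doing the work. The paper's entire proof is that the argument of \cref{fembfembcart} carries over \emph{once we observe that homotopy pullbacks of open inclusions in $\FEmb^\Delta_d$ and $\FEmbCart^\Delta_d$ can be computed as intersections}. This is the point your write-up gestures at (``the hardest bookkeeping lies in verifying that the \v{C}ech-descent condition is preserved\ldots when covers are interpreted in the enriched sense'') but never actually names. In a simplicially enriched site, condition~(c) of the comparison lemma is a statement about finite intersections of members of a covering family, and a priori those intersections should be taken as \emph{homotopy} pullbacks in the simplicial category. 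What makes the argument go through is that for families of open inclusions the homotopy pullbacks in $\FEmb^\Delta_d$ and $\FEmbCart^\Delta_d$ coincide with the ordinary set-theoretic intersections, and therefore again lie in (or are covered by) $\FEmbCart^\Delta_d$. Once this is noted, conditions (a)--(c) really do transfer from \cref{fembfembcart}.

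Your worry that Hoyois's Lemma C.3 is ``formulated for ordinary (unenriched) sites'' is not the genuine obstruction; the comparison lemma as used here applies to simplicial sites, and the paper invokes it for $\FEmbCart^\Delta_d$ without any reformulation. Neither of your two proposed workarounds is incorrect, but both are heavier machinery than needed, and neither isolates the homotopy-pullback point. In particular, the inference ``the Grothendieck topologies are determined at the level of underlying strict morphisms, so the three hypotheses transfer verbatim'' is exactly where the gap sits: the covering families do agree on the nose, but you must still verify that the finite intersections in condition~(c), computed homotopically in the enriched category, are the ordinary ones. State and justify that fact and your argument becomes the paper's.
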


\begin{proof}
The proof of \cref{fembfembcart} continues to work
once we observe that homotopy pullbacks of open inclusions in $\FEmb^Δ_d$ and $\FEmbCart^Δ_d$
can be computed as intersections.
\end{proof}

\begin{definition}
\label{c.formula}
The adjunction
$$\xymatrix{ L: \sPSh(\FEmb_d)_\lconst \ar@<.1cm>[r] & \ar@<.1cm>[l]\sPSh(\FEmb^Δ_d)_\loc : ρ}$$
is defined as follows.
The right adjoint~$ρ$ is given by restriction along the functor $\FEmb_d→\FEmb^Δ_d$.
The left adjoint $L$ is a simplicial left adjoint functor
that sends the representable presheaf of $(T→U)∈\FEmb_d$
to the representable simplicial presheaf $L(T→U)$,
i.e., $L(T→U)(V→S)=\FEmb^Δ_d(V→S,T→U)$.
The adjunction
\begin{equation}\label{fembcartadj}
\xymatrix{ L: \sPSh(\FEmbCart_d)_\lconst \ar@<.1cm>[r] & \ar@<.1cm>[l]\sPSh(\FEmbCart^Δ_d)_\loc : ρ}
\end{equation}
is defined likewise.
\end{definition}

\begin{definition}
\label{fiberwiseshapefun}
Given $F∈\sPSh(\FEmb_d)_\loc$, we denote by $$X→\mathfrak{C}_d X$$
the derived unit of the adjunction in \cref{c.formula}.
Thus, $\mathfrak{C}_d=ρLQ$ is the composition of the cofibrant replacement functor~$Q$
on $\sPSh(\FEmb_d)_\loc$ with the functor $L$ and then~$ρ$.
\end{definition}

\begin{proposition}
\label{fembfembdelta}
The adjunction of \cref{c.formula} is a Quillen equivalence.
\end{proposition}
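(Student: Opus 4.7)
The plan is to reduce to the cartesian subsites and then use an explicit smooth affine contraction of fiberwise open embeddings.

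By \cref{fembfembcart} and \cref{fembdeltafembdeltacart}, restriction to the cartesian subsites is a Quillen equivalence in both the $\lconst$ and $\loc$ variants. At the level of right adjoints these restriction functors commute strictly with $\rho$ (both are restriction along fully faithful inclusions, and each simply evaluates a presheaf on an object $T\to U$), so the diagram of Quillen adjunctions commutes. Consequently, 2-out-of-3 for Quillen equivalences reduces the claim to the cartesian adjunction \cref{fembcartadj}.

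For the Quillen property on the cartesian subsites, I would show that $\rho$ preserves local fibrant objects. A $\loc$-fibrant $F\in\sPSh(\FEmbCart^\Delta_d)$ is injectively fibrant, Čech-local, and simplicially enriched, and $\rho F$ inherits Čech descent, so only fiberwise local constancy must be verified. Given a smooth $U$-family of fiberwise open embeddings $\varphi\colon\RR^d\times U\to\RR^d\times U$ over $U$ with fiberwise component $f(\cdot,u)\colon\RR^d\to\RR^d$, define a smooth $\Delta^1$-family by
\[
\psi_t(x,u)=\left(f(0,u)+\frac{f(tx,u)-f(0,u)}{t},\; u\right)\quad\text{for }t\in(0,1],
\]
extended at $t=0$ by the fiberwise affine map $\psi_0(x,u)=(f(0,u)+(D_1 f)(0,u)\cdot x,\,u)$. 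Taylor expansion gives smoothness across $t=0$, and each $\psi_t$ is a fiberwise open embedding, being an affine rescaling of the open embedding $x\mapsto f(tx,u)$. Thus $\psi$ is a $1$-simplex in $\FEmbCart^\Delta_d(\RR^d\times U\to U,\,\RR^d\times U\to U)$ connecting $\psi_1=\varphi$ to the fiberwise affine diffeomorphism $\psi_0$. Since $F$ is simplicially enriched, this yields a homotopy $F(\varphi)\simeq F(\psi_0)$; and because $\psi_0$ is invertible in $\FEmbCart^\Delta_d$, $F(\psi_0)$ is an isomorphism of simplicial sets. Hence $F(\varphi)$ is a weak equivalence.

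For the Quillen equivalence, observe first that $\rho$ automatically reflects weak equivalences between fibrant objects, since $\loc$-weak equivalences between fibrants are detected objectwise and the object sets of $\FEmbCart_d$ and $\FEmbCart^\Delta_d$ coincide. Combined with the previous paragraph, this shows that $\rho$ is homotopy fully faithful. It therefore remains to establish essential surjectivity on homotopy categories: every $\lconst$-fibrant presheaf $G$ on $\FEmbCart_d$ should be equivalent to $\rho F$ for some $\loc$-fibrant $F$ on $\FEmbCart^\Delta_d$. The idea is that the affine contraction organizes each smooth $\Delta^n$-family of fiberwise open embeddings into a family that differs from a constant one by fiberwise open endomorphisms of the target, and such endomorphisms are exactly the maps that $G$ sends to weak equivalences; hence $G$ extends canonically to a simplicially enriched presheaf on $\FEmbCart^\Delta_d$.

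The hard part will be the last step: constructing the enrichment $F$ of $G$ functorially and verifying that it is $\loc$-fibrant. I expect this to require an inductive construction over the cellular filtration of $\Delta^n$, with the affine contraction supplying the simplicial structure maps and the hypothesis on $G$ (Čech descent plus invariance under fiberwise open endomorphisms) ensuring both functoriality and that $F$ still satisfies descent on $\FEmbCart^\Delta_d$. Alternatively, one could check the derived unit $X\to \rho L X$ directly on representables $X=j(T\to U)$, where $(\rho L X)(V\to S)=\FEmbCart^\Delta_d(V\to S,T\to U)$ and the unit is the inclusion of $0$-simplices; this amounts to the same assembly problem viewed from the dual side.
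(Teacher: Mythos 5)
Your reduction to the cartesian subsites is the same move the paper makes: you correctly observe that the restriction functors commute (strictly) with $\rho$, so 2-out-of-3 for Quillen equivalences applied to the square from \cref{fiberwiseshape} reduces everything to \cref{fembcartadj}.

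Your argument for the Quillen property (that $\rho$ carries $\loc$-fibrant objects of $\sPSh(\FEmbCart^\Delta_d)$ to $\lconst$-local objects of $\sPSh(\FEmbCart_d)$) via the affine contraction
$\psi_t(x,u)=(f(0,u)+(f(tx,u)-f(0,u))/t,\,u)$
is correct and somewhat more explicit than what the paper does. The paper instead checks the Quillen adjunction directly (projective, then Čech-localize, then observe $\ldf L$ inverts the $\lconst$-localizing morphisms); your reformulation in terms of preservation of local objects is equivalent. Your contraction is smooth across $t=0$ by Hadamard's lemma and each $\psi_t$ is indeed a fiberwise open embedding, so this part works.

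Where the proposal has a genuine gap is exactly where you flag one: the essential surjectivity half of the Quillen equivalence. You correctly isolate the remaining problem — extending a $\lconst$-fibrant $G$ on $\FEmbCart_d$ to a simplicially enriched presheaf on $\FEmbCart^\Delta_d$ in a way that is functorial, compatible with simplicial face/degeneracy maps, and $\loc$-fibrant — but you stop at the point of sketching the strategy (\emph{"I expect this to require an inductive construction over the cellular filtration of $\Delta^n$..."}) without carrying it out. This is not a small technical lemma; it is the substance of the statement. The paper sidesteps this entirely by invoking Ayala--Francis \cite[Proposition~2.19]{AyalaFrancis} (noting that the argument for $\Emb_d$ carries over to the fibered setting), which is where the actual assembly of the enriched presheaf lives. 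Your affine contraction is the right ingredient — it is morally what makes the Ayala--Francis argument work — but the proposal as written does not constitute a proof of the equivalence. Either carry out the inductive construction you sketch (including the verification that the result satisfies descent and that the derived unit on representables is an equivalence), or, more economically, cite the Ayala--Francis result directly as the paper does.
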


\begin{proof}
First, the adjunctions are Quillen adjunctions because the inclusion $\FEmb_d→\FEmb^Δ_d$ (respectively $\FEmbCart_d→\FEmbCart^Δ_d$)
induces a Quillen adjunction between projective model structures.
This Quillen adjunction descends to Čech localizations
because the inclusion functor preserves covering families.
Finally, the left derived functor of~$L$ maps the localizing morphisms in \cref{flcms} to weak equivalences.

By the 2-out-of-3 property for Quillen equivalences,
it suffices to show that the adjunction \cref{fembcartadj}
is a Quillen equivalence. We conclude by invoking Ayala–Francis \cite[Proposition~2.19]{AyalaFrancis}.
The proof given there for $\Emb_d$ carries over verbatim to $\FEmb_d$.
\end{proof}

\begin{remark}
The functor 
$$\mathfrak{C}_d:\sPSh(\FEmb_d)_\loc\to \sPSh(\FEmb_d)_\loc$$
of \cref{fiberwiseshapefun}
performs the left Bousfield localization with respect to the morphisms that define the fiberwise locally constant model structure in \cref{flcms}.
Thus, for any $X\in\sPSh(\FEmb_d)_\loc$,
we have a localization morphism $X→\mathfrak{C}_d(X)$,
where $\mathfrak{C}_d(X)$ is local with respect to the maps in \cref{flcms}.
In model-categorical terms, $\mathfrak{C}_d$ is weakly equivalent to the fibrant replacement functor in $\sPSh(\FEmb_d)_\lconst$.
\end{remark}

We now relate the model category $\sPSh(\FEmb_d)_\lconst$ to the more familiar (but also more difficult in practice)
model category of presheaves on $\cartsp$ valued in simplicial sets equipped with an action of the orthogonal group $\O(d)$ (or rather its singular simplicial set).

\begin{proposition}
\label{fembcartgl}
The inclusion of simplicial categories
$$\cartsp\times\tdeloop(\Sing\GL(d))→\FEmbCart^Δ_d,\qquad U\mapsto (\RR^d\times U\to U)$$
induces a Quillen equivalence
$$\sPSh(\FEmbCart^Δ_d)_\loc→\PSh(\cartsp,\sset^{\Sing\GL(d)})_\loc,$$
where the right side denotes the projective model structure
on the category of simplicial sets equipped with an action of the simplicial group $\Sing\GL(d)$.
\end{proposition}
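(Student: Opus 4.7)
The plan is to exhibit the inclusion $i:\cartsp\times\tdeloop(\Sing\GL(d))\to \FEmbCart^\Delta_d$ as a Dwyer--Kan equivalence of simplicial categories, and then promote it to a Quillen equivalence of the Čech-localized presheaf categories. The right adjoint is restriction along~$i$, with left adjoint given by enriched left Kan extension; this is a Quillen adjunction on the projective model structures and descends to the Čech-local structures because $i$ sends good covers in $\cartsp$ to covering families in $\FEmbCart^\Delta_d$ via $\{U_\alpha\to U\}\mapsto\{\RR^d\times U_\alpha\to \RR^d\times U\}$, so restriction preserves descent-local objects.

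The core of the argument is verifying that $i$ is a Dwyer--Kan equivalence. Essential surjectivity is immediate since every object of $\FEmbCart^\Delta_d$ has the form $\RR^d\times U\to U$. For fully faithfulness on mapping spaces, an $n$-simplex of the hom from $\RR^d\times U$ to $\RR^d\times V$ in $\FEmbCart^\Delta_d$ consists of a pair $(t,u)$ with $u\in\cartsp(U,V)$ and $t:\Delta^n\times\RR^d\times U\to\RR^d$ smooth, such that $t(x,-,u_0):\RR^d\to\RR^d$ is an open embedding for every $(x,u_0)$. The hom space thus decomposes as a disjoint union over $u\in\cartsp(U,V)$ of the smooth singular complex of $U$-parametrized families of open self-embeddings of~$\RR^d$, and~$i$ identifies $\cartsp(U,V)\times\Sing\GL(d)$ with the subspace of $U$-constant linear embeddings. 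That this inclusion is a weak equivalence follows from the classical deformation retraction of $\Emb(\RR^d,\RR^d)$ onto $\GL(d)$ via the smooth isotopy $\phi_s(v)=\phi(sv)/s$, which extends continuously to $s=0$ by $\phi_0=D\phi(0)$, combined with the contractibility of~$U$, which makes the constant map $\Sing\GL(d)\to\map(U,\Sing\GL(d))$ a weak equivalence.

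To upgrade the resulting projective Quillen equivalence to a Quillen equivalence on Čech localizations, I would show that any good open cover of $\RR^d\times U$ by objects of $\FEmbCart^\Delta_d$ admits a refinement of the product form $\{\RR^d\times U_\alpha\to \RR^d\times U\}$ induced from a good open cover of~$U$. This uses the contractibility of the fibers~$\RR^d$: after refining the base, each fiberwise open embedding can be locally straightened to a product via the same scaling isotopy used above. The main obstacle is making this cofinality argument precise while preserving the good-cover condition on total spaces; I expect a partition-of-unity and tubular neighborhood construction to suffice. Once cofinality of product covers is in hand, the DK-equivalence of simplicial sites, together with matching classes of covering families, yields the Quillen equivalence by the standard simplicial comparison lemma.
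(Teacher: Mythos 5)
Your treatment of the Dwyer--Kan equivalence is correct and is exactly the route the paper takes: essential surjectivity is by construction, and homotopical full faithfulness is the smooth analogue of the Kister--Mazur theorem, for which your decomposition of the hom-spaces as a coproduct over $u\in\cartsp(U,V)$ and the ``zoom in'' isotopy $\phi_s(v)=\phi(sv)/s$ (after first translating so that $\phi(0)=0$, and checking smoothness in all parameters jointly) is the standard proof; combined with the smooth contractibility of $U$ this identifies the parametrized embedding space with $\Sing\GL(d)$. The paper's own proof consists of precisely this observation and says nothing further.

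The gap is in your descent to the Čech localizations. The claim that every good open cover of $\RR^d\times U$ by objects of $\FEmbCart^\Delta_d$ admits a refinement by a product cover $\{\RR^d\times U_\alpha\}$ induced from a cover of $U$ is false. Take $U=\RR^0$: the only product cover of $\RR^d\to\RR^0$ is the identity cover $\{\RR^d\}$, and $\RR^d$ does not factor through either member of the good cover of $\RR^d$ by two overlapping open half-spaces, so no product cover refines it. No amount of fiberwise straightening repairs this, because the members of a general good cover are not unions of full fibers. The correct way to match the two localizations is not cofinality of product covers but the implication ``descent for product covers $\Rightarrow$ descent for all good total-space covers'': transporting the Čech nerve of a good cover $\{V_\alpha\}$ of $\RR^d\times U\to U$ across the projective Quillen equivalence yields (after quotienting the free $\Sing\GL(d)$-action) the augmented simplicial presheaf $[n]\mapsto\coprod_{\alpha_0\cdots\alpha_n}j(U_{\alpha_0\cdots\alpha_n})\to j(U)$ on $\cartsp$, where $U_{\alpha_0\cdots\alpha_n}$ is the base of $V_{\alpha_0\cdots\alpha_n}$; over each point $u$ of $U$ the indexing simplicial set is the nerve of the induced good cover of the contractible fiber $\RR^d\times\{u\}$, hence contractible, so the map is a Čech-local equivalence on $\cartsp$. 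In other words, the fiberwise shape of $\RR^d$ being a point is what makes the two topologies generate the same localization; your cofinality step should be replaced by this argument.
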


\begin{proof}
The inclusion of simplicial categories
$$\cartsp\times\tdeloop(\Sing\GL(d))\to\FEmbCart^\Delta_d,\qquad U\mapsto (\RR^d\times U\to U)$$
is a Dwyer–Kan equivalence of simplicial categories.
Indeed, the inclusion functor is homotopically essentially surjective by construction.
It is homotopically fully faithful by the smooth analogue of the Kister–Mazur theorem.
\end{proof}

\begin{proposition}\label{fiberwiseshape}
We have a zigzag of right Quillen equivalences of model categories
$$\xymatrix{
\sPSh(\FEmb_d)_{\lconst} \ar[d]_q &\ar[l]_{\rho} \sPSh(\FEmb^\Delta_d)_\loc \ar[d]_q \ar[r]^{r} &\PSh(\cartsp,\sset^{\Sing\GL(d)})_\loc\cr
\sPSh(\FEmbCart_d)_{\lconst} &\ar[l]_{\rho} \sPSh(\FEmbCart^\Delta_d)_\loc \ar[ur]^{r}.\cr}$$
\end{proposition}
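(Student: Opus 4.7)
The plan is to recognize that every map in the stated zigzag is already established to be a right Quillen equivalence by a preceding result, so the proof amounts to identifying the correct citation for each edge of the diagram and confirming that the square commutes (so that the $r$ on the top row factors through the $r$ on the bottom row).

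First I would handle the two horizontal arrows labeled~$\rho$. The top arrow $\rho\colon\sPSh(\FEmb^\Delta_d)_\loc\to\sPSh(\FEmb_d)_\lconst$ is the right adjoint of the adjunction of \cref{c.formula}, shown to be a Quillen equivalence in \cref{fembfembdelta}. The bottom arrow $\rho\colon\sPSh(\FEmbCart^\Delta_d)_\loc\to\sPSh(\FEmbCart_d)_\lconst$ is the analogous restriction adjunction obtained by replacing $\FEmb_d$ with its full subcategory $\FEmbCart_d$; its Quillen equivalence is the cart version of \cref{fembfembdelta} that is already invoked in its proof (namely the appeal to Ayala--Francis \cite[Proposition~2.19]{AyalaFrancis}).

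Next I would turn to the two vertical arrows~$q$, both given by restriction along the inclusion of the full subcategory of cartesian objects. The left vertical $q\colon\sPSh(\FEmb_d)_\lconst\to\sPSh(\FEmbCart_d)_\lconst$ is a right Quillen equivalence by the second statement of \cref{fembfembcart}, while the right vertical $q\colon\sPSh(\FEmb^\Delta_d)_\loc\to\sPSh(\FEmbCart^\Delta_d)_\loc$ is a right Quillen equivalence by \cref{fembdeltafembdeltacart}. Commutativity of the square is immediate: the restriction functors $q$ and the restriction-along-inclusion functors $\rho$ are all defined by precomposing with an inclusion of sites, so they commute on the nose.

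Finally, for the arrow $r$, I would first take the bottom-row version $r\colon\sPSh(\FEmbCart^\Delta_d)_\loc\to\PSh(\cartsp,\sset^{\Sing\GL(d)})_\loc$, which is the right Quillen equivalence of \cref{fembcartgl} obtained from the Dwyer--Kan equivalence $\cartsp\times\tdeloop(\Sing\GL(d))\to\FEmbCart^\Delta_d$. The top-row arrow $r\colon\sPSh(\FEmb^\Delta_d)_\loc\to\PSh(\cartsp,\sset^{\Sing\GL(d)})_\loc$ is then simply defined as the composition of the right vertical $q$ with this bottom-row $r$; being the composition of two right Quillen equivalences, it is itself a right Quillen equivalence, which makes the triangle on the right commute by definition.

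The step I expect to require the most care is merely bookkeeping: ensuring that the localizations match up on both sides of each arrow, so that what is a Čech-local weak equivalence on one side and a fiberwise-locally-constant weak equivalence on the other are correctly compared. This is already addressed inside the proofs of \cref{fembfembcart,fembdeltafembdeltacart,fembfembdelta}, where it is verified that the derived left adjoint sends the additional localizing morphisms of \cref{flcms} to weak equivalences and the derived right adjoint preserves local objects; so the present proposition has no genuinely new content beyond assembling these ingredients.
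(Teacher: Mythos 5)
Your proof is correct and takes essentially the same approach as the paper, whose proof is the one‑liner ``Combine \cref{fembfembcart,fembdeltafembdeltacart,fembfembdelta,fembcartgl}.'' Your identification of each arrow with the corresponding preceding result, including factoring the top $r$ through the bottom one via the right vertical $q$, is exactly the intended assembly.
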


\begin{proof}
Combine \cref{fembfembcart,fembdeltafembdeltacart,fembfembdelta,fembcartgl}.
\end{proof}

In the proof of the main theorem, we will need the following transition functors between $\FEmb_{k}$ and $\FEmb_{k-1}$.

\begin{definition}
\label{transemb}
Let $k≥1$.
Consider the functor $$\iota_{k-1}:\FEmb_{k-1}\into \FEmb_{k}$$ defined on objects by $\iota_{k-1}(T→U)=T\times \RR→U$
and on morphisms by taking the product of an embedding with the identity $\id_\RR:\RR\to \RR$.
By left Kan extension, we obtain a corresponding adjunction $(\iota_{k-1})_{!}\dashv \iota^*_{k-1}$.
$$\xymatrix@C=1.5cm{
\sPSh(\FEmb_{k-1})\ar@<.1cm>[r]^{(\iota_{k-1})_!} & \ar@<.1cm>[l]^-{\iota_{k-1}^*}  \sPSh(\FEmb_{k}).
}$$
Since $(\iota_{k-1})_!$ sends covering families to covering families, the adjunction descends to the Čech-local model structure.
\end{definition}

\begin{lemma}
The adjunction \cref{transemb} descends to the fiberwise locally constant model structure of \cref{flcms}.
\end{lemma}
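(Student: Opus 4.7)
The plan is to invoke the universal property of left Bousfield localization. We already know from the preceding paragraph that the adjunction $(\iota_{k-1})_! \dashv \iota_{k-1}^*$ is a Quillen adjunction between the Čech-local model structures $\sPSh(\FEmb_{k-1})_\loc$ and $\sPSh(\FEmb_k)_\loc$. Since $\sPSh(\FEmb_d)_\lconst$ is obtained from $\sPSh(\FEmb_d)_\loc$ by further left Bousfield localization at the set of morphisms $S_d$ consisting of representable morphisms of the form $(\RR^d\times U\to\RR^d\times U,\id_U)$, by Hirschhorn's criterion it suffices to verify that the left-derived functor $\ldf(\iota_{k-1})_!$ sends every element of $S_{k-1}$ to a weak equivalence in $\sPSh(\FEmb_k)_\lconst$.

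First I would note that both source and target are equipped with the injective model structure underlying the Bousfield localizations, so every simplicial presheaf (in particular every representable) is cofibrant; hence the left derived functor $\ldf(\iota_{k-1})_!$ is computed on $S_{k-1}$ by applying $(\iota_{k-1})_!$ strictly. Next I would recall the standard fact that left Kan extension along any functor between small categories sends representable presheaves to representable presheaves along that functor: for every $(T\to U)\in\FEmb_{k-1}$ we have $(\iota_{k-1})_!(y(T\to U))= y(\iota_{k-1}(T\to U)) = y(T\times\RR\to U)$, where $y$ denotes the Yoneda embedding into the respective presheaf category.

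Now fix a localizing morphism in $S_{k-1}$: a representable morphism of the form $(\RR^{k-1}\times U\to \RR^{k-1}\times U,\id_U)$ coming from a smooth $U$-indexed family of open embeddings $\RR^{k-1}\to\RR^{k-1}$. Applying $(\iota_{k-1})_!$ and using the identification in the previous paragraph, this becomes the representable morphism $(\RR^{k-1}\times U\times\RR\to\RR^{k-1}\times U\times\RR,\id_U)$, which under the identification $\RR^{k-1}\times\RR=\RR^k$ is exactly the representable morphism $(\RR^k\times U\to\RR^k\times U,\id_U)$ corresponding to the $U$-indexed family of open embeddings $\RR^k\to\RR^k$ obtained by taking the product with $\id_\RR$. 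In particular, it belongs to $S_k$, hence is a weak equivalence in $\sPSh(\FEmb_k)_\lconst$.

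This verifies the criterion and establishes that $(\iota_{k-1})_!\dashv\iota_{k-1}^*$ descends to a Quillen adjunction $\sPSh(\FEmb_{k-1})_\lconst\rightleftarrows\sPSh(\FEmb_k)_\lconst$, as required. There is no real obstacle: the argument rests on the fact that the localization at fiberwise families of open embeddings is stable under forming products with $\RR$ in the fiber direction, which is precisely the operation implemented by $\iota_{k-1}$.
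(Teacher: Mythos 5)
Your proof is correct and follows essentially the same route as the paper: both invoke the universal property of left Bousfield localization and reduce to the observation that $(\iota_{k-1})_!$ acts on representables by taking the product with $\id_\RR$ in the fiber, so it carries the localizing morphisms of $\sPSh(\FEmb_{k-1})_\lconst$ to localizing morphisms of $\sPSh(\FEmb_k)_\lconst$. Your extra remarks about cofibrancy in the injective model structure and left Kan extensions preserving representables are correct elaborations of the same argument.
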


\begin{proof}
By the universal property of left Bousfield localizations, we need only show that the left Kan extension $(\iota_{k-1})_!$ sends morphisms whose domain and codomain are fiberwise diffeomorphic to some $\RR^d\times U\to U$ to morphisms of the same form. But this is clear, since for any morphism $f:(M\to U)\to (N\to V)\in \FEmb_d$, 
$$(\iota_{k-1})_!(f)=f\times \id_{\RR}:(M\times \RR\to U)\to(N\times \RR\to V).\qedhere$$
\end{proof}

The following lemma will be used in the proof of \cref{mainthm}.
It is analogous to the homotopy pushout diagram in Lurie \cite[Proof of Theorem 2.4.6, Part~3]{Lurie.TFT}, adapted to our setting.
\begin{lemma}
\label{inductive.reduction}
Fix $d≥2$ and $U\in\cartsp$.
The homotopy commutative diagram 
\begin{equation}
\xymatrix{
\ldf(\iota_{d-2})_{!}\iota_{d-2}^*(\RR^{d-1}\times U\to U) \ar[r]^-{\bar\epsilon}\ar[d]_{\epsilon} & \RR^{d-1}\times U\to U\ar[d]_{\bar e_d}\\
\RR^{d-1}\times U\to U\ar[r]_{e_d} & \iota_{d-1}^*(\RR^{d}\times U\to U)
}
\end{equation}
is a homotopy pushout diagram in $\sPSh(\FEmb_d)_\lconst$, where the model structure is defined in \cref{fiberwiseshape}.
The map $e_d$ is given by the inclusion $\RR^{d-1}\times\RR→\RR^d$.
The map $\bar e_d$ is given by the composition of~$e_d$ with the map $\RR^d→\RR^d$ that changes the sign of the $(d-1)$st and $d$th coordinates.
The map $\epsilon$ is adjoint to the identity map.
The map $\bar\epsilon$ is adjoint to the map that changes the sign of the $(d-1)$st coordinate.
\end{lemma}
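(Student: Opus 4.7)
The plan is to transport the square through the chain of Quillen equivalences in \cref{fiberwiseshape} to a diagram of $\cartsp$-presheaves of $\Sing\O(d-1)$-spaces, where it becomes the familiar hemisphere decomposition of $S^{d-1}$. Because left Quillen equivalences preserve homotopy pushouts, it will suffice to verify the translated statement.

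Under the equivalence $\sPSh(\FEmb_{d-1})_\lconst\simeq\PSh(\cartsp,\sset^{\Sing\O(d-1)})_\loc$, the smooth Kister–Mazur theorem used in the proof of \cref{fembcartgl} identifies the representable $\RR^{d-1}\times U\to U$ with the $\cartsp$-presheaf $V\mapsto\map(V,U)\times\Sing\O(d-1)$ carrying the free left $\Sing\O(d-1)$-action, and $\iota_{d-1}^*(\RR^d\times U\to U)$ with $V\mapsto\map(V,U)\times\Sing\O(d)$ with $\Sing\O(d-1)$ acting by left multiplication through the standard inclusion. I would also identify $\iota_{d-2}^*$ with restriction along $\O(d-2)\hookrightarrow\O(d-1)$ and $(\iota_{d-2})_!$ with the corresponding induction $X\mapsto\Sing\O(d-1)\times_{\Sing\O(d-2)}X$, so that the top-left corner becomes $V\mapsto\map(V,U)\times(\Sing\O(d-1)\times_{\Sing\O(d-2)}\Sing\O(d-1))$. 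Using the $\O(d-1)$-equivariant diffeomorphism $\O(d-1)\times_{\O(d-2)}\O(d-1)\simeq\O(d-1)\times S^{d-2}$ given by $[g_1,g_2]\mapsto(g_1g_2,g_2\O(d-2))$ (with $\O(d-1)$ acting on the first factor), the translated square becomes, suppressing the spectator $\map(V,U)$-factor,
\[
\xymatrix@C=40pt{
\O(d-1)\times S^{d-2}\ar[r]\ar[d] & \O(d-1)\ar[d]\\
\O(d-1)\ar[r] & \O(d),
}
\]
in which every vertex is a free left $\O(d-1)$-space. I would then check that the two arrows out of the top-left collapse $S^{d-2}$ in the two ways prescribed by the sign-change on the $(d-1)$st coordinate, while the two arrows into $\O(d)$ embed $\O(d-1)$ via the standard inclusion and via its composition with $\sigma=\operatorname{diag}(I_{d-2},-1,-1)$; passing to $\O(d-1)$-orbits, these two embeddings land at the north and south pole of $\O(d-1)\backslash\O(d)\simeq S^{d-1}$.

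Because every vertex is a free left $\O(d-1)$-space, the homotopy pushout can be computed by taking $\O(d-1)$-orbits and re-tensoring, reducing the question to the classical homotopy pushout
\[
\xymatrix{
S^{d-2}\ar[r]\ar[d] & \text{pt}\ar[d]\\
\text{pt}\ar[r] & S^{d-1}
}
\]
that presents $S^{d-1}$ as the gluing of two $(d-1)$-disks along their common $S^{d-2}$. The principal obstacle is the careful identification under \cref{fiberwiseshape} of the derived adjunction $(\iota_{d-2})_!\dashv\iota_{d-2}^*$ with induction–restriction along $\O(d-2)\hookrightarrow\O(d-1)$ (since the left Kan extension of a representable is generally not representable, one must pass to a cofibrant replacement first), and in matching the explicit sign-change descriptions of $e_d$, $\bar e_d$, $\epsilon$, $\bar\epsilon$ with the hemisphere inclusions up to an equivariant homotopy.
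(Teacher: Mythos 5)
Your translation through \cref{fiberwiseshape} matches the paper's, and the reduction to a square of $\O(d-1)$-equivariant spaces is also the same. Where you diverge is the endgame: the paper computes the double mapping cylinder $\O(d-1)\sqcup_L(L\times[0,1])\sqcup_L\O(d-1)$ explicitly (with $L=\O(d-1)\times_{\O(d-2)}\O(d-1)$ and middle map $(g,g',t)\mapsto g\gamma_t g'$) and then checks by hand that the induced map to $\O(d)$ is a homeomorphism, whereas you quotient out the free $\O(d-1)$-action to reduce everything to the non-equivariant hemisphere decomposition $\mathrm{pt}\sqcup_{S^{d-2}}\mathrm{pt}\simeq S^{d-1}$. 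Your route is more conceptual and bottoms out in a textbook fact rather than an ad hoc homeomorphism check; the paper's route is more self-contained and doesn't require invoking the principle that homotopy pushouts of free cofibrant $G$-spaces can be detected on quotients. Both strategies are valid, and your observation that this square is ``the hemisphere decomposition in disguise'' is a cleaner packaging of what the paper's homeomorphism is really saying. One bookkeeping slip to fix: your proposed identification $[g_1,g_2]\mapsto(g_1g_2,\,g_2\O(d-2))$ is not well defined on the balanced product, since replacing $(g_1,g_2)$ by $(g_1x,x^{-1}g_2)$ for $x\in\O(d-2)$ changes the left coset $g_2\O(d-2)$; you want the right coset $\O(d-2)g_2\in\O(d-2)\backslash\O(d-1)\cong S^{d-2}$ (or, dually, $g_1\O(d-2)$), which is invariant. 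With that correction the $\O(d-1)$-equivariance on the first factor goes through and the orbit reduction works as you describe.
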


\begin{proof}
\cref{fiberwiseshape} produces an equivalent homotopy commutative diagram in the model category $\PSh(\cartsp,\sset^{\Sing \GL(d)})$:
$$\xymatrix{
\ldf(\kappa_{d-2})_{!}\kappa_{d-2}^*(\Sing \GL(d-1)\times U\to U) \ar[r]^-{\bar\epsilon}\ar[d]_{\epsilon} & \Sing \GL(d-1)\times U\to U \ar[d]_{\bar e_d}\\
\Sing \GL(d-1)\times U\to U \ar[r]_{e_d} & \kappa_{d-1}^*(\Sing \GL(d)\times U\to U).
}$$
where $\kappa^*_{d-1}$ restricts a $\Sing \GL(d)$ action to a $\Sing \GL(d-1)$ action.
Similarly, $\kappa_{d-2}^*$ restricts from a $\Sing \GL(d-1)$ action to a $\Sing \GL(d-2)$ action.
The derived left Kan extension $(\kappa_{d-2})_!$ takes the homotopy fiber product with $\Sing \GL(d-1)$ over $\Sing \GL(d-2)$.
Using \cref{fiberwiseshape}, it suffices to show that this diagram is a homotopy pushout.
The diagram is obtained by taking a product of $U$ with the corresponding diagram without $U$s.
Hence, we need only show that we have a homotopy pushout homotopy commutative diagram of $\Sing \GL(d-1)$-equivariant simplicial sets:
$$\xymatrix{
\Sing \GL(d-1) \times_{\Sing \GL(d-2)} \Sing \GL(d-1) \ar[r]^-{\bar p}\ar[d]_{p} & \Sing \GL(d-1) \ar[d]_{\bar e_d}\\
\Sing \GL(d-1) \ar[r]_{e_d} & \Sing \GL(d),
}$$
where
the map $p$ is induced by the multiplication in $\GL(d-1)$.
Here $\GL(d-1)$ acts on the left on all objects, including the top left corner: $α(g,g')=(αg,g')$.
Also $\Sing \GL(d-2)$ acts on $\Sing \GL(d-1) \times \Sing \GL(d-1)$ as $x(g,g')=(gx,x^{-1}g')$.
Using the Quillen equivalence between simplicial sets and topological spaces, along with the fact that the inclusion $\O(d)\into\GL(d)$ is a weak homotopy equivalence,
the above square is transformed into
$$\xymatrix{
\O(d-1) \times_{\O(d-2)} \O(d-1) \ar[r]^-{\bar p}\ar[d]_{p} & \O(d-1) \ar[d]_{\bar e_d}\\
\O(d-1) \ar[r]_{e_d} & \O(d).
}$$
Here the map $e_d$ is the inclusion that omits the $d$th basis vector, whereas $\bar e_d(g)=e_d(g)γ_1$,
where $\gamma_t\in\O(d)$ is given by $\gamma_t(f_d)=\cos(\pi t)f_d+\sin(\pi t)f_{d-1}$, $\gamma_t(f_{d-1})=-\sin(\pi t)f_d+\cos(\pi t)f_{d-1}$,
with $f_{d-1}$ and $f_d$ denoting the last two basis vectors of~$\RR^d$.
The map $p$ is given by $p(g,g')=gg'$, whereas $\bar p$ is given by $\bar p(g,g')=gδg'δ^{-1}$,
where $δ$ reverses the sign of the $(d-1)$st coordinate.
The bottom left composition is $(g,g')↦gg'$ and the top right composition is $(g,g')↦gγ_1g'$.
The homotopy between them is given by $(g,g',t)↦gγ_tg'$.

Denote by $L$ the space $\O(d-1) \times_{\O(d-2)} \O(d-1)$.
The induced map from the homotopy pushout to the bottom right corner can be computed as
$$\O(d-1)\sqcup_L (L\times[0,1])\sqcup_L \O(d-1)\to\O(d),\eqlabel{luriehomeo}$$
where the two outer factors are embedded using the inclusion $e_d:\O(d-1)\to\O(d)$ and the inclusion $\bar e_d:\O(d-1)\to\O(d)$.
The left attaching map is $p:L\to\O(d-1)$.
The right attaching map is $\bar p:L\to\O(d-1)$.
The middle map is given by $(g,g',t)\mapsto g\gamma_t g'$.
By inspection, the map \cref{luriehomeo} is a homeomorphism.
\end{proof}

\section{The geometric cobordism hypothesis}\label{codescent}

We are now ready to prove the main theorem. 
We identify some key steps of the proof.
\begin{itemize}
\item Using the codescent property of the functor $\BBord_d$ (Grady–Pavlov \cite[\ecref{EL-mainthm}]{GradyPavlov}), in \cref{mainthm} we reduce the general statement for $\BBord_d^{\cal S}$
to the case of $\BBord_d^{\RR^d⨯U→U}$, which is much easier to work with.
\item We introduce a filtration $(B_k)_{-1≤k≤d}$ (\cref{index.filtration}) on the bordism category $\BBord_d^{\RR^d⨯U→U}$,
where $B_k$ contains bordisms that admit a Morse function with critical points of index at most~$k$, while vanishing on the source cut in the $d$th direction.
\item We prove (\cref{bkpushout}) that the inclusion $B_{k-1}→B_k$ ($k≥0$) is a homotopy cobase change
of the inclusion $\tilde O_{k-1}→\tilde H_k$ of handles of index~$k-1$ into handles of index $k-1$ and~$k$ (\cref{handle,def.O}).
\item In \cref{handlecutout}, we cut off the “tails” (given by identity $d$-bordisms on some $(d-1)$-bordisms) from handles in $\tilde O_{k-1}$ and $\tilde H_k$,
which results in subcategories $O_{k-1}$ and $H_k$, whose bordisms can be interpreted as units and counits in adjunctions.
\item In \cref{handleequiv} we show that for $k≥1$ the inclusion $O_{k-1}→H_k$
is a homotopy cobase change of the inclusion of the free-walking adjunction unit into the free-walking adjunction, parametrized by a certain moduli stack.
\item These observations are combined in \cref{bkequivalence} to show that the inclusion $B_{k-1}→B_k$ is a weak equivalence in $\smcatdual_{∞,d}$ for $k≥2$.
For $k=1$, interpreting the 0-handle (a $d$-dimensional disk) in $B_0$ as a unit of an adjunction and freely adjoining the corresponding counit produces an object
weakly equivalent to~$B_1$.
\item In \cref{b0equivalence}, we prove that the inclusion $B_{-1}→B_0$ freely adds a 0-handle (given by a $d$-dimensional disk).
\item In \cref{contractiblecylinders}, we prove that $B_{-1}$ up to a weak equivalence only has identity bordisms in the $d$th direction,
and in \cref{dimensional.reduction} we show that the objects of $B_{-1}$
are weakly equivalent to a $(d-1)$-dimensional bordism category $\BBord_{d-1}$ with a certain geometric structure.
\item All of the above results are assembled together in \cref{mainthm} to prove that $\BBord_d^{\RR^d⨯U→U}$ is weakly equivalent (in $\smcatdual_{∞,d}$)
to the representable presheaf of~$U$ (interpreted as a $U$-family of 0-bordisms given by points), using induction on~$d≥0$.
\end{itemize}

\subsection{Proof of the geometric cobordism hypothesis}

Since bordisms and their cut tuples will be used frequently, we will often denote bordisms simply by $M$, leaving the cut tuples and map $P:M\to \langle \ell\rangle$ implicit. 
In what follows, we will always assume that $\mathscr{C}\in \smcatdual_{\infty,d}$ is a fibrant object. In particular, $\mathscr{C}$ has all duals. 
We begin by considering a canonical example of a point in the bordism category $\BBord_{d}^{\RR^d\times U\to U}$. This point will be used to define the evaluation functor at the level of field theories.  

\begin{example}\label{thepoint}
Fix $d\ge0$ and let $\{e_i\}_{i=1}^d$ be the standard orthonormal basis of $\RR^d$.
The following is a canonical example of an object, i.e., a point, in the fully extended bordism category. Consider the triple 
\begin{equation}\label{bord.point}
(\RR^d\times U,\{C^k:1\leq k \leq d\}, 1:\RR^d\times U\to \langle 1\rangle)\in \BBord_d^{\RR^d\times U\to U}(U,\langle 1\rangle,{\bf 0}),
\end{equation}
where the cut $C^k$ in the $k$-th direction is given by the hyperplane $C^k_{=}={\rm span}\{e_i:i\neq k\}$
and $C^k_{<}$ and $C^k_{>}$ are given by the halfspaces for which the $k$th coordinate is negative respectively positive.
The map $1:\RR^d\times U\to \{*,1\}=\langle 1\rangle$ sends every point to~$1$.
The geometric structure is given by the identity embedding $\RR^d\times U\to \RR^d\times U$.
Evaluation at the family of points as in \cref{bord.point} gives rise to corresponding evaluation functor 
$${\rm ev}:\left(\Funmon(\BBord_d^{\RR^d\times U\to U},\mathscr{C})\right)^{\times}
\to \left(\Funmon(j(U,\langle 1\rangle,{\bf 0}),\mathscr{C})\right)^{\times}
\to \Map(j(U,\langle 1\rangle),\mathscr{C}^\times)
\to \Map(U,\mathscr{C}^{\times}),$$
where $\mathscr{F}^{\times}$ denotes the core of a smooth symmetric monoidal (∞,d)-category~$\mathscr{F}$ (\cref{coreadj}),
$\Funmon(-,-)$ denotes the internal hom (\cref{ihom.cat}),
and $\Map(-,-)$ denotes the powering of $\sPSh(\cartsp\times \Gamma)$ over $\sPSh(\cartsp)$ (\cref{power.cat}).
In the Segal space formalism, $\mathscr{F}^{\times}$ is just the value of $\mathscr{F}$ on the multisimplex ${\bf 0}\in \Delta^{\times d}$, whenever $\mathscr{F}$ is fibrant.
\end{example} 

To prove the cobordism hypothesis, we will need to enhance $\mathscr{C}^{\times}$ to a simplicial presheaf on $\FEmb_d$.
This is analogous to the topological case, where $\mathscr{C}^\times$ is enhanced with an action of $\O(d)$ as a topological group.

\begin{definition}\label{extcx}
Let $\mathscr{C}\in \smcatdual_{∞,d}$ (\cref{smcatdual}) be a fibrant object
and denote by $\mathscr{E}^{\times}$ the invertible part (i.e., the core, see \cref{coreadj}) of a smooth symmetric monoidal $(∞,d)$-category~$\mathscr{E}$.
We define the presheaf 
$$\mathscr{C}_{d}^{\times}:\FEmb_d^\op\to \sPSh(\cartsp\times \Gamma),\qquad\mathscr{C}^{\times}_{d}(\RR^d\times U\to U)≔\Funmon(\BBord_d^{\RR^d\times U\to U},\mathscr{C})^\times.\eqlabel{extcxfun}$$
The right side is given by taking the core of the internal hom, which is an object in $\sPSh(\cartsp\times \Gamma\times \Delta^{\times d})$. 

Equivalently, we can regard $\mathscr{C}^⨯_d$ as a presheaf on $\FEmb_d^\op\times \cartsp\times \Gamma$, and then the notation $\mathscr{C}^{\times}_{d}(\RR^d\times U\to U)$ is the partial evaluation (\cref{partials}).
We observe that we can always forget structure, by evaluating on $\RR^0\in \cartsp$, or $\langle 1\rangle\in \Gamma$, when necessary.
\end{definition}

\begin{remark}
At first glance, \cref{extcx} may seem strange.
However, let us recall that in the topological cobordism hypothesis, $\mathscr{C}^{\times}$ acquires an action of $\O(d)$ through the framed cobordism hypothesis,
via the action of~$\O(d)$ on framings.
\cref{extcx} provides a parallel construction in the geometric setting:
through the geometric framed cobordism hypothesis, $\mathscr{C}^{\times}$ can be extended to a simplicial presheaf $\mathscr{C}_{d}^{\times}$ on $\FEmb_d$.
\end{remark}

\begin{remark}
\cref{mainthm.geometric} proves that the smooth symmetric monoidal $(∞,d)$-category $$\Funmon(\BBord_d^{\cal S},\mathscr{C})$$
is a smooth symmetric monoidal $∞$-groupoid for any geometric structure ${\cal S}$.
Thus, applying $(-)^\times$ on the right side of \cref{extcxfun} merely converts between two equivalent descriptions of $∞$-groupoids:
as homotopy constant presheaves on $\Delta^{\times d}$ and as simplicial sets.
\end{remark}

\def\ala{\mathfrak a}
\def\alg{A}

\begin{remark}
\label{refinement.deligne}
As a practical example for \cref{extcx}, consider an abelian Lie group~$\alg$ and some $d\ge0$.
The smooth symmetric monoidal $(\infty,d)$-category $\deloop^d\alg$
sends $(V,\langle\ell\rangle,{\bf m})$ to $(\tdeloop^d \sm(V,\alg))^\ell$,
equipped with the obvious structure maps.
The resulting object is an $(\infty,d)$-category with a single $k$-morphism for $0\le k<d$
and an abelian Lie group~$\alg$ of $d$-morphisms.
In a forthcoming work of Stolz, Teichner, and the second author \cite{PavlovStolzTeichner},
we compute $(\deloop^d\alg)^\times_d$ as the simplicial presheaf that sends $p:M\to V$
to the Dold–Kan functor applied to the fiberwise Deligne chain complex
$$\Omega_p^d(M,\ala)←\Omega_p^{d-1}(M,\ala)←⋯←\Omega_p^1(M,\ala)←\sm(M,\alg).$$
Thus, the space of $d$-dimensional functorial field theories with geometric structure~${\cal S}$ and target $\deloop^d\alg$ is equivalent to
the space of $A$-banded bundle $(d-1)$-gerbes with connection on the geometric structure~${\cal S}$.
\end{remark}

We are now ready to prove the main theorem.
Supporting lemmas and propositions will be proved subsequently.
We remark that the strategy of the proof is similar to Lurie \cite[Theorem 2.4.6]{Lurie.TFT}.
\cref{mainthm} is fully enriched and both sides of the equivalence are smooth symmetric monoidal $(\infty,d)$-categories. 
We remind the reader that the objects $\Funmon$ and $\Map$ denote the internal hom, respectively powering (\cref{power.cat}). 
For the statement that only encodes the categorical structure, one can simply evaluate at $\RR^0\in \cartsp$ and $\langle 1\rangle\in \Gamma$. 
Hence, for a simplified version, the reader is welcome to substitute $\Map(-,-)$ with  the $\infty$-groupoid of maps $\map(-,-)$ and to read $\Funmon(-,-)$ as the $(\infty,d)$-category of symmetric monoidal functors. 

Recall the notations $\mathscr{C}^⨯$ of \cref{coreadj}, ${\rm c}_S$ of \cref{evconst},
$\Funmon$ of \cref{ihom.cat}, $\Map$ of \cref{power.cat}.

\begin{theorem}[Geometric framed cobordism hypothesis]\label{mainthm}
Fix $d\geq 0$, $U\in\cartsp$, and a fibrant object $\mathscr{C}∈\smcatdual_{∞,d}$ (\cref{smcatdual}).
The smooth symmetric monoidal $(∞,d)$-category
$\Funmon(\BBord_d^{\RR^d\times U\to U},\mathscr{C})$ is a smooth symmetric monoidal $∞$-groupoid, i.e., the inclusion of the core yields a weak equivalence in $\smcatdual_{∞,d}$:
$${\rm c}_{\{1,\ldots,d\}}\left(\Funmon(\BBord_d^{\RR^d\times U\to U},\mathscr{C})^\times\right)\overset{\simeq}{\longrightarrow} \Funmon(\BBord_d^{\RR^d\times U\to U},\mathscr{C}).$$
Furthermore, evaluation at the point (see \cref{thepoint}) yields an equivalence
of smooth symmetric monoidal $∞$-groupoids
$${\rm ev}:\Funmon(\BBord^{\RR^d\times U\to U}_d,\mathscr{C})^{\times}\overset{\simeq}{\longrightarrow} \Map(U,\mathscr{C}^⨯),$$
where $\Map(-,-)$ denotes the powering of simplicial presheaves on $\cartsp⨯Γ$ over simplicial presheaves on $\cartsp$.
\end{theorem}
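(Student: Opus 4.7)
The plan is to proceed by induction on $d \geq 0$. The base case $d = 0$ reduces to noticing that $\BBord_0^{U \to U}$ is (up to the core and the $\Gamma$-structure) just the representable presheaf of $U$ labelled by the points of $\langle 1 \rangle$, so the statement is essentially tautological. For the inductive step $d \geq 1$, I would follow the program laid out in the introduction to \cref{codescent}: filter $\BBord_d^{\RR^d \times U \to U}$ by a sequence of subobjects $B_{-1} \subseteq B_0 \subseteq \cdots \subseteq B_d$ according to the maximal Morse index (relative to the $d$th direction) of the bordisms involved, and argue inductively that each inclusion induces a weak equivalence on $\Funmon(-, \mathscr{C})^\times$.

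For each step $k \geq 0$ I would use the cutting and gluing machinery of \cite{GradyPavlov} to exhibit $B_k$ as a homotopy pushout of $B_{k-1}$ along the inclusion $\tilde O_{k-1} \hookrightarrow \tilde H_k$ (handles of index $k-1$ inside handles of indices $k-1$ and $k$), and then trim the cylindrical tails to reduce to the inclusion $O_{k-1} \hookrightarrow H_k$ of elementary handle pieces. The key geometric observation is that, parametrized over the appropriate moduli stack of handle attachments, this latter inclusion can be identified with the inclusion of the free-walking adjunction unit $\eta_{\bf m} \hookrightarrow {\rm Adj}_{\bf m}$ from \cref{dcatadjoints} (tensored with suitable data encoding the smooth and $\Gamma$-enrichments). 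For $k \geq 2$ this immediately gives a weak equivalence after mapping into any fibrant $\mathscr{C} \in \smcatdual_{\infty,d}$ by the localizing property of \cref{dualmorph} and \cref{unit.notation}. The case $k = 1$ needs a separate argument: $B_0$ only contains the 0-handle viewed as a unit, and the cobase change to $B_1$ freely adjoins the counit plus triangle identity, again using duals in $\mathscr{C}$. Finally, \cref{b0equivalence} (to be established separately) says that $B_{-1} \hookrightarrow B_0$ freely adjoins a dualizable object corresponding to a $d$-disk, which is absorbed into the core of $\mathscr{C}$.

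To close the induction I would show (via \cref{contractiblecylinders} and \cref{dimensional.reduction}) that $B_{-1}$ has only identity $d$-morphisms up to $d$-thin homotopy, so that it reduces to a smooth $(\infty, d-1)$-category of the form $\BBord_{d-1}^{\mathcal{T}}$, where $\mathcal{T}$ is the geometric structure obtained by pushing $\RR^d \times U \to U$ down one dimension. Using \cref{inductive.reduction}, $\mathcal{T}$ sits in a homotopy pushout (in $\sPSh(\FEmb_{d-1})_\lconst$) of two copies of $\RR^{d-1} \times U \to U$ along $\ldf (\iota_{d-2})_! \iota_{d-2}^*(\RR^{d-1} \times U \to U)$. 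Applying the locality theorem (\cref{local}) turns this into a homotopy pullback of $\Funmon(\BBord_{d-1}^{-}, \mathscr{C})$, which by the inductive hypothesis becomes a homotopy pullback of $\Map(U, \mathscr{C}^\times)$ with itself over something equivalent to $\Map(U, \mathscr{C}^\times)$ (the two dualities and the thin-homotopy identification collapsing on an $\infty$-groupoid target), yielding $\Map(U, \mathscr{C}^\times)$. Naturality in the canonical point of \cref{thepoint} then identifies the equivalence with evaluation, and the groupoidal conclusion (the first equivalence in the statement) is automatic since every layer of the filtration already lies in the core.

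The main obstacle will be step two: honestly identifying the geometric moduli of index-$k$ handle attachments with the free-walking adjunction. This is the point where the Morse-theoretic input enters, and matching the geometric pushout square to the categorical one requires carefully lining up the parametrizing family of handles with the strictly undulating squiggle presentation of $\underline{\rm Adj}$, invoking \cite[Proposition 3.4.20]{Lurie.TFT} and \cref{simpltosegal} to convert between simplicial-category and Segal-space formalisms. A secondary technical annoyance is propagating the full smooth symmetric monoidal enrichment (the powering by $\sPSh(\cartsp)$ and the $\Gamma$-action) through every step of the filtration; this should however be automatic since every cobase change is taken in $\smcatdual_{\infty,d}$ and the evaluation map of \cref{thepoint} is built to respect these structures.
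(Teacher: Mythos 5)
Your proposal follows the same overall program the paper itself lays out (filtration by Morse index, handle attachments as adjunctions, dimensional reduction via \cref{contractiblecylinders} and \cref{dimensional.reduction}, and \cref{inductive.reduction} to move between dimensions). The base case and the cited supporting lemmas all match. However, there is a genuine gap in your final assembly step, which corresponds to the paper's Steps~2 and~3 and is where the real subtlety of the proof lives.

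You write that after applying \cref{inductive.reduction} and the inductive hypothesis you get ``a homotopy pullback of $\Map(U,\mathscr{C}^\times)$ with itself over something equivalent to $\Map(U,\mathscr{C}^\times)$ (the two dualities and the thin-homotopy identification collapsing on an $\infty$-groupoid target), yielding $\Map(U,\mathscr{C}^\times)$.'' This is not correct as stated, and it is precisely the step one cannot dispatch with formal nonsense about $\infty$-groupoid targets. The homotopy pullback produced by \cref{inductive.reduction} has as its middle term the mapping object $\Map_{\FEmb_{d-2}}\bigl(\iota_{d-2}^*(\RR^{d-1}\times U\to U),\iota_{d-2}^*\mathscr{D}_{d-1}^{\times}\bigr)$ --- a codimension-$2$ invariant --- and this is \emph{not} equivalent to $\Map(U,\mathscr{C}^\times)$. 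Moreover, the two maps $e_d$ and $\bar e_d$ into this middle term are genuinely different: one involves a sign flip in the $(d-1)$st and $d$th coordinates, so the pullback does not collapse by symmetry. What the paper actually does is re-expand each copy of $\Map_{\FEmb_{d-1}}(\RR^{d-1}\times U\to U,\mathscr{D}^\times_{d-1})$ via the codimension-$1$ reduction (what it calls \cref{uniteq}, involving $\Funmon(H_{0,d-1},\mathscr{D})_{\unitable}$), obtaining a triple fiber product $T_{\unitable}$. It then constructs auxiliary subcategories ${\cal H}_0\subset{\cal H}_1\subset\BBord_d^{\RR^d\times U\to U}$ consisting of compatible families of index-$0$ and dual handles across the top two codimensions, and argues that restriction from ${\cal H}_1$ to a single handle factor is an equivalence because the remaining cells are forced by the adjunction and unit conditions. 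This interaction between the top-dimensional unit condition and the codimension-$1$ unit condition (via the $\op$-reversal and the cells of ${\cal H}_0$, ${\cal H}_1$) is the core of the inductive step, and it requires the concrete geometry of the embedded handles --- not just abstract groupoidality. Your plan omits this entirely, and it is the hardest part of the proof.

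A few smaller gaps: the paper's base case $d=0$ is not tautological --- it invokes \cref{bkpushout} with $B_{-1}=\Ot_{-1}=\emptyset$, identifying $\Ht_0$ with the point of \cref{thepoint}; there is also a technical preliminary of delooping the bordism category three times (along $\Gamma\times\Delta^{\times 3}\to\Gamma$) so that the codimension-$2$ and codimension-$3$ arguments remain well-posed for small $d$, which you do not address. And the objects that play the role of the ``free-walking adjunction unit'' in \cref{handleequiv} are the subcomputads $\bar\eta\subset\overline{\rm Adj}$, not $\eta_{\bf m}\subset{\rm Adj}_{\bf m}$ directly; the exact boundary of which cells are included matters for the pushout identification.
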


\begin{proof}
We proceed by induction on the dimension $d$.
The main proof works with $k$-bordisms,
where $k=d$, $k=d-1$, $k=d-2$, or (in \cref{bkequivalence}) $k=d-3$.
In order for these constructions to work when $d<3$,
we deloop the bordism category $\BBord_d^{\RR^d\times U\to U}$ three times,
by pulling back along the canonical functor $Γ⨯Δ^{⨯3}→Γ$
given by the composition
$$Γ⨯Δ^{⨯3}→Γ^{⨯4}→Γ,$$
where the left functor uses the canonical functor $Δ→Γ$
and the right functor is the smash product of finite pointed sets.

For $d=0$, we invoke \cref{bkpushout}.
Since $\Ot_{-1}=B_{-1}=\emptyset$ for $d=0$,
we obtain a weak equivalence
$$\Funmon(\BBord_0^{U→U},\mathscr{C})→\Funmon(\Ht_0,\mathscr{C}),$$
where $\Ht_0$ is as in \cref{def.Ht}.
This map is weakly equivalent to the map from the statement
because $\Ht_0$ is weakly equivalent to the object defined in \cref{thepoint}.
This serves as the base of the induction.

Suppose the theorem is true for all dimensions strictly less than~$d$. Then \cref{mainthm.geometric} also holds in dimensions strictly less than $d$. 

\paragraph{Step 1 (moving from $B_1$ to codimension 1):}
Recall the functor 
$${\rm c}_{\{d\}}:\smcatdual_{\infty,d-1}\into \smcatdual_{\infty,d}$$
in \cref{evconstadj} that sends a $(d-1)$-fold simplicial object $\mathscr{C}$ to the $d$-fold simplicial object ${\rm c}_{\{d\}}(\mathscr{C})$ that is constant in the $d$th simplicial direction.
Recall also the functor $\iota_{d-1}: \FEmb_{d-1}→\FEmb_d$
(\cref{transemb})
and the Quillen adjunction $(\iota_{d-1})_!\dashv \iota_{d-1}^*$.
Combining \cref{contractiblecylinders,dimensional.reduction,bkequivalence,b0equivalence}, we have a weak equivalence
$$
\Funmon(B_1,\mathscr{C}) \simeq   \Funmon\left({\rm c}_{\{d\}}\left(\BBord_{d-1}^{\iota_{d-1}^*(\RR^d\times U\to U)}\right),\mathscr{C}\right)
\times_{\Funmon(O_{-1},\mathscr{C})}\Funmon(H_0,\mathscr{C})_\unitable. \eqlabel{uniteq}
$$

In the remainder of the proof, we will abbreviate ${\rm ev}_{\{d\}}\mathscr{C}$ as $\mathscr{D}$.
Using the induction hypothesis (with arbitrary geometric structure,  \cref{mainthm.geometric})
and the adjunction ${\rm c}_{\{d\}}\dashv {\rm ev}_{\{d\}}$, we see that the first factor on the right side of \eqref{uniteq} is equivalent to 
\begin{align*}
\Funmon\left({\rm c}_{\{d\}}\left(\BBord_{d-1}^{\iota_{d-1}^*(\RR^d\times U\to U)}\right),\mathscr{C}\right) &\simeq \Funmon\left(\BBord_{d-1}^{\iota_{d-1}^*(\RR^d\times U\to U)},{\rm ev}_{\{d\}}\mathscr{C}\right) 
\\
&\simeq \Map_{\FEmb_{d-1}}\left(\iota_{d-1}^*(\RR^d\times U\to U),\mathscr{D}_{d-1}^{\times}\right).
\end{align*}
Here we recall the notation $\Map_{\FEmb_{d-1}}(-,-)$, which is indeed an object of the correct type  (\cref{power.end.cat}). 
It follows that we have an equivalence
$$\Funmon(B_1,\mathscr{C})\simeq \Map_{\FEmb_{d-1}}\left(\iota_{d-1}^*(\RR^d\times U\to U),\mathscr{D}_{d-1}^{\times}\right)\times_{\Funmon(O_{-1},\mathscr{C})}\Funmon(H_0,\mathscr{C})_\unitable. \eqlabel{unitwithmap}
$$
\paragraph{Step 2 (moving to codimension 2):}
By \cref{inductive.reduction}, we have
a homotopy pushout diagram in $\sPSh(\FEmb_{d-1})_\lconst$, where the model structure is defined in \cref{fiberwiseshape}:
\begin{equation}\label{pushoutorth}
\xymatrix{
\ldf((\iota_{d-2})_{!})\iota_{d-2}^*(\RR^{d-1}\times U\to U )\ar[r]^-{\bar\epsilon}\ar[d]_{\epsilon} & \RR^{d-1}\times U\to U\ar[d]_{\bar e_{d}}
\\
\RR^{d-1}\times U\to U\ar[r]_{e_d} & \iota^*_{d-1}(\RR^{d}\times U\to U),
}
\end{equation}
where $\ldf((\iota_{d-2})_!)$ denotes the left derived functor of the left Kan extension $(\iota_{d-2})_!$.
The left vertical map is the derived counit of the adjunction $(\iota_{d-1})_!\dashv \iota_{d-1}^*$,
whereas the top horizontal map further changes the sign of the $(d-1)$st coordinate in the source and in the target.
The maps $e_{d}$ and $\bar e_{d}$ are the two natural transformations that include the hyperplane $\RR^{d-1}$ into $\RR^d$,
and change the sign of the $(d-1)$st and $d$th coordinate of~$\RR^d$ in the case of $\bar e_d$.
Taking derived powerings of $\mathscr{D}^\times_{d-1}$ by \eqref{pushoutorth} and using the Quillen adjunction $(\iota_{d-1})_{!}\dashv \iota^*_{d-1}$ gives a corresponding homotopy pullback diagram 
\begin{equation}
\xymatrix{
\Map_{\FEmb_{d-2}}(\iota_{d-2}^*(\RR^{d-1}\times U\to U ),\iota^*_{d-2}\mathscr{D}_{d-1}^{\times}) & \ar[l]_-{\bar{\epsilon}} \Map_{\FEmb_{d-1}}(\RR^{d-1}\times U\to U,\mathscr{D}^{\times}_{d-1})\\
\Map_{\FEmb_{d-1}}(\RR^{d-1}\times U\to U,\mathscr{D}^{\times}_{d-1})\ar[u]_{{\epsilon}}  &\ar[l]_{e_d}\ar[u]_{\bar{e}_{d}} \Map_{\FEmb_{d-1}}(\iota_{d-1}^*(\RR^{d}\times U\to U),\mathscr{D}_{d-1}^{\times}).
}
\end{equation}

Reversing the direction of the $(d-1)$st factor of~$Δ$
amounts to taking inverses in the $(d-1)$st direction,
yielding a weak equivalence $\mathscr{D}^⨯_{d-1}→(\mathscr{D}^\op)^⨯_{d-1}$.
Substituting this weak equivalence into the above square yields the following commutative diagram:
\begin{equation}\label{pullbackorth}
\xymatrix{
\Map_{\FEmb_{d-2}}(\iota_{d-2}^*(\RR^{d-1}\times U\to U ),\iota^*_{d-2}\mathscr{D}_{d-1}^{\times}) & \ar[l]_-{\bar{\epsilon}} \Map_{\FEmb_{d-1}}(\RR^{d-1}\times U\to U,(\mathscr{D}^\op)^{\times}_{d-1})\\
\Map_{\FEmb_{d-1}}(\RR^{d-1}\times U\to U,\mathscr{D}^{\times}_{d-1})\ar[u]_{{\epsilon}}  &\ar[l]_{e_d}\ar[u]_{\bar{e}_{d}} \Map_{\FEmb_{d-1}}(\iota_{d-1}^*(\RR^{d}\times U\to U),\mathscr{D}_{d-1}^{\times}).
}
\end{equation}

By definition of $\mathscr{D}^{\times}_{d-1}$, we have 
$$\Map_{\FEmb_{d-1}}(\RR^{d-1}\times U\to U,\mathscr{D}_{d-1}^{\times})\cong \mathscr{D}^{\times}_{d-1}(\RR^{d-1}\times U\to U)≔\Funmon(\BBord_{d-1}^{\RR^{d-1}\times U\to U},\mathscr{D}). \eqlabel{maptofun}$$
Observe that the right side is a smooth symmetric monoidal $\infty$-groupoid by the induction hypothesis.
To simplify notation, we set $\mathscr{E}={\rm ev}_{\{d-1\}}\mathscr{D}={\rm ev}_{\{d-1,d\}}\mathscr{C}$. Let $B_{k,d-1}$ denote the $k$-th layer of the filtration in \cref{index.filtration} in dimension $d-1$. By \cref{bkequivalence}, we have a weak equivalence 
$$B_{1,d-1}\overset{\simeq}{\into}\BBord_{d-1}^{\RR^{d-1}\times U\to U}.$$  
Let $H_{0,d-1}$ denote $H_0$ in dimension $d-1$.
Combining the last equivalence with \cref{uniteq} in dimension $d-1$, we have a weak equivalence
$$\Funmon(\BBord_{d-1}^{\RR^{d-1}\times U\to U},\mathscr{D})\simeq \Funmon(\BBord_{d-2}^{\iota_{d-2}^*(\RR^{d-1}\times U\to U)},\mathscr{E})\times_{\Funmon(O_{-1,d-1},\mathscr{D})}\Funmon(H_{0,d-1},\mathscr{D})_{\unitable}. \eqlabel{luriesthing}$$

Combining \cref{maptofun} and \cref{luriesthing}, we have a weak equivalence
$$\displaylines{\quad\Map_{\FEmb_{d-1}}(\RR^{d-1}\times U\to U,\mathscr{D}_{d-1}^{\times}) \hfill\cr
\hfill{}\simeq \Map_{\FEmb_{d-2}}(\iota^*_{d-2}(\RR^{d-1}\times U\to U),\mathscr{E}^{\times}_{d-2})\times_{\Funmon(O_{-1,d-1},\mathscr{D})}\Funmon(H_{0,d-1},\mathscr{D})_{\unitable}.\quad\cr}$$
Observe that under this equivalence, the map $\bar{e}_d$ reverses the direction of $(d-1)$-morphisms.
This amounts to reversing arrows in the $(d-1)$st direction for the target category, i.e., the ordering on simplices in the $(d-1)$st simplicial direction.
In order to make the notation compact, we set 
$$Q=\Map_{\FEmb_{d-2}}(\iota^*_{d-2}(\RR^{d-1}\times U\to U),\mathscr{E}^{\times}_{d-2})\times_{\Funmon(O_{-1,d-1},\mathscr{D})}\Funmon(H_{0,d-1},\mathscr{D})_{\unitable}$$
and 
$$Q^\op=\Map_{\FEmb_{d-2}}(\iota^*_{d-2}(\RR^{d-1}\times U\to U),\mathscr{E}^{\times}_{d-2})\times_{\Funmon(O_{-1,d-1},\mathscr{D^\op})}\Funmon(H_{0,d-1},\mathscr{D}^\op)_\unitable,\eqlabel{qop}$$
where the superscript $\op$ indicates that arrows are reversed in the $(d-1)$st direction.
Substituting $Q$ and $Q^\op$ into \cref{pullbackorth} and expanding one of the homotopy pullback squares, we obtain homotopy pullback squares
$$\xymatrix@C=0cm{ 
Q^\op\ar[d]^-{\bar\delta} & \ar[l]_-{\bar f_d} \Map_{\FEmb_{d-1}}(\iota_{d-1}^*(\RR^{d}\times U\to U ),\mathscr{D}_{d-1}^{\times})\ar[d]^-{f_d}\\
\Map_{\FEmb_{d-1}}(\iota^*_{d-2}(\RR^{d-1}\times U\to U),\mathscr{E}_{d-2}^{\times})\ar[d] &  Q \ar[l]_-{\delta}\ar[d]\\
\Funmon(O_{-1,d-1},\mathscr{D}) & \ar[l]\Funmon(H_{0,d-1},\mathscr{D})_{\unitable}.
}$$
The maps $f_d$ and $\bar f_d$ are induced by the maps $e_d$ and $\bar e_d$.
The maps $\delta$ and $\bar\delta$ are induced by the maps $\epsilon$ and $\bar\epsilon$ in a similar manner.
Hence, $$\Map_{\FEmb_{d-1}}(\iota^*_{d-1}(\RR^{d}\times U\to U),\mathscr{D}^{\times}_{d-1}) \simeq Q^\op \times_{\Funmon(O_{-1,d-1},\mathscr{D})}\Funmon(H_{0,d-1},\mathscr{D})_\unitable.$$
Plugging back into \eqref{unitwithmap}, we have an equivalence
$$\Funmon(B_1,\mathscr{C})\simeq  (Q^\op \times_{\Funmon(O_{-1,d-1},\mathscr{D})}\Funmon(H_{0,d-1},\mathscr{D})_\unitable)\times_{\Funmon(O_{-1},\mathscr{D})}\Funmon(H_{0},\mathscr{C})_\unitable. \eqlabel{b1trippull}$$

\paragraph{Step 3 (restriction to the left adjoint):} Referring back to \cref{qop}, we focus on the triple pullback in the right side of \cref{b1trippull}:
$$T_{\unitable}≔\left(\Funmon(H_{0,d-1},\mathscr{D}^\op)_\unitable\times_{\Funmon(O_{-1,d-1},\mathscr{D}^\op)}\Funmon(H_{0,d-1},\mathscr{D})_\unitable\right)\times_{\Funmon(O_{-1},\mathscr{D})}\Funmon(H_{0},\mathscr{C})_\unitable,$$
which we consider as an object in the slice category over $\Funmon(O_{-1,d-1},\mathscr{D})$. We also define 
$$T≔ \left(\Funmon(H_{0,d-1},\mathscr{D}^\op)\times_{\Funmon(O_{-1,d-1},\mathscr{D}^\op)}\Funmon(H_{0,d-1},\mathscr{D})\right)\times_{\Funmon(O_{-1},\mathscr{D})}\Funmon(H_{0},\mathscr{C})$$
as the same pullback, but without the unit conditions.

We rewrite
$$T=\left(\Funmon(H_{0,d-1}^\op,\mathscr{D})\times_{\Funmon(O_{-1,d-1},\mathscr{D})}\Funmon(H_{0,d-1},\mathscr{D})\right)\times_{\Funmon(O_{-1},\mathscr{D})}\Funmon(H_{0},\mathscr{C}),$$
using the fact that $O_{-1,d-1}$ only has identity $(d-1)$-morphisms.

Using the (derived) adjunction ${\rm c}_{\{d\}}\dashv {\rm ev}_{\{d\}}$
and the fact that ${\rm c}_{\{d\}}(H_{0,d-1})$ is a subobject of $\BBord_d^{\RR^d⨯U→U}$,
we have a weak equivalence of (derived) mapping objects 
$$\Funmon\left({\rm c}_{\{d\}}(H_{0,d-1}^\op⊔_{O_{-1,d-1}} H_{0,d-1})⊔_{O_{-1}}H_0,\mathscr{C}\right)→T. \eqlabel{pushouttoT}$$

We have a weak equivalence
$$H_{0,d-1}^\op→H_{d-1,d-1}$$
that changes the sign of the $(d-1)$st coordinate for the embedding into~$\RR^{d-1}$
and also exchanges the order of $C_{<0}$ and $C_{>0}$ in every cut.
Substituting $H_{d-1,d-1}$ instead of $H_{0,d-1}^\op$ in \cref{pushouttoT}
and taking into account that the maps
$$O_{-1,d-1}→H_{0,d-1}^\op,\qquad O_{-1}→\left({\rm c}_{\{d\}}(H_{0,d-1}^\op⊔_{O_{-1,d-1}} H_{0,d-1})⊔_{O_{-1}}H_0,\mathscr{C}\right)$$
both involve a change of the sign of the $(d-1)$st coordinate for the embedding into~$\RR^{d-1}$,
we have a weak equivalence of (derived) mapping objects
$$\Funmon\left({\rm c}_{\{d\}}(H_{d-1,d-1}⊔_{O_{-1,d-1}} H_{0,d-1})⊔_{O_{-1}}H_0,\mathscr{C}\right)→T, \eqlabel{pushouttoT2}$$
where no sign changes are present anymore.

Since homotopy pushouts commute with disjoint unions,
by inspection of individual connected components of $H_{0,d-1}$, $H_{d-1,d-1}$, $O_{-1,d-1}$, $O_{-1}$, and $H_0$,
we have a weak equivalence in $\smcatdual_{∞,d}$:
$${\rm c}_{\{d\}}(H_{d-1,d-1}⊔_{O_{-1,d-1}} H_{0,d-1})⊔_{O_{-1}}H_0→{\cal H}_0. \eqlabel{pushoutunitsd}$$
Here ${\cal H}_0$ denotes the subobject of $\BBord_d^{\RR^d⨯U→U}$
given by the union of $H_0$, $H_{0,d-1}$, and $H_{d-1,d-1}$.
Observe that ${\cal H}_0$ contains simplices that decompose the boundary of the $(d-1)$-disk, in addition to the $d$-dimensional disk and its boundary.

Thus, we have a weak equivalence of (derived) internal homs
$$\Funmon({\cal H}_0,\mathscr{C})→\Funmon\left({\rm c}_{\{d\}}(H_{d-1,d-1}⊔_{O_{-1,d-1}} H_{0,d-1})⊔_{O_{-1}}H_0,\mathscr{C}\right).\eqlabel{combinetopushout}$$
Let $\Funmon({\cal H}_0,\mathscr{C})_{\unitable}\subset \Funmon({\cal H}_0,\mathscr{C})$ denote the subobject of connected components defined by the property that the restriction to $H_{0,d-1}$, $H_{d-1,d-1}$, and $H_0$, via the canonical maps into the pushout \cref{pushoutunitsd}, factors through the corresponding $\unitable$ subobject. 
That is, the unit condition is satisfied for the handle of index 0 in the top dimension, as well as the index~0 handle and its dual handle (of index $d-1$) in dimension $d-1$. 
Combining \cref{combinetopushout} and \cref{pushouttoT} together and imposing the unit condition, we get the following weak equivalence
in the slice category over $\Funmon(O_{-1,d-1},\mathscr{D})$:
$$\Funmon({\cal H}_0,\mathscr{C})_{\unitable}→
T_{\unitable}.$$
Denote by ${\cal  H}_1$ the union of ${\cal  H}_0$ and~$H_1$.
The map
$$\Funmon({\cal H}_1,\mathscr{C})
→\Funmon({\cal H}_0,\mathscr{C})_{\unitable}$$
is a weak equivalence.
Indeed, the codimension~1 cells of ${\cal H}_0$ and ${\cal H}_1$ are the same,
whereas ${\cal  H}_1$ has an additional codimension~0 cell corresponding to the counit.
The presence of a counit forces the other morphisms to fit into an adjunction.
Furthermore, the canonical restriction map 
$$\Funmon({\cal H}_0,\mathscr{C})_{\unitable}\to \Funmon(H_{0,d-1},\mathscr{C})_{\unitable}$$
is a weak equivalence, since the only cells present in ${\cal H}_0$ that are not present in $H_0$ are sent to the right adjoint and unit of an adjunction in $\mathscr{C}$, by definition (see \cref{discwbound}).  
Combining these observations, we see that the projection of $T_{\unitable}$ to the second factor $\Funmon(H_{0,d-1},\mathscr{C})_{\unitable}$ is an equivalence. 

\begin{figure}[ht]
\centering
\begin{tikzpicture}
\node (a) at (0,0) {$\bullet$};
\node (b) at (1,-.5) {$\bullet$};

\draw (a.center) to[out=85,in=90,looseness=3] (b.center);
\draw (a.center) to[out=-90,in=-95,looseness=3] (b.center);
\draw (.4,.78) to[out=185,in=180,looseness=2.5] (.55,-1.27);

\node at (.8,1) {$f$};
\node at (.8,-1.4) {$g$};
\node at (-1.4,-.2) {$\emptyset$};
\node at (-.8,.8) {$\eta$};

\draw[dashed] (.5,-2) -- (.5,2);
\draw[dashed] (.5,-2) -- (.5,2);
\draw[dashed] (-1,.5) -- (2,-1);
\end{tikzpicture}
\caption{A disc with decomposed boundary, which is present in ${\cal H}_0$. The top half of the boundary $f$ is contained in $H_{0,d-1}$. The unit condition ensures that $\eta$ is the unit of an adjunction, with a left adjoint given by $f$ and right adjoint given by $g$. The embedding of $\eta$ is given by projecting onto the plane whose axes are the dashed lines. The embeddings of $f$ and $g$ are given by projecting onto the horizontal axis.} \label{discwbound}
\end{figure}
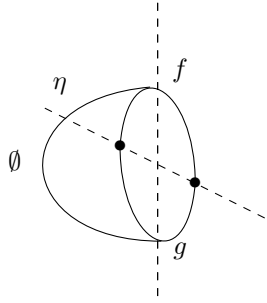

\paragraph{Step 4 (final assembly):} Finally, using \cref{contractiblecylinders,dimensional.reduction} once more in dimension $d-1$ and the induction hypothesis, it follows that we have equivalences 
\begin{align*}
\Funmon(B_1,\mathscr{C})&\simeq\Map_{\FEmb_{d-1}}(\iota^*_{d-2}(\RR^{d-1}\times U),\mathscr{E}^{\times}_{d-2})\times_{\Funmon(O_{-1,d-1},\mathscr{D})}\Funmon(H_{0,d-1},\mathscr{D})_{\unitable}\cr
&\simeq \Funmon(B_{1,d-1},\mathscr{D}).
\end{align*}
Using the equivalences $B_1\simeq \BBord_{d,}^{\RR^d\times U\to U}$ and $B_{1,d-1}\simeq \BBord_{d-1,}^{\RR^{d-1}\times U\to U}$, 
we get that 
$$
\Funmon\left(\BBord_{d}^{\RR^d\times U\to U},\mathscr{C}\right)\simeq \Funmon(B_1,\mathscr{C}), \quad \Funmon\left(\BBord_{d-1}^{\RR^{d-1}\times U\to U},{\rm ev}_{\{d\}}\mathscr{C}\right)\simeq \Funmon(B_{1,d-1},\mathscr{D}).
$$
Hence, the induction hypothesis implies 
$$
\Funmon\left(\BBord_{d}^{\RR^d\times U\to U},\mathscr{C}\right)\simeq \Funmon\left(\BBord^{\RR^{d-1}\times U\to U}_{d-1},{\rm ev}_{\{d\}}\mathscr{C}\right)\overset{\simeq}{\to} \Map(U,{\rm ev}_{\{d\}}\mathscr{C}^{\times})= \Map(U,\mathscr{C}^{\times}),
$$
which completes the induction.
\end{proof}

Now that $\mathscr{C}^{\times}$ has been promoted to a sheaf on $\FEmb_d$, via the framed cobordism hypothesis, we can prove the following general version of the geometric cobordism hypothesis.
Given the main theorem of Grady–Pavlov \cite{GradyPavlov}, the proof is rather trivial and just depends on a simply homotopy cocontinuity argument. 

Recall the notations $\mathscr{C}^⨯$ of \cref{coreadj}, ${\rm c}_S$ of \cref{evconst},
$\Funmon$ of \cref{ihom.cat}, $\Map$ of \cref{power.cat}.

\begin{theorem}[The geometric cobordism hypothesis]\label{mainthm.geometric}
Fix $d\geq 0$, a fibrant object $\mathscr{C}\in\smcatdual_{∞,d}$ (\cref{smcatdual}), and ${\cal S}\in \sPSh(\FEmb_d)$ (\cref{fembdef}).
Let $\mathscr{C}^{\times}_{d}$ be the (fibrant) simplicial presheaf in \cref{extcx}. 
The smooth symmetric monoidal $(∞,d)$-category
$\Funmon(\BBord_d^{\cal S},\mathscr{C})$ is a smooth symmetric monoidal $∞$-groupoid, i.e., the inclusion of the core yields a weak equivalence in $\smcatdual_{∞,d}$:
$${\rm c}_{\{1,\ldots,d\}}\left(\Funmon(\BBord_d^{\cal S},\mathscr{C})^\times\right)\overset{\simeq}{\longrightarrow} \Funmon(\BBord_d^{\cal S},\mathscr{C}).$$
Furthermore, we have a natural weak equivalence (in fact, an isomorphism)
$$\Funmon(\BBord_d^{\cal S},\mathscr{C})^\times\simeq \Map_{\FEmb_d}({\cal S},\mathscr{C}^{\times}_{d}),$$
where $\Map_{\FEmb_d}(-,-)$ denotes the mapping object from \cref{power.end.cat}. 
\end{theorem}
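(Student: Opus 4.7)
The plan is to reduce the statement to the geometric framed case (\cref{mainthm}) by a homotopy cocontinuity argument, using the locality theorem (\cref{local}) as the main engine. Every simplicial presheaf ${\cal S}\in\sPSh(\FEmb_d)$ can be written as a homotopy colimit of representables, and on representables of the form $\RR^d\times U\to U$ both sides of the claimed equivalence reduce to objects already identified in \cref{mainthm}.

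First I would establish the second (main) equivalence. Express ${\cal S}$ as a homotopy colimit $\hocolim_\alpha\, j(T_\alpha\to U_\alpha)$ of representables in the Čech-local model structure on $\sPSh(\FEmb_d)$ (the standard presentation of a presheaf as a colimit of its category of elements, suitably derived). By \cref{local}, the functor ${\cal S}\mapsto \Funmon(\BBord_d^{\cal S},\mathscr{C})$ is an $\infty$-sheaf, so it sends this homotopy colimit to the homotopy limit
\[
\Funmon(\BBord_d^{\cal S},\mathscr{C})\simeq \holim_\alpha\, \Funmon(\BBord_d^{T_\alpha\to U_\alpha},\mathscr{C}).
\]
By \cref{mainthm}, each term on the right is already a smooth symmetric monoidal $\infty$-groupoid. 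Homotopy limits of $\infty$-groupoids (i.e.\ of objects that are constant in the $\Delta^{\times d}$-directions up to weak equivalence) remain $\infty$-groupoids, and the core functor $(-)^\times$ commutes with such limits since it is a right Quillen functor (\cref{evconstadj}). This simultaneously proves the first assertion—that $\Funmon(\BBord_d^{\cal S},\mathscr{C})$ is a smooth symmetric monoidal $\infty$-groupoid—and shows
\[
\Funmon(\BBord_d^{\cal S},\mathscr{C})^\times\simeq \holim_\alpha\, \Funmon(\BBord_d^{T_\alpha\to U_\alpha},\mathscr{C})^\times.
\]

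On the other side, the derived mapping object $\Map_{\FEmb_d}(-,\mathscr{C}^\times_d)$ is a right Quillen bifunctor on the Čech-local model structure (\cref{power.end.cat}); in particular it converts homotopy colimits in its first argument into homotopy limits. Combined with the Yoneda-type identity
\[
\Map_{\FEmb_d}\!\bigl(j(T\to U),\mathscr{C}^\times_d\bigr)\;\cong\;\mathscr{C}^\times_d(T\to U)\;=\;\Funmon(\BBord_d^{T\to U},\mathscr{C})^\times,
\]
which is \emph{by definition} of $\mathscr{C}^\times_d$ (\cref{extcx}), we obtain
\[
\Map_{\FEmb_d}({\cal S},\mathscr{C}^\times_d)\simeq \holim_\alpha\, \Funmon(\BBord_d^{T_\alpha\to U_\alpha},\mathscr{C})^\times.
\]
Assembling the two displays gives the desired natural equivalence, and the compatibility of the two presentations with the structure maps in the colimit diagram makes the equivalence natural in~${\cal S}$.

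The main obstacle, as suggested by the remark preceding the statement, is not geometric but organizational: one must verify that the two homotopy limit presentations really are indexed by the same diagram and that the isomorphism on representables is natural in $\FEmb_d$, so that the identification passes to the limit. This amounts to checking that the unit map ${\cal S}\to \mathscr{C}^\times_d$ corresponding (by adjunction) to the canonical evaluation $\BBord_d^{\cal S}\to\BBord_d^{\cal S}$ induces the equivalence on representables from \cref{mainthm}; once this naturality is in hand, cocontinuity closes the argument. A minor subtlety is that the core functor must be derived in the multiple (not globular) model structure, which is legitimate here because the resulting presheaves are $\infty$-groupoids and the two derivations agree on such objects.
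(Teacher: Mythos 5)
Your argument is correct, and it is essentially the alternative proof the paper itself records in the remark immediately following the theorem: write ${\cal S}$ as a homotopy colimit of representables, use the locality theorem (\cref{local}) to convert this to a homotopy limit of $\Funmon(\BBord_d^{\RR^d\times U\to U},\mathscr{C})$, apply \cref{mainthm} termwise, and match this against $\Map_{\FEmb_d}(-,\mathscr{C}^\times_d)$, which sends the same homotopy colimit to the same homotopy limit and agrees on representables by the enriched Yoneda lemma. The paper's primary proof is a more economical packaging of the same content: both sides are simplicial right Quillen functors $\sPSh(\FEmb_d)^\op\to\sPSh(\cartsp\times\Gamma)$ (using that $\BBord_d$ is left Quillen and $\Funmon$, $\Map_{\FEmb_d}$ are right Quillen bifunctors, with all objects cofibrant in the local injective structure), and they are isomorphic on representables by the definition of $\mathscr{C}^\times_d$; this gives an actual natural isomorphism for all ${\cal S}$ without invoking a colimit decomposition, which is why the theorem can assert an isomorphism rather than merely a zigzag of weak equivalences. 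One small point to tighten in your version: \cref{mainthm} only applies to representables of the form $\RR^d\times U\to U$, so the colimit decomposition must be taken over the comma category of such objects mapping to ${\cal S}$ (as in the paper's remark); this is legitimate because $\FEmbCart_d$ is a dense sub-site of $\FEmb_d$ in the Čech-local model structure (\cref{fembfembcart}), but it should be said explicitly, since a general representable $T\to U$ is not covered by \cref{mainthm} directly.
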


\begin{proof}
Both sides compute derived mapping objects,
since $\mathscr{C}$ is fibrant by assumption,
$\mathscr{C}^⨯_d$ is fibrant by construction,
and all objects are cofibrant in the appropriate local injective model structure.
Since $\Funmon$ and $\Map_{\FEmb_d}$ are right Quillen bifunctors
and $\BBord_d$ is a left Quillen functor by Grady--Pavlov \cite[\ecref{EL-mainthm}]{GradyPavlov},
both sides define simplicial right Quillen functors
$$\sPSh(\FEmb_d)^\op→\sPSh(\cartsp⨯Γ),$$
and when ${\cal S}$ is a representable simplicial presheaf on~$\FEmb_d$,
both sides are isomorphic by definition of $\mathscr{C}^⨯_d$ and the enriched Yoneda lemma.
Hence both sides compute derived mapping objects and are isomorphic for any ${\cal S}$.
\end{proof}

\begin{remark}
Alternatively to the above proof of \cref{mainthm.geometric}, we have a chain of natural weak equivalences
\begin{align*}
\Funmon(\BBord_d^{\cal S},\mathscr{C})&\overset{\bf 1}{\simeq} 
\Funmon(\BBord_d^{\hocolim_{\RR^d\times U\to {\cal S}}(\RR^d\times U\to U)}\mathscr{C})\\
&\overset{\bf 2}{\simeq}  \holim_{\RR^d\times U\to {\cal S}}\Funmon(\BBord_d^{\RR^d\times U\to U},\mathscr{C})\\
&=:  \holim_{\RR^d\times U\to {\cal S}}\mathscr{C}_{d}^{\times}(\RR^d\times U\to U)\\
&\overset{\bf 3}{\simeq}  \Map_{\FEmb_{d}}({\cal S},\mathscr{C}_{d}^{\times}).
\end{align*}
The equivalence ${\bf 1}$ follows by writing ${\cal S}$ as a homotopy colimit over representables.
The equivalence ${\bf 2}$ follows from Grady--Pavlov \cite[\ecref{EL-mainthm}]{GradyPavlov}.
The equivalence ${\bf 3}$ follows for formal reasons.
Indeed, recall that the evaluation is the partial evaluation defined in \cref{partials}. Equivalently, 
$$\mathscr{C}^{\times}(\RR^d\times U\to U)\cong \Map_{\FEmb_{d}}(\RR^d\times U\to U,\mathscr{C}_d^{\times}).$$
Hence, the equivalence follows from homotopy continuity of the derived bifunctor $\Map_{\FEmb_d}(-,-)$. 
\end{remark}

As a special case of presheaves constant in the direction of $\cartsp$, we obtain a proof of the topological cobordism hypothesis.

\begin{theorem}[The topological cobordism hypothesis]
\label{top.cob.hyp}
Fix $d\geq 0$ and a symmetric monoidal $(∞,d)$-category with duals given by a fibrant object $\mathscr{C}\in\catdual_{∞,d}$.
Fix ${\cal S}\in \sPSh(\Emb_d)$, a simplicial presheaf on the site $\Emb_d$ of $d$-dimensional manifolds and open embeddings,
which equivalently can be specified (\cref{fiberwiseshape}) as a simplicial set with an action of the simplicial group $\Sing\O(d)\simeq\Sing\GL(d)$.
Let $\mathscr{C}^{\times}_{d}$ be the presheaf of $Γ$-spaces on $\Emb_d$, defined like in \cref{extcx}, but without involving the site $\cartsp$:
$$\mathscr{C}^{\times}_{d}(V)≔\Funmon(\BBord_d^V,\mathscr{C})^{\times},\qquad V∈\Emb_d.$$
Then, the inclusion
$${\rm c}_{\{1,\ldots,d\}}(\Funmon(\BBord_d^{\cal S},\mathscr{C})^{\times})→\Funmon(\BBord_d^{\cal S},\mathscr{C})$$
is a natural weak equivalence for all ${\cal S}∈\sPSh(\Emb_d)$.
We have a weak equivalence
$$\mathscr{C}^⨯_d(\RR^d)≃\mathscr{C}^⨯,$$ which is not natural in $\Emb_d$.
We also have a natural weak equivalence
$$\Funmon(\BBord_d^{{\cal S}},\mathscr{C})^⨯\simeq \Map_{\Emb_d}({\cal S},\mathscr{C}^{\times}_d),$$
where again $\Map_{\Emb_d}(-,-)$ denotes the mapping object from \cref{power.end.cat}.
\end{theorem}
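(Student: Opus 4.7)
The plan is to derive the topological cobordism hypothesis as a direct specialization of the geometric cobordism hypothesis (\cref{mainthm.geometric}). The basic idea is that topological structures are precisely geometric structures that are constant along the $\cartsp$ direction, and likewise a (non-smooth) symmetric monoidal $(\infty,d)$-category with duals can be promoted to a smooth one by taking it to be constant on $\cartsp$. Concretely, the forgetful functor $\FEmb_d \to \Emb_d$ sending $(V\times U \to U)$ to $V$ yields, by precomposition, a fully faithful embedding
$$\iota_\ast : \sPSh(\Emb_d) \hookrightarrow \sPSh(\FEmb_d)$$
whose image consists of simplicial presheaves constant in $\cartsp$, and analogously $\mathscr{C} \in \catdual_{\infty,d}$ extends to a fibrant $\hat{\mathscr{C}} \in \smcatdual_{\infty,d}$ constant on $\cartsp$.

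First I would verify that under these embeddings the bordism category $\BBord_d^{\iota_\ast {\cal S}}$ and the internal hom $\Funmon(\BBord_d^{\iota_\ast {\cal S}}, \hat{\mathscr{C}})$ are again constant in $\cartsp$; for the bordism category this reduces to unpacking \cref{enrichedbordstr}, since when ${\cal S}$ does not depend on the base the cuts, cores, and germs of bordisms do not see~$U$. Then \cref{mainthm.geometric} applied to $\iota_\ast {\cal S}$ and $\hat{\mathscr{C}}$ gives the weak equivalence
$${\rm c}_{\{1,\ldots,d\}}\bigl(\Funmon(\BBord_d^{\iota_\ast{\cal S}}, \hat{\mathscr{C}})^\times\bigr) \overset{\simeq}{\longrightarrow} \Funmon(\BBord_d^{\iota_\ast{\cal S}}, \hat{\mathscr{C}})$$
together with the identification $\Funmon(\BBord_d^{\iota_\ast{\cal S}}, \hat{\mathscr{C}})^\times \simeq \Map_{\FEmb_d}(\iota_\ast{\cal S}, \hat{\mathscr{C}}^\times_d)$. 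Evaluating at $\RR^0 \in \cartsp$ and using that $\hat{\mathscr{C}}^\times_d$ is itself constant in $\cartsp$ (since $\BBord_d^{\RR^d\times U \to U}$ is constant in $U$ when the target does not see $U$) would identify the right side with $\Map_{\Emb_d}({\cal S}, \mathscr{C}^\times_d)$.

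For the identification $\mathscr{C}^\times_d(\RR^d) \simeq \mathscr{C}^\times$, I would invoke the geometric framed cobordism hypothesis (\cref{mainthm}) with $U = \RR^0$, giving
$$\mathscr{C}^\times_d(\RR^d) \;=\; \Funmon(\BBord_d^{\RR^d}, \hat{\mathscr{C}})^\times \;\overset{\simeq}{\longrightarrow}\; \Map(\RR^0, \hat{\mathscr{C}}^\times) \;\cong\; \mathscr{C}^\times,$$
where the failure of naturality in $\Emb_d$ is exactly the observation recorded after \cref{refinement}: the equivalence depends on the chosen base point and is not compatible with the action of diffeomorphisms of~$\RR^d$.

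The main obstacle I anticipate is keeping track of the compatibility between the two sites and the various model structures: one must check that $\iota_\ast$ is a (right) Quillen functor between the Čech-local injective model structures (which follows from $\FEmb_d \to \Emb_d$ preserving covers), that it sends the extra localizing maps for the fiberwise locally constant structure of \cref{flcms} to weak equivalences on the topological side, and that taking cores and $\rm c_{\{1,\ldots,d\}}$ commutes with evaluation at $\RR^0$. Modulo this bookkeeping, which uses only that $\iota_\ast$ is fully faithful and preserves limits, the theorem is a direct corollary of \cref{mainthm.geometric} combined with the reduction to the framed case in \cref{mainthm}.
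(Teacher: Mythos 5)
Your overall strategy---specialize the geometric cobordism hypothesis by promoting $\mathscr{C}$ to a constant smooth target and ${\cal S}$ to a geometric structure on $\FEmb_d$, then identify the two sides---is the right idea and matches the spirit of the paper's argument. But there is a genuine gap in the implementation. The functor $\FEmb_d \to \Emb_d$, ``$(V\times U\to U)\mapsto V$'', that you use to produce $\iota_*$ does not exist. First, objects of $\FEmb_d$ are arbitrary submersions $M\to U$ with $d$-dimensional fibers, not just products. Second, and more fatally, even on the full subcategory of trivial products a morphism in $\FEmb_d$ is a fiberwise open embedding $V\times U\to V'\times U'$ over a smooth $U\to U'$, which is a \emph{smooth $U$-family} of embeddings $V\to V'$ rather than a single one; there is no canonical way to extract an element of $\hom_{\Emb_d}(V,V')$ from this (any such choice would require choosing basepoints, which is not functorial in $\cartsp$). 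Consequently your $\iota_*$ is not defined, and neither is the claim that its image is ``presheaves constant in $\cartsp$''. Relatedly, your assertion that $\BBord_d^{\iota_*{\cal S}}$ is constant in $\cartsp$ is not quite right: the objects of the bordism category are smooth $U$-families of cut manifolds, so $\BBord_d^{\cal S}(U,-)$ has nontrivial $U$-dependence for \emph{any} ${\cal S}$, and it is the functor category into a constant target, not the bordism category itself, that ends up locally constant.

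The paper's proof avoids all of this by going in the opposite direction. It uses the canonical inclusion $i:\Emb_d\hookrightarrow\FEmb_d$ given by $V\mapsto(V\to\RR^0)$, which \emph{is} a well-defined functor of sites, and then forms the left Kan extension $i_!:\sPSh(\Emb_d)\to\sPSh(\FEmb_d)$. On representables $V=\RR^d$, the framed geometric cobordism hypothesis (\cref{mainthm}) gives $\Funmon(\BBord_d^{\RR^d\to\RR^0},\mathscr{C})^\times\simeq\mathscr{C}^\times$ directly. The general ${\cal S}$ is then handled by writing it as a homotopy colimit of representables and invoking the homotopy cocontinuity of $\BBord_d^{(-)}$ (which is the codescent property from Grady--Pavlov) together with the homotopy cocontinuity of $\ldf i_!$. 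Your argument can be repaired by replacing the nonexistent ``restriction along a projection'' with this left Kan extension along the inclusion and a cocontinuity argument; the remaining steps (evaluation at $\RR^0$, the non-naturality of $\mathscr{C}^\times_d(\RR^d)\simeq\mathscr{C}^\times$, the use of \cref{mainthm.geometric}) are fine as you have them.
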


\begin{proof}
We invoke \cref{mainthm.geometric} in the special case when the presheaves on $\cartsp$ and $\FEmb_d$
are homotopy constant with respect to maps in $\cartsp$.
In detail, the framed geometric cobordism hypothesis (\cref{mainthm}) implies that for the representable $\RR^d\to \RR^0$, we have an equivalence of $\Gamma$-spaces
$$\Funmon(\BBord_d^{ \RR^d\to \RR^0},\mathscr{C})^⨯ \simeq \mathscr{C}^{\times}.$$
Let $i:\Emb_d\into \FEmb_d$ denote the canonical inclusion.
The general case follows by homotopy cocontinuity of the bordism category, using Grady--Pavlov \cite[\ecref{EL-mainthm}]{GradyPavlov}, along with homotopy cocontinuity of the left derived functor of the left Kan extension $i_!:\sPSh(\Emb_d)\to \sPSh(\FEmb_d)$.
\end{proof}

In \cref{top.cob.hyp}, it is not necessary to assume that the geometric structure is constant in the cartesian space direction in order to obtain topological field theories in the sense of Lurie \cite{Lurie.TFT}. In fact, if we assume that only the target category $\mathscr{C}$ is locally constant in the cartesian space direction, we still obtain topological field theories. To see this, we recall the following from Schreiber \cite{Schreiber}.

\begin{definition}
\label{shape.functor}
Recall the cohesive adjunction between simplicial presheaves on $\cartsp$ and $\sset$ (see Grady–Pavlov \cite[\ecref{EL-shapebordism}]{GradyPavlov} for a review using our notation).
In particular, we have a simplicial Quillen adjunction $(\csp\dashv \Delta)$
\begin{equation}\label{cohesiveadj}
\xymatrix{
\sPSh(\cartsp)_{\loc}\ar@<.1cm>[r]^-{\csp} & \ar@<.1cm>[l]^-{\Delta} \sset.
}
\end{equation}
We refer to (the left derived functor of) the left adjoint functor $\csp$ as the \emph{shape} functor. 
\end{definition}

\begin{remark}\label{shapeequivariant}
The above adjunction continues to hold if we take presheaves with values in a left Bousfield localization of simplicial presheaves.
For example, the adjunction $\csp\dashv \Delta$ induces a corresponding adjunction 
$$\xymatrix{
\sPSh(\cartsp,\sset^{\sing \GL(d)})_{\loc}\ar@<.1cm>[r]^-{\csp} & \ar@<.1cm>[l]^-{\Delta} \sset^{\sing \GL(d)}.
}$$ 
\end{remark}

\begin{proposition}\label{equiadj}
Fix $d\geq 0$ and a smooth symmetric monoidal $(∞,d)$-category with duals given by a fibrant object $\mathscr{C}\in\smcatdual_{∞,d}$.
Suppose that $\mathscr{C}$ is homotopy constant with respect to $\cartsp$, i.e., for all $U\in \cartsp$ the canonical map $U\to \RR^0$ induces a weak equivalence $\mathscr{C}(\RR^0)\to \mathscr{C}(U)$.
Then we have a natural weak equivalence
$$\Funmon(\BBord_d^{{\cal S}},\mathscr{C})^⨯\simeq \Map(\ldf\csp({\cal S}),\mathscr{C}_d^{\times})^{\GL(d)},$$
where $\ldf\csp$ is the left derived functor of $\csp$ in \cref{shapeequivariant}
and the superscript $\GL(d)$ denotes the homotopy fixed points object.
\end{proposition}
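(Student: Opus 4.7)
The plan is to combine the geometric cobordism hypothesis (\cref{mainthm.geometric}) with the Quillen equivalences of \cref{fiberwiseshape} that transport fiberwise locally constant presheaves on $\FEmb_d$ to $\GL(d)$-equivariant presheaves on $\cartsp$, and then to invoke the $\GL(d)$-equivariant cohesive adjunction of \cref{shapeequivariant}.

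First, I apply \cref{mainthm.geometric} to obtain
$$\Funmon(\BBord_d^{\cal S},\mathscr{C})^\times \simeq \Map_{\FEmb_d}({\cal S},\mathscr{C}^\times_d),$$
reducing the problem to identifying the right-hand side with $\Map(\ldf\csp({\cal S}),\mathscr{C}^\times_d)^{\GL(d)}$. The next step is to verify that $\mathscr{C}^\times_d$ is fibrant in $\sPSh(\FEmb_d)_\lconst$, so that it lies in the essential image of the right Quillen equivalence of \cref{fiberwiseshape}. By \cref{mainthm} we have $\mathscr{C}^\times_d(\RR^d\times U\to U)\simeq \Map(U,\mathscr{C}^\times)$, and the hypothesis that $\mathscr{C}$ is homotopy constant on $\cartsp$ forces this mapping object to itself be homotopy constant in $U$; in particular $\mathscr{C}^\times_d$ is invariant under smooth $U$-indexed families of open self-embeddings of $\RR^d$, hence local with respect to the morphisms defining $\sPSh(\FEmb_d)_\lconst$.

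Now I transport ${\cal S}$ (after cofibrant replacement) and $\mathscr{C}^\times_d$ across the zigzag of right Quillen equivalences of \cref{fiberwiseshape} into $\PSh(\cartsp,\sset^{\Sing\GL(d)})_\loc$. Under this transport, $\mathscr{C}^\times_d$ corresponds to a presheaf on $\cartsp$ of $\Sing\GL(d)$-equivariant spaces which is homotopy constant on $\cartsp$, and whose value at $\RR^0\in\cartsp$ is the $\GL(d)$-space that appears on the right-hand side of the desired equivalence. Since both functors in the zigzag are right Quillen, the derived mapping object $\Map_{\FEmb_d}({\cal S},\mathscr{C}^\times_d)$ is transported to the derived equivariant mapping object in $\PSh(\cartsp,\sset^{\Sing\GL(d)})_\loc$, computed as a $\GL(d)$-homotopy fixed-point object of an end over $\cartsp$. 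Finally, I apply the $\GL(d)$-equivariant cohesive Quillen adjunction $\csp\dashv\Delta$ from \cref{shapeequivariant}: since $\mathscr{C}^\times_d$ is essentially in the image of $\Delta$, the end over $\cartsp$ collapses to $\Map(\ldf\csp({\cal S}),\mathscr{C}^\times_d)$, and taking $\GL(d)$-homotopy fixed points yields the claimed expression.

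The main obstacle will be the second step, namely verifying fiberwise local constancy of $\mathscr{C}^\times_d$ carefully, and in particular checking that the identification $\Map(U,\mathscr{C}^\times)\simeq \mathscr{C}^\times$ under homotopy constancy on $\cartsp$ is compatible with the structure maps along morphisms in $\FEmb_d$, so that the transport through \cref{fiberwiseshape} carries the natural $\GL(d)$-action arising from self-embeddings of $\RR^d$. A secondary subtlety is tracking the $\Gamma$-direction throughout these manipulations, since \cref{fiberwiseshape} is stated without $\Gamma$; since the $\Gamma$-direction is parametric and orthogonal to the homotopy-theoretic manipulations on $\cartsp$ and $\FEmb_d$, a $\Gamma$-enriched version of \cref{fiberwiseshape} should be easy to state and carry through without substantive modification.
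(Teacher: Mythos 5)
Your proposal follows essentially the same route as the paper's proof: reduce via \cref{mainthm.geometric} to the mapping object $\Map_{\FEmb_d}({\cal S},\mathscr{C}^\times_d)$, observe that homotopy constancy of $\mathscr{C}$ forces $\mathscr{C}^\times_d$ to be homotopy constant and hence (after transport through \cref{fiberwiseshape}) weakly equivalent to an object in the essential image of $\Delta$, and then conclude by the equivariant cohesive adjunction $\csp\dashv\Delta$ of \cref{shapeequivariant}. The additional care you take with fiberwise local constancy and the $\Gamma$-direction is reasonable bookkeeping but does not change the argument.
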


\begin{proof}
Observe that when $\mathscr{C}$ is homotopy constant, $\mathscr{C}^{\times}_d$ is homotopy constant, i.e. for all $U\in \cartsp$ the projection map $\RR^d\times U\to \RR^d$ induces an weak equivalence $\mathscr{C}_d^{\times}(\RR^d\to \RR^0)\to \mathscr{C}_d^{\times}(\RR^d\times U\to U)$.
Using the equivalence with presheaves of $\GL(d)$-equivariant simplicial sets, $\mathscr{C}^{\times}_d$ can be identified with a presheaf of $\GL(d)$-equivariant simplicial sets that is weakly equivalent to an object in the essential image of $\Delta$.
With this observation, the claim follows at once from the geometric cobordism hypothesis (\cref{mainthm.geometric}), along with the Quillen adjunction $\csp\dashv \Delta$ in \cref{shapeequivariant}. 
\end{proof}

\begin{remark}
\label{lurie.formulation}
\cref{equiadj} can be seen as recovering Lurie's formulation of the (topological) cobordism hypothesis \cite[Theorem~2.4.18]{Lurie.TFT}.
Indeed, the simplicial groups $\GL(d)$ and $\O(d)$ are weakly equivalent,
the action of $\O(d)$ on $\mathscr{C}^⨯$ coincides (by definition) with the action defined by Lurie \cite[Corollary~2.4.10]{Lurie.TFT},
and the $\O(d)$-space $\ldf\csp({\cal S})$ is weakly equivalent to $\tilde X$ in Lurie's setting.
\end{remark}

\subsection{Handles in the geometrically framed bordism category}
\label{handles.duals}

We now turn to the proof of supporting lemmas for the framed case. The present section is devoted to showing that the index $k$ handle and index $k-1$ handle participate in an adjunction. 
The triangle identity is witnessed by a family of generalized Morse functions with a single birth death singularity. 
The proof of this fact is surprisingly subtle and requires us to move down to codimension 2 in order to see the adjunction. 
In particular, we must obtain the canonical bordisms \cref{counitforreal}, \cref{unitforreal}, and \cref{triangleid} from the construction of Milnor \cite[Assertion~3.4]{Milnor}. This is explained in \cref{milnor.construction}. 

The following table summarizes the bordisms that appear in the present section.
The left column is the subobject of the bordism category that contains the bordisms described in the center column, which are depicted in the right column. 
\begin{center}
\begin{tabular}{c|p{5cm}|c}
Notation & \centering bordisms & illustration for $d=3,k=2$
 \\ \hline
$\Hb_k$ & Index $k$ handle with ``tails'', index $k-1$ handle with ``tails'', and handle cancellation with ``tails''. & 
\input{hbar.tikz}
\\
$\Ob_{k-1}$ & Index $k-1$ handle with ``tails''.&  \begin{tikzpicture}[scale=.2, baseline={(current bounding box.center)}]
	\begin{pgfonlayer}{nodelayer}
		\node [style=none] (144) at (-6.5, 9.5) {};
		\node [style=none] (145) at (-7, 8.5) {};
		\node [style=none] (146) at (-5.75, 8.5) {};
		\node [style=none] (147) at (-6.5, 4) {};
		\node [style=none] (148) at (-7, 5) {};
		\node [style=none] (149) at (-5.75, 5) {};
		\node [style=none] (150) at (-3, 8.5) {};
		\node [style=none] (151) at (-4.25, 8.5) {};
		\node [style=none] (152) at (-3, 5) {};
		\node [style=none] (153) at (-4.25, 5) {};
		\node [style=none] (154) at (-3.5, 7.5) {};
		\node [style=none] (155) at (-3.5, 6) {};
		\node [style=none] (156) at (-3.75, 9.5) {};
		\node [style=none] (157) at (-6.25, 9.5) {};
		\node [style=none] (158) at (-5.75, 8.5) {};
		\node [style=none] (159) at (-7, 8.5) {};
		\node [style=none] (160) at (-3.75, 4) {};
		\node [style=none] (161) at (-6.25, 4) {};
		\node [style=none] (162) at (-5.75, 5) {};
		\node [style=none] (163) at (-7, 5) {};
		\node [style=none] (164) at (-4.25, 8.5) {};
		\node [style=none] (165) at (-3, 8.5) {};
		\node [style=none] (166) at (-4.25, 5) {};
		\node [style=none] (167) at (-3, 5) {};
		\node [style=none] (168) at (-3.75, 7.5) {};
		\node [style=none] (169) at (-3.75, 6) {};
		\node [style=none] (170) at (-3.5, 9.5) {};
		\node [style=none] (171) at (-1, 9.5) {};
		\node [style=none] (172) at (-1.5, 8.5) {};
		\node [style=none] (173) at (-0.25, 8.5) {};
		\node [style=none] (174) at (-3.5, 4) {};
		\node [style=none] (175) at (-1, 4) {};
		\node [style=none] (176) at (-1.5, 5) {};
		\node [style=none] (177) at (-0.25, 5) {};
		\node [style=none] (178) at (-0.75, 9.5) {};
		\node [style=none] (179) at (-0.25, 8.5) {};
		\node [style=none] (180) at (-1.5, 8.5) {};
		\node [style=none] (181) at (-0.75, 4) {};
		\node [style=none] (182) at (-0.25, 5) {};
		\node [style=none] (183) at (-1.5, 5) {};
		\node [style=none] (184) at (-6.5, 0.25) {};
		\node [style=none] (185) at (-7, -0.75) {};
		\node [style=none] (186) at (-5.75, -0.75) {};
		\node [style=none] (187) at (-6.5, -5.25) {};
		\node [style=none] (188) at (-7, -4.25) {};
		\node [style=none] (189) at (-5.75, -4.25) {};
		\node [style=none] (190) at (-6.25, 0.25) {};
		\node [style=none] (191) at (-5.75, -0.75) {};
		\node [style=none] (192) at (-7, -0.75) {};
		\node [style=none] (193) at (-6.25, -5.25) {};
		\node [style=none] (194) at (-5.75, -4.25) {};
		\node [style=none] (195) at (-7, -4.25) {};
		\node [style=none] (196) at (-0.75, 0.25) {};
		\node [style=none] (197) at (-1.25, -0.75) {};
		\node [style=none] (198) at (0, -0.75) {};
		\node [style=none] (199) at (-0.75, -5.25) {};
		\node [style=none] (200) at (-1.25, -4.25) {};
		\node [style=none] (201) at (0, -4.25) {};
		\node [style=none] (202) at (-0.5, 0.25) {};
		\node [style=none] (203) at (0, -0.75) {};
		\node [style=none] (204) at (-1.25, -0.75) {};
		\node [style=none] (205) at (-0.5, -5.25) {};
		\node [style=none] (206) at (0, -4.25) {};
		\node [style=none] (207) at (-1.25, -4.25) {};
		\node [style=none] (208) at (-3.75, 3.25) {};
		\node [style=none] (209) at (-3.75, 0.75) {};
		\node [style=none] (215) at (-3.5, -8.25) {$\xi$};
	\end{pgfonlayer}
	\begin{pgfonlayer}{edgelayer}
		\draw [bend left=90, looseness=2.75] (145.center) to (146.center);
		\draw [bend right=90, looseness=2.75] (148.center) to (149.center);
		\draw (145.center) to (148.center);
		\draw (146.center) to (149.center);
		\draw [bend right=90, looseness=2.75] (150.center) to (151.center);
		\draw [bend left=90, looseness=2.75] (150.center) to (151.center);
		\draw [bend right=90, looseness=2.75] (152.center) to (153.center);
		\draw [bend left=90, looseness=2.75] (152.center) to (153.center);
		\draw [bend right=75, looseness=1.75] (154.center) to (155.center);
		\draw (156.center) to (157.center);
		\draw [bend right=90, looseness=2.75] (158.center) to (159.center);
		\draw (160.center) to (161.center);
		\draw [bend left=90, looseness=2.75] (162.center) to (163.center);
		\draw (158.center) to (162.center);
		\draw (159.center) to (163.center);
		\draw (153.center) to (163.center);
		\draw (152.center) to (162.center);
		\draw (151.center) to (159.center);
		\draw (150.center) to (158.center);
		\draw [bend left=90, looseness=2.75] (164.center) to (165.center);
		\draw [bend right=90, looseness=2.75] (164.center) to (165.center);
		\draw [bend left=90, looseness=2.75] (166.center) to (167.center);
		\draw [bend right=90, looseness=2.75] (166.center) to (167.center);
		\draw [bend left=75, looseness=1.75] (168.center) to (169.center);
		\draw (170.center) to (171.center);
		\draw [bend left=90, looseness=2.75] (172.center) to (173.center);
		\draw (174.center) to (175.center);
		\draw [bend right=90, looseness=2.75] (176.center) to (177.center);
		\draw (172.center) to (176.center);
		\draw (173.center) to (177.center);
		\draw (167.center) to (177.center);
		\draw (166.center) to (176.center);
		\draw (165.center) to (173.center);
		\draw (164.center) to (172.center);
		\draw [bend right=90, looseness=2.75] (179.center) to (180.center);
		\draw [bend left=90, looseness=2.75] (182.center) to (183.center);
		\draw (179.center) to (182.center);
		\draw (180.center) to (183.center);
		\draw [bend left=90, looseness=2.75] (185.center) to (186.center);
		\draw [bend right=90, looseness=2.75] (188.center) to (189.center);
		\draw (185.center) to (188.center);
		\draw (186.center) to (189.center);
		\draw [bend right=90, looseness=2.75] (191.center) to (192.center);
		\draw [bend left=90, looseness=2.75] (194.center) to (195.center);
		\draw (191.center) to (194.center);
		\draw (192.center) to (195.center);
		\draw [bend left=90, looseness=2.75] (197.center) to (198.center);
		\draw [bend right=90, looseness=2.75] (200.center) to (201.center);
		\draw (197.center) to (200.center);
		\draw (198.center) to (201.center);
		\draw [bend right=90, looseness=2.75] (203.center) to (204.center);
		\draw [bend left=90, looseness=2.75] (206.center) to (207.center);
		\draw (203.center) to (206.center);
		\draw (204.center) to (207.center);
		\draw (190.center) to (196.center);
		\draw (191.center) to (203.center);
		\draw (194.center) to (206.center);
		\draw (193.center) to (199.center);
		\draw [style=new edge style 0] (208.center) to (209.center);
		\draw [style=new edge style 2] (192.center) to (204.center);
		\draw [style=new edge style 2] (195.center) to (207.center);
	\end{pgfonlayer}
\end{tikzpicture}
 \\
 $H_{k}$ & Index $k$ handle, index $k-1$ handle, and handle cancellation. & \input{h.tikz}
\\
$O_{k-1}$ & Index $k-1$ handle & \begin{tikzpicture}[scale=.25,baseline=(current bounding box.center)]
	\begin{pgfonlayer}{nodelayer}
		\node [style=none] (145) at (-8, 8.75) {};
		\node [style=none] (146) at (-6.75, 8.5) {};
		\node [style=none] (148) at (-8, 5.25) {};
		\node [style=none] (149) at (-6.75, 5) {};
		\node [style=none] (150) at (-4.5, 8.5) {};
		\node [style=none] (151) at (-5.75, 8.75) {};
		\node [style=none] (152) at (-4.5, 5) {};
		\node [style=none] (153) at (-5.75, 5.25) {};
		\node [style=none] (154) at (-5, 7.75) {};
		\node [style=none] (155) at (-5, 6.25) {};
		\node [style=none] (158) at (-6.75, 8.5) {};
		\node [style=none] (159) at (-8, 8.75) {};
		\node [style=none] (162) at (-6.75, 5) {};
		\node [style=none] (163) at (-8, 5.25) {};
		\node [style=none] (164) at (-5.75, 8.75) {};
		\node [style=none] (165) at (-4.5, 8.5) {};
		\node [style=none] (166) at (-5.75, 5.25) {};
		\node [style=none] (167) at (-4.5, 5) {};
		\node [style=none] (168) at (-5.25, 7.75) {};
		\node [style=none] (169) at (-5.25, 6.25) {};
		\node [style=none] (172) at (-3.5, 8.75) {};
		\node [style=none] (173) at (-2.25, 8.5) {};
		\node [style=none] (176) at (-3.5, 5.25) {};
		\node [style=none] (177) at (-2.25, 5) {};
		\node [style=none] (179) at (-2.25, 8.5) {};
		\node [style=none] (180) at (-3.5, 8.75) {};
		\node [style=none] (182) at (-2.25, 5) {};
		\node [style=none] (183) at (-3.5, 5.25) {};
		\node [style=none] (185) at (-8, -0.5) {};
		\node [style=none] (186) at (-6.75, -0.75) {};
		\node [style=none] (188) at (-8, -4) {};
		\node [style=none] (189) at (-6.75, -4.25) {};
		\node [style=none] (191) at (-6.75, -0.75) {};
		\node [style=none] (192) at (-8, -0.5) {};
		\node [style=none] (194) at (-6.75, -4.25) {};
		\node [style=none] (195) at (-8, -4) {};
		\node [style=none] (197) at (-3.25, -0.5) {};
		\node [style=none] (198) at (-2, -0.75) {};
		\node [style=none] (200) at (-3.25, -4) {};
		\node [style=none] (201) at (-2, -4.25) {};
		\node [style=none] (203) at (-2, -0.75) {};
		\node [style=none] (204) at (-3.25, -0.5) {};
		\node [style=none] (206) at (-2, -4.25) {};
		\node [style=none] (207) at (-3.25, -4) {};
		\node [style=none] (208) at (-5.25, 3.5) {};
		\node [style=none] (209) at (-5.25, 1) {};
		\node [style=none] (215) at (-4.75, -7) {$\eta$};
	\end{pgfonlayer}
	\begin{pgfonlayer}{edgelayer}
		\draw (145.center) to (148.center);
		\draw (146.center) to (149.center);
		\draw [in=255, out=-90, looseness=2.50] (150.center) to (151.center);
		\draw [bend right=75, looseness=1.75] (154.center) to (155.center);
		\draw (158.center) to (162.center);
		\draw (159.center) to (163.center);
		\draw (152.center) to (162.center);
		\draw (151.center) to (159.center);
		\draw (150.center) to (158.center);
		\draw [in=75, out=90, looseness=3.00] (166.center) to (167.center);
		\draw [bend left=75, looseness=1.75] (168.center) to (169.center);
		\draw (173.center) to (177.center);
		\draw (167.center) to (177.center);
		\draw (165.center) to (173.center);
		\draw (164.center) to (172.center);
		\draw (179.center) to (182.center);
		\draw (185.center) to (188.center);
		\draw (186.center) to (189.center);
		\draw (191.center) to (194.center);
		\draw (192.center) to (195.center);
		\draw (198.center) to (201.center);
		\draw (203.center) to (206.center);
		\draw (191.center) to (203.center);
		\draw (194.center) to (206.center);
		\draw [style=new edge style 0] (208.center) to (209.center);
		\draw [style=new edge style 2] (195.center) to (207.center);
		\draw [style=new edge style 2] (204.center) to (207.center);
		\draw [style=new edge style 2] (180.center) to (183.center);
		\draw (192.center) to (204.center);
		\draw [style=new edge style 2] (163.center) to (183.center);
	\end{pgfonlayer}
\end{tikzpicture}
\end{tabular}
\end{center}

We now review the construction of Milnor \cite[Assertion~3.4]{Milnor} and describe the multisimplices in the bordism category that arise from the construction. 
\begin{construction}
\label{milnor.construction}
For each $0\leq k\leq d$, consider the function $f_k:\RR^d\to \RR$ defined by the formula 
$$f_k=-x_1^2-x_2^2+\cdots -x_k^2+x_{k+1}^2+\cdots +x_d^2.$$
Fix a smooth cutoff function $\mu:\RR\to \RR$ satisfying 
$\mu(0)>1$, $\mu(x)=0$ for $x\geq 2$, and $-1<\mu'(x)\leq 0$. Define the modified $F$ function 
$$F_k=f_k-\mu(x_1^2+x_2^2+\cdots +x_k^2+2x_{k+1}^2+\cdots + 2x_d^2), \eqlabel{milnorF}$$
considered in Milnor \cite{Milnor}.
The function $F$ is obtained by modifying $f$ in the ellipsoid $$G(x)=x_1^2+x_2^2+\cdots +x_k^2+2x_{k+1}^2+\cdots + 2x_d^2-2=0.$$
The intersection of this ellipsoid with the level set of $f_k$ at $-1$ is a product of spheres $S^{d-k-1}\times S^{k-1}$.
The function $F$ can be used to construct a canonical bordism of the form \cref{counitforreal}, including the embedding into $\RR^d$.
The preimage $f^{-1}(-1)$ is the composition of two discs, embedded in $\RR^d$.
The preimage $F^{-1}(-1)$ is an identity bordism, embedded in $\RR^d$.
The entire bordism is depicted below
\begin{center}
\begin{tikzpicture}
\draw[fill=gray, very thick] (-2,2.5) parabola bend (0,1) (2,2.5);
\draw[fill=gray, very thick] (-2,-2.5) parabola bend (0,-1) (2,-2.5);
\draw[thick] (-2.5,2) parabola bend (0,.7) (2.5,2);
\draw[thick, rotate=180] (-2.5,2) parabola bend (0,.7) (2.5,2);

\draw[rotate=90, fill=gray!25, very thick] (-2,2.5) parabola bend (0,1) (2,2.5);
\draw[rotate=90, fill=gray!25, very thick] (-2,-2.5) parabola bend (0,-1) (2,-2.5);

\draw[dashed] (-3,0) -- (3,0);
\draw[dashed] (0,-3) -- (0,3);

\node at (0,0) {$\bullet$};
\node at (.2,.4) {\footnotesize{$F^{-1}(-1)$}};
\node at (-1.8,.5) {\footnotesize{$f^{-1}(-1)$}};
\node at (4.5,0) {$(x_1,\ldots,x_k)$-axis};
\node at (.5,3.3) {$(x_{k+1},\ldots,x_d)$-axis};
\end{tikzpicture}
\end{center}
Here $f^{-1}(-1)$ is the union of the two curves bounding the light gray region.
The preimage $F^{-1}(-1)$ is the two curves bounding the white region: the Milnor bridge.
The intersection of the ellipsoid $G^{-1}(0)$ with $f^{-1}(-1)$ is a product of spheres $S^{d-k-1}\times S^{k-1}$.

\paragraph{The Milnor bridge as a multisimplex:}
The above construction yields the bordism with
cuts in the $d$th direction being $f^{-1}(-1)$ and $F^{-1}(-1)$
and cuts in the $(d-1)$st direction being $∅$ (with the entire bordism being inside $C_{>0}$)
and the ellipsoid $G^{-1}(0)$:
$$
\begin{tikzcd}[column sep={6em,between origins}, row sep={2em,between origins}]
	\emptyset && \emptyset \\
	& {D^{d-k}\times D^k} \\
	{S^{d-k-1}\times S^{k-1}} && {S^{d-k-1}\times S^{k-1}}.
	\arrow["{D^{d-k}\times S^{k-1}}"', from=1-1, to=3-1]
	\arrow["{S^{d-k-1}\times D^k}", from=1-3, to=3-3]
	\arrow[from=1-1, to=1-3]
	\arrow[from=3-1, to=3-3]
\end{tikzcd} \eqlabel{biunitwcaps}
$$
Here the horizontal direction corresponds to the $d$th direction and the vertical direction corresponds to the $(d-1)$st direction. The bordism \cref{biunit} can be written as a the following composition in the $(d-1)$st direction, where the top cell is degenerate in the $d$th direction, i.e., the horizontal direction:
$$
\begin{tikzcd}[column sep={6em,between origins}, row sep={2em,between origins}]
	\emptyset && \emptyset \\
	{} &  \\
	{S^{d-k-1}\times S^{k-1}} && {S^{d-k-1}\times S^{k-1}} \\
	& \zeta \\
	{S^{d-k-1}\times S^{k-1}} && {S^{d-k-1}\times S^{k-1}}
	\arrow["{D^{d-k}\times S^{k-1}}"', from=1-1, to=3-1]
	\arrow[from=3-1, to=3-3]
	\arrow["B"', from=3-1, to=5-1]
	\arrow["R", from=3-3, to=5-3]
	\arrow[from=5-1, to=5-3]
	\arrow["{D^{d-k}\times S^{k-1}}", from=1-3, to=3-3]
	\arrow[from=1-1, to=1-3]
\end{tikzcd} \eqlabel{biunit}$$
For $k\geq 2$, the bordism $B$ is a composition of bordisms in the $(d-1)$st and $(d-2)$nd directions. Explicitly, 
$$
B=\begin{tikzcd}[column sep={6.5em,between origins}, row sep={2.5em,between origins}]
	\emptyset && \emptyset && \emptyset \\
	{} & {{\rm id}} && {{\rm id}} \\
	{D^{d-k-1}\times S^{k-2}} && {D^{d-k-1}\times S^{k-2}} && {D^{d-k-1}\times S^{k-2}} \\
	& {D^{d-k}\times D^{k-1}} && {D^{d-k}\times D^{k-1}} \\
	{D^{d-k-1}\times S^{k-2}} && {D^{d-k-1}\times S^{k-2}} && {D^{d-k-1}\times S^{k-2}} \\
	& {{\rm id}} && {{\rm id}} \\
	\emptyset && \emptyset && \emptyset,
	\arrow["{{\rm id}}"{description}, from=3-1, to=3-3]
	\arrow["{S^{d-k-1}\times D^{k-1}}"', from=3-1, to=5-1]
	\arrow["{D^{d-k}\times S^{k-2}}"{description}, from=3-3, to=5-3]
	\arrow["{{\rm id}}"{description}, from=5-1, to=5-3]
	\arrow["{{\rm id}}"{description}, from=3-3, to=3-5]
	\arrow["{{\rm id}}"{description}, from=5-3, to=5-5]
	\arrow["{S^{d-k-1}\times D^{k-1}}", from=3-5, to=5-5]
	\arrow["{D^{d-k-1}\times D^{k-1}}"', from=1-1, to=3-1]
	\arrow[from=1-1, to=1-3]
	\arrow[from=1-3, to=1-5]
	\arrow["{D^{d-k-1}\times D^{k-1}}"{description}, from=1-3, to=3-3]
	\arrow["{D^{d-k-1}\times D^{k-1}}", from=1-5, to=3-5]
	\arrow["{D^{d-k-1}\times D^{k-1}}"', from=5-1, to=7-1]
	\arrow[from=7-1, to=7-3]
	\arrow[from=7-3, to=7-5]
	\arrow["{D^{d-k-1}\times D^{k-1}}"{description}, from=5-3, to=7-3]
	\arrow["{D^{d-k-1}\times D^{k-1}}", from=5-5, to=7-5]
\end{tikzcd} \eqlabel{biunittoeps}
$$
where the $(d-1)$st direction is the horizontal direction and the $d$th direction is the vertical direction. For $k<2$, we have just the composition of the middle two cells. For $k\geq 2$, the bordism $R$ is the composition
$$
R=\begin{tikzcd}[column sep={8em,between origins}, row sep={2em,between origins}]
	\emptyset && \emptyset \\
	{} & {{\rm id}} \\
	{D^{d-k-1}\times S^{k-2}} && {D^{d-k-1}\times S^{k-2}} \\
	{} & {S^{d-k-1}\times D^{k-1}\times[0,1]} \\
	{D^{d-k-1}\times S^{k-2}} && {D^{d-k-1}\times S^{k-2}} \\
	& {{\rm id}} \\
	\emptyset && \emptyset.
	\arrow["{{\rm id}}"{description}, from=3-1, to=3-3]
	\arrow["{S^{d-k-1}\times D^{k-1}}"', from=3-1, to=5-1]
	\arrow["{{\rm id}}"{description}, from=5-1, to=5-3]
	\arrow["{D^{d-k-1}\times D^{k-1}}"', from=1-1, to=3-1]
	\arrow[from=1-1, to=1-3]
	\arrow["{D^{d-k-1}\times D^{k-1}}", from=1-3, to=3-3]
	\arrow["{D^{d-k-1}\times D^{k-1}}"', from=5-1, to=7-1]
	\arrow[from=7-1, to=7-3]
	\arrow["{D^{d-k-1}\times D^{k-1}}", from=5-3, to=7-3]
	\arrow["{S^{d-k-1}\times D^{k-1}}", from=3-3, to=5-3]
\end{tikzcd}
$$
For $k<2$, we have just the center cell. Finally, the bordism $\zeta$ is the bordism obtained by pre and post composing the bordism $\epsilon$ in \cref{counitforreal} with identity on the identity on $D^{d-k-1}\times D^{k-1}$. For an illustration of the case $d=3,k=2$, see the left column in \cref{d3k2bords}.

\paragraph{Handle cancellation:}
Consider also the 1-parameter family of generalized Morse functions $f_{k-1,k}:\RR^d\to \RR$ defined by the formula
$$f_{k-1,k}=-x_1^2-x_2^2-\cdots- x_{k-1}^2 -x_{k}^3-tx_k+x_{k+1}^2+\cdots +x_d^2.$$
At $t=-1$, there is a pair of critical points, one of index $k$ and one of index $k-1$.
At $t=0$ the critical points are canceled through a birth-death singularity.
At $t=1$, there are no critical points. The construction of $F$ construction can be modified to suit $f_{k,k-1}$, by choosing a cutoff functions and modifying $f_{k,k-1}$ in elliptical regions around each critical point. For example, when $k=1$, and $t=-1$, we have the embedded bordism depicted below. 
\begin{center}
\begin{tikzpicture}
\draw[fill=gray, very thick] (-3.1,2.5) .. controls (-1,2) and (.5,0) .. (2,2.5);
\draw[fill=gray, very thick] (-3.1,-2.5) .. controls (-1,-2) and (.5,0) .. (2,-2.5);
\draw[thick] (-1.5,1.2) parabola bend (0,.7) (1.5,1.2);
\draw[thick, rotate=180] (-1.5,1.2) parabola bend (0,.7) (1.5,1.2);

\draw[fill=gray!25,very thick] (-3.1,0) circle (2cm);
\draw[rotate=90, fill=gray!25, very thick] (-2,-2.5) parabola bend (0,-1) (2,-2.5);

\draw[dashed] (-6,0) -- (3,0);
\draw[dashed] (-1.1,-3) -- (-1,3);

\node at (0,0) {$\bullet$};
\node at (-3.1,0) {$\bullet$};
\node at (4.5,0) {$x_1$-axis};
\node at (.5,3.3) {$(x_{2},\ldots,x_d)$-axis};
\end{tikzpicture}
\end{center}
2-cell of the form \cref{counitforreal}, along with a canonical embedding into $\RR^d$. 

\begin{figure}[ht]
\begin{center}
\begin{tikzpicture}[scale=.25]
	\begin{pgfonlayer}{nodelayer}
		\node [style=none] (0) at (-9.25, -0.75) {};
		\node [style=none] (1) at (-8, -0.75) {};
		\node [style=none] (2) at (-9.25, -4.25) {};
		\node [style=none] (3) at (-8, -4.25) {};
		\node [style=none] (4) at (-8.75, -1.75) {};
		\node [style=none] (5) at (-8.75, -3.25) {};
		\node [style=none] (6) at (-8.5, 0.25) {};
		\node [style=none] (7) at (-4.75, 0.25) {};
		\node [style=none] (8) at (-5.25, -0.75) {};
		\node [style=none] (9) at (-4, -0.75) {};
		\node [style=none] (11) at (-8.5, -5.25) {};
		\node [style=none] (12) at (-4.75, -5.25) {};
		\node [style=none] (13) at (-5.25, -4.25) {};
		\node [style=none] (14) at (-4, -4.25) {};
		\node [style=none] (15) at (0, -0.75) {};
		\node [style=none] (16) at (-1.25, -0.75) {};
		\node [style=none] (17) at (0, -4.25) {};
		\node [style=none] (18) at (-1.25, -4.25) {};
		\node [style=none] (19) at (-0.5, -1.75) {};
		\node [style=none] (20) at (-0.5, -3.25) {};
		\node [style=none] (21) at (-0.75, 0.25) {};
		\node [style=none] (22) at (-4.5, 0.25) {};
		\node [style=none] (23) at (-4, -0.75) {};
		\node [style=none] (24) at (-5.25, -0.75) {};
		\node [style=none] (25) at (-0.75, -5.25) {};
		\node [style=none] (26) at (-4.5, -5.25) {};
		\node [style=none] (27) at (-4, -4.25) {};
		\node [style=none] (28) at (-5.25, -4.25) {};
		\node [style=none] (29) at (-9.25, 7.75) {};
		\node [style=none] (30) at (-8, 7.75) {};
		\node [style=none] (31) at (-1.25, 7.75) {};
		\node [style=none] (32) at (0, 7.75) {};
		\node [style=none] (33) at (-8.5, 8.75) {};
		\node [style=none] (35) at (-0.75, 8.75) {};
		\node [style=none] (36) at (-8.5, 6.75) {};
		\node [style=none] (37) at (-0.75, 6.75) {};
		\node [style=none] (38) at (-9.25, 4.25) {};
		\node [style=none] (39) at (-8, 4.25) {};
		\node [style=none] (40) at (-1.25, 4.25) {};
		\node [style=none] (41) at (0, 4.25) {};
		\node [style=none] (42) at (-8.5, 5.25) {};
		\node [style=none] (43) at (-0.75, 5.25) {};
		\node [style=none] (44) at (-8.5, 3.25) {};
		\node [style=none] (45) at (-0.75, 3.25) {};
		\node [style=none] (46) at (-1.25, -0.75) {};
		\node [style=none] (47) at (0, -0.75) {};
		\node [style=none] (48) at (-1.25, -4.25) {};
		\node [style=none] (49) at (0, -4.25) {};
		\node [style=none] (50) at (-0.75, -1.75) {};
		\node [style=none] (51) at (-0.75, -3.25) {};
		\node [style=none] (52) at (-0.5, 0.25) {};
		\node [style=none] (56) at (-0.5, -5.25) {};
		\node [style=none] (66) at (0, 7.75) {};
		\node [style=none] (67) at (-1.25, 7.75) {};
		\node [style=none] (68) at (0, 4.25) {};
		\node [style=none] (69) at (-1.25, 4.25) {};
		\node [style=none] (70) at (-0.5, 6.75) {};
		\node [style=none] (71) at (-0.5, 5.25) {};
		\node [style=none] (72) at (-0.75, 8.75) {};
		\node [style=none] (73) at (-0.75, 3.25) {};
		\node [style=none] (74) at (-1.25, 7.75) {};
		\node [style=none] (75) at (0, 7.75) {};
		\node [style=none] (76) at (-1.25, 4.25) {};
		\node [style=none] (77) at (0, 4.25) {};
		\node [style=none] (78) at (-0.75, 6.75) {};
		\node [style=none] (79) at (-0.75, 5.25) {};
		\node [style=none] (80) at (-0.5, 8.75) {};
		\node [style=none] (84) at (-0.5, 3.25) {};
		\node [style=none] (142) at (-2.75, 2.75) {};
		\node [style=none] (143) at (-2.75, 1) {};
		\node [style=none] (146) at (-11.75, -0.75) {};
		\node [style=none] (147) at (-10.5, -0.75) {};
		\node [style=none] (148) at (-11.25, -1.75) {};
		\node [style=none] (149) at (-11, 0.25) {};
		\node [style=none] (150) at (-11.75, -4.25) {};
		\node [style=none] (151) at (-10.5, -4.25) {};
		\node [style=none] (152) at (-11.25, -5.25) {};
		\node [style=none] (153) at (-11, -3.25) {};
		\node [style=none] (162) at (-11.75, 7.75) {};
		\node [style=none] (163) at (-10.5, 7.75) {};
		\node [style=none] (164) at (-11.25, 6.75) {};
		\node [style=none] (165) at (-11, 8.75) {};
		\node [style=none] (166) at (-11.75, 4.25) {};
		\node [style=none] (167) at (-10.5, 4.25) {};
		\node [style=none] (168) at (-11.25, 3.25) {};
		\node [style=none] (169) at (-11, 5.25) {};
		\node [style=none] (170) at (-11.75, 4.25) {};
		\node [style=none] (171) at (-10.5, 4.25) {};
		\node [style=none] (172) at (-11.25, 3.25) {};
		\node [style=none] (173) at (-11, 5.25) {};
		\node [style=none] (174) at (-11.75, -0.75) {};
		\node [style=none] (175) at (-10.5, -0.75) {};
		\node [style=none] (176) at (-11.25, -1.75) {};
		\node [style=none] (177) at (-11, 0.25) {};
		\node [style=none] (178) at (-11.75, -4.25) {};
		\node [style=none] (179) at (-10.5, -4.25) {};
		\node [style=none] (180) at (-11.25, -5.25) {};
		\node [style=none] (181) at (-11, -3.25) {};
		\node [style=none] (182) at (-11.75, -4.25) {};
		\node [style=none] (183) at (-10.5, -4.25) {};
		\node [style=none] (184) at (-11.25, -5.25) {};
		\node [style=none] (185) at (-11, -3.25) {};
	\end{pgfonlayer}
	\begin{pgfonlayer}{edgelayer}
		\draw [bend left=90, looseness=2.75] (0.center) to (1.center);
		\draw [bend right=90, looseness=2.75] (0.center) to (1.center);
		\draw [bend left=90, looseness=2.75] (2.center) to (3.center);
		\draw [bend right=90, looseness=2.75] (2.center) to (3.center);
		\draw [bend left=75, looseness=1.75] (4.center) to (5.center);
		\draw (6.center) to (7.center);
		\draw [bend left=90, looseness=2.75] (8.center) to (9.center);
		\draw (11.center) to (12.center);
		\draw [bend right=90, looseness=2.75] (13.center) to (14.center);
		\draw (8.center) to (13.center);
		\draw (9.center) to (14.center);
		\draw (3.center) to (14.center);
		\draw (1.center) to (9.center);
		\draw [bend right=90, looseness=2.75] (15.center) to (16.center);
		\draw [bend left=90, looseness=2.75] (15.center) to (16.center);
		\draw [bend right=90, looseness=2.75] (17.center) to (18.center);
		\draw [bend left=90, looseness=2.75] (17.center) to (18.center);
		\draw [bend right=75, looseness=1.75] (19.center) to (20.center);
		\draw (21.center) to (22.center);
		\draw [bend right=90, looseness=2.75] (23.center) to (24.center);
		\draw (25.center) to (26.center);
		\draw [bend left=90, looseness=2.75] (27.center) to (28.center);
		\draw (23.center) to (27.center);
		\draw (24.center) to (28.center);
		\draw (18.center) to (28.center);
		\draw (17.center) to (27.center);
		\draw (16.center) to (24.center);
		\draw (15.center) to (23.center);
		\draw [bend left=90, looseness=2.75] (29.center) to (30.center);
		\draw [bend right=90, looseness=2.75] (29.center) to (30.center);
		\draw [bend left=90, looseness=2.75] (31.center) to (32.center);
		\draw [bend right=90, looseness=2.75] (31.center) to (32.center);
		\draw (33.center) to (35.center);
		\draw (36.center) to (37.center);
		\draw [bend left=90, looseness=2.75] (38.center) to (39.center);
		\draw [bend right=90, looseness=2.75] (38.center) to (39.center);
		\draw [bend left=90, looseness=2.75] (40.center) to (41.center);
		\draw [bend right=90, looseness=2.75] (40.center) to (41.center);
		\draw (42.center) to (43.center);
		\draw (44.center) to (45.center);
		\draw (30.center) to (32.center);
		\draw (39.center) to (41.center);
		\draw [bend left=90, looseness=2.75] (46.center) to (47.center);
		\draw [bend right=90, looseness=2.75] (46.center) to (47.center);
		\draw [bend left=90, looseness=2.75] (48.center) to (49.center);
		\draw [bend right=90, looseness=2.75] (48.center) to (49.center);
		\draw [style=new edge style 2] (0.center) to (24.center);
		\draw [style=new edge style 2] (2.center) to (28.center);
		\draw [style=new edge style 2] (38.center) to (40.center);
		\draw [style=new edge style 2] (29.center) to (31.center);
		\draw [bend right=90, looseness=2.75] (66.center) to (67.center);
		\draw [bend left=90, looseness=2.75] (66.center) to (67.center);
		\draw [bend right=90, looseness=2.75] (68.center) to (69.center);
		\draw [bend left=90, looseness=2.75] (68.center) to (69.center);
		\draw [bend left=90, looseness=2.75] (74.center) to (75.center);
		\draw [bend right=90, looseness=2.75] (74.center) to (75.center);
		\draw [bend left=90, looseness=2.75] (76.center) to (77.center);
		\draw [bend right=90, looseness=2.75] (76.center) to (77.center);
		\draw [style=new edge style 0] (142.center) to (143.center);
		\draw [bend left=90, looseness=2.75] (146.center) to (147.center);
		\draw [bend right=90, looseness=2.75] (146.center) to (147.center);
		\draw [bend left=90, looseness=2.75] (150.center) to (151.center);
		\draw [bend right=90, looseness=2.75] (150.center) to (151.center);
		\draw [bend left=90, looseness=2.75] (162.center) to (163.center);
		\draw [bend right=90, looseness=2.75] (162.center) to (163.center);
		\draw [bend left=90, looseness=2.75] (166.center) to (167.center);
		\draw [bend right=90, looseness=2.75] (166.center) to (167.center);
		\draw [bend left=270, looseness=1.75] (165.center) to (164.center);
		\draw [bend left=90, looseness=2.75] (170.center) to (171.center);
		\draw [bend right=90, looseness=2.75] (170.center) to (171.center);
		\draw [bend left=270, looseness=1.75] (173.center) to (172.center);
		\draw [bend left=90, looseness=2.75] (174.center) to (175.center);
		\draw [bend right=90, looseness=2.75] (174.center) to (175.center);
		\draw [bend left=90, looseness=2.75] (178.center) to (179.center);
		\draw [bend right=90, looseness=2.75] (178.center) to (179.center);
		\draw [bend left=270, looseness=1.75] (177.center) to (176.center);
		\draw [bend left=90, looseness=2.75] (182.center) to (183.center);
		\draw [bend right=90, looseness=2.75] (182.center) to (183.center);
		\draw [bend left=270, looseness=1.75] (185.center) to (184.center);
	\end{pgfonlayer}
\end{tikzpicture} \qquad
\input{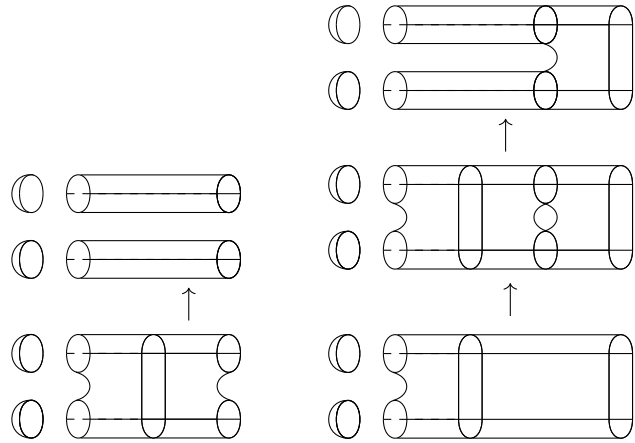}
\end{center}
\caption{The left column depicts the bordisms $B$ and $R$ in \cref{biunit} for $d=3,k=2$. The top bordism in the left column is $R$ and the bottom bordism is $B$. The caps on the left are the top square in \cref{biunit}. The bordism $\zeta$ fills in the center of the two cylinders. Further cutting the bordism $\zeta$ (in the $(d-2)$nd direction) along the horizontal lines, i.e. the composition of the two center horizontal edges in \cref{biunittoeps} gives the bordism $\epsilon$ in \cref{handle}. The right column depicts the handle cancellation, giving the bordism $\beta$ in \cref{handle} after cutting along the horizontal lines.}
\label{d3k2bords}
\end{figure}

\paragraph{Handle cancellation as a multisimplex:}
The above construction yields a bordism that can be decomposed in the $d$th and $(d-1)$st direction as follows. 
\[\delta=\begin{tikzcd}[column sep={6em,between origins}, row sep={2em,between origins}]
	\emptyset && \emptyset && \emptyset \\
	& {{\rm id}} && {{\rm id}} \\
	{S^{d-k-1}\times S^{k-1}} && {S^{d-k-1}\times S^{k-1}} && {S^{d-k-1}\times S^{k-1}} \\
	& {{\rm id}_Q} & {} \\
	{D^{d-k-1}\times S^{k-1}} && {D^{d-k-1}\times S^{k-1}} & \zeta \\
	&& {} \\
	& \xi & {S^{d-k-1}\times S^{k-1}} && {S^{d-k-1}\times S^{k-1}} \\
	&&& {{\rm id}_Q} \\
	{D^{d-k-1}\times S^{k-1}} && {D^{d-k-1}\times S^{k-1}} && {D^{d-k-1}\times S^{k-1},}
	\arrow["{D^{d-k}\times S^{k-1}}"', from=1-1, to=3-1]
	\arrow["Q"', from=3-1, to=5-1]
	\arrow["{D^{d-k}\times S^{k-1}}"{description}, from=1-3, to=3-3]
	\arrow["Q", from=3-3, to=5-3]
	\arrow["P", from=5-3, to=7-3]
	\arrow["{D^{d-k}\times S^{k-1}}"{description}, from=1-5, to=3-5]
	\arrow[from=3-1, to=3-3]
	\arrow[from=3-3, to=3-5]
	\arrow[from=7-3, to=7-5]
	\arrow[from=1-1, to=1-3]
	\arrow[from=1-3, to=1-5]
	\arrow["Q", from=7-3, to=9-3]
	\arrow[from=9-3, to=9-5]
	\arrow["Q", from=7-5, to=9-5]
	\arrow[from=9-1, to=9-3]
	\arrow[from=5-1, to=5-3]
	\arrow["{R}", from=3-5, to=7-5]
	\arrow["{S}"', from=5-1, to=9-1]
\end{tikzcd}\eqlabel{bibeta}\]
where $\zeta$ and $R$ are as in \cref{biunit}, $Q$ is the composition of the bordisms occurring in the first column of $B$, $P$ is the composition of bordisms occurring in the second column of $B$, $S$ is the composition of bordisms in the right column of \cref{lefthan}, and $\xi$ is the bordism from the right column of \cref{lefthan} to the composition of the right two columns of \cref{centerhan}. The bordism $\delta$ is the composable chain of bordisms $S\circ_{d-1}Q\to  Q\circ_{d-1}P\circ_{d-1}Q\to Q\circ_{d-1}R$. The compositions $S\circ_{d-1}Q$, $Q\circ_{d-1}P\circ_{d-1}Q$ and $Q\circ_{d-1}R$ are depicted below. 
\[
\adjustbox{scale=.75,center}{
\begin{tikzcd}[column sep={8.5em,between origins}, row sep={2.5em,between origins}]
	\emptyset && \emptyset && \emptyset \\
	& {{\rm id}} && {{\rm id}} \\
	{D^{d-k-1}\times S^{k-2}} && {D^{d-k-1}\times S^{k-2}} && {D^{d-k-1}\times S^{k-2}} \\
	& {D^{d-k}\times D^{k-1}} && {D^{d-k}\times S^{k-2}\times [0,1]} \\
	{D^{d-k-1}\times S^{k-2}} && {D^{d-k-1}\times S^{k-2}} && {D^{d-k-1}\times S^{k-2}} \\
	& {{\rm id}} && {{\rm id}} \\
	\emptyset && \emptyset && \emptyset,
	\arrow["{D^{d-k-1}\times D^{k-1}}"', from=1-1, to=3-1]
	\arrow["{S^{d-k-1}\times D^{k-1}}"', from=3-1, to=5-1]
	\arrow["{D^{d-k-1}\times D^{k-1}}"', from=5-1, to=7-1]
	\arrow[from=1-1, to=1-3]
	\arrow[from=1-3, to=1-5]
	\arrow["{D^{d-k-1}\times D^{k-1}}", from=1-5, to=3-5]
	\arrow["{D^{d-k-1}\times D^{k-1}}"{description}, from=1-3, to=3-3]
	\arrow[from=3-1, to=3-3]
	\arrow[from=3-3, to=3-5]
	\arrow["{D^{d-k}\times S^{k-2}}", from=3-5, to=5-5]
	\arrow["{D^{d-k}\times S^{k-2}}"{description}, from=3-3, to=5-3]
	\arrow[from=5-1, to=5-3]
	\arrow[from=5-3, to=5-5]
	\arrow["{D^{d-k-1}\times D^{k-1}}"{description}, from=5-3, to=7-3]
	\arrow["{D^{d-k-1}\times D^{k-1}}", from=5-5, to=7-5]
	\arrow[from=7-1, to=7-3]
	\arrow[from=7-3, to=7-5]
\end{tikzcd}}\eqlabel{lefthan}
\]
\[
\adjustbox{scale=.75,center}{
\begin{tikzcd}[column sep={8em,between origins}, row sep={2.5em,between origins}]
	\emptyset && \emptyset && \emptyset && \emptyset \\
	& {{\rm id}} && {{\rm id}} && {{\rm id}} \\
	{D^{d-k-1}\times S^{k-2}} && {D^{d-k-1}\times S^{k-2}} && {D^{d-k-1}\times S^{k-2}} && {D^{d-k-1}\times S^{k-2}} \\
	& {D^{d-k}\times D^{k-1}} && {D^{d-k}\times D^{k-1}} && {D^{d-k}\times D^{k-1}} \\
	{D^{d-k-1}\times S^{k-2}} && {D^{d-k-1}\times S^{k-2}} && {D^{d-k-1}\times S^{k-2}} && {D^{d-k-1}\times S^{k-2}} \\
	& {{\rm id}} && {{\rm id}} && {{\rm id}} \\
	\emptyset && \emptyset && \emptyset && \emptyset,
	\arrow[from=3-1, to=3-3]
	\arrow["{D^{d-k-1}\times D^{k-1}}"', from=1-1, to=3-1]
	\arrow["{D^{d-k-1}\times D^{k-1}}"{description}, from=1-3, to=3-3]
	\arrow["{D^{d-k-1}\times D^{k-1}}"{description}, from=1-5, to=3-5]
	\arrow["{D^{d-k-1}\times D^{k-1}}", from=1-7, to=3-7]
	\arrow[from=3-3, to=3-5]
	\arrow[from=3-5, to=3-7]
	\arrow[from=5-5, to=5-7]
	\arrow["{D^{d-k-1}\times D^{k-1}}", from=5-7, to=7-7]
	\arrow["{D^{d-k-1}\times D^{k-1}}"{description}, from=5-5, to=7-5]
	\arrow["{S^{d-k-1}\times D^{k-1}}"{description}, from=3-5, to=5-5]
	\arrow["{D^{d-k}\times S^{k-2}}", from=3-7, to=5-7]
	\arrow[from=5-3, to=5-5]
	\arrow["{D^{d-k-1}\times D^{k-1}}"{description}, from=5-3, to=7-3]
	\arrow["{D^{d-k}\times S^{k-2}}"{description}, from=3-3, to=5-3]
	\arrow["{S^{d-k-1}\times D^{k-1}}"', from=3-1, to=5-1]
	\arrow[from=5-1, to=5-3]
	\arrow["{D^{d-k-1}\times D^{k-1}}"', from=5-1, to=7-1]
	\arrow[from=1-1, to=1-3]
	\arrow[from=1-3, to=1-5]
	\arrow[from=1-5, to=1-7]
	\arrow[from=7-1, to=7-3]
	\arrow[from=7-3, to=7-5]
	\arrow[from=7-5, to=7-7]
\end{tikzcd}}\eqlabel{centerhan}
\]

\[
\adjustbox{scale=.75,center}{
\begin{tikzcd}[column sep={8.5em,between origins}, row sep={2.5em,between origins}]
	\emptyset && \emptyset && \emptyset \\
	& {{\rm id}} && {{\rm id}} \\
	{D^{d-k-1}\times S^{k-2}} && {D^{d-k-1}\times S^{k-2}} && {D^{d-k-1}\times S^{k-2}} \\
	& {S^{d-k-1}\times D^{k-1}\times [0,1]} && {D^{d-k}\times D^{k-1}} \\
	{D^{d-k-1}\times S^{k-2}} && {D^{d-k-1}\times S^{k-2}} && {D^{d-k-1}\times S^{k-2}} \\
	& {{\rm id}} && {{\rm id}} \\
	\emptyset && \emptyset && \emptyset.
	\arrow["{D^{d-k-1}\times D^{k-1}}"', from=1-1, to=3-1]
	\arrow[from=1-1, to=1-3]
	\arrow["{D^{d-k-1}\times D^{k-1}}"{description}, from=1-3, to=3-3]
	\arrow[from=3-1, to=3-3]
	\arrow[from=1-3, to=1-5]
	\arrow["{D^{d-k-1}\times D^{k-1}}", from=1-5, to=3-5]
	\arrow[from=3-3, to=3-5]
	\arrow["{D^{d-k}\times S^{k-2}}", from=3-5, to=5-5]
	\arrow["{S^{d-k-1}\times D^{k-1}}"{description}, from=3-3, to=5-3]
	\arrow[from=5-3, to=5-5]
	\arrow["{D^{d-k-1}\times D^{k-1}}"{description}, from=5-3, to=7-3]
	\arrow[from=7-3, to=7-5]
	\arrow["{D^{d-k-1}\times D^{k-1}}", from=5-5, to=7-5]
	\arrow[from=5-1, to=5-3]
	\arrow["{S^{d-k-1}\times D^{k-1}}"{description}, from=3-1, to=5-1]
	\arrow["{D^{d-k-1}\times D^{k-1}}"', from=5-1, to=7-1]
	\arrow[from=7-1, to=7-3]
\end{tikzcd}}
\]

\end{construction}

Throughout the remainder of the section, we will frequently refer to the various bordisms constructed above. The following table provides a summary of the bordisms we will use, along with an informal description. For the formal description, see \cref{handle} and \cref{handletails}.

\begin{remark}
In order to ensure that certain bordisms can be glued, we will use the geometric structure given by taking fiberwise etale maps to $\RR^d$.
This is the fibrant replacement of the geometric structure given by open embeddings.
Since the bordism category satisfies codescent (see Grady--Pavlov \cite[\ecref{EL-mainthm}]{GradyPavlov}), the bordism category with fiberwise open embeddings as the geometric structure is canonically equivalent (via sheafification of the geometric structure) to the bordism category with etale maps as the geometric structure. 
Therefore, the main theorem holds for both geometric structures. 
\end{remark}

\begin{definition}\label{handle}
Given $d≥0$ and $0≤k≤d$, we define a subobject $H_k\subset \BBord_d^{\RR^d\times U\to U}$ 
as follows. 
Fix $V\in \cartsp$, $\langle \ell\rangle\in \Gamma$ and ${\bf m}\in \Delta^{\times (d-2)}$.
For $k>0$, we define $H_k(V,\langle \ell\rangle,{\bf m})$
to be the subobject that is generated from the following bisimplices.

For $k=0$, we only take the bordism given by the second diagram, setting $k=1$ in that diagram, so that we get the index~0 handle as a bordism $D^d:\emptyset\to S^{d-1}$.
We require that the whole bordism maps to a single element of $\langle\ell\rangle$.
We omit $V$ from the notation below and understand the bordisms to be the corresponding trivial family of bordisms parametrized over~$V$.
We also omit the simplex~$l$ (responsible for smooth deformations of cut tuples as in \cref{enrichedbordstr}) from the notation below,
taking for every indicated bordism the whole connected component in the corresponding simplicial set.
\begin{enumerate}
\item[\fbox{$\epsilon$}] The handle of index~$k$ (constructed above),
interpreted as a counit and given explicitly as a bordism in bidegree $([1],[1])$ given by \cref{milnor.construction}:
\begin{equation}\label{counitforreal}
\xymatrix@C=.2cm{
S^{d-k-1}\times D^{k-1} \ar[rr]^{\id}\ar[dd]_{D^{d-k}\times S^{k-1}} && S^{d-k-1}\times D^{k-1}\ar[dd]^{S^{d-k-1}\times D^{k-1}\times[0,1]}\\
&D^{d-k}\times D^{k} &\\
S^{d-k-1}\times D^{k-1} \ar[rr] _{\id} && S^{d-k-1}\times D^{k-1}\\
}
\end{equation}
together with all bordisms
occurring in the following diagram, provided that its composition in the 1st simplicial direction (i.e., the vertical direction) can be extended to a diagram of the above form:
$$
\xymatrix@C=2cm{
S^{d-k-1}\times D^{k-1} \ar[r]^{\id}\ar[d]^{\fbox{$g$}}_{D^{d-k}\times D^{k-1}} & S^{d-k-1}\times D^{k-1}\ar[d]^{S^{d-k-1}\times D^{k-1}\times[0,1]}\\
D^{d-k}\times S^{k-2} \ar[d]^{\fbox{$f$}}_{D^{d-k}\times D^{k-1}} &  S^{d-k-1}\times D^{k-1}\ar[d]^{S^{d-k-1}\times D^{k-1}\times[0,1]}\\
S^{d-k-1}\times D^{k-1} \ar[r] _{\id}& S^{d-k-1}\times D^{k-1}\\
}
$$
\item[\fbox{$\eta$}] The handle of index~$k-1$ (constructed above), interpreted as a unit and given explicitly as a bordism in bidegree $([1],[1])$ given by \cref{milnor.construction}:
\begin{equation}\label{unitforreal}
\xymatrix@C=.2cm{
\ar[rr]^{\id}  D^{d-k}\times S^{k-2}\ar[dd]_{D^{d-k}\times S^{k-2}\times[0,1]} &&  D^{d-k}\times S^{k-2}\ar[dd]^{ S^{d-k}\times D^{k-1}}\\
&D^{d-k+1}\times D^{k-1} &\\
\ar[rr] ^{\id}  D^{d-k}\times S^{k-2} && D^{d-k}\times S^{k-2}}
\end{equation}
together with all bordisms
occurring in the following diagram, provided that its composition in the 1st simplicial direction (i.e., the vertical direction) can be extended to a diagram of the above form:
$$\xymatrix@C=2cm{
\ar[r]^{\id}  D^{d-k}\times S^{k-2}\ar[d]_{D^{d-k}\times S^{k-2}\times[0,1]} & D^{d-k}\times S^{k-2} \ar[d]_{\fbox{$f$}}^{D^{d-k}\times D^{k-1}}\\
D^{d-k}\times S^{k-2}\ar[d]_{D^{d-k}\times S^{k-2}\times[0,1]} & S^{d-k-1}\times D^{k-1} \ar[d]_{\fbox{$g$}}^{D^{d-k}\times D^{k-1}}\\
\ar[r] ^{\id}  D^{d-k}\times S^{k-2} & D^{d-k}\times S^{k-2}\\
}$$
\item[\fbox{$\beta$}] Consider the bordisms in the following diagram
given by \cref{milnor.construction}:
\begin{equation}\label{triangleid}
\xymatrix@C=2cm{
S^{d-k-1}\times D^{k-1}\ar[r]^\id \ar[d]_{D^{d-k}\times D^{k-1}}^g & S^{d-k-1}\times D^{k-1} \ar[r]^{\id}\ar[d]_{D^{d-k}\times D^{k-1}}^g & S^{d-k-1}\times D^{k-1}\ar[d]^{S^{d-k-1}\times D^{k-1}\times[0,1]}\\
\ar[r]^{\id}  D^{d-k}\times S^{k-2}\ar[d]_{D^{d-k}\times S^{k-2}\times[0,1]} & D^{d-k}\times S^{k-2} \ar[d]^{D^{d-k}\times D^{k-1}}_f \ar@{{}{ }{}}[r]|{\displaystyle \epsilon} & S^{d-k-1}\times D^{k-1}\ar[d]^{S^{d-k-1}\times D^{k-1}\times[0,1]}\\
D^{d-k}\times S^{k-2}\ar[d]_{D^{d-k}\times S^{k-2}\times[0,1]} \ar@{{}{ }{}}[r]|{\displaystyle η}  & S^{d-k-1}\times D^{k-1} \ar[d]^{D^{d-k}\times D^{k-1}}_g \ar[r] _{\id} &  S^{d-k-1}\times D^{k-1}\ar[d]^{D^{d-k}\times D^{k-1}}_g\\
\ar[r] ^{\id}  D^{d-k}\times S^{k-2} & D^{d-k}\times S^{k-2}\ar[r]_{\id} & D^{d-k}\times S^{k-2}.\\
}
\end{equation}
We include bordisms given by the $([2],[1])$-bisimplices appearing in the left and right rectangles above.
We also include bordisms given by the $([1],[1])$-bisimplices given by their compositions (in the vertical direction, i.e., the $(d-1)$st direction).
Finally we include bordisms given by the $([1],[2])$-bisimplex given by the left and right bordisms above and the $([1],[1])$-bisimplex given by their composition. 
\end{enumerate}
The fiberwise embedding into $\RR^d\times U$ of each vertex and each edge in the above diagrams are induced by restricting the fiberwise embedding of the corresponding 2-cell.

Summarizing, $H_k$ contains bordisms diffeomorphic to $f$,~$g$,~$\epsilon$,~$\eta$,~$\eta\circ_{d-1} \id_g$,~$\id_g\circ_{d-1} \epsilon$, and (the domain of) the triangle identity $\beta:=(\id_g\circ_{d-1} \epsilon)\circ_d (\eta\circ_{d-1} \id_g)$.
Here $\circ_i$ denotes the composition in the $i$th direction.
\end{definition}

\begin{definition}
\label{def.O}
Given $d≥0$ and $0<k≤d$,
let $O_{k-1}\subset H_k$ denote the subobject of $H_k$ (\cref{handle}) defined as follows.
Fix $V\in \cartsp$, $\langle \ell\rangle\in \Gamma$ and ${\bf m}\in \Delta^{\times (d-2)}$.
For $k>0$, we define
$$O_{k-1}(V,\langle \ell\rangle,{\bf m})\subset H_k(V,\langle \ell\rangle,{\bf m})$$
to be the bisimplicial subspace generated by bisimplices present in the left column of $\beta$ in \cref{handle}.
(We remark that this definition of $O_{k-1}$ automatically includes bisimplices of degree $([1],[0])$ in $\epsilon$ of \cref{handle}.)
For $k=0$, we define
$$O_{-1}(V,\langle \ell\rangle,{\bf m})\subset H_0(V,\langle \ell\rangle,{\bf m})$$
to be the bisimplicial subspace generated by bisimplices of degree $([1],[0])$ (i.e., vertical bordisms) from $\eta$ of \cref{handle}.
\end{definition}

The subobject $H_k$ contains precisely the data necessary to witness an adjunction in $B_k$. To see this, let ${\bf 1}_{d-2}\in \Delta^{\times d-2}$ be the multisimplex with components ${\bf 1}_i=[1]$ for all $1\leq i\leq d-2$. We recall that ${\rm Adj}_{{\bf 1}_{d-2}}$ satisfies a universal property: given an $(\infty,d)$-category $\mathscr{C}$, and an adjunction in the homotopy 2-category of $\mathscr{C}$, there exists a full homotopy coherent adjunction
$${\rm Adj}_{{\bf 1}_{d-2}}\to \mathscr{C}$$
that sends $f,g,\epsilon$, $\eta$  to the corresponding left adjoint, right adjoint, counit and unit of the adjunction. Moreover, the triangle identities and higher coherences for the triangle identities are sent to the corresponding data in $\mathscr{C}$. Applying the universal property to the bordism category, we obtain the following.

\begin{proposition}\label{adjinbord}
Fix $d\geq 0$. Let ${\bf 1}_{d-2}\in \Delta^{\times d-2}$ be the multisimplex with components ${\bf 1}_i=[1]$ for all $1\leq i\leq d-2$. Let $R$ denote the fibrant replacement functor in $\smcat$. The bordisms labeled by $f,g, \epsilon$ and $\eta$ in Part~3 of \cref{handle} are part of a homotopy coherent adjunction in the fibrant replacement of $\BBord_d^{\RR^d\times U\to U}$. That is, there is a map 
$${\rm Adj}_{{\bf 1}_{d-2}}\to R(\BBord_d^{\RR^d\times U\to U}) \eqlabel{cohadjinbord}$$
that sends $f,g,\epsilon,\eta$ and the triangle identity $\beta$, given by the bisimplex of degree $([1],[2])$
$$
\beta:=\vcenter{\xymatrix@C=2cm{
+\ar[r]^-{\id} \ar[d]_g & + \ar[r]^{\id}\ar[d]_g & +\ar[d]^{\id}
\\
 -\ar[d]_{\id}\ar[r]_{\id} & - \ar[d]^{f} \ar@{{}{ }{}}[r]|{\displaystyle \epsilon} & +\ar[d]^{\id}
 \\
-\ar[d]_{\id} \ar@{{}{ }{}}[r]|{\displaystyle η}  & + \ar[d]^{g}\ar[r]^{\id}  &  +\ar[d]^{g}\\
\ar[r] ^{\id} - & -\ar[r]_{\id} & -.\\
}}
$$
to the corresponding bordisms depicted in \cref{handle}.
Observe that in the Reedy fibrant replacement, the homotopy that contracts the cylinder given by composing the diagram \cref{counitforreal}
can be glued together with the $([1],[2])$ multisimplex that composes this diagram in the horizontal direction,
resulting in a $([1],[2])$-multisimplex whose two outer faces are given by the two columns
and the inner face is degenerate.
Hence, it makes sense to map identities to such simplices.
\end{proposition}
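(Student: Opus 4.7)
The plan is to invoke the universal property of the walking homotopy coherent adjunction $\underline{\rm Adj}$ established by Riehl--Verity, and reduce the claim to producing an adjunction in an underlying homotopy 2-category.

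First, by \cref{simpltosegal}, the bisimplicial space obtained from $R(\BBord_d^{\RR^d\times U\to U})$ at $(V,\langle 1\rangle,{\bf 1}_{d-2}\boxtimes(-,-)\boxtimes{\bf 0}^{d-k-1})$ is weakly equivalent, via the nerve ${\cal N}$, to a simplicial category ${\sf C}$ fibrant in $\sset_{J}\text{-}\Cat$. Under this correspondence, maps ${\rm Adj}_{{\bf 1}_{d-2}}\to R(\BBord_d^{\RR^d\times U\to U})(V,\langle 1\rangle)$ correspond (up to coherent equivalence) to maps $\underline{\rm Adj}\to {\sf C}$ of simplicial categories. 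By Riehl--Verity \cite[Theorems~4.3.9 and 4.4.18]{RiehlVerity}, every adjunction in the homotopy 2-category $\Ho_2{\sf C}$ lifts to such a map, and the space of lifts extending a fixed left adjoint is contractible.

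It therefore suffices to exhibit an adjunction in $\Ho_2{\sf C}$ with left adjoint $f$, right adjoint $g$, unit $\eta$, and counit $\epsilon$ as in \cref{handle}. The first triangle identity is precisely the content of the $([1],[2])$-bisimplex $\beta$ of \cref{triangleid}: its rightmost column composes in the $(d-1)$st direction to $(\id_g\circ_d\epsilon)\circ_{d-1}(\eta\circ_d\id_g)$, its leftmost column composes to a cylinder on $g$, and $\beta$ itself is the 2-cell identifying these two 1-morphisms. The cylinder on $g$ represents $\id_g$ in $\Ho_2{\sf C}$ because in the fibrant replacement identity cylinders become invertible via $d$-thin homotopies, exactly as observed for $1$-bordisms in \cref{one.dimensional.bordisms} (the argument given there for cylinders carries over verbatim to higher-dimensional bordisms, as noted after \cref{d1bords}). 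Reflecting the construction in the $(d-1)$st coordinate of $\RR^d$ supplies the dual triangle identity, completing the adjunction in $\Ho_2{\sf C}$.

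Granted this adjunction, the Riehl--Verity universal property yields a map $\underline{\rm Adj}\to {\sf C}$ sending $f$, $g$, $\eta$, $\epsilon$ to the prescribed bordisms, and the contractibility of the space of extensions allows us to arrange that the triangle identity 2-cell in $\underline{\rm Adj}$ is sent to the specific bisimplex $\beta$ of \cref{triangleid}. The main technical obstacle is checking that the Morse-theoretic birth-death picture of \cref{milnor.construction} genuinely assembles into a multisimplex of $\BBord_d^{\RR^d\times U\to U}$: one must verify that the proposed cut tuples, the resulting cores, and the fiberwise embedding into $\RR^d$ satisfy the transversality and compactness conditions of \cref{bord}, and that the gluing of the contracting homotopy for the cylinder composition of \cref{counitforreal} with the horizontal composition (as discussed in the final remark following the statement) produces a well-defined $([1],[2])$-multisimplex whose inner face is degenerate. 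Once this geometric check is performed, transporting the abstractly constructed map through ${\cal N}$ and the adjunction in \cref{nerve2cat} yields the desired map ${\rm Adj}_{{\bf 1}_{d-2}}\to R(\BBord_d^{\RR^d\times U\to U})$.
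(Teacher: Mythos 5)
Your proposal follows the same overall strategy as the paper's proof: reduce via the Riehl--Verity universal property (which you access explicitly through \cref{simpltosegal}, whereas the paper simply invokes "the universal property of ${\rm Adj}$") to exhibiting an adjunction in the homotopy 2-category, and then use the Morse-flow contraction of cylinders to verify the triangle identity.

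The mechanism you propose for the \emph{second} triangle identity, however, has a genuine gap. You claim that reflecting the construction in the $(d-1)$st coordinate of $\RR^d$ supplies the dual triangle identity. But $\eta$ is a handle of index $k-1$ and $\epsilon$ is a handle of index $k$ (\cref{handle}), and reflection in a single coordinate does not change the Morse index of a handle: the quadratic form $-x_1^2-\cdots-x_k^2+x_{k+1}^2+\cdots+x_d^2$ is invariant under $x_i\mapsto -x_i$. Reflecting therefore interchanges $f$ and $g$ (these are the same manifold $D^{d-1}$ with opposite cut orientation) but does \emph{not} interchange $\eta$ and $\epsilon$. What you would actually produce is a second adjunction datum $(f,g,\eta',\epsilon')$ satisfying the first triangle identity, with $\eta'$ still an index-$(k-1)$ handle distinct from $\epsilon$. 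To salvage the argument you would need a further step identifying this data with $(\eta,\epsilon)$ up to homotopy (e.g., via contractibility of the space of extensions of the left adjoint, together with invertibility of the other triangle composite), none of which is in the proposal. The paper avoids this entirely: it observes that the second triangle identity bordism is itself present in $\BBord_d^{\RR^d\times U\to U}$, obtained by \emph{shuffling} the squares of the Part 3 diagram, and the same Morse-flow argument then applies directly to that shuffle.

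Finally, the ``technical obstacle'' you flag at the end — verifying that the Milnor birth--death picture assembles into a valid multisimplex of $\BBord_d^{\RR^d\times U\to U}$ — is not actually left open. That verification is exactly the content of \cref{milnor.construction}, and the paper's proof simply cites it.
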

\proof
First, observe that $\BBord_d^{\RR^d\times U\to U}$ is closed under composition of the bordisms in Part~3 of \cref{handle}.
It is also closed under composition of the second triangle identity diagram, obtained by shuffling the squares in Part~3. 

By the universal property of ${\rm Adj}_{{\bf 1}_{d-1}}$, it suffices to show that the bordisms in \cref{handle} define an adjunction in the homotopy 2-category of the bordism category, after evaluating at ${\bf 1}_{d-2}\in \Delta^{\times d-2}$. To see that the triangle identities are satisfied in the homotopy 2-category, observe that the bordism in Part~3 is obtained by the family of generalized Morse functions $f_{k,k-1}:\RR^d\to \RR$ in \cref{milnor.construction}. The member of the family at $t=1$ is a Morse function having no critical points, which implies that the bordism is a cylinder. Since cylinders are homotopic to their source cut by a homotopy of rank $d$, i.e., by the Morse flow, the bordism is an identity in the homotopy 2-category. Finally, since all these bordisms and their honest compositions are also present in the fibrant replacement, we indeed have an adjunction. 
\endproof

We now define subobjects of the bordism category that will be used in the proof of the main theorem. These subobjects will serve as an intermediate step between bordisms in the $k$th level of the index filtration in \cref{index.filtration} and bordisms in $H_k$. The next definition includes the bordisms obtained from \cref{milnor.construction}, before cutting in the $(d-2)$nd direction.

\begin{definition}\label{handletails}
Given $d≥0$ and $0≤k≤d$, we define a subobject $\Hb_k\subset \BBord_d^{\RR^d\times U\to U}$ 
as follows. 
Fix $V\in \cartsp$, $\langle \ell\rangle\in \Gamma$ and ${\bf m}\in \Delta^{\times (d-2)}$.
For $k>0$, we define $\Hb_k(V,\langle \ell\rangle,{\bf m})$
to be the subobject that is generated from the following bisimplices.

For $k=0$, we only take the bordism given by the second diagram, setting $k=1$ in that diagram, so that we get the index~0 handle as a bordism $D^d:\emptyset\to S^{d-1}$.
We require that the whole bordism maps to a single element of $\langle\ell\rangle$.
We omit $V$ from the notation below and understand the bordisms to be the corresponding trivial family of bordisms parametrized over~$V$.
We also omit the simplex~$l$ (responsible for smooth deformations of cut tuples as in \cref{enrichedbordstr}) from the notation below,
taking for every indicated bordism the whole connected component in the corresponding simplicial set.
\begin{enumerate}
\item[\fbox{$\zeta$}] The handle with tails of index~$k$, given explicitly as a bordism in bidegree $([1],[1])$ given by \cref{biunit} in  \cref{milnor.construction}:
\begin{equation}\label{bicounitforreal}
\xymatrix@C=.4cm{
S^{d-k-1}\times S^{k-1} \ar[rr]^{\id}\ar[dd]_{B} && S^{d-k-1}\times S^{k-1}\ar[dd]^{R}\\
&\zeta &\\
S^{d-k-1}\times S^{k-1} \ar[rr] _{\id} && S^{d-k-1}\times S^{k-1}\\
}
\end{equation}
together with all bordisms
occurring in the following diagram, provided that its composition in the 1st simplicial direction (i.e., the vertical direction) can be extended to a diagram of the above form:
$$
\xymatrix@C=2cm{
S^{d-k-1}\times S^{k-1} \ar[r]^{\id}\ar[d]_{\fbox{$Q$}}& S^{d-k-1}\times S^{k-1}\ar[d]^{S^{d-k-1}\times S^{k-1}\times[0,1]}\\
D^{d-k-1}\times S^{k-1} \ar[d]_{\fbox{$P$}} &  S^{d-k-1}\times S^{k-1}\ar[d]^{S^{d-k-1}\times S^{k-1}\times[0,1]}\\
S^{d-k-1}\times S^{k-1} \ar[r] _{\id}& S^{d-k-1}\times S^{k-1}\\
}
$$
\item[\fbox{$\xi$}] The handle with tails of index~$k-1$, explicitly as a bordism in bidegree $([1],[1])$ given by the bordism $\xi$ in \cref{bibeta} of \cref{milnor.construction}:
\begin{equation}\label{biunitforreal}
\xymatrix@C=.4cm{
D^{d-k-1}\times S^{k-1} \ar[rr]^{\id}\ar[dd]_{S} && D^{d-k-1}\times S^{k-1}\ar[dd]^{B'}\\
&\xi&\\
D^{d-k-1}\times S^{k-1} \ar[rr] _{\id} && D^{d-k-1}\times S^{k-1}\\
}
\end{equation}
together with all bordisms
occurring in the following diagram, provided that its composition in the 1st simplicial direction (i.e., the vertical direction) can be extended to a diagram of the above form:
$$\xymatrix@C=2cm{
\ar[r]^{\id} D^{d-k-1}\times S^{k-1}\ar[d]_{D^{d-k-1}\times S^{k-1}\times[0,1]} & D^{d-k-1}\times S^{k-1} \ar[d]^{\fbox{$P$}}\\
D^{d-k-1}\times S^{k-1}\ar[d]_{D^{d-k-1}\times S^{k-1}\times[0,1]} & S^{d-k-1}\times S^{k-1} \ar[d]^{\fbox{$Q$}}\\
\ar[r] ^{\id}  D^{d-k-1}\times S^{k-1} & D^{d-k-1}\times S^{k-1}\\
}$$
\item[\fbox{$\delta$}] Consider the handle cancellation with tails 
given by \cref{bibeta} in \cref{milnor.construction}:
\begin{equation}\label{trianglebeta}
\begin{tikzcd}[column sep={6em,between origins}, row sep={2em,between origins}]
	{S^{d-k-1}\times S^{k-1}} && {S^{d-k-1}\times S^{k-1}} && {S^{d-k-1}\times S^{k-1}} \\
	& {{\rm id}_Q} & {} \\
	{D^{d-k-1}\times S^{k-1}} && {D^{d-k-1}\times S^{k-1}} & \zeta \\
	&& {} \\
	& \xi & {S^{d-k-1}\times S^{k-1}} && {S^{d-k-1}\times S^{k-1}} \\
	&&& {{\rm id}_Q} \\
	{D^{d-k-1}\times S^{k-1}} && {D^{d-k-1}\times S^{k-1}} && {D^{d-k-1}\times S^{k-1}}
	\arrow["Q"', from=1-1, to=3-1]
	\arrow["Q", from=1-3, to=3-3]
	\arrow["P", from=3-3, to=5-3]
	\arrow[from=1-1, to=1-3]
	\arrow[from=1-3, to=1-5]
	\arrow[from=5-3, to=5-5]
	\arrow["Q", from=5-3, to=7-3]
	\arrow[from=7-3, to=7-5]
	\arrow["Q", from=5-5, to=7-5]
	\arrow[from=7-1, to=7-3]
	\arrow[from=3-1, to=3-3]
	\arrow["R", from=1-5, to=5-5]
	\arrow["S"', from=3-1, to=7-1]
\end{tikzcd}
\end{equation}
We include bordisms given by the $([2],[1])$-bisimplices appearing in the left and right rectangles above.
We also include bordisms given by the $([1],[1])$-bisimplices given by their compositions (in the vertical direction, i.e., the $(d-1)$st direction).
Finally we include bordisms given by the $([1],[2])$-bisimplex given by the left and right bordisms above and the $([1],[1])$-bisimplex given by their composition. 
\end{enumerate}
The fiberwise embedding into $\RR^d\times U$ of each vertex and each edge in the above diagrams are induced by restricting the fiberwise embedding of the corresponding 2-cell.

Summarizing, $\Hb_k$ contains bordisms diffeomorphic to $P$,~$Q$,~$\zeta$,~$\xi$,~$\xi\circ_{d-1} \id_Q$,~$\id_Q\circ_{d-1} \zeta$, and (the domain of) the triangle identity (with tails) $\delta:=(\id_Q\circ_{d-1} \zeta)\circ_d (\xi\circ_{d-1} \id_Q)$.
Here $\circ_i$ denotes the composition in the $i$th direction.
\end{definition}

We also define a further subobject $\Ob_{k-1}$ of $\Hb_k$, similar to the subobject $O_{k-1}$ of $H_k$. 

\begin{definition}
\label{def.Obar}
Given $d≥0$ and $0<k≤d$,
let $\Ob_{k-1}\subset \Hb_k$ denote the subobject of $\Hb_k$ (\cref{handletails}) defined as follows.
Fix $V\in \cartsp$, $\langle \ell\rangle\in \Gamma$ and ${\bf m}\in \Delta^{\times (d-2)}$.
For $k>0$, we define
$$\Ob_{k-1}(V,\langle \ell\rangle,{\bf m})\subset \Hb_k(V,\langle \ell\rangle,{\bf m})$$
to be the bisimplicial subspace generated by bisimplices present in the left column of $\delta$ in \cref{handle}.
(We remark that this definition of $\Ob_{k-1}$ automatically includes bisimplices of degree $([1],[0])$ in $\zeta$ of \cref{handle}.)
For $k=0$, we define
$$\Ob_{-1}(V,\langle \ell\rangle,{\bf m})\subset \Hb_0(V,\langle \ell\rangle,{\bf m})$$
to be the bisimplicial subspace generated by bisimplices of degree $([1],[0])$ (i.e., vertical bordisms) from $\xi$ of \cref{handle}.
\end{definition}

\begin{definition}
\label{def.Ht}
Given $d\geq 0$ and $0\leq k\leq d$, let $\Ht_k\subset \BBord_d^{\RR^d\times U\to U}$ denote the subobject defined as follows. 
Fix $V\in \cartsp$, $\langle \ell\rangle\in \Gamma$ and ${\bf m}\in \Delta^{\times (d-2)}$.
We define 
$$\Ht_{k}(V,\langle \ell\rangle,{\bf m})\subset \BBord_d^{\RR^d\times U\to U}(V,\langle \ell\rangle, {\bf m})$$ 
to be the simplicial subset generated by the following simplices. With respect to the cartesian space direction, the corresponding family of bordisms is trivial.
\begin{enumerate}
\item[\fbox{$\widetilde{\zeta}$}] We take 1-simplices such that the closure of the interior of the core is diffeomorphic to the Milnor bridge \cref{biunitwcaps}, constructed in \cref{milnor.construction}. 
\item[\fbox{$\widetilde{\xi}$}]  We take 1-simplices such that the closure of the interior of the core is diffeomorphic to the Milnor bridge obtained by replacing $k$ by $k-1$ in \cref{biunitwcaps}. 
\item[\fbox{$\widetilde{\delta}$}] We take 2-simplices such that the closure of the interior of the core is diffeomorphic to the bordism obtained by composing the two vertical columns in \cref{bibeta}.
\end{enumerate}
The structure maps for $\Ht_k$ are given by restricting the corresponding structure maps for $\BBord_d^{\RR^d\times U\to U}$. Similarly, we define $\widetilde{O}_{k-1}$ to be the subobject 
$$\Ot_{k-1}(V,\langle \ell\rangle,{\bf m})\subset \BBord_d^{\RR^d\times U\to U}(V,\langle \ell\rangle,{\bf m})$$ 
generated by the 1-simplices 
\begin{enumerate}
\item[\fbox{$\widetilde{\xi}$}] We take 1-simplices such that the closure of the interior of the core is diffeomorphic to the Milnor bridge obtained by replacing $k$ by $k-1$ in \cref{biunitwcaps}. 
\end{enumerate}
\end{definition}

Our next goal is to obtain handles with tails, i.e. bordisms in $\widetilde{H}_k$ from handles via a homotopy pushout diagram. Essentially, the proof amounts to ``cutting off tails'' from the Milnor bridge in \cref{milnor.construction} and then cutting out the counit \cref{counitforreal} from the bordism \cref{biunit}, by cutting off codimension 2 tails (see \cref{handlecutfig}). 

\begin{figure}[ht]
$$
\input{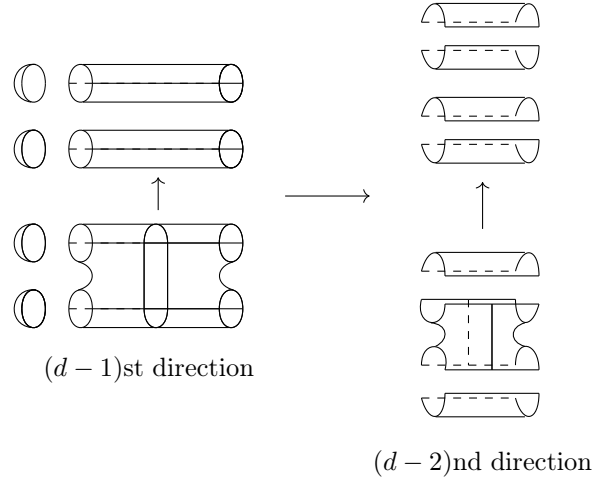}
$$
\caption{The proof of \cref{handlecutout} proceeds by first cutting out bordisms of the form $\zeta$ in \cref{biunit}, by cutting off caps in the $(d-1)$st direction, and then cutting out bordisms of the form $\epsilon$, by cutting in the $(d-2)$nd direction as indicated. The figure described the process for the special case $d=3$,$k=2$.} \label{handlecutfig}
\end{figure}

\begin{proposition}\label{handlecutout}
For any $d≥0$ and $0≤k≤d$,
the following diagrams are homotopy pushout diagrams in $\smcatdual_{\infty,d}$:
$$
\xymatrix{
O_{k-1}\ar[r]\ar[d] & \Ob_{k-1}\ar[d]\ar[r] & \Ot_{k-1}\ar[d] \\
H_{k}\ar[r] & \Hb_{k}\ar[r] & \Ht_k.
}$$
The objects in the diagrams are as in \cref{handle} ($H_k$), \cref{def.O} ($O_{k-1}$), 
\cref{handletails} ($\Hb_k$), \cref{def.Obar} $\Ob_{k-1}$), \cref{def.Ht} ($\Ht_k$, $\Ot_{k-1}$). 
\end{proposition}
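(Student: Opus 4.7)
The plan is to verify each square as a strict pushout of subpresheaves of $\BBord_d^{\RR^d \times U \to U}$, and then promote this to a homotopy pushout in $\smcatdual_{\infty,d}$. Since every inclusion in both squares is a monomorphism of bisimplicial presheaves, hence a cofibration in the injective model structure (which persists after left Bousfield localization), and since the model category is left proper, it suffices to verify that each square of subpresheaves
$$\xymatrix{A \ar[r]\ar[d] & B\ar[d]\\ C\ar[r] & D}$$
satisfies $D = B \cup C$ and $B \cap C = A$ at each bisimplicial degree.

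For the left square, I will show that $\Hb_k = H_k \cup \Ob_{k-1}$ and $H_k \cap \Ob_{k-1} = O_{k-1}$ by direct inspection of the generating bisimplices, aided by the key structural observation (\cref{biunittoeps}, \cref{bibeta}) that each generator of $\Hb_k$ is obtained from a generator of $H_k$ by pre- and post-composition in the $(d-2)$nd direction with identity ``tail'' bordisms of the form $D^{d-k-1}\times D^{k-1}$. Splitting generators into two classes — those whose interior contains $\epsilon$-type counit content (which lie in $H_k$) and those whose interior is purely unit-side, i.e., $\xi$-type (which lie in $\Ob_{k-1}$) — shows that every bisimplex of $\Hb_k$ is covered. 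For the intersection, a bisimplex lying in $\Ob_{k-1}$ has no $\epsilon$-interior content, while a bisimplex lying in $H_k$ has no tail content, so their intersection is exactly the subobject generated by tail-free, $\epsilon$-free bisimplices in the $\eta$-column, which is $O_{k-1}$. The reverse inclusion $O_{k-1}\subseteq H_k\cap\Ob_{k-1}$ is immediate from how $O_{k-1}$ sits inside both via the canonical embedding of $\eta$ into $\xi$ as its tail-free core.

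For the right square, the analogous decomposition runs in the $(d-1)$st direction. By \cref{def.Ht}, the generators of $\Ht_k$ that are not in $\Hb_k$ are the full Milnor bridges $\widetilde\zeta, \widetilde\xi, \widetilde\delta$ viewed as single $1$-simplices (or a $2$-simplex in the case of $\widetilde\delta$) in the $d$th direction, obtained by forgetting the $(d-1)$-direction cut along the ellipsoid $G^{-1}(0)\cong S^{d-k-1}\times S^{k-1}$ that splits each Milnor bridge into two halves in $\Hb_k$. Generators supported on the unit-handle side $\widetilde\xi$ lie in $\Ot_{k-1}$, whereas generators involving the $\widetilde\zeta$-core or the full $\widetilde\delta$ are already in $\Hb_k$ through their bisimplicial refinements $\zeta$ and $\delta$. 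The overlap $\Hb_k\cap\Ot_{k-1}$ picks out those bisimplices which have no $\widetilde\zeta$-content and whose $(d-1)$-direction cut is present, which is $\Ob_{k-1}$.

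The principal technical obstacle will be the bookkeeping surrounding the triangle-identity $2$-simplices $\beta$, $\delta$, $\widetilde\delta$: their faces, degeneracies, and compositions weave together unit, counit, cylinder, and tail data simultaneously, so one must check that every such bisimplex factors uniquely through the proposed cover. This reduces to a finite enumeration using the explicit diagrams \cref{triangleid} and \cref{trianglebeta}, together with the observation from \cref{milnor.construction} that the $([2],[1])$- and $([1],[2])$-pieces of $\beta$ (respectively $\delta$) are generated by compositions of data that lie in at most one of $H_k\setminus O_{k-1}$ or $\Ob_{k-1}\setminus O_{k-1}$, never straddling both nontrivially.
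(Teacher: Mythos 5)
Your outer framework --- verify that each square is a strict objectwise pushout, then promote via left properness since all maps are injective cofibrations --- is sound in principle, but the factual input fails: neither square is a strict pushout of subpresheaves. For the left square, the generating bisimplex $\zeta \in \Hb_k$ \cref{bicounitforreal} is the index-$k$ handle \emph{with tails}; this bordism lies in neither $H_k$ (whose generators $\epsilon$, $\eta$, $\beta$ have the tails already removed by cuts in the $(d{-}2)$nd direction) nor $\Ob_{k-1}$ (the $\xi$-column). Likewise for the right square: the full Milnor bridge \cref{biunitwcaps}, with $(d{-}1)$-cuts $\emptyset$ and $S^{d-k-1}\times S^{k-1}$, generates $\Ht_k$ but lies in neither $\Hb_k$ (whose $\zeta$-bisimplices already have the caps removed, carrying $S^{d-k-1}\times S^{k-1}$ at \emph{both} ends of the $(d{-}1)$st direction) nor $\Ot_{k-1}$. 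The paper's proof makes this explicit: it treats the comparison map $\Ot_{k-1}\sqcup_{\Ob_{k-1}}\Hb_k\to\Ht_k$ as a weak equivalence rather than an isomorphism, and its surjectivity step is phrased as producing a zigzag connecting a given necklace to one in the image --- confirming that the comparison is not surjective on the nose.

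The actual argument is homotopy-theoretic. One passes to stalks on $\cartsp$, fixes all multisimplicial directions but one (the $(d{-}1)$st for the right square, the $(d{-}2)$nd for the left), and compares the two sides through the Dugger--Spivak necklace-category criterion of Grady--Pavlov. The geometric input is that one can insert the missing splitting cuts at distance $\epsilon/2$ from the core of each $\widetilde{\zeta}$ (respectively $\zeta$); combined with the simplicial Whitehead theorem, this yields a zigzag of simplicial homotopies exhibiting both necklace-category nerves as contractible on each ambient-bordism component, together with surjectivity on the set of components. Your proposed ``finite enumeration of generating bisimplices'' cannot substitute for this: the discrepancy between $\Ht_k$ and the pushout is not a finite list of missing generators but a continuum of allowable cut positions, trivialized only up to homotopy via the contractibility of the $\Cut$ spaces.
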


\begin{proof}
The right pushout square requires cutting in the $(d-1)$st direction, while the left pushout square requires cutting in the $(d-2)$nd direction. We begin with the right pushout square. 

\paragraph{The right pushout square:}

We will show that after passing to stalks on $\cartsp$, evaluating on an arbitrary object $\langle \ell\rangle\in \Gamma$ and multisimplex ${\bf m}\in \Delta^{S}$, with $S=\{1,\ldots, d-2,d\}$, the canonical map from the objectwise pushout $f:\Ot_{k-1}\sqcup_{\Ob_{k-1}}\Hb_k\to \Ht_k$ is a weak equivalence in the Rezk model structure on simplicial spaces.
This is sufficient to prove that $f$ is a weak equivalence by Grady--Pavlov \cite[\ecref{EL-multiple.single}]{GradyPavlov}.
Furthermore, by Grady--Pavlov \cite[\ecref{EL-levelwise}]{GradyPavlov}, it suffices to show that $f$ induces an equivalence in the Joyal model structure on simplicial sets at each simplicial level $[l]\in \Delta$ indexing the target simplicial direction (i.e., the ``space'' direction).

Observe that $f$ is bijective on vertices, since vertices are bordisms with a single cut in the $(d-1)$st simplicial direction, which are present in both the pushout and in $\Ht_k$ by definition.
By Grady--Pavlov \cite[\ecref{EL-necklaces.prop}]{GradyPavlov}, it suffices to show that for all vertices $x,y\in \widetilde H_k$, the induced map on Dugger--Spivak necklace categories   
$$\Cnec(f):\Cnec(\Ot_{k-1}\sqcup_{\Ob_{k-1}}\Hb_k)(x,y)\to \Cnec(\Ht_k)(x,y)$$
is a weak equivalence in the classical model structure on simplicial sets.

Explicitly, the Dugger–Spivak necklace categories associated to the pushout can be described as follows.

\begin{itemize}
\setlength\itemsep{0em}
\item[$\circ$] A necklace of length $t$ in the category $\Cnec(\Ot_{k-1}\sqcup_{\Ob_{k-1}}\Hb_k)(x,y)$ is a composable tuple of embedded bordisms $(M_s,C_s)_{s=1}^t$ called \emph{beads}. The tuple satisfies the following property.
\begin{enumerate}
\setlength\itemsep{0em}
\item[(P)] Each bead $(M_s,C_{s})$ is either in $\Ot_{k-1}$ or $\Hb_{k}$. 
\end{enumerate}

\item[$\circ$] A morphism in  $\Cnec(\Ot_{k-1}\sqcup_{\Ob_{k-1}}\Hb_k)(x,y)$ is a composition of two basic types of morphisms. 
\begin{itemize}
\setlength\itemsep{0em}
\item The first type inserts a cut $\Upsilon$ (Grady--Pavlov \cite[\ecref{cut}]{GradyPavlov}) in a bead, obtaining a new bead $(M_s,C_s\cup\Upsilon)$,
provided that Property~(P) continues to hold. 

\item The second type glues consecutive beads together, reducing the number of beads by one,
provided that Property~(P) continues to hold. 
\end{itemize}
\end{itemize}
The necklace categories of $\Ht_{k}$ are identical, except Property P above is
replaced with the condition that each bead belongs to~$\Ht_k$.
The map $\Cnec(f)$ simply regards a necklace in $\Ot_{k-1}\sqcup_{\Ob_{k-1}}\Hb_k$ as a necklace in $\Ot_k$ (by forgetting Property P). 

The map $\Cnec(f)$ splits as a coproduct, indexed by the germs of cores of ambient bordisms with endcuts $x$ and~$y$.
(In other words, morphisms in the necklace category can only exist between necklaces whose underlying bordisms from~$x$ to~$y$ coincide.)
We claim that each disjoint summand of ${\cal N}(\Cnec(\Ht_{k})(x,y))$ and ${\cal N}(\Cnec(\Ot_{k-1}\sqcup_{\Ob_{k-1}}\Hb_k)(x,y))$ corresponding to the ambient bordism is contractible.
To this end, we use
the simplicial Whitehead theorem (see, for example, Grady–Pavlov \cite[\ecref{EL-simplicial.whitehead}]{GradyPavlov}). We prove the claim for the pushout, the proof for $\Ht_k$ is identical.

Choose an arbitrary map from the subdivided sphere
$$f:{\rm Sd}^k\partial \Delta[n]\to {\cal N}(\Cnec(\Ot_{k-1}\sqcup_{\Ob_{k-1}}\Hb_k)(x,y)).$$
Since the closure of the interior of the core of the each bordism of the form $\widetilde{\zeta}$ (\cref{def.Ht}) picked out by the map~$f$ is compact and there are only finitely many cut tuples picked out by the subdivided sphere,
there is a minimum distance, say $\epsilon$, from the closure of the interior of the core of $\widetilde{\zeta}$ to the nearest cut. This continues to hold for $l>0$, since the smooth $l$-simplex is compact, so we can choose a single $\epsilon$ for all points $t∈Δ^l$. The center horizontal cut $z=S^{d-k-1}\times S^{k-1}$ and bottom horizontal cut $w=S^{d-k-1}\times S^{k-1}$ in \cref{biunit} can be inserted at a distance of say $\epsilon/2$ from the closure of the interior of the core of $\widetilde{\zeta}$. Hence, the cuts $z$ and $w$ can be inserted simultaneously in each $\widetilde{\zeta}$ in each necklace picked out by the subdivided sphere.

Let $i$ index the bordisms of the form $\widetilde{\zeta}$ occurring in a given bordism and let $z_i,w_i$ denote the cuts constructed above, corresponding to the $i$th $\widetilde{\zeta}$. 
We will produce a zig-zag of simplicial homotopies from $f$ to a constant map,
which inserts the cut $z_{i}$ and $w_i$ defined above. The zig-zag of simplicial homotopies consists of three simplicial homotopies.
The first homotopy inserts the cut $z_{i}$ and $w_i$ for each vertex in the subdivided sphere simultaneously.
The second converts all bead cuts to ordinary cuts, simultaneously for all vertices in the subdivided sphere.
The last homotopy removes all cuts but $z_{i}$, $w_i$, $x$, and~$y$. 	

To complete the proof, we need only show that the map $\Cnec(f)$ is bijective on the disjoint summands indexed by ambient bordisms.
Since $\Cnec(f)$ is  just an inclusion on such disjoint summands, by definition, we need only prove surjectivity.
But applying the above construction to a single necklace in $\Cnec(\widetilde{H}_k)$ produces a zig-zag connecting this necklace to a necklace in the image of $\Cnec(f)$, which proves surjectivity.

\paragraph{The left pushout square:}

The proof is similar to the proof for the right square, except that we cut in the $(d-2)$nd direction. 
Again we pass to stalks on $\cartsp$ and evaluate on an arbitrary object $\langle \ell\rangle\in \Gamma$. We also evaluate on an arbitrary multisimplex ${\bf m}\in \Delta^{S}$, with $S=\{1,\ldots, d-3,d-1,d\}$, isolating the $(d-2)$nd direction. We again claim that $f$ induces an equivalence in the Joyal model structure on simplicial sets at each simplicial level $[l]\in \Delta$ indexing the target simplicial direction (i.e., the ``space'' direction).

Observe that $f$ is again bijective on vertices, since vertices are bordisms with a single cut in the $(d-2)$st simplicial direction, which are present in both the pushout and in $\Hb_k$ by definition. The Dugger–Spivak necklace categories associated to the pushout and $\Hb_k$ are described precisely as in the proof of the right pushout square, but with $\Ob_{k-1}$ replacing $\Ot_{k-1}$, $\Hb_k$ replacing $\Ht_k$ and $O_{k-1}$ replacing $\Ob_{k-1}$. 

The map $\Cnec(f)$ again splits as a coproduct, indexed by the germs of cores of ambient bordisms with endcuts $x$ and~$y$. We claim that each disjoint summand of ${\cal N}(\Cnec(\Ht_{k})(x,y))$ and ${\cal N}(\Cnec(\Ob_{k-1}\sqcup_{O_{k-1}}H_k)(x,y))$ corresponding to the ambient bordism is contractible and again invoke the simplicial Whitehead theorem (see, for example, Grady–Pavlov \cite[\ecref{EL-simplicial.whitehead}]{GradyPavlov}). We prove the claim for the pushout, the proof for $\Hb_k$ is identical.

Choose an arbitrary map from the subdivided sphere
$$f:{\rm Sd}^k\partial \Delta[n]\to {\cal N}(\Cnec(\Ot_{k-1}\sqcup_{\Ob_{k-1}}H_k)(x,y)).$$
Since the closure of the interior of the core of the each bordism of the form $\zeta$ (\cref{handletails}), picked out by the map~$f$ is compact and there are only finitely many cut tuples picked out by the subdivided sphere,
there is a minimum distance, say $\epsilon$, from the closure of the interior of the core of $\zeta$ to the nearest cut. This continues to hold for $l>0$, since the smooth $l$-simplex is compact, so we can choose a single $\epsilon$ for all points $t∈Δ^l$. The two center horizontal cuts $z=D^{d-k-1}\times S^{k-2}$ and $w=D^{d-k-1}\times S^{k-2}$ in \cref{biunittoeps} can be inserted at a distance of say $\epsilon/2$ from the closure of the interior of the core of each $\zeta$. Hence, the cuts $z$ and $w$ can be inserted simultaneously in each $\zeta$ in each necklace picked out by the subdivided sphere.

Let $i$ index the bordisms of the form $\zeta$ occurring in a given bordism and let $z_i,w_i$ denote the cuts constructed above, corresponding to the $i$th $\zeta$. 
We will produce a zig-zag of simplicial homotopies from $f$ to a constant map,
which inserts the cuts $z_{i}$ and $w_i$ defined above. The zig-zag of simplicial homotopies consists of three simplicial homotopies.
The first homotopy inserts the cut $z_{i}$ and $w_i$ for each vertex in the subdivided sphere simultaneously.
The second converts all bead cuts to ordinary cuts, simultaneously for all vertices in the subdivided sphere.
The last homotopy removes all cuts but $z_{i},w_i$, $x$, and~$y$. 	

To complete the proof, we need only show that the map $\Cnec(f)$ is bijective on the disjoint summands indexed by ambient bordisms.
Since $\Cnec(f)$ is  just an inclusion on such disjoint summands, by definition, we need only prove surjectivity.
But applying the above construction to a single necklace in $\Cnec(\widetilde{H}_k)$ produces a zig-zag connecting this necklace to a necklace in the image of $\Cnec(f)$, which proves surjectivity.
\end{proof}

\def\sC{\mathscr{C}}
\def\cM{{\cal M}}
\def\cA{{\cal A}}
\def\Adj{{\rm Adj}}
\def\bAdj{\overline{\rm Adj}}

\begin{proposition}\label{handleequiv}
Fix $d≥1$ and $k≥1$.
Denote by $\cM$ the coproduct summand of the (derived) mapping simplicial presheaf
$$\Map_{Γ⨯Δ^d}(\Adj_{{\bf 1}_{d-2}}, R(\BBord_d^{\RR^d\times U\to U}))$$
(\cref{adjinbord}),
corresponding to maps that send the left adjoint 1-morphism to the connected component of bordism~$f$ (\cref{handle}).
The adjoint map $\cM⨯\Adj_{{\bf 1}_{d-2}}→R(\BBord_d^{\RR^d\times U\to U})$ restricts to maps $\cM⨯\bAdj_{{\bf 1}_{d-2}}→H_k$ and $\cM⨯\bar\eta_{{\bf 1}_{d-2}}→O_{k-1}$.
Below, we abbreviate $\bAdj_{{\bf 1}_{d-2}}$ as $\bAdj$ and likewise for $\bar\eta$.
The commutative square
$$\xymatrix{
\cM⨯\bar\eta \ar[r] \ar[d] & O_{k-1} \ar[d]\cr
\cM⨯\bAdj \ar[r] & H_k.\cr
}$$
is homotopy cocartesian in $\smcatdual_{\infty,d}$. 
\end{proposition}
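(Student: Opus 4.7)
My plan is to identify the square as the outcome of attaching the full adjunction data $\cM\times\bAdj$ to the unit-only subobject $O_{k-1}$, and to verify this using the universal property of free homotopy coherent adjunctions due to Riehl–Verity. By \cref{adjinbord}, any map $\Adj_{{\bf 1}_{d-2}}\to R(\BBord_d^{\RR^d\times U\to U})$ whose left adjoint 1-morphism lies in the connected component of the bordism $f$ is forced to send the atomic cells to bordisms diffeomorphic to $f,g,\eta,\epsilon,\beta$ of \cref{handle}, so its image lies inside $H_k$. Parametrizing this construction over $\cM$ produces the map $\cM\times\bAdj\to H_k$, whose restriction along the inclusion of the subcomputad $\bar\eta\hookrightarrow\bAdj$ (the one generated by $f$, $g$, and~$\eta$, in the spirit of \cref{subcomputads}) lands in $O_{k-1}$ by \cref{def.O}. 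Commutativity of the square is then immediate from the definitions.

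To check the pushout property, write $P$ for the homotopy pushout in $\smcatdual_{\infty,d}$ and let $\phi:P\to H_k$ be the canonical comparison. I plan to test $\phi$ against an arbitrary fibrant $\mathscr{C}\in\smcatdual_{\infty,d}$ and, using \cref{simpltosegal} together with the Quillen equivalence ${\cal N}$ of \cref{nerve2cat}, to transport the question into an equivalence of derived mapping spaces of simplicial categories in the ${\bf 1}_{d-2}$-direction. There, the required equivalence reduces to the assertion that the restriction map from the space of homotopy coherent adjunctions to the space of unit data (relative to a prescribed left adjoint component) is an acyclic Kan fibration onto its essential image; this is exactly Riehl–Verity \cite[Proposition 4.4.17, Theorem 4.4.18]{RiehlVerity}. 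The coproduct summand~$\cM$ is what singles out the correct essential image, namely those adjunctions whose left adjoint lies in the $f$-component.

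The chief technical obstacle will be translating between the Joyal-enriched simplicial category formalism of Riehl–Verity and the multisimplicial Segal space framework used here; this is the same translation performed when verifying that fibrant objects of $\smcatdual_{\infty,d}$ admit duals, and I would execute it by applying the tensoring functor of \cref{dcatadjoints} pointwise over ${\bf m}\in\Delta^{\times(d-2)}$ and invoking Riehl–Verity fiberwise. A secondary concern is compatibility with the Bousfield localizations: since both $\cM\times\bar\eta\to\cM\times\bAdj$ and $O_{k-1}\to H_k$ are cofibrations between cofibrant objects in the multiple projective model structure, the ordinary pushout already computes the homotopy pushout there, and the equivalence persists after left Bousfield localization to $\smcatdual_{\infty,d}$ by left properness.
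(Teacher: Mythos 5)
Your proposal correctly sets up the commutative square, identifies Riehl--Verity's theory of homotopy coherent adjunctions as the categorical tool, and handles the Bousfield-localization bookkeeping correctly. However, the central reduction is a gap, not a proof.

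You propose to test the comparison map $\phi:P\to H_k$ against an arbitrary fibrant $\mathscr{C}$ and reduce the claim to Riehl--Verity's fibration statement about $\map(\underline{\rm Adj},\mathsf{C})\to\map(\underline{f},\mathsf{C})$. But Riehl--Verity's results concern maps out of the free adjunction computads into a \emph{fibrant} simplicial category; they say nothing about the internal geometry of the bordism subobjects $H_k$ and $O_{k-1}$. What actually needs controlling is the moduli stack of counit and triangle-identity cells \emph{inside} $H_k$ --- the spaces parametrizing Milnor bridges and birth-death families of generalized Morse functions together with their fiberwise embeddings into $\RR^d$. A priori $H_k$ could have ``extra'' moduli of $\epsilon$-, $g\epsilon$-, and $\beta$-cells over a fixed unit datum, in which case $H_k$ would strictly contain the free extension $P=\cM\times\bAdj\sqcup_{\cM\times\bar\eta}O_{k-1}$ and the comparison would fail. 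Your ``reduction'' tacitly assumes exactly what must be proved.

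The paper's proof supplies the missing geometric input by working directly on the bordism side. It examines the maps $R(P)_\epsilon\to R(H_k)_\epsilon$ (and likewise for $g\epsilon$, $\beta$) on the coproduct summands of moduli stacks, and shows that the homotopy fiber $\mathcal{F}_v$ over any vertex $v\in H_{k,\epsilon}$ is contractible. This is done by constructing maps $\mathcal{F}_v\to G_0$ and $G_0\to\mathcal{F}_v$, where $G_0$ is the moduli stack of cylinders from $g_0$ to itself --- contractible by \cref{contractiblecylinders} via the Morse flow --- and then using the universal property of the free homotopy coherent adjunction (the second triangle identity, after lifting $\bAdj\to R(P)$ to $\Adj\to R(P)$) to show the round-trip $\mathcal{F}_v\to G_0\to\mathcal{F}_v$ is homotopic to the identity. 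It is this interplay of the geometric contractibility (\cref{contractiblecylinders}) with the formal Riehl--Verity lift that proves the fiber contractible, and hence the pushout claim. Your approach omits \cref{contractiblecylinders} entirely, and that omission is fatal: without it there is no way to control the moduli of cells in $H_k$. (Note also that testing against $\mathscr{C}$ and invoking Riehl--Verity in $\mathscr{C}$ is closer to how the paper \emph{uses} \cref{handleequiv} in the proof of \cref{bkequivalence}, which cites Riehl--Verity Propositions 4.4.7 and 4.4.12; running that argument to \emph{prove} \cref{handleequiv} would be circular.)
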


\begin{proof}
The object $\bAdj$ contains $\bar\eta$ together with the following nondegenerate bisimplices:
\begin{itemize}
\item A $([1],[1])$-bisimplex $ε$.
\item A $([1],[1])$-bisimplex $gε$.
\item A $([2],[1])$-bisimplex witnessing $gε$ as the composition $\id_g ∘ ε$ of $\id_g$ and $ε$ in the $(d-1)$st direction, where $\id_g$ denotes the identity on~$g$ in the $d$th direction.
\item A $([1],[2])$-bisimplex witnessing $\id_g$ as the composition of $ηg$ and $gε$ in the $d$th direction.
\end{itemize}

The object $H_k$ contains $O_{k-1}$ together with the same types of nondegenerate bisimplices,
but with different moduli stacks.

Denote by $P$ the objectwise homotopy pushout $$P=\cM⨯\bAdj⊔_{\cM⨯\bar\eta}O_{k-1}.$$
Denote by $R(P)$ the fibrant replacement of~$P$ in the model category $\smcatdual_{∞,d}$.
We have an induced chain of maps
$$P\to R(P)\to R(H_k),$$
which induces maps $R(P)_ε→R(H_{k})_{ε}$, $R(P)_{g\epsilon}\to R(H_{k})_{gε}$, $R(P)_β→R(H_{k})_{β}$,
where the subscripts denote the corresponding coproduct summands of moduli stacks of bisimplices in $R(P)$ respectively $R(H)_k$.
It suffices to show that these induced maps are weak equivalences.

We have a map $P_ε→H_{k,ε}$ to the moduli stack given by the coproduct summand of bordisms in the connected component of $ε$ in $H_k([1],[1])$
that extracts the $ε$-cell from a map $\bAdj→H_k$.
Fix a vertex $v∈H_{k,ε}$ and consider the (objectwise) homotopy fibers $F_v$ and ${\cal F}_v$ of $P_ε→H_{k,ε}$ and $R(P)_ε→R(H_{k})_{ε}$ over~$v$.
We have an induced chain of maps
$$F_v\to {\cal F}_v\to \{v\}.$$
It suffices to show that the right map is a weak equivalence for any $v$, since this implies that $R( P)_ε→R(H_{k})_{ε}$ is a weak equivalence.

A point in $F_v$ is given by a choice of adjunction data such that the image of its counit in $H_{k,ε}$ is given by the vertex $v\in H_{k,ε}$.
Fix a point $a_0\in F_v$.
Then $a_0$ determines a $([2],[0])$-bisimplex with outer faces $f_0$, $g_0$ and inner face given by the composition, a $([1],[1])$ bisimplex $\eta_0$ with target $g_0f_0=k$, a $([1],[1])$ bisimplex $\epsilon_0$ with source $f_0g_0=h$.
Denote by $G_0$ the connected component of the identity map in the moduli stack of bordisms from~$g_0$ to itself.
Consider the map ${\cal F}_v→G_0$ that restricts to the counit and inserts the given $([1],[1])$-bisimplex $\epsilon$ in ${\cal F}_v$
into the following diagram:
\[\begin{tikzcd}[column sep=small]
	&& {} \\
	\\
	&&& {S^{d-k-1}\times D^{k-1}} && {S^{d-k-1}\times D^{k-1}} \\
	{D^{d-k}\times S^{k-2}} && {D^{d-k}\times S^{k-2}} && \epsilon \\
	& {\eta_0} && {S^{d-k-1}\times D^{k-1}} && {S^{d-k-1}\times D^{k-1}} \\
	{D^{d-k}\times S^{k-2}} && {D^{d-k}\times S^{k-2}}
	\arrow["{g_0}", from=3-4, to=4-3]
	\arrow[from=3-4, to=3-6]
	\arrow["{f_0}"', from=4-3, to=5-4]
	\arrow[from=5-4, to=5-6]
	\arrow["{S^{d-k-1}\times D^{k-1}\times [0,1]}", from=3-6, to=5-6]
	\arrow["h", from=3-4, to=5-4]
	\arrow[from=4-1, to=4-3]
	\arrow["{D^{d-k}\times S^{k-2}\times [0,1]}"', from=4-1, to=6-1]
	\arrow["k", from=4-3, to=6-3]
	\arrow[from=6-1, to=6-3]
	\arrow["{g_0}", from=5-4, to=6-3]
\end{tikzcd}\]

The moduli stack~$G_0$ is contractible by \cref{contractiblecylinders}, i.e., cylinders can be deformed by a Morse flow to their source cut.
Consider the map $G_0→{\cal F}_v$ that inserts a bordism in $G_0$ into the following diagram:
\[\begin{tikzcd}
	{} \\
	& {D^{d-k}\times S^{k-2}} \\
	{D^{d-k}\times S^{k-2}} & {S^{d-k-1}\times D^{k-1}} && {S^{d-k-1}\times D^{k-1}} \\
	{D^{d-k}\times S^{k-2}} && \epsilon_0 \\
	& {S^{d-k-1}\times D^{k-1}} && {S^{d-k-1}\times D^{k-1}}
	\arrow["{g_0}", from=3-2, to=4-1]
	\arrow[from=3-2, to=3-4]
	\arrow["{f_0}"', from=4-1, to=5-2]
	\arrow[from=5-2, to=5-4]
	\arrow["{S^{d-k-1}\times D^{k-1}\times [0,1]}", from=3-4, to=5-4]
	\arrow["h", from=3-2, to=5-2]
	\arrow[from=3-1, to=4-1]
	\arrow[from=2-2, to=3-2]
	\arrow[from=2-2, to=3-1]
\end{tikzcd}\]
The composition ${\cal F}_v→G_0→{\cal F}_v$ sends $\epsilon$ to the composition of the following diagram: 
$$
\begin{tikzcd}[column sep=small]
	&& {} \\
	\\
	&&& {D^{d-k}\times S^{k-2}} && {S^{d-k-1}\times D^{k-1}} \\
	{D^{d-k}\times S^{k-2}} && {D^{d-k}\times S^{k-2}} && \epsilon \\
	& {\eta_0} && {S^{d-k-1}\times D^{k-1}} && {S^{d-k-1}\times D^{k-1}} \\
	{D^{d-k}\times S^{k-2}} && {D^{d-k}\times S^{k-2}} && {\epsilon_0} \\
	&&& {S^{d-k-1}\times D^{k-1}} && {S^{d-k-1}\times D^{k-1}}
	\arrow["{g_0}", from=3-4, to=4-3]
	\arrow[from=3-4, to=3-6]
	\arrow["{f_0}"', from=4-3, to=5-4]
	\arrow[from=5-4, to=5-6]
	\arrow["{S^{d-k-1}\times D^{k-1}\times [0,1]}", from=3-6, to=5-6]
	\arrow["h", from=3-4, to=5-4]
	\arrow["{g_0}", from=5-4, to=6-3]
	\arrow["{f_0}"', from=6-3, to=7-4]
	\arrow["h", from=5-4, to=7-4]
	\arrow[from=7-4, to=7-6]
	\arrow[from=5-6, to=7-6]
	\arrow["k"', from=4-3, to=6-3]
	\arrow[from=4-1, to=4-3]
	\arrow["{D^{d-k}\times S^{k-2}\times[0,1]}"', from=4-1, to=6-1]
	\arrow[from=6-1, to=6-3]
\end{tikzcd}
$$
The tuple $(f_0,g_0,η_0,ε_0,β_0)$ provides the data for a map $\bAdj→P$.
Once composed with the fibrant replacement map $P\to R(P)$,
the map $\bAdj→P→R(P)$ extends (uniquely up to a contractible choice) to a map $\Adj→R(P)$.
In particular, we obtain the other triangle identity for $(f_0,g_0,η_0,ε_0)$,
which contracts the whole diagram to~$\epsilon_0$,
so the composition ${\cal F}_v→G_0→ {\cal F}_v$ is homotopic to the identity map.

Since the moduli stack $G_0$ is contractible, this forces ${\cal F}_v$ to also be contractible.
Hence, the map $R( P)_ε→H_{k,ε}$ is a weak equivalence.

The argument for $gε$ is analogous, using the same diagrams with $gε$ instead of~$ε$.
Hence, the map $R(P)_{gε}→R(H_{k})_{gε}$ is a weak equivalence.
Finally, the map $R(P)_β→R(H_{k})_{β}$ is a weak equivalence
because its fiber over any vertex in the codomain
is the moduli stack of adjunctions with $f$ as its left adjoint.
This moduli stack is contractible.
\end{proof}

\subsection{Cutting out handles}\label{index.filtration.section}

Throughout this section, we fix an arbitrary multisimplex ${\bf m}\in \Delta^{\times d-1}$ and an object $\langle \ell\rangle\in \Gamma$.
By convention, we set 
$$\BBord_{d}^{\RR^d\times U\to U}≔\BBord_{d}^{\RR^d\times U\to U}(\langle \ell\rangle,{\bf m}).$$
We will refer to all objects in this section as smooth simplicial spaces, since the partial evaluation leaves only the cartesian space direction and the last simplicial direction.

We now define a filtration on $\BBord_{d}^{\RR^d\times U\to U}$ as follows. 
\begin{definition}\label{index.filtration}
Let $-1\leq k\leq d$.
We define a subobject $B_k\into \BBord_{d}^{\RR^d\times U\to U}$
whose bordisms, once composed with a cylinder, admit a fiberwise Morse function $g$ that satisfies the following properties.
\begin{itemize}
\item $g$ induces the source cut and the target cut, in particular, $g^{-1}(0)=C_=^0$ and $g^{-1}(0,\infty)=C_>^0$ (Grady–Pavlov \cite[\ecref{EL-cut}]{GradyPavlov}).
\item The nondegenerate critical points of~$g$
are located in the interior of the core and have index at most $k$.
\end{itemize}
The case $k=-1$ corresponds to the case where the bordism has no fiberwise critical points. We have a filtration 
$$B_{-1}\into B_0\into \cdots \into B_k\into B_d=\BBord_{d}^{\RR^d\times U\to U},$$
where the last equality holds since any bordism admits a Morse function that is compatible with the source cut.
\end{definition}

In the following proofs, we will need to use the codescent property (Grady--Pavlov \cite[\ecref{EL-mainthm}]{GradyPavlov}) at each level of the index filtration \cref{index.filtration}. 

\begin{proposition}\label{bkpushout}
Fix $d≥0$ and $0\leq k\leq d$.
We recall the subobjects $\Ot_{k-1}$ and $\Ht_k$ from \cref{def.Ht}.
The diagram of inclusions
$$\xymatrix{
\Ot_{k-1}\ar[d]\ar[r] & B_{k-1}\ar[d]\\
\Ht_k\ar[r] & B_k\\
}$$
is a homotopy pushout in $\smcatdual_{∞,d}$.
\end{proposition}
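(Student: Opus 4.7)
The proof follows the template of \cref{handlecutout}. We pass to stalks on $\cartsp$ and evaluate at an arbitrary $\langle\ell\rangle \in \Gamma$ and multisimplex ${\bf m} \in \Delta^{\times(d-1)}$, isolating the final simplicial direction. By Grady--Pavlov \cite[\ecref{EL-multiple.single}]{GradyPavlov} and \cite[\ecref{EL-levelwise}]{GradyPavlov}, it suffices to show that at each simplicial level $[l]$ in the space direction, the canonical map
$$f \colon \Ht_k \sqcup_{\Ot_{k-1}} B_{k-1} \to B_k$$
is an equivalence of simplicial sets in the Joyal model structure. The map is bijective on vertices, because a vertex of $B_k$ is a single embedded bordism which, after composition with a cylinder, either already lies in $B_{k-1}$ (no index-$k$ critical points) or is itself diffeomorphic to a Milnor bridge in $\Ht_k$.

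By Grady--Pavlov \cite[\ecref{EL-necklaces.prop}]{GradyPavlov}, we reduce to showing that for any pair of vertices $x,y$, the induced map on Dugger--Spivak necklace categories
$$\Cnec(f) \colon \Cnec\bigl(\Ht_k \sqcup_{\Ot_{k-1}} B_{k-1}\bigr)(x,y) \to \Cnec(B_k)(x,y)$$
is a weak equivalence. As in \cref{handlecutout}, this map splits as a coproduct indexed by germs of ambient bordism cores from $x$ to~$y$, so it suffices to establish contractibility of each summand on both sides together with surjectivity of $\Cnec(f)$ onto connected components on the right.

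The core Morse-theoretic input is that any bordism $M \in B_k$ admits a fiberwise Morse function~$g$ compatible with the source cut whose critical points of index~$k$ are isolated in disjoint compact neighborhoods diffeomorphic to Milnor bridges of \cref{milnor.construction}. Inserting cuts in the last simplicial direction that bracket each such critical point at uniform distance~$\epsilon$ decomposes $M$ into a necklace whose beads are either in $\Ht_k$ (the index-$k$ handles with tails) or in $B_{k-1}$ (the complement, which now has no critical points of index $k$). For families over $\Delta^l$ and over a subdivided sphere, compactness of cores and finiteness of the picked-out simplices yields a uniform $\epsilon>0$ for which the bracketing cuts can be inserted simultaneously across all vertices. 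The simplicial Whitehead theorem (Grady--Pavlov \cite[\ecref{EL-simplicial.whitehead}]{GradyPavlov}) then produces a zigzag of homotopies from any map ${\rm Sd}^k \partial \Delta[n] \to {\cal N}(\Cnec(-)(x,y))$ to a constant map, via steps analogous to \cref{handlecutout}: insert the bracket cuts uniformly; convert bead cuts to ordinary cuts; delete the extraneous cuts, leaving only $x$, $y$, and the brackets. Surjectivity of $\Cnec(f)$ onto connected components follows from applying the same local construction to a single necklace.

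The main obstacle is treating $1$-parameter families in which an index-$k$ critical point is created or annihilated against an index-$(k-1)$ one through a birth--death singularity; naively, such a family fails to admit a uniform choice of brackets. This is precisely the data captured by $\widetilde\delta \in \Ht_k$ (arising from the generalized Morse family $f_{k-1,k}$ of \cref{milnor.construction}), identified along $\widetilde\xi$ with $\Ot_{k-1}$ in the pushout. Verifying that the bracket-insertion construction extends coherently across such bifurcations (so that the inserted bead transitions between $\widetilde\zeta$ and the composable piece $\widetilde\delta$ of the handle cancellation) and that the resulting local decompositions glue together via the codescent property of $\BBord_d$ (Grady--Pavlov \cite[\ecref{EL-mainthm}]{GradyPavlov}) at the level of each filtration step, is the technical heart of the argument.
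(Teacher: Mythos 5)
Your overall strategy coincides with the paper's: pass to stalks, reduce to the Joyal model structure levelwise, invoke the Dugger--Spivak necklace categories, split $\Cnec(f)$ as a coproduct over ambient bordisms, and prove contractibility of each summand by a Morse-theoretic cut-insertion zigzag together with the simplicial Whitehead theorem. However, your final paragraph concedes rather than completes the argument: you name the "coherent extension across birth--death bifurcations" as the technical heart and leave it unverified, so as written the proposal has a gap precisely where the proof has to close.

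Moreover, the difficulty you flag there does not actually arise. Each coproduct summand of $\Cnec(f)(x,y)$ is indexed by the germ of the core of a \emph{fixed} ambient bordism from $x$ to $y$ (morphisms in the necklace category only relate necklaces with the same underlying bordism), so one chooses a single Morse function $\phi$ on that bordism once and for all; the simplices over $\Delta^l$ and over the subdivided sphere vary only the cut tuples, never the bordism or its critical points, so no handle creation or cancellation has to be tracked and the $\widetilde\delta$-simplices play no role in this proposition (they matter for \cref{handlecutout} and \cref{handleequiv}). The actual work is: (i) choosing an open cover isolating each critical point of $\phi$ and invoking the Morse decomposition of Grady--Pavlov to produce a cut tuple $\Psi$ for which every bead contains at most one critical point, hence lies in $\Ht_k$ or $B_{k-1}$ so that Property~(P) is preserved throughout the zigzag of homotopies; and (ii) for the $B_k$ side, contractibility follows more cheaply from the observation that $B_k$ is a union of connected components of $\BBord_d^{\RR^d\times U\to U}$ closed under composition, so its necklace nerves coincide with those of the full bordism category, already shown contractible in Grady--Pavlov. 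Your bracketing of only the index-$k$ critical points by level sets at uniform distance $\epsilon$ is also slightly too naive (nearby or equal critical values can obstruct such level-set cuts); the open-cover/strip construction avoids this.
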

\begin{proof}
As in \cref{handlecutout},
we will show that after passing to stalks on $\cartsp$, evaluating on an arbitrary object $\langle \ell\rangle\in \Gamma$ and multisimplex ${\bf m}\in \Delta^{d-1}$,
as well as a simplex $[l]∈Δ$ (corresponding to $l$-dimensional deformations of cut tuples),
the canonical map $f:\Ht_k\sqcup_{\Ot_{k-1}}B_{k-1}\to B_k$ is a weak equivalence in the Joyal model structure on simplicial sets.

In the resulting simplicial set $B_k$, vertices are $l$-dimensional families of bordisms with a single cut in the $d$th simplicial direction.
Furthermore, after evaluating at $[0]∈Δ$ (corresponding to the $d$th factor of~$Δ$),
the intersection of $B_{k-1}$ and $\tilde H_k$ equals $\tilde O_{k-1}$ (as a set).
Thus, $f$ is bijective on vertices.
By Grady--Pavlov \cite[\ecref{EL-necklaces.prop}]{GradyPavlov}, it suffices to show that for all vertices $x,y\in B_k$, the induced map on Dugger--Spivak necklace categories   
$$\Cnec(f):\Cnec(\Ht_k\sqcup_{\Ot_{k-1}}B_{k-1})(x,y)\to \Cnec(B_k)(x,y)$$
is a weak equivalence in the classical model structure on simplicial sets.

Explicitly, the Dugger–Spivak necklace categories associated to the pushout can be described as follows.

\begin{itemize}
\setlength\itemsep{0em}
\item[$\circ$] A necklace of length $t$ in the category $\Cnec(\Ht_k\sqcup_{\Ot_{k-1}}B_{k-1})(x,y)$ is a composable tuple of embedded bordisms $(M_s,C_s)_{s=1}^t$ called \emph{beads}. The tuple satisfies the following property.
\begin{enumerate}
\setlength\itemsep{0em}
\item[(P)] Each bead $(M_s,C_{s})$ is either in $\Ht_k$ or $B_{k-1}$. 
\end{enumerate}

\item[$\circ$] A morphism in  $\Cnec(\Ht_k\sqcup_{\Ot_{k-1}}B_{k-1})(x,y)$ is a composition of two basic types of morphisms. 
\begin{itemize}
\setlength\itemsep{0em}
\item The first type inserts a cut $\Upsilon$ in a bead, obtaining a new bead $(M_s,C_s\cup\Upsilon,e_s,f_s)$;

\item The second type glues consecutive beads together, reducing the number of beads by one.  
\end{itemize}
\end{itemize}
The necklace categories of $B_{k}$ are identical, except Property P above is replaced by the requirement that each bead belongs to~$B_k$.
The map $\Cnec(f)$ simply regards a necklace in $\Ht_k\sqcup_{\Ot_{k-1}}B_{k-1}$ as a necklace in $B_k$ (by forgetting Property P). 

The map $\Cnec(f)$ splits as a coproduct, indexed by the germs of cores of ambient bordisms with endcuts $x$ and~$y$.
(In other words, morphisms in the necklace category can only exist between necklaces whose underlying bordisms from~$x$ to~$y$ coincide.)
We claim that each disjoint summand of ${\cal N}(\Cnec(B_{k})(x,y))$ and ${\cal N}(\Cnec(\Ht_k\sqcup_{\Ot_{k-1}}B_{k-1})(x,y))$
corresponding to the ambient bordism, is contractible.

For ${\cal N}(\Cnec(B_k)(x,y))$
we observe that $B_k$ is a subobject of $\BBord_d^{\RR^d⨯U→U}$ that is closed under composition of bordisms,
and, moreover, when evaluated on some fixed object of $\cartsp⨯Γ⨯Δ^{⨯d}$,
the inclusion $B_k→\BBord_d^{\RR^d⨯U→U}$
becomes an inclusion of connected components
(namely, connected components indexed by pairs $(M,P)$, where $M$ admits a Morse function with critical points of index at most~$k$).
Thus, the simplicial sets ${\cal N}(\Cnec(B_{k})(x,y))$ coincide
with the simplicial sets ${\cal N}(\Cnec(\BBord_d^{\RR^d⨯U→U})(x,y))$,
and the latter were proved to be contractible in Grady–Pavlov \cite[\ecref{EL-contractible.cuts}]{GradyPavlov}.

For ${\cal N}(\Cnec(\Ht_k\sqcup_{\Ot_{k-1}}B_{k-1})(x,y))$,
the proof of Grady–Pavlov \cite[\ecref{EL-contractible.cuts}]{GradyPavlov} continues to work with $\Ht_k\sqcup_{\Ot_{k-1}}B_{k-1}$ replacing $B_{i-1}$.
As explained there, we invoke the simplicial Whitehead theorem (Grady–Pavlov \cite[\ecref{EL-simplicial.whitehead}]{GradyPavlov}) and show that any map $$f:\Sd^k ∂Δ[n]→{\cal N}(\Cnec(\Ht_k\sqcup_{\Ot_{k-1}}B_{k-1})(x,y))$$
admits a zigzag of simplicial homotopies to a map~$f'$ of the same type that factors through
$${\cal N}(\Cnec(\Ht_k\sqcup_{\Ot_{k-1}}B_{k-1})(x,x)),$$
i.e., the target cut coincides with the source cut~$x$.
Since the latter simplicial set is contractible (by virtue of being the nerve of a category with a terminal object),
it remains to construct the claimed zigzag.

The zigzag is constructed in the same way as in Grady–Pavlov \cite[\ecref{EL-contractible.cuts}]{GradyPavlov},
by constructing a cut tuple $Ψ$ from~$x$ to~$y$ (i.e., $Ψ=(Ψ_0=x,Ψ_1,…,Ψ_{|Ψ|-1},Ψ_{|Ψ|}=y)$),
and homotoping the map~$f$ inductively so that after the $j$th step of the induction,
the homotoped map~$f$ factors through
$${\cal N}(\Cnec(\Ht_k\sqcup_{\Ot_{k-1}}B_{k-1})(x,Ψ_j)).$$
Here $j$ runs in the decreasing order from~$|Ψ|$ to~0, with $j=|Ψ|$ being the base of the induction, which is satisfied by the assumption on~$f$
and $j=0$ being the final step of the induction, proving the desired claim.

The cut tuple~$Ψ$ is constructed in
Grady–Pavlov \cite[\ecref{EL-morse.decomposition}]{GradyPavlov},
with the input data for that result constructed as follows.
Fix a Morse function~$\phi$ on the bordism from~$x$ to~$y$
such that $\phi$ is compatible with the source cut and the nondegenerate critical points of $\phi$ have index $\leq k$.
Fix an open cover~$W$ of the bordism from~$x$ to~$y$
such that every critical point~$p$ of the Morse function~$\phi$ is contained in a unique element of the open cover~$W$,
with some open ball around~$p$ being disjoint from all other elements of~$W$.

The argument of Grady–Pavlov \cite[\ecref{EL-contractible.cuts}]{GradyPavlov}
then constructs the desired zigzag of simplicial homotopies connecting~$f$ and~$f'$.
All beads created in this construction contain either no critical points of~$\phi$ or exactly one critical point of~$\phi$.
Therefore, all such beads belong to $\Ht_k$ or $B_{k-1}$.

To complete the proof, we need only show that the map $\Cnec(f)$ is bijective on the disjoint summands indexed by ambient bordisms.
Since $\Cnec(f)$ is  just an inclusion on such disjoint summands, by definition, we need only prove surjectivity.
But the construction in Grady–Pavlov \cite[\ecref{EL-morse.decomposition}]{GradyPavlov}
decomposes any ambient bordism in $B_k$ into a composition of bordisms in $\Ht_k$ and $B_{k-1}$
by cutting out strips containing the critical points of the Morse function, one critical point at a time,
which implies surjectivity. 
\end{proof}

\subsection{The index filtration}

\begin{notation}\label{jmorphisms}
Fix $d\geq 1$. Let ${\bf 1}_j$ be the multisimplex with components ${\bf 1}_i=[1]$ for $i\leq j$ and ${\bf 1}_i=[0]$ for $d\geq i>j$. Let $X$ be a $d$-fold complete Segal space. We will refer to vertices in $X({\bf 1}_j)$ as \emph{$j$-morphisms}. We will often denote a $j$-morphism as an arrow $f:a\to b$, where $a,b\in X({\bf 1}_{j-1})$, and $f\in X({\bf 1}_j)$ satisfies $d^j_1(f)=a$ and $d^j_0(f)=b$, where $d^j_l$, $l\in \{0,1\}$, denotes the simplicial map in the $j$th direction.
\end{notation}

In the following proof, we will make use of the exchange principle of Lurie \cite[Lemma 3.4.21]{Lurie.TFT}, adapting its statement to our setting.
In particular, we will define the relevant objects, morphisms, 2-morphisms and 3-morphisms as multisimplices in a $d$-fold Segal space.
With these modifications, the proof is verbatim as in Lurie \cite[Lemma 3.4.21]{Lurie.TFT}

\begin{proposition}\label{exchange}
Fix $d\geq 2$ and let $X$ be a $d$-fold complete Segal space. 
Let $f:x\to y$ and $f^{\dagger}:y\to x$ be multisimplices in degree ${\bf 1}_1$ (see \cref{jmorphisms}).
Consider multisimplices of degree ${\bf 1}_2$ of the form
$$\xymatrix@C=.5cm@R=.5cm{
x \ar[rr]^{\id}\ar[dd]_{\id} && x \ar[dd]^{f^{\dagger}\circ f }\\
&u &\\
x \ar[rr] _{\id} && x,\\
} \qquad
\xymatrix@C=.5cm@R=.39cm{
y \ar[rr]^{\id}\ar[dd]_{\id} && y \ar[dd]^{f\circ f^{\dagger} }\\
&u' &\\
y \ar[rr] _{\id} && y.\\
}$$
The right edges are given by a choice of a composition, which exists in any Segal space.
Using \cref{jmorphisms}, we will denote these 2-morphisms by $u:\id_x\to f^{\dagger}\circ f$ and $u':\id_{y}\to f\circ f^{\dagger}$. Suppose the following are satisfied.
\begin{enumerate}
\item The 2-morphism $u:\id_x\to f\circ f^{\dagger}$ is the unit of an adjunction in $X$, with a compatible counit~$v$.
That is, there is a map $(u,v,t):{\rm Adj}_1\to X$ that sends the generators $\epsilon\mapsto v$, $\eta\mapsto u$ and $t\mapsto t'$, where $t'$ implements the triangle identity.  

The triangle identities imply that every 3-morphism $\alpha:f^{\dagger}u'\to uf^{\dagger}$, which is a multisimplex  of degree ${\bf 1}_3$ of the form 
$$\xymatrix@C=1cm@R=.6cm{
y\ar[dddd]_-{\id}\ar[drr]^-{\id}\ar[rrr]^-{\id} &&& y
\ar@{}[dd]^<<<<{f^{\dagger}}
\ar[dd]|<<<<<<<<{\strut}
\ar[drr]^-{\id} && \\
&& y\ar[dddd]^-{f\circ f^{\dagger}}
\ar[rrr]^<<<<{\id}
&& \save []+<0em,-1em>*+{\id_{f^{\dagger}}}\restore & y\ar[dd]^-{f^{\dagger}}\\
&\save []+<0em,-1em>*+{u'} \restore & & x\ar[dddd]^-{\id}\ar[drr]^-{\id} && \\
&&&&& x\ar[dddd]^-{f^{\dagger}\circ f} \\
y\ar[dd]_-{f^{\dagger}}\ar[drr] &  & & & \save []+<0em,-1em>*+{u} \restore &\\
& \save []+<0em,-1em>*+{\id_{f^{\dagger}}} \restore & y\ar[dd]^<<<<{f^{\dagger}}&& \\
x\ar[drr]^-{\id}\ar@{}[rrr]^-{\id} \ar[rrr]|<<<<<<<<<<<<<<<<<<<<<<<<{\quad} &&& x\ar[drr]^-{\id} &&\\
&&x \ar[rrr]^-{\id} &&& x,\\
}$$
has an adjoint $\beta:u'\circ v\to \id_{f\circ f^{\dagger}}$, which is a multisimplex of degree ${\bf 1}_3$ of the form 
$$\xymatrix@C=1cm@R=.6cm{
y\ar[drr]^-{\id}\ar[ddd]_-{f\circ f^{\dagger}} \ar[rrr]^-{\id} &  && y\ar[ddrrrr]^-{\id}\ar@{}[ddd]^>>>>>>{f\circ f^{\dagger}} \ar[ddd]|<<<<<<<<<<<<<{\strut} &&& \\
& \save []+<0em,-3em>*+{v} \restore & y\ar[ddd]^-{\id}\ar[drr]^>>>>>>>>{\id} &&& & \\
& & & & y\ar[ddd]^<<<<<<<<<<<{f\circ f^{\dagger}}\ar[rrr]^-{\id}  &&& y\ar[ddd]^-{f\circ f^{\dagger}} \\
y \ar[drr]^-{\id}\ar@{}[rrr]^-{\id} \ar[rrr]|<<<<<<<<<<<<<<<<<<<<<<<<{\quad} &  & & y \ar[ddrrrr]|<<<<<<<<<<<<<{\phantom{ABC}} & \save []+<-2.5em,-1.5em>*+{u'} \restore &   &&\\
&  & y\ar[drr]^-{\id} & &&& &&\\
&  & & & y\ar[rrr]^-{\id} && &y&\\
}$$

Explicitly, this correspondence is the following composite map
\begin{align*}
\map(u'\circ v,\id_{f\circ f^{\dagger}})&\overset{\id_{f^{\dagger}}}{\longrightarrow} \map(f^{\dagger}(u'\circ v),\id_{f^{\dagger}\circ f\circ f^{\dagger}})
\\
&\overset{uf^{\dagger}}{\longrightarrow} \map(f^{\dagger} u'\circ f^{\dagger}v\circ u f^{\dagger}, \id_{f^{\dagger}\circ f\circ f^{\dagger}}\circ  (uf^{\dagger}))
\\
&\overset{t'}{\longrightarrow}\map(f^{\dagger}u' ,uf^{\dagger}),
\end{align*}
where the first two maps are obtained by whiskering with the corresponding 2-morphism. The last map precomposes with the triangle identity $t':f^{\dagger}u\circ v f^{\dagger}\to \id_{f^{\dagger}\circ f\circ f^{\dagger}}$. The composition is depicted diagrammatically in \cref{nastyfig}. 
\item The 2-morphism $u'$ is the unit of an adjunction in $\mathscr{C}$, with compatible counits $v'$. The triangle identity implies that every 3-morphism $\alpha:\id\times u'\to u\times \id$ is adjoint to a morphism $\gamma:\id_{f^{\dagger}\circ f}\to u\circ v'$, as in 1. 
\item Both $u$ and $v$ admit left adjoints. 
\end{enumerate}
Then $\beta$ is the counit of an adjunction in the homotopy 2-category if and only if $\gamma$ is the unit of an adjunction. 
\end{proposition}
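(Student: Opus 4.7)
The plan is to reduce the claim to a purely 2-categorical assertion in the homotopy 2-category $\Ho_2 X$, and then invoke the standard theory of mates under adjunctions. The reason this reduction is legitimate is that the property of being a unit or counit of an adjunction, as well as the mate correspondences in hypotheses (1) and (2), only involve 2-truncated data; by Riehl--Verity \cite[Theorem~4.3.9]{RiehlVerity} such 2-categorical adjunctions extend to fully coherent ones uniquely up to contractible choice, so nothing is lost by working at the level of $\Ho_2 X$.

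Next, I would set up the mate calculus explicitly. Given the two adjunctions witnessing $f\dashv f^\dagger$, namely $(u,v)$ and $(u',v')$, the passage $\alpha\mapsto \beta$ is the mate operation under $(u,v)$: one whiskers $\alpha$ with $f^\dagger$, precomposes with the comparison to $u f^\dagger$, and then applies the triangle identity $t'$ for $(u,v)$, exactly as in the explicit composition displayed in the statement. Dually, $\alpha\mapsto\gamma$ is the mate operation under $(u',v')$. Thus $\beta$ and $\gamma$ are related to a common $\alpha$ by two different mate correspondences, and it suffices to show that under these correspondences the property of being a unit or counit of an adjunction is preserved.

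Then I would carry out the key equivalence. Assume $\beta:u'\circ v\to\id_{f\circ f^\dagger}$ is a counit: there exists a right adjoint $\beta^R$ together with triangle identities. Hypothesis (3) supplies left adjoints $u^L$ and $v^L$; these compose (using that $u'\circ v$ has a left adjoint $v^L\circ u'^L$, and similarly for $u\circ v'$) to turn $\beta^R$ into a candidate right adjoint $\gamma^R:u\circ v'\to\id_{f^\dagger\circ f}$ for $\gamma$. The triangle identities witnessing that $\gamma$ is a unit with counit $\gamma^R$ are obtained by pasting the triangle identities for $\beta$ against the triangle identities for the four adjunctions $(u,v)$, $(u',v')$, $(u^L,u)$, $(v^L,v)$. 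The reverse implication is symmetric, exchanging the roles of $(u,v)$ and $(u',v')$, so the equivalence follows.

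The main obstacle will be the bookkeeping of pastings and triangle identities in this rather elaborate string diagram manipulation. Hypothesis (3) is essential: without left adjoints to $u$ and $v$ one cannot construct the required adjoint 2-morphism $\gamma^R$ from $\beta^R$. Once the mate correspondence is established and the left adjoints to $u,v$ are invoked, the verification proceeds exactly as in Lurie \cite[Lemma~3.4.21]{Lurie.TFT}, since the only use of the ambient $(\infty,d)$-categorical structure is through $\Ho_2 X$, and the Segal space formalism imposes no additional coherence obstructions at the 2-categorical level.
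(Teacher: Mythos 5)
Your proposal is correct and takes essentially the same route as the paper: the paper's own proof simply states that the argument is verbatim Lurie \cite[Lemma 3.4.21]{Lurie.TFT} once $u,u',v,v',\alpha,\beta,\gamma$ are interpreted as the indicated multisimplices in $X$. Your additional remarks on reducing to $\Ho_2 X$ via Riehl--Verity and on the role of hypothesis (3) in transporting $\beta^R$ to $\gamma^R$ are consistent with, and merely flesh out, what the paper leaves implicit.
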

\begin{proof}
The proof is verbatim Lurie \cite[Lemma 3.4.21]{Lurie.TFT}, interpreting the morphisms $u,u',v,v',\alpha,\beta$ and $\gamma$ as corresponding multisimplices in $X$, as described above.
\end{proof}

\begin{figure}[ht]
$$
\begin{tikzcd}[sep={between origins,.75cm}]
y \ar[ddddddd] \ar[rrrr] &  &   &   & y \ar[d] \ar[rrrr]&   &   &   & y \ar[d] \ar[ddrrr]  &   &   &   &   &   &   &   &   &   & \\
  &   &   &   & x \ar[ddrrr]\ar[dddddd]\ar[rrrr] &   &   &   & x \ar[ddrrr]\ar[dddddd] &   &   &   &   &   &   &   &   &   & \\
  &   &   &   &   &   &   & y \ar[from=uulll, crossing over] \ar[rrrr, crossing over]\ar[d] &   &   &   & y \ar[d]\ar[ddddrrrrrr] &   &   &   &   &   &   & \\
  &   &   &   &   &   &   & x\ar[rrrr, crossing over]  &   &   &   & x \ar[ddddd]  &   &   &   &   &   &   & \\
  &   &   &   &   &   & y\ar[from=uuuullllll, crossing over] &   &   &   & y\ar[from=uulll, crossing over] &   &   &   &   &   &   &   & \\
  &   &   &   &   &   &   &   &   &   &   &   &   &   &   &   &   &   &  \\
  &   &   &   &   &   &   &   &   & y &   &   &   & y \ar[from=uulll, crossing over]\ar[rrrr]&   &   &   & y\ar[ddddddd] & \\  
x \ar[rrrr]\ar[ddddrrrrrr] &   &   &   & x \ar[rrrr]\ar[ddrrr] &   &   &   & x\ar[ddrrr] &   &   &   &   &   &   &   &   &   & \\  
  &   &   &   &   &   &   & y \ar[rrrr, crossing over]\ar[d] \ar[from=uuuuu, crossing over] &   &   &   & y \ar[d] &   &   &   &   &   &   & \\
  &   &   &   &   &   &   & x \ar[rrrr] &   &   &   & x\ar[ddddrrrrrr] &   &   &   &   &   &   & \\
  &   &   &   &   &   & y\ar[from=uuuuuu, crossing over]\ar[d]&   &   &   & y \ar[from=uulll, crossing over]\ar[d]\ar[ddrrr]\ar[from=uuuuuu, crossing over]  &   &   &   &   &   &   &   & \\
  &   &   &   &   &   & x\ar[ddrrr] &   &   &   & x \ar[from=uulll, crossing over]\ar[ddrrr] &   &   &   &   &   &   &   & \\
  &   &   &   &   &   &   &   &   & y\ar[d]\ar[rrrr, crossing over] &   &   &   & y\ar[d]\ar[from=uuuuuu, crossing over]  &   &   &   &   & \\  
  &   &   &   &   &   &   &   &   & x\ar[rrrr] &   &   &   & x\ar[rrrr]  &   &   &   & x &  
\arrow[from=11-7, to=11-11, crossing over]
\arrow[from=12-7, to=12-11, crossing over]
\arrow[from=11-7, to=13-10, crossing over]
\arrow[from=5-7,to=7-10, crossing over]
\arrow[from=5-7,to=5-11, crossing over]
\arrow[from=7-10,to=7-14, crossing over]
\arrow[from=7-10,to=13-10, crossing over]    
\end{tikzcd}
$$
\caption{A diagram describing the adjunction $\beta\mapsto \alpha$ in \cref{exchange}. Starting with $\beta:u'\circ v\to \id_{f\circ f^{\dagger}}$, depicted by the right front 3-morphism in the above diagram, we obtain a 3-morphism $\alpha:f^{\dagger}u'\to uf^{\dagger}$. The left face is the composition of $\id_{f^{\dagger}\circ f\circ f^{\dagger}}$ with $f^{\dagger}u'$. The right face is the composition of $f^{\dagger}u$ with $\id_{f^{\dagger}\circ f\circ f^{\dagger}}$. The 3-morphism appearing in the back left corner implements the triangle identity. The center bordism is the composition $uf^{\dagger}\circ f^{\dagger}v\circ f^{\dagger}u'$. }
\label{nastyfig}
\end{figure}
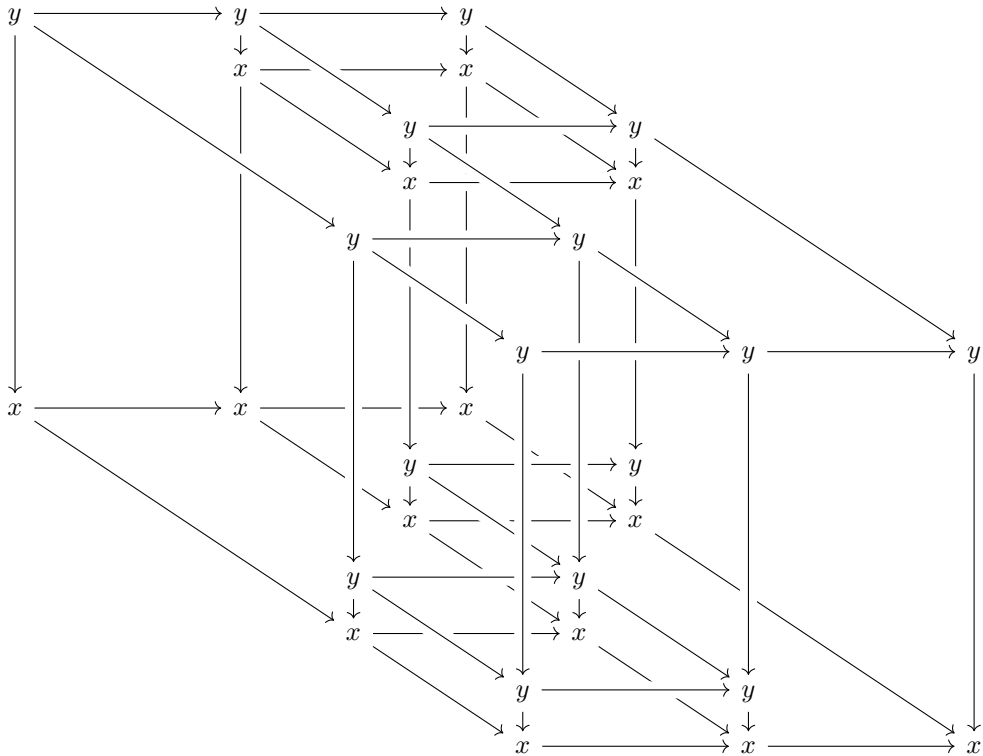

We will use the following notation in the proof of \cref{bkequivalence}.

\begin{definition}
\label{def.summands}
Given a fibrant smooth symmetric monoidal $(∞,d)$-category with duals $\sC$,
we denote by
$$\Funmon(H_k,\sC)_\Adj⊂\Funmon(H_k,\sC)$$
the coproduct summand
consisting of connected components containing maps $H_k→\sC$ (\cref{handle})
that send the quintuple of bordisms $(f,g,η,ε,β)$ (\cref{handle}) to a quintuple in~$\sC$ that extends to an adjunction.

Denote by
$$\Funmon(O_{k-1},\sC)_\unit⊂\Funmon(O_{k-1},\sC)$$
the coproduct summand 
consisting of connected components containing maps $O_{k-1}→\sC$ (\cref{def.O})
that send the triple of bordisms $(f,g,η)$ (\cref{handle}) to a triple in~$\sC$ that extends to an adjunction.
\end{definition}

\begin{proposition}\label{bkequivalence}
Fix $d≥1$ and a fibrant object $\mathscr{C}\in \smcatdual_{∞,d}$.
For $k\geq 2$, the inclusion $B_{k-1}\into B_k$ is a weak equivalence in $\smcatdual_{∞,d}$,
hence it induces a weak equivalence of derived internal homs
$$\Funmon(B_k,\mathscr{C})\to\Funmon(B_{k-1},\mathscr{C}).$$
Moreover, we have a weak equivalence of derived mapping spaces 
$$\Funmon(B_1,\mathscr{C})\to \Funmon(B_0,\mathscr{C})_\unitable$$
where the subscript $\unitable$ is defined in the proof.
\end{proposition}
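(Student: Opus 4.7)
The plan is to reduce the statement to a question about the inclusion of abstract adjunction computads via three successive pushout squares, and then invoke the contractibility of homotopy coherent adjunction extensions together with the exchange principle.

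For $k\ge 2$, the argument begins by applying \cref{bkpushout} to present $B_{k-1}\hookrightarrow B_k$ as a homotopy cobase change in $\smcatdual_{\infty,d}$ of $\Ot_{k-1}\hookrightarrow\Ht_k$. The two pushout squares of \cref{handlecutout}, used in horizontal succession, then further express $\Ot_{k-1}\to\Ht_k$ as a homotopy cobase change of $O_{k-1}\to H_k$, and \cref{handleequiv} writes $O_{k-1}\to H_k$ itself as a homotopy cobase change of $\cM\times\bar\eta\to\cM\times\bAdj$. Composing these three cobase changes (using left properness of $\smcatdual_{\infty,d}$ and the fact that product with a fixed cofibrant simplicial presheaf preserves weak equivalences) reduces the case $k\ge 2$ to showing that the inclusion $\bar\eta\to\bAdj$ of the bisimplicial subcomputads from \cref{handleequiv} is a weak equivalence in $\smcatdual_{\infty,d}$.

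To handle this core equivalence, I would use \cref{simpltosegal} to translate, for each fibrant $\mathscr{C}\in\smcatdual_{\infty,d}$, the induced map of derived internal homs $\Funmon(\bAdj,\mathscr{C})\to\Funmon(\bar\eta,\mathscr{C})$ into a comparison of mapping spaces $\map(\underline{\rm Adj},{\sf C})\to\map(\underline\eta,{\sf C})$, where ${\sf C}$ is a simplicial category with all adjoints. Riehl--Verity [Theorem~4.4.18] makes the fibers of this map contractible over triples $(f,g,\eta\colon\id\to gf)$ lying in the image from $\map(\underline{\rm Adj},{\sf C})$, so the remaining point is that every such triple actually arises in this image. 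This is precisely the role of the exchange principle \cref{exchange}: in a category with duals, it converts an arbitrary unit-shaped $2$-morphism $\eta$ and the canonical counit of the dual adjunction $f\dashv f^\dagger$ into the missing counit and triangle-identity data for $(f,g,\eta)$, upgrading the triple to a genuine homotopy coherent adjunction. Together with Riehl--Verity contractibility this gives the required weak equivalence, and hence the case $k\ge 2$.

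For $k=1$ the same chain of reductions applies, but now $\bar\eta$ contains the $0$-handle $D^d\colon\emptyset\to S^{d-1}$, interpreted as the unit of an adjunction, and $\bAdj$ adds the $1$-handle as its counit. Not every $0$-handle in $\mathscr{C}$ is the unit of an adjunction, so the Riehl--Verity extension argument only applies on the $\unitable$ subobject defined in \cref{unit.notation}, yielding a weak equivalence onto $\Funmon(B_0,\mathscr{C})_{\unitable}$ rather than onto all of $\Funmon(B_0,\mathscr{C})$. The main obstacle throughout is the use of the exchange principle to check that triples $(f,g,\eta)$ arising from the bordism handles genuinely extend to adjunctions in fibrant $\mathscr{C}$: fibrancy in $\smcatdual_{\infty,d}$ only ensures (via the localizing maps of \cref{dualmorph}) that every morphism admits some right adjoint, not that a prescribed $g$ is right adjoint via a prescribed $\eta$. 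Carrying this out requires a careful match between the bisimplicial indexing used for the handles --- with ${\bf 1}_{d-2}$ sitting in the first $d-2$ simplicial directions and encoding $(d-1)$-morphisms --- and the $2$-categorical adjunction data built into the definition of $\smcatdual_{\infty,d}$.
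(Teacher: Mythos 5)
Your assembly of the three homotopy cobase changes (\cref{bkpushout}, \cref{handlecutout}, \cref{handleequiv}) is correct as far as it goes, but the statement you reduce the case $k\ge 2$ to --- that $\bar\eta\to\bAdj$ is a weak equivalence in $\smcatdual_{\infty,d}$ --- is false, and this is a genuine gap. The localizing morphisms of \cref{dualmorph} are $f_{\bf m}\to{\rm Adj}_{\bf m}$, not $\eta_{\bf m}\to{\rm Adj}_{\bf m}$: for fibrant $\mathscr{C}$ the restriction $\map({\rm Adj}_{\bf m},\mathscr{C})\to\map(\eta_{\bf m},\mathscr{C})$ is, by Riehl--Verity, a weak equivalence only onto the summand $\unit_{\bf m}(\mathscr{C})$ of \cref{unit.notation}, i.e.\ onto those triples $(f,g,\eta)$ whose $\eta$ actually is an adjunction unit; a generic $2$-morphism $\eta\colon\id\to gf$ is not one (already for $f=g=\id_x$ the triangle identities force $\eta$ to be invertible, which fails for a general endomorphism of $\id_x$ in a category with duals). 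Since a homotopy cobase change of a non-equivalence need not be an equivalence, no left-properness argument closes the case $k\ge2$ from this reduction. Your attempted repair overstates what \cref{exchange} delivers: the exchange principle does not upgrade an arbitrary unit-shaped $2$-morphism to an adjunction unit; its hypotheses require that $u$ and $u'$ already be units with compatible counits and that $\beta$ already be known to be a counit, and it then merely transfers this property to $\gamma$.

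What the paper actually proves is weaker and is exactly what is needed: only the triples in the image of the restriction $\Funmon(B_{k-1},\mathscr{C})\to\Funmon(O_{k-1},\mathscr{C})$ must land in the unit summand (the factorization \cref{factorunitsadj}), and the input to \cref{exchange} that makes this work is geometric, not an abstract duality property of $\mathscr{C}$: for $k\ge2$ the handles of index $k-2$ and $k-1$ together with their cancellation already lie in $B_{k-1}$, giving a map ${\rm Adj}_{{\bf 1}_{d-2}}\to B_{k-1}$ as in \cref{adjinbord} one index down, so any $Z\colon B_{k-1}\to\mathscr{C}$ sends the relevant $\beta$ to a genuine counit and \cref{exchange} then forces $\gamma$ to be a unit. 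This is precisely where the hypothesis $k\ge2$ enters, and it is the step your reduction discards by forgetting the attaching map $\cM\times\bar\eta\to O_{k-1}\to B_{k-1}$: one must check that, after mapping into $\mathscr{C}$, this attaching map lands in the unit-able components, since only over those components is the Riehl--Verity restriction an equivalence. Your treatment of $k=1$ is consistent with the paper's, but as written the $k\ge2$ case does not go through.
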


\begin{proof}
From \cref{bkpushout}, for any $k≥0$ we have a homotopy pushout diagram in $\smcatdual_{∞,d}$
$$\xymatrix{
\Ot_{k-1}\ar[d]\ar[r] & B_{k-1}\ar[d]\\
\Ht_k\ar[r] & B_k.\\
}$$
Let $\mathscr{C}\in \smcatdual_{∞,d}$ be a local object.  Taking derived internal homs from the above homotopy pushout into $\mathscr{C}$ gives a corresponding homotopy pullback
$$\xymatrix{
\Funmon(\Ot_{k-1},\mathscr{C})& \ar[l]\Funmon(B_{k-1},\mathscr{C})\\
\Funmon(\Ht_k,\mathscr{C})\ar[u] &\ar[l]\ar[u]\Funmon(B_k,\mathscr{C}).\\
}$$
By \cref{handlecutout}, we have a weak equivalence $\Ht_k\simeq \Ot_{k-1}\sqcup_{O_{k-1}}H_k$.
Hence, for any $k≥0$ we can further expand the above homotopy fiber product to a diagram of homotopy pullbacks 
\begin{equation}
\xymatrix{
\Funmon(O_{k-1},\mathscr{C})&\ar[l]\Funmon(\Ot_{k-1},\mathscr{C})& \ar[l]\Funmon(B_{k-1},\mathscr{C})\\
\Funmon(H_{k},\mathscr{C}) \ar[u] &\ar[l]\Funmon(\Ht_k,\mathscr{C})\ar[u] &\ar[l]\ar[u]\Funmon(B_k,\mathscr{C}).\\
}\end{equation}

By the pasting lemma for homotopy pullbacks, the bottom left square is a homotopy pullback square.
Hence the outer bottom square is also a homotopy pullback square by pasting.
For $k≥1$, by \cref{handleequiv},
the map $O_{k-1}→H_k$ fits into a homotopy cocartesian square 
$$\xymatrix{
\cM⨯\bar\eta \ar[r] \ar[d] & O_{k-1} \ar[d]\cr
\cM⨯\bAdj \ar[r] & H_k.\cr
}.$$
Mapping out into a fibrant object $\mathscr{C}$, it follows from Riehl--Verity \cite[Proposition 4.4.7]{RiehlVerity} that we have an equivalence on disjoint summands
$$\Funmon({\cal M}\times \overline{\rm Adj},\mathscr{C})_{\rm Adj}\to \Funmon({\cal M}\times \overline{\eta},\mathscr{C})_{\unit}.$$
The above homotopy cocartesian square implies that we also have an equivalence corresponding disjoint summands 
$$\Funmon(H_{k},\mathscr{C})_{\rm Adj}\to \Funmon(O_{k-1},\mathscr{C})_{\unit},$$ where subscripts are defined in \cref{def.summands}.

We claim that the above diagram factors as 
\begin{equation}\label{factorunitsadj}
\xymatrix{
\Funmon(O_{k-1},\mathscr{C})\\
\Funmon(O_{k-1},\mathscr{C})_{\unit}\ar@{>->}[u]^-{l} &  \ar[lu]^-{g} \ar[l]\Funmon(B_{k-1},\mathscr{C})\\
\Funmon(H_{k},\mathscr{C})_{\rm Adj}\ar[u]^-{f} & \ar[u] \ar[l]\ar[u]\Funmon(B_k,\mathscr{C}).\\
}\end{equation}
for $k\geq 2$. The bottom map factors by definition, since the bordisms in $H_k$ participate in an adjunction in $B_{k}$. 
For the top map, we apply the exchange principle from \cref{exchange}. 
Fix a vertex $Z:B_{k-1}\to \mathscr{C}$.
Fix a bordism in the moduli stack ${\cal M}$, i.e., a bordism of the form \cref{unitforreal}, which determines bordisms 
$$D^{k-2}\times D^{d-k}:D^{k-2}\times S^{d-k-1}\to S^{k-3}\times D^{d-k}, \qquad D^{k-2}\times D^{d-k}: S^{k-3}\times D^{d-k} \to D^{k-2}\times S^{d-k-1},$$
that are degenerate in the last two multisimplicial directions.  Recalling \cref{jmorphisms}, we set 
$$x=Z(D^{k-2}\times S^{d-k-1}),\quad y=Z(S^{k-3}\times D^{d-k}),$$ $$f^{\dagger}=Z(D^{k-2}\times D^{d-k}):y\to x,\quad f=Z(D^{k-2}\times D^{d-k}):x\to y.$$
A choice of homotopy that contracts the composite bordism $D^{k-1}\times S^{d-k-1}=D^{k-2}\times D^{d-k}\circ D^{k-2}\times D^{d-k} $ to $D^{k-2}\times S^{d-k-1}$ gives rise to a simplicial homotopy $\id_x\simeq Z(D^{k-1}\times S^{d-k-1})$. We also have an identification $f^{\dagger}\circ f= Z(S^{k-2}\times D^{d-k})$. 

Now define $u=Z(D^{d-k}\times D^{k-1}):\id_{x}\to f^{\dagger}\circ f$. This is the value of $Z$ on the upper left edge of the second diagram in \cref{unitforreal}. Define also $u'≔Z(D^{k-2}\times D^{d-k+1}):\id_y\to f\circ f^{\dagger}$. Finally, let $\gamma$ be the value of $Z$ on the bordism given by replacing $k$ with $k+1$ in \cref{unitforreal}. Then $\beta$ is the value of $Z$ on the bordism \cref{counitforreal}, which can be seen using the triangle identity \cref{triangleid}.

We are now in a position to apply \cref{exchange}. By definition, the composition 
$$\eta_{{\bf 1}_{d-2}}\to B_{k-1}\overset{Z}{\to} \mathscr{C}$$
picks out $\gamma:\id_{f^{\dagger}\circ f}\to u\circ v'$. Since $\beta$ is obtained as the image of the composite map 
$$\epsilon_{{\bf 1}_{d-2}}\to {\rm Adj}_{{\bf 1}_{d-2}}\to B_{k-1}\to \mathscr{C},$$
it is the counit of an adjunction. Then \cref{exchange} implies that $\gamma$ is also the unit of an adjunction. Hence the restriction map $\Funmon(B_{k-1},\mathscr{C})\to \Map(\eta_{{\bf 1}_{d-2}},\mathscr{C})$ indeed factors through $\unit_{{\bf 1}_{d-2}}(\mathscr{C})$.

Thus, for $k≥2$, by Riehl--Verity \cite[Proposition 4.4.12]{RiehlVerity}, the map $f$ in \cref{factorunitsadj} is a weak equivalence.
Since the bottom square in \cref{factorunitsadj} is homotopy cartesian, this shows that $B_{k-1}\into B_k$ is a local equivalence, completing the proof for $k≥2$.

For $k=1$, denote by $\Funmon(B_0,\mathscr{C})_\unitable$
the subobject given by union of the connected components of $\Funmon(B_0,\mathscr{C})$
such that the restriction of the map~$g$ factors through the map~$l$ in \cref{factorunitsadj}.
Replacing $\Funmon(B_0,\mathscr{C})$ with $\Funmon(B_0,\mathscr{C})_\unitable$
in the upper right corner of \cref{factorunitsadj} produces a commutative diagram
whose bottom square is homotopy cartesian.
Thus, the induced map $\Funmon(B_1,\mathscr{C})→\Funmon(B_0,\mathscr{C})_\unitable$ is a weak equivalence.
\end{proof}

\begin{proposition}\label{b0equivalence}
Fix $d≥1$ and $\mathscr{C}\in \smcatdual_{∞,d}$.
We have a weak equivalence $$\Funmon(B_0,\mathscr{C})_\unitable\to \Funmon(B_{-1},\mathscr{C}) \times_{\Funmon(O_{-1},\mathscr{C})} \Funmon(H_0,\mathscr{C})_{\unitable},$$
where the subscript $\unitable$ on both sides is defined in the proof of \cref{bkequivalence}:
we require that the morphism given by the image of $\emptyset\to S^{d-1}$
is the unit of an adjunction involving morphisms given by evaluating on bordisms $D^{d-1}:\emptyset\to S^{d-2}$ and $D^{d-1}:S^{d-2}\to\emptyset$ (which compose to $S^{d-1}$).
\end{proposition}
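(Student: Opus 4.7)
The plan is to combine the two homotopy pushout results we already have to express $B_0$ as a homotopy pushout involving $O_{-1}$, $H_0$, and $B_{-1}$, and then to match the \emph{unitable} subobjects on the two sides.

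First, I would apply \cref{bkpushout} at $k=0$ to get the homotopy pushout
$$
\xymatrix{
\Ot_{-1}\ar[r]\ar[d] & B_{-1}\ar[d] \\
\Ht_0\ar[r] & B_0
}
$$
in $\smcatdual_{\infty,d}$. The outer rectangle in \cref{handlecutout} at $k=0$ (obtained by pasting the two squares there) gives a second homotopy pushout
$$
\xymatrix{
O_{-1}\ar[r]\ar[d] & \Ot_{-1}\ar[d] \\
H_0\ar[r] & \Ht_0,
}
$$
which I would paste along $\Ot_{-1}\to\Ht_0$ with the first square. By the pasting lemma for homotopy pushouts, the resulting outer rectangle is a homotopy pushout
$$
\xymatrix{
O_{-1}\ar[r]\ar[d] & B_{-1}\ar[d] \\
H_0\ar[r] & B_0
}
$$
in $\smcatdual_{\infty,d}$. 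Applying the right Quillen bifunctor $\Funmon(-,\mathscr{C})$ and using that $\mathscr{C}$ is fibrant then yields a homotopy pullback
$$
\Funmon(B_0,\mathscr{C})\;\overset{\simeq}{\longrightarrow}\;\Funmon(B_{-1},\mathscr{C})\times_{\Funmon(O_{-1},\mathscr{C})}\Funmon(H_0,\mathscr{C}).
$$

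The second step is to intersect this equivalence with the \emph{unitable} subobjects. The \emph{unitable} subobject $\Funmon(B_0,\mathscr{C})_\unitable\subset\Funmon(B_0,\mathscr{C})$ is a union of connected components, defined so that the composition $\Funmon(B_0,\mathscr{C})\to\Funmon(H_0,\mathscr{C})$ lands in $\Funmon(H_0,\mathscr{C})_\unitable$; the latter is itself a union of connected components by construction (following the $k=1$ case in the proof of \cref{bkequivalence}). Since homotopy pullbacks along inclusions of unions of connected components are computed componentwise, restricting the right-hand side to the subobject where the $\Funmon(H_0,\mathscr{C})$-component is \emph{unitable} picks out exactly those connected components of $\Funmon(B_0,\mathscr{C})$ whose image in $\Funmon(H_0,\mathscr{C})$ is \emph{unitable}. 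By definition, this is $\Funmon(B_0,\mathscr{C})_\unitable$, so the equivalence from the first step restricts to the claimed equivalence.

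The main subtlety I expect is purely bookkeeping: one has to verify that the \emph{unitable} condition defined in the proof of \cref{bkequivalence} (for $k=1$) really is the restriction along the inclusion $H_0\hookrightarrow B_0$ of the connected components characterized by the condition stated here (the disc $D^d:\emptyset\to S^{d-1}$, together with its two boundary half-discs $D^{d-1}:\emptyset\to S^{d-2}$ and $D^{d-1}:S^{d-2}\to\emptyset$, exhibits a unit of an adjunction in $\mathscr{C}$). This matches because the generating bisimplex of $H_0$ is precisely the $([1],[1])$-bisimplex \cref{unitforreal} with $k=1$ (which, together with the cells it forces, is the free-walking unit of an adjunction on the object corresponding to $S^{d-1}$), and this is the same data that defines the \emph{unitable} condition on $\Map(\eta_{{\bf 1}_{d-2}},\mathscr{C})$. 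No further argument of the type used in \cref{bkequivalence} (i.e., the exchange principle of \cref{exchange}) is needed here, since we are not trying to deduce that the restriction automatically lands in the unitable locus---we are building that condition into the definition on both sides.
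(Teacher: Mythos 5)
Your proposal is correct and follows essentially the same route as the paper: apply \cref{bkpushout} at $k=0$, identify the resulting pullback over $\Funmon(\Ot_{-1},\mathscr{C})$ and $\Funmon(\Ht_0,\mathscr{C})$ with the one over $\Funmon(O_{-1},\mathscr{C})$ and $\Funmon(H_0,\mathscr{C})$ via \cref{handlecutout}, and then restrict to the connected components satisfying the unit condition. The only difference is presentational: you paste the pushout squares before mapping into $\mathscr{C}$, whereas the paper passes to internal homs first and records the identification as an equality of pullbacks.
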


\begin{proof}
We apply \cref{bkpushout} for $k=0$,
which produces a weak equivalence
$$\Funmon(B_0,\mathscr{C})\to\Funmon(B_{-1},\mathscr{C}) \times_{\Funmon(\tilde O_{-1},\mathscr{C})} \Funmon(\tilde H_0,\mathscr{C})=\Funmon(B_{-1},\mathscr{C}) \times_{\Funmon(O_{-1},\mathscr{C})} \Funmon(H_0,\mathscr{C}).$$
Taking the connected components on both sides corresponding to the unit condition completes the proof.
\end{proof}

\begin{proposition}\label{contractiblecylinders}
The object $B_{-1}\in\smcatdual_{\infty,d}$ is homotopy constant with respect to the $d$th factor of~$\Delta$.
\end{proposition}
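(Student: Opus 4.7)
The plan is to show that for each fixed $\mathbf{m}\in\Delta^{\times(d-1)}$, $V\in\cartsp$, and $\langle\ell\rangle\in\Gamma$, the simplicial set $B_{-1}(V,\langle\ell\rangle,\mathbf{m},-)$ in the $d$th direction is weakly equivalent to its value at $[0]\in\Delta$; equivalently, the iterated degeneracy
$$s^n\colon B_{-1}(V,\langle\ell\rangle,\mathbf{m},[0])\longrightarrow B_{-1}(V,\langle\ell\rangle,\mathbf{m},[n])$$
is a weak equivalence for every $n\geq 0$. The obvious retraction $r$ discards all cuts in the $d$th direction except the source cut $C_{=0}^d$; this satisfies $r\circ s^n=\id$ on the nose. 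The heart of the argument is to construct a simplicial homotopy $s^n\circ r\simeq \id$.

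To construct this homotopy, I fix a bordism $(M,C^\bullet,P,e)\in B_{-1}(V,\langle\ell\rangle,\mathbf{m},[n])$. By the defining property of $B_{-1}$ in \cref{index.filtration}, after composition with a cylinder the core of $M\times U$ admits a fiberwise Morse function $g\colon M\times U\to\RR$ with $g^{-1}(0)=C_{=0}^d$ and no nondegenerate critical points. The gradient flow of $g$ (with respect to a fiberwise Riemannian metric) identifies the germ of the core with a product $C_{=0}^d\times[0,T]$, under which each $d$th-direction cut $C_j^d$ becomes an ordered hypersurface inside this cylinder. Each such hypersurface can first be continuously deformed to a level set of $g$ via a $1$-family of cuts (only the defining functions $h_j$ of the cuts are modified, not the underlying manifold $M$ or the embedding $e$ into $\RR^d\times U$), and once all cuts take the form $g^{-1}(t_j)$ with $0=t_0\leq t_1\leq\cdots\leq t_n$, a second $1$-family $u\mapsto g^{-1}((1-u)t_j)$ for $u\in[0,1]$ collapses them all onto $C_{=0}^d$, reaching $s^n\circ r$ applied to the original bordism at $u=1$. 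These two families concatenate to an edge in $\DiffCut_\square(M\times U\to U,P)$, which is a Kan complex by \cref{enrichedbordstr}, and the concatenation upgrades to the desired simplicial homotopy.

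The main obstacle is coherence: the construction depends on a choice of Morse function $g$ and an auxiliary Riemannian metric, yet the resulting homotopy must be natural in $\mathbf{m}$, $V$, and $\langle\ell\rangle$, and must commute with the cut-respecting diffeomorphisms used in the morphisms of $B_{-1}$. I would address this by parameterising the construction over the space of such auxiliary choices, which is contractible (the space of Morse functions on a cylinder with prescribed vanishing on the source cut and without critical points, together with the space of fiberwise Riemannian metrics, are both contractible), so that any two choices yield homotopic simplicial homotopies and the whole package assembles into a canonical simplicial deformation retraction. The transversality condition $\pitchfork$ of \cref{bord} is automatically preserved throughout because at every intermediate time the cuts are either regular level sets of the critical-point-free $g$ or generic small deformations of previously valid cuts. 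The geometric mechanism underlying this argument is exactly the one exhibited at the end of \cref{one.dimensional.bordisms}, where an embedded interval in $\RR$ is shown to be invertible via a $1$-thin translational homotopy; here the same Morse-flow contraction is applied in all dimensions to collapse the cylinders of $B_{-1}$ onto their source cuts.
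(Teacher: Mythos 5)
Your geometric mechanism — contracting cylinders via Morse flow, as in the $d=1$ example — is the same one the paper exploits, but the top-level strategy is different: you aim for an explicit simplicial deformation retraction $s^n\circ r\simeq\id$, whereas the paper applies the simplicial Whitehead theorem together with subdivision. The retraction strategy, as written, leaves a genuine gap. The linchpin is the assertion that the space of auxiliary choices (critical-point-free Morse functions vanishing on the source cut, together with a fiberwise metric) is contractible, and nonempty over every simplex of $B_{-1}([n])$, so that the pointwise construction "assembles" into a global simplicial homotopy. Neither half of this is established, and both are doubtful. The condition that $dg$ vanish nowhere is not convex (a convex combination of two nowhere-vanishing differentials can vanish), so contractibility is not a routine observation; and more seriously, a given $\Delta^l$-family of bordisms need not admit any \emph{global} family of such Morse functions over $\Delta^l$ — the cited existence result holds only pointwise in $t\in\Delta^l$, and patching needs subdivision. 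The paper handles exactly this by first restricting to the simplicial subset $B'_{-1}$ of simplices that do admit such a global Morse family, proving $B'_{-1}\hookrightarrow B_{-1}$ is a weak equivalence by subdivision and the simplicial Whitehead theorem, and only then applying Whitehead again to the degeneracy map on $B'_{-1}$. Your argument has no analogue of this localisation step, and so has nowhere to retreat when a $\Delta^l$-family of Morse functions fails to exist.

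A subsidiary issue: the paper runs a chain of three homotopies (glue an $\epsilon$-collar onto the source, contract using a second Morse function that is constant on the target, then shift back) precisely because that second Morse function requires source and target cuts to be disjoint; it then disposes of the resulting loop on the boundary sphere by a smooth $2$-homotopy sending $\epsilon\to 0$. Your one-step procedure — deforming each $C_j^d$ to a level set $g^{-1}(t_j)$ and then scaling $t_j\mapsto(1-u)t_j$ — asserts but does not construct the first deformation, which is not obviously a valid path of cut tuples when the $C_j^d$ overlap or sit far from level sets of the chosen $g$; this would have to be argued rather than taken for granted.
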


\begin{proof}
The idea of the proof is that bordisms in $B_{-1}$ are cylinders in the $d$th direction, so a Morse flow can be used to contract these cylinders and make them degenerate.

We evaluate $B_{-1}$ on an arbitrary object of $\cartsp⨯Γ⨯Δ^{⨯(d-1)}$
and work exclusively with the resulting simplicial space,
which we also denote by $B_{-1}$.

Recall (Grady–Pavlov \cite[\ecref{EL-bordism.enriched}]{GradyPavlov})
that in the case of representable structures given by some object $(\RR^d⨯U→U)∈\FEmb_d$,
the simplicial sets $B_{-1}([n])$
admit a simple description: their $l$-simplices
are smooth $l$-dimensional families of $n$-chains of embedded bordisms in~$\RR^d$.
(Rather, we have germs of such families around $Δ^l$, which we refer to as $Δ^l$-families from now on.)

Consider the simplicial subset $B'_{-1}([n])⊂B_{-1}([n])$
consisting of those simplices
for which the corresponding smooth $Δ^l$-families of embedded bordisms
admit a Morse function without critical points that vanishes on the 0th cut (corresponding to the vertex $0∈[n]$).
The inclusion $B'_{-1}([n])⊂B_{-1}([n])$ is a simplicial weak equivalence
by the simplicial Whitehead theorem
(see, for example, Grady–Pavlov \cite[\ecref{EL-simplicial.whitehead}]{GradyPavlov}).
Indeed, pick a $l$-simplex in $B_{-1}([n])$, which is given by a smooth $Δ^l$-family of embedded bordisms.
For any $t∈Δ^l$, the bordism corresponding to the point~$t$ admits a Morse function that vanishes on the source cut
(Grady–Pavlov \cite[\ecref{EL-morse.decomposition}]{GradyPavlov}).
Therefore, we can find a family of such functions in some open neighborhood of~$t$.
By repeatedly subdividing $Δ^l$, we can ensure that each of the resulting simplices admits such a family of Morse functions,
and the simplicial Whitehead theorem concludes the argument.
Thus, from now we work with $B'$ instead of~$B$.

We apply the simplicial Whitehead theorem (Grady–Pavlov \cite[\ecref{EL-simplicial.whitehead}]{GradyPavlov})
to the simplicial degeneration map $B'_{-1}([0])→B'_{-1}([1])$.
(The general case of simplicial degeneration maps $B'_{-1}([0])→B'_{-1}([l])$ is treated identically.)

Suppose we have a map $∂Δ^a → B'_{-1}([0])$
together with a filling $Δ^a → B'_{-1}([1])$.
That is to say, we have a $Δ^a$-smooth family of embedded cylinders
and the cylinders over the points of $∂Δ^a$ are degenerate.
Pick a smooth family of Morse functions without critical points that vanishes on the source cuts of cylinders (see Grady–Pavlov \cite[\ecref{EL-morse.decomposition}]{GradyPavlov}).
The family of Morse functions has germs around the cores of cylinders, so can be extended to an actual Morse function in an open neighborhood of the core.
Pick some $ε>0$ such that the preimage of $[-ε,0]$ has no critical points.

We can now glue the preimage of $[-ε,0]$, which is a family of cylinders, to the source cut, ensuring the source and target cuts are disjoint for all cylinders.
As explained in \cref{one.dimensional.bordisms}, gluing a cylinder produces a simplicial homotopy from the original map $Δ^a→B'_{-1}([0])$
to a map of the same form that encodes the glued cylinders.
The homotopy itself is implemented by the Morse flow.

The source and target cuts are now disjoint for all cylinders.
Pick a new family of Morse functions that vanishes on the source and is constant on the target bordisms (see Grady–Pavlov \cite[\ecref{EL-morse.decomposition}]{GradyPavlov}).

The Morse flow up to the value $-ε$ for the newly constructed Morse functions without critical points contracts the cylinders
and can be implemented via a simplicial homotopy in the simplicial set $\Cut$ of \cref{enrichedbordstr}, as explained in \cref{one.dimensional.bordisms}.

The resulting families are degenerate and land inside $B'_{-1}([0])$.

Finally, use the Morse flow again to move the resulting degenerate cylinders from~$-ε$ to~$0$.

We have constructed a chain of three simplicial homotopies from the given map $Δ^a→B'_{-1}([1])$ to a map $Δ^a→B'_{-1}([1])$ that lands in the image of $B'_{-1}([0])$.

This is not yet a homotopy relative boundary $∂Δ^a$, as required by the simplicial Whitehead theorem,
since the boundary is itself moved by these homotopies around a loop.
However, these loops are themselves contractible: geometrically, the loop moves the boundary cuts from~0 to~$-ε$ and then from~$-ε$ back to~0.
A smooth 2-homotopy contracts these loops to constant loops by smoothly deforming $ε$ to~0.
\end{proof}

\begin{proposition}\label{dimensional.reduction}
We have a weak equivalence
$$i:\BBord_{d-1}^{\iota^*_{d-1}(\RR^d\times U\to U)}\overset{\simeq}{\to} B_{-1}([0]).$$
\end{proposition}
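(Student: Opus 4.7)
The plan is to construct~$i$ explicitly together with an essentially inverse map, exploiting the fact that bordisms in~$B_{-1}$ admit Morse functions without critical points and are therefore canonically cylinders over their source cut. First, I would unpack the map~$i$ as follows. A $k$-simplex in $\BBord_{d-1}^{\iota^*_{d-1}(\RR^d\times U\to U)}$ consists of a $(d-1)$-bordism $T\to V$ equipped with cut tuples in directions $1,\ldots,d-1$ together with a fiberwise étale map $T\times\RR\to\RR^d$ covering some smooth map $V\to U$. The map~$i$ sends this to the $d$-bordism $T\times\RR\to V$ with the same cut tuples pulled back along the projection $T\times\RR\to T$ in directions $1,\ldots,d-1$, the single cut $T\times\{0\}$ in the $d$th direction, and the given étale map to~$\RR^d$. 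Projection onto the $\RR$-factor is a Morse function without critical points compatible with the source cut, so the image lies in $B_{-1}([0])$.

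Second, I would construct the essential inverse by Morse theory. A vertex of $B_{-1}([0])$ consists, up to germ around the core, of a $d$-bordism $M$ with a single cut $C=C^d_0$ in the $d$th direction together with a Morse function $\varphi:M\to\RR$ without critical points satisfying $\varphi^{-1}(0)=C_{=0}$. The gradient flow of~$\varphi$ produces a canonical germ of a diffeomorphism between the germ of~$M$ around~$C_{=0}$ and $C_{=0}\times\RR$, and postcomposing the given étale map to~$\RR^d$ with this diffeomorphism yields an étale map $C_{=0}\times\RR\to\RR^d$, i.e., an element of the $\iota^*_{d-1}(\RR^d\times U\to U)$-structure. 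The cuts in directions $1,\ldots,d-1$ are inherited from~$M$.

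Third, I would promote this pointwise inverse to a simplicial weak equivalence by a contractibility-in-families argument analogous to the one used in \cref{contractiblecylinders}. Using the simplicial Whitehead theorem (Grady--Pavlov \cite[\ecref{EL-simplicial.whitehead}]{GradyPavlov}), it suffices to show that for any sphere $\partial\Delta^a$ mapping into a fiber of~$i$ (with a chosen filling into $B_{-1}([0])$), one can produce a zigzag of simplicial homotopies contracting it to a point. The relevant contractibility is that the space of smooth $\Delta^l$-families of Morse functions without critical points compatible with a given family of source cuts is nonempty and convex (one can average any two by a partition of unity), hence contractible, so Morse trivializations can be chosen coherently over the sphere and deformed to a constant one.

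The main obstacle will be making the Morse-theoretic trivialization natural enough in families to descend to a map of simplicial presheaves, rather than merely providing isolated inverses. Two technical points resolve this. The germ convention of \cref{bord,enrichedbordstr} means only the germ of the Morse function near the cut matters, which eliminates the usual obstruction to global uniqueness. Secondly, after replacing the structure ``fiberwise open embeddings into~$\RR^d$'' by the sheafification ``fiberwise étale maps'' via codescent (\cref{etalebordisms}), one can glue Morse trivializations across an open cover, so the locally defined inverse extends to a genuine inverse map of simplicial presheaves on $\cartsp\times\Gamma\times\Delta^{\times(d-1)}$, completing the argument.
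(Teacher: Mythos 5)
Your forward map agrees with the paper's, but the way you propose to prove that it is an equivalence has a genuine gap. The central problem is the claimed convexity: the set of functions with nowhere-vanishing fiberwise differential is \emph{not} convex, and a partition-of-unity average of two critical-point-free Morse functions can perfectly well acquire critical points. Away from the cut, compatibility with the cut only constrains the \emph{sign} of the function, not the direction of its gradient, so the gradients of two admissible functions can cancel at a point where both functions are positive; the average is then critical there. Hence the space of admissible Morse functions is not contractible ``by averaging,'' and the coherent choice of Morse trivializations over a sphere $\partial\Delta^a$ is exactly the point that still needs a real argument. Relatedly, the Morse function is not part of the data of a vertex of $B_{-1}([0])$ (\cref{index.filtration} only requires that one \emph{exist}), and the gradient flow additionally depends on a choice of metric, so your ``inverse'' is not a map of simplicial presheaves; at best it is defined after passing to a space of bordisms equipped with chosen trivializations, and then you must prove that the space of such choices is contractible in families --- which is essentially the whole content of the statement. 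The appeal to codescent and \'etale sheafification does not repair this: codescent concerns the geometric structure ${\cal S}$, and Morse functions on the underlying bordisms do not glue along open covers of the total space.

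The paper avoids Morse theory entirely at this step. It defines the forward map directly on the simplicial groupoids of \cref{enrichedbordstr}, observes that the codomain is weakly equivalent to the space of $U$-families of cooriented codimension-1 germs embedded in $\RR^d$ while the domain is the same data together with an extension of the embedding to $M\times\RR\times U$, and identifies the homotopy fiber over a vertex as a torsor over the simplicial group $G$ of cut-preserving fiberwise self-diffeomorphisms of $M\times\RR\times U$. Contractibility of $G$ is then proved by a Kister--Mazur-type ``linearize in the $\RR$-direction and rescale'' deformation, reducing $G$ to the contractible space of smooth maps $M\times U\to\RR_{>0}$. If you want to keep the Morse-theoretic picture, you must replace the convexity claim by an argument of this kind (or by the subdivision-and-local-choice argument used in the proof of \cref{contractiblecylinders}); as written, the key step fails.
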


\begin{proof}
We evaluate on some arbitrary $U∈\cartsp$, $\langle\ell\rangle∈Γ$, ${\bf m}∈Δ^{⨯(d-1)}$.
The resulting spaces are the diagonals of simplicial groupoids from \cref{enrichedbordstr},
and we define a morphism~$f$ of these simplicial groupoids.

Recall that the simplicial set of objects is given by 
$${\rm Ob}≔\coprod_{(M,P)}{\cal S}_{\square}(M\times U\to U)\times\Cut_\square(M\times U\to U,P),$$
where the coproduct ranges over pairs $(M,P)$ as in \cref{bord},
where $M$ has dimension~$d-1$ on the left side and dimension~$d$ on the right side.
When we map to the right side, we have to add an additional cut, corresponding to an object (0-simplex) in the $d$th direction.
The map $f$ sends $(M,P)$ to $(M⨯\RR,P')$,
where $P':M⨯\RR→M→\langle\ell\rangle$ simply projects onto~$M$ and applies~$P$.
The cut in the $d$th direction is $(M⨯\RR_{<0}⨯U→U,M⨯\{0\}⨯U→U,M⨯\RR_{>0}⨯U→U)$,
which provides a map of simplicial sets $\Cut(M⨯U→U,P)→\Cut((M⨯\RR)⨯U→U,P')$.
A geometric structure on the left is by definition of~$\iota^*_{d-1}$ a germ of an embedding $M⨯\RR⨯U→\RR^d⨯U$ around the core of~$M⨯U$,
which we map to its germ around the core of $M⨯\RR⨯U$ on the right side.

Recall that the simplicial set of morphisms is given by
$${\rm Mor}≔\coprod_{(\tilde M,\tilde P)}{\cal S}_{\square}(\tilde M\times U\to U)\times\DiffCut_\square(M\times U\to U,P),$$
where the coproduct is taken over the same pairs as for ${\rm Ob}$.
The simplicial set $\DiffCut(M\times U\to U,P)$
has as its $l$-simplices
an $l$-simplex of $\Cut(M\times U\to U,P)$
together with a germ of a diffeomorphism $M\to \tilde M$
taken around the union over all $t\in Δ^l$ of cores of~$C_t$ inside~$\tilde M$
and that commutes with the maps $P$ and $\tilde P$.

We send such a germ of a diffeomorphism $M\to \tilde M$
to the induced germ of a diffeomorphism $M⨯\RR→\tilde M⨯\RR$.
The resulting map preserves composition, again by definition of $\iota^*_{d-1}$ on morphisms.

We have defined a morphism~$f$ of simplicial groupoids.
To show that the diagonal~$g$ of the nerve of~$f$ is a weak equivalence,
observe that the codomain of~$g$ is weakly equivalent to the simplicial set of $U$-families of embedded codimension~1 bordisms in~$\RR^d$ with a choice of a normal orientation.
The domain of~$g$ is weakly equivalent to the simplicial set of $U$-families of embedded codimension~1 bordisms~$M$ in~$\RR^d$ with a choice of an embedding $M⨯\RR⨯U→\RR^d⨯U$
that extends the given embedding $M⨯U→\RR^d⨯U$.
The homotopy fiber of~$g$ over some vertex in the codomain of~$g$
is a simplicial torsor over the simplicial group~$G$ of $U$-families of diffeomorphisms $M⨯\RR⨯U→M⨯\RR⨯U$ (over~$U$) that preserve $M⨯\{0\}⨯U$, $M⨯\RR_{>0}⨯U$, and $M⨯\RR_{<0}⨯U$.

We now show that the simplicial group~$G$ is contractible.
The argument is standard and closely resembles the “make linear in a small neighborhood and zoom in” argument used in the proof of the smooth analogue of the Kister–Mazur theorem.
We apply the simplicial Whitehead theorem to~$G$.
Since any simplicial group is a Kan complex, there is no need to use subdivisions.

Suppose $g:∂Δ^a→G$ is a simplicial map.
We construct a filling $Δ^a→G$ as follows.
Given~$g$, construct the smooth map~$g'$ that for each $t∈∂Δ^a$
is given by the map $M⨯\RR⨯U→M⨯\RR⨯U$
that merely rescales the $\RR$-component
by the factor given by the derivative of the composition $M⨯\RR⨯U→M⨯\RR⨯U→\RR$ at the given point $(m,u)∈M⨯U$ in the direction of~$\RR$,
where the first map is given by~$g$ and the second map is projection.

Using the compactness of~$∂Δ^a$, choose $ε>0$ such that
the smooth homotopy that interpolates between $g$ and $g'$
via the formula $(1-tb(|x|/ε))g(x)+tb(|x|/ε)g'$
is an embedding for all~$t∈[0,1]$,
where $b:[0,∞)→\RR$ is a bump function that is 1 near~0 and 0 near $[1,∞)$.

Next, by rescaling $\RR$
using the formula $g'(x(1-t))/(1-t)$ for $t∈[0,1)$ (and extending to $t=1$ using $g'(x)$),
we can further deform~$g'$ to a family~$g''$ that is given by diffeomorphisms $M⨯\RR⨯U→M⨯\RR⨯U$
that do not change the $M$-component and are linear on the $\RR$-component.
The space of such diffeomorphisms is the space of smooth maps $M⨯U→\RR_{>0}$, which is contractible.
Thus, $G$ is contractible.
\end{proof}

\section{The homotopy type of the extended geometric bordism category}

In this section, we show how to extend the Galatius–Madsen–Tillmann–Weiss theorem \cite{GMTW}
to the case of fully extended bordism categories with geometric structures.
This extends the existing work in the case of topological structure of
Bökstedt–Madsen \cite{BokstedtMadsen} for multiple $(∞,d)$-categories
and Schommer-Pries \cite{SchommerPries.ITFT} for globular $(∞,d)$-categories (generalizing the fully extended case only).

\begin{definition}
The model category of simplicial spectra is denoted by~$\Sp$.
We also consider the left Bousfield localization of $\Sp$ with respect to morphisms
that induce isomorphisms on nonnegatively graded homotopy groups.
The resulting {\it connective\/} model structure on $\Sp$ is denoted by~$\Sp_{≥0}$.
\end{definition}

Recall the following Quillen equivalence,
where the left side models smooth symmetric monoidal $(∞,d)$-categories
in which all objects are (monoidally) invertible
and all $k$-morphisms for $0<k\le d$ are invertible.

\begin{proposition}[Bousfield–Friedlander \cite{BousfieldFriedlander}]
\label{sheaf.spectra.models}
We have a chain of left Quillen equivalences
$$\smcatinv_{\infty,d}=\sPSh(\cartsp⨯Γ⨯\Delta^{\times d})→\sPSh(\cartsp⨯\Gamma)→\PSh(\cartsp,\Sp_{≥0}),$$
where the left category $\smcatinv_{\infty,d}$ is the category $\smcatuple_{∞,d}$
equipped with the projective globular (or uple) model structure that is left Bousfield localized with respect to arbitrary maps of multisimplices
(making all $k$-morphisms invertible for any $0<k\le d$)
as well as the shear map for $\Gamma$
(making all objects monoidally invertible),
the middle category $\sPSh(\cartsp⨯\Gamma)$ is equipped with the stable model structure of Bousfield–Friedlander \cite{BousfieldFriedlander} in the direction of~$\Gamma$
and Čech structure in the direction of $\cartsp$,
and the right category $\PSh(\cartsp,\Sp_{\geq 0})$ is equipped with the Čech-local model structure valued in the stable {\it connective\/} model structure on simplicial spectra.
The left functor takes colimits with respect to $\Delta^{\times d}$ and the right functor is given by applying the Bousfield–Friedlander functor \cite{BousfieldFriedlander}
objectwise on $\cartsp$.
\end{proposition}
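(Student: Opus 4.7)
The plan is to split the chain into two Quillen equivalences and verify each one independently, then appeal to the composition law for Quillen equivalences.

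For the first arrow, the left adjoint is left Kan extension along the projection $p:\cartsp⨯Γ⨯Δ^{⨯d}→\cartsp⨯Γ$, which computes homotopy colimits over $Δ^{⨯d}$; the right adjoint $p^*$ sends a presheaf on $\cartsp⨯Γ$ to its constant extension in the $Δ^{⨯d}$-direction. At the level of the projective (or injective) model structures this is manifestly a Quillen adjunction, since $p^*$ preserves all fibrations and weak equivalences. The key observation is that localizing $\smcatinv_{\infty,d}$ at \emph{all} maps of multisimplices forces fibrant objects to be homotopy constant in the $Δ^{⨯d}$-directions, by the universal property of left Bousfield localization. Once homotopy constancy holds, the Segal, completeness, and globularity conditions in those directions become automatic, so the localized fibrant objects are precisely homotopy-constant diagrams valued in stably fibrant Čech-local objects of $\sPSh(\cartsp⨯Γ)$. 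The derived counit $\ldf p_! p^* Y → Y$ is then a weak equivalence since the homotopy colimit of a constant $Δ^{⨯d}$-indexed diagram recovers the value, and the derived unit $X → p^* \ldf p_! X$ is a weak equivalence since for fibrant $X$ every component of $X$ is already weakly equivalent to the colimit.

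For the second arrow, one applies the classical Bousfield--Friedlander theorem objectwise on $\cartsp$. That theorem establishes a left Quillen equivalence from the stable model structure on $Γ$-spaces (obtained by left Bousfield localization at Segal maps together with the shear map) to the connective stable model structure on simplicial spectra. Since the underlying Quillen equivalence is natural in the enriching parameter, one obtains a corresponding Quillen equivalence of the projective/injective presheaf categories over $\cartsp$. The Čech-local model structures on both sides are obtained by further left Bousfield localization in the $\cartsp$-direction, which acts on a variable independent of the $Γ$-direction, so the two localizations commute and the objectwise Bousfield--Friedlander functor descends to a left Quillen equivalence of the Čech-local categories.

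The main technical obstacle will be the careful bookkeeping of stacked localizations in the first step. Concretely, one must verify that inverting all multisimplicial maps in $\smcatinv_{\infty,d}$ is compatible with (and in fact absorbs) the earlier Segal, completeness, and globularity localizations of $\smcatglob_{∞,d}$ in those directions, so that the identification of fibrant objects as homotopy-constant $Δ^{⨯d}$-diagrams is clean. Likewise one must check that the shear-map localization combined with Segal's $Γ$-condition coincides with the stable model structure of Bousfield--Friedlander (not merely a further localization thereof). Both verifications are standard manipulations of left Bousfield localizations, using that a localization at a set~$S$ together with a set~$T$ equals the localization at $S∪T$, and that fibrancy in the combined localization can be checked one factor at a time when the relevant localizations act on disjoint variables.
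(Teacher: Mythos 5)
Your proposal is correct and follows essentially the same route as the paper, whose proof merely asserts that the first equivalence "follows from the definition of the model structures" (i.e., localizing at all multisimplicial maps forces fibrant objects to be homotopy constant in the $\Delta^{\times d}$-directions, so the derived unit and counit of $p_!\dashv p^*$ are equivalences) and that the second follows from applying the Bousfield--Friedlander theorem objectwise on $\cartsp$, with the Čech localization commuting with the stabilization in the $\Gamma$-direction. The only caveat is your parenthetical appeal to the injective structure in the first step — $p^*$ does not obviously preserve injective fibrations — but the proposition is stated for the projective structure, where your argument applies verbatim.
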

\begin{proof}
The statement for the left functor follows immediately from the definition of the model structures. The statement for the right functor follows from Bousfield--Friedlander \cite{BousfieldFriedlander}.
\end{proof}

\begin{definition}\label{def.K.functor}
We denote by
$$\cK:\smcatinv_{\infty,d}→\PSh(\cartsp,\Sp_{\geq 0})$$
the composition of the chain of equivalences from \cref{sheaf.spectra.models},
and by $\ldf\cK$ its left derived functor with respect to the model structures indicated there.
\end{definition}

Recall the notion of a geometric structure from \cref{geometric.structure}.

\begin{definition}\label{madtilspec}
Fix $d\geq 0$. We define the \emph{Madsen–Tillmann spectrum functor} as the composite functor 
$$\MT≔\ldf\cK\circ \BBord_d^{(-)}:\sPSh(\FEmb_d)\to \smcatinv_{\infty,d}\to \PSh(\cartsp,\Sp_{\geq 0}).$$
\end{definition}

The following result generalizes the classification of invertible topological field theories established by Schommer-Pries \cite[Theorem~6.14]{SchommerPries.ITFT}
to the case of arbitrary geometric structures.

\begin{theorem}
\label{invertible.tft}
Fix $d\geq 0$, ${\cal S}\in \sPSh(\FEmb_d)$, and a fibrant object $\mathscr{C}\in \smcatinv_{∞,d}$ (\cref{sheaf.spectra.models}).
Then the natural inclusion map
$${\rm c}_{\{1,\ldots,d\}}(\Funmon(\BBord_d^{\cal S},\mathscr{C})^⨯)→\Funmon(\BBord_d^{\cal S},\mathscr{C})$$
is a weak equivalence and
we have a natural weak equivalence
$$\ldf\cK\Funmon(\BBord_d^{\cal S},\mathscr{C})\simeq \rdf\hom(\MT({\cal S}),\ldf\cK(\mathscr{C})).$$
\end{theorem}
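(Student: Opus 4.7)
The plan is to reduce Part~1 directly to \cref{mainthm.geometric} and Part~2 to the Quillen equivalences of \cref{sheaf.spectra.models}, exploiting the fact that every fibrant object of $\smcatinv_{\infty,d}$ is automatically an $\infty$-groupoid with duals. First, I would observe that any fibrant $\mathscr{C}\in\smcatinv_{\infty,d}$ is fibrant in $\smcatdual_{\infty,d}$: invertibility of all $k$-morphisms ($0<k\le d$) makes every such morphism its own adjoint, while monoidal invertibility of objects provides duals, so the localizing maps of \cref{dualmorph,dualobj} are satisfied trivially. Applying \cref{mainthm.geometric} then yields the weak equivalence in Part~1 and, as a byproduct, shows that $\Funmon(\BBord_d^{\cal S},\mathscr{C})$ is a smooth symmetric monoidal $\infty$-groupoid.

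Next, I would verify that $\Funmon(\BBord_d^{\cal S},\mathscr{C})$ is itself fibrant in $\smcatinv_{\infty,d}$. The $\infty$-groupoid condition is Part~1; grouplikeness follows from grouplikeness of $\mathscr{C}$, since the shear map for the functor object is induced pointwise from the shear map on $\mathscr{C}$, which is a weak equivalence by hypothesis. At this point I can transport both $\BBord_d^{\cal S}$ and $\Funmon(\BBord_d^{\cal S},\mathscr{C})$ across the chain of Quillen equivalences of \cref{sheaf.spectra.models}. The composite $\cK$ is a monoidal left Quillen equivalence: the first factor takes colimits along $\Delta^{\times d}$, which is monoidal for the Day convolution since colimits are, while the Bousfield–Friedlander stabilization functor is monoidal with respect to the smash product of $\Gamma$-spaces and spectra. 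Because $\BBord_d^{(-)}$ is left Quillen by \cref{local}, the object $\BBord_d^{\cal S}$ is cofibrant, and $\Funmon(\BBord_d^{\cal S},\mathscr{C})$ is fibrant by the previous step; a monoidal left Quillen equivalence then preserves derived internal homs on such pairs, yielding
$$\ldf\cK\Funmon(\BBord_d^{\cal S},\mathscr{C})\simeq \rdf\hom(\ldf\cK\BBord_d^{\cal S},\ldf\cK(\mathscr{C})) = \rdf\hom(\MT({\cal S}),\ldf\cK(\mathscr{C})),$$
which is the required identification.

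The subtlest step will be the internal-hom compatibility across the localization from $\smcatuple_{\infty,d}$ to $\smcatinv_{\infty,d}$: the functor $\Funmon$ is defined and derived in the multiple model structure, whereas the equivalence to $\PSh(\cartsp,\Sp_{\geq 0})$ starts from $\smcatinv_{\infty,d}$, so one must check that the derived internal hom into a grouplike object is insensitive to this further localization. This is essentially what the combination of Part~1 and the pointwise grouplikeness argument provides, but some care is required to package it as a statement that the localization is monoidal, i.e., that the subcategory of local objects is closed under taking internal homs out of arbitrary cofibrant sources, so as to identify $\Funmon$ computed in $\smcatuple_{\infty,d}$ with the internal hom in $\smcatinv_{\infty,d}$ up to the required weak equivalence.
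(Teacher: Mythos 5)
Your proposal is correct and follows essentially the same route as the paper: Part~1 is an application of \cref{mainthm.geometric} (after noting that a fibrant object of $\smcatinv_{\infty,d}$ satisfies the locality conditions of $\smcatdual_{\infty,d}$), and Part~2 moves the monoidal left Quillen equivalence $\ldf\cK$ inside the derived internal hom; you simply make explicit several details the paper leaves implicit. One trivial quibble: an invertible morphism is not ``its own adjoint'' but rather has its \emph{inverse} as a two-sided adjoint, which is what actually makes the dualizing localizations vacuous here.
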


\begin{proof}
The first claim holds by the geometric cobordism hypothesis (\cref{mainthm.geometric}).
The second claim simply moves the left derived left Quillen equivalence $\ldf\cK$ (\cref{def.K.functor}) inside the (derived) internal hom.
\end{proof}

\begin{theorem}\label{madsensphere}
Fix $d\geq 0$ and $U∈\cartsp$.
Then we have a weak equivalence 
$$\MT(\RR^d\times U\to U)\simeq \mathbf{S}\otimes j(U),$$
where $j:\cartsp\to \sPSh(\cartsp)$ is the Yoneda embedding and $\otimes$ denotes the tensoring of presheaves of connective spectra over simplicial presheaves.
Furthermore, this weak equivalence is $\O(d)$-equivariant,
where the group $\O(d)$ acts via its canonical representation on $\RR^d$ on the left
and via the J-homomorphism on the right.
\end{theorem}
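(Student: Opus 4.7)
The plan is to establish the equivalence by Yoneda, using the geometric framed cobordism hypothesis (\cref{mainthm}) to compute the functor corepresented by $\MT(\RR^d\times U\to U)$. First I would observe that the functor $\ldf\cK:\smcatdual_{∞,d}\to \PSh(\cartsp,\Sp_{\geq 0})$ (\cref{def.K.functor}) is a left adjoint to the inclusion of presheaves of connective spectra into smooth symmetric monoidal $(∞,d)$-categories with duals, since a presheaf of connective spectra is equivalently a smooth symmetric monoidal $∞$-groupoid with invertible objects, hence is fibrant in $\smcatdual_{∞,d}$. Therefore, for any fibrant $\mathscr{E}\in \PSh(\cartsp,\Sp_{\geq 0})$ we have a natural equivalence
$$\rdf\hom(\MT(\RR^d\times U\to U),\mathscr{E})\simeq \Funmon(\BBord_d^{\RR^d\times U\to U},\mathscr{E}).$$

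Next, I would apply the geometric framed cobordism hypothesis (\cref{mainthm}) to identify the right side with $\Map(U,\mathscr{E}^\times)$. Since $\mathscr{E}$ is a connective spectrum, $\mathscr{E}^\times=\mathscr{E}$, and the resulting object coincides with the value of $\mathscr{E}$ on~$U$, which by the enriched Yoneda lemma equals $\rdf\hom(\mathbf{S}\otimes j(U),\mathscr{E})$, where $\mathbf{S}\otimes j(U)$ is the free presheaf of connective spectra on the representable $j(U)$. Since this identification is natural in $\mathscr{E}$, the Yoneda lemma in $\PSh(\cartsp,\Sp_{\geq 0})$ yields the desired equivalence
$$\MT(\RR^d\times U\to U)\simeq \mathbf{S}\otimes j(U).$$

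For the $\O(d)$-equivariance, I would exploit the fact that the $\O(d)$-action on $\BBord_d^{\RR^d\times U\to U}$ comes from functoriality of $\BBord_d^{(-)}$ applied to the action of $\O(d)$ on the representable presheaf $(\RR^d\times U\to U)\in \sPSh(\FEmb_d)$ (using the inclusion $\O(d)\into \GL(d)$ and the description of representables on $\FEmbCart^\Delta_d$ from \cref{fembcartgl}). The chain of equivalences above is natural in the representable geometric structure, and so transports this action to an $\O(d)$-action on $\mathbf{S}\otimes j(U)$ through an action on~$\mathbf{S}$. To identify the transported action with the $J$-homomorphism action, I would test against the specific target category $\deloop^d\mathbf{S}$ or its truncations, where the $\O(d)$-action on the core of fully dualizable objects is classically computed to be the $J$-action (as in Lurie's discussion of the action of $\O(d)$ on the space of fully dualizable objects, transported here via the geometric framed cobordism hypothesis to the action on~$\mathbf{S}$).

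The main obstacle will be the $\O(d)$-equivariance. The equivalence itself follows formally from the cobordism hypothesis and Yoneda, but identifying the transported $\O(d)$-action on the sphere spectrum as the $J$-homomorphism requires carefully unraveling the action of $\O(d)\subset \GL(d)$ on $\BBord_d^{\RR^d\times U\to U}$ through the entire proof of \cref{mainthm}: specifically, tracing how the sign changes and reflections used in the inductive step (via \cref{inductive.reduction} and the $\O(d-1)\times_{\O(d-2)}\O(d-1)\to \O(d)$ pushout) interact with the unit/counit structure of adjunctions associated to $0$-handles. Since the $J$-homomorphism is characterized by how $\O(d)$ acts on framed $0$-manifolds (i.e., points with a framing), the final identification should reduce to this classical description after repeated application of the inductive step.
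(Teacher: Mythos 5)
Your argument for the weak equivalence is essentially the same Yoneda argument the paper uses, just phrased dually. The paper constructs an explicit map $p:\mathbf{S}\otimes j(U)\to \MT(\RR^d\times U\to U)$ induced by the canonical point of \cref{thepoint}, and then shows $p$ is a weak equivalence by passing derived mapping spaces into an arbitrary fibrant $X\in\PSh(\cartsp,\Sp_{\geq 0})$, using the adjunction $\cK\dashv\Omega^\infty$ and the framed cobordism hypothesis (\cref{mainthm}) to identify both sides with $(\Omega^\infty X)(U)$. You instead assert the equivalence directly from the corepresentability identity $\rdf\hom(\MT(\RR^d\times U\to U),\mathscr{E})\simeq \mathscr{E}(U)$. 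These are the same computation; one small caution is that your displayed equivalence $\rdf\hom(\MT(\RR^d\times U\to U),\mathscr{E})\simeq \Funmon(\BBord_d^{\RR^d\times U\to U},\mathscr{E})$ compares objects living in different categories (a mapping spectrum vs.\ the internal hom in $\sPSh(\cartsp\times\Gamma\times\Delta^{\times d})$); the paper sidesteps this by phrasing everything in terms of derived mapping \emph{spaces} $\rdf\map(-,-)$ after applying $\Omega^\infty$, which is the cleaner way to say what you intend.

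On the $\O(d)$-equivariance, you should be aware that the paper's own proof of \cref{madsensphere} actually says nothing about it: the proof text ends after showing $p$ is a weak equivalence. Your outlined strategy — trace the $\O(d)\subset\GL(d)$ action on representables in $\FEmbCart^\Delta_d$ through the inductive machinery of \cref{mainthm}, and in particular through \cref{inductive.reduction} and the $\O(d-1)\times_{\O(d-2)}\O(d-1)\to\O(d)$ pushout, reducing the identification of the transported action to the classical characterization of the $J$-homomorphism on framed points — is the natural route, but note one structural advantage of the paper's formulation: because the paper exhibits the equivalence as a \emph{specific} map $p$ induced by the canonical point, one only needs to check equivariance of that single construction. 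A pure corepresentability argument gives an equivalence only up to a contractible choice, and to make sense of ``the equivalence is equivariant'' you must first pin down which equivalence; the naturality in the representable geometric structure that you invoke does accomplish this, but it takes a sentence of unpacking that becomes automatic once the explicit point of \cref{thepoint} is in the picture.
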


\begin{proof}
Let $j:\cartsp\times \Gamma\times \Delta^{\times d}\to \sPSh(\cartsp\times \Gamma\times \Delta^{\times d})$ denote the Yoneda embedding. The canonical map $$(\RR^d\times U,\{C^k:1\leq k \leq d\}, 1:\RR^d\times U\to \langle 1\rangle):j(U,\langle 1\rangle, {\bf 0})\to \BBord_d^{\RR^d\times U\to U},$$
described in \cref{thepoint}, induces a map 
$$p:\mathbf{S}\otimes j(U)\simeq \ldf \cK(j(U,\langle 1\rangle, {\bf 0}))\to \ldf \cK\BBord_d^{\RR^d\times U\to U}=\MT(\RR^d\times U\to U).$$
To see that this map is a weak equivalence, observe that for an arbitrary fibrant object $X∈\PSh(\cartsp,\Sp_{≥0})$ (\cref{sheaf.spectra.models}),
taking derived maps out of $p$ yields a map
$$\rdf\map(\BBord_d^{\RR^d\times U\to U},\Omega^{\infty}X)\simeq \rdf\map(\ldf\cK\BBord^{\RR^d\times U\to U},X)\to \map(\mathbf{S}\otimes j(U),X)\simeq (\Omega^{\infty}X)(U).$$ 
Here, $\Omega^{\infty}$ is the right Quillen adjoint $\Omega^{\infty}\vdash \cK$ in \cref{def.K.functor,sheaf.spectra.models},
which is derived because $X$ is fibrant.
The above map is a weak equivalence by the framed geometric cobordism hypothesis (\cref{mainthm}).
\end{proof}

To connect with the Madsen--Tillmann construction \cite{MadsenTillmann} in the general case, we will use the following observations.
Let $b:\FEmb_d\to \cartsp$ denote the base space functor, sending $M\to U$ to $U$. 
The category $\sPSh(\FEmb_d)$ is tensored over $\sPSh(\cartsp)$ via 
\begin{equation}
Y\boxtimes X≔ b^*(Y)\times X, \qquad X\in \sPSh(\FEmb_d), \quad Y\in \sPSh(\cartsp),
\end{equation}
where $\sPSh(\FEmb_d)$ is enriched in $\sPSh(\cartsp)$ by applying the right adjoint $b_*$ to the internal hom:
$$\map(b^*Y⨯X,Z)≅\map(b^*Y,\hom(X,Z))≅\map(Y,b_*\hom(X,Z)).$$
Furthermore, the functor $b^*$ is a left Quillen functor
$$b^*:\sPSh(\cartsp)_\loc→\sPSh(\FEmb_d)_\loc,$$
where both sides are equipped with the Čech-local injective model structure.

Similarly, let $p:\cartsp\times \Gamma\times \Delta^{\times d}\to \cartsp$ be the projection. Then $\sPSh(\cartsp\times \Gamma\times \Delta^{\times d})$ is tensored over $\sPSh(\cartsp)$ via 
\begin{equation}
Y\tboxtimes X≔ p^*(Y)\times X, \qquad X\in \sPSh(\cartsp\times \Gamma\times \Delta^{\times d}), \quad Y\in \sPSh(\cartsp).
\end{equation}

\begin{lemma}\label{bordtensor}
The functor 
$$\BBord_d:\sPSh(\FEmb_d)_{\inj,\loc}\to \smcatdual_{\infty,d}$$
is a left $\sPSh(\cartsp)$-Quillen functor. 
\end{lemma}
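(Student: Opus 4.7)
The plan is to reduce the claim to two ingredients: (i) $\BBord_d$ is already a left Quillen functor $\sPSh(\FEmb_d)_{\inj,\loc}\to\smcatdual_{\infty,d}$, and (ii) it intertwines the two $\sPSh(\cartsp)$-tensorings up to natural isomorphism. Together with the fact that $\smcatdual_{\infty,d}$ is a $\sPSh(\cartsp)$-enriched model category via the projection $p$, these yield the $\sPSh(\cartsp)$-Quillen compatibility via the pushout-product axiom for the cartesian product.

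\textbf{Step 1 (left Quillen).} The functor $\BBord_d:\sPSh(\FEmb_d)_{\inj,\loc}\to\smcatuple_{\infty,d}$ is left Quillen by Grady--Pavlov \cite[\ecref{EL-mainthm}]{GradyPavlov}: it preserves (injective) cofibrations by inspection, and the locality/codescent property implies it sends \v{C}ech-local weak equivalences in the domain to weak equivalences in the target. Postcomposing with the left Bousfield localizations $\smcatuple_{\infty,d}\to\smcatglob_{\infty,d}\to\smcatdual_{\infty,d}$ (\cref{smcatglob,smcatdual}) preserves this property, since left Bousfield localizations are identities on cofibrations and send weak equivalences to weak equivalences.

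\textbf{Step 2 (intertwining the tensorings).} I claim there is a natural isomorphism
$$\BBord_d(Y\boxtimes X)\;\cong\;Y\tboxtimes\BBord_d(X)$$
for $Y\in\sPSh(\cartsp)$ and $X\in\sPSh(\FEmb_d)$. Since $\BBord_d$ is left adjoint (indeed a left Quillen functor) and both tensorings are computed as cartesian products with the pullback of $Y$ along projections, it suffices to check this for representable $Y=j(V)$ and $X$ a representable $(T\to U)\in\FEmb_d$. Unwinding the definitions (\cref{bord,bordstr,enrichedbordstr}), the simplicial set
$\BBord_d^{\,b^*j(V)\times (T\to U)}(W,\langle\ell\rangle,{\bf m})$
is the groupoid of the data of \cref{enrichedbordstr} for the target $T\to U$, together with a smooth map $W\to V$ independent of all other data; this is precisely $j(V)(W)\times\BBord_d^{T\to U}(W,\langle\ell\rangle,{\bf m})$, i.e.\ $(j(V)\tboxtimes\BBord_d^{T\to U})(W,\langle\ell\rangle,{\bf m})$. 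Naturality in $V$ and $(T\to U)$ is immediate, and general $Y$ and $X$ follow by cocontinuity since both sides preserve colimits in each argument.

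\textbf{Step 3 (Quillen bifunctor compatibility).} We need to verify the pushout-product axiom: for a cofibration $f:X\to X'$ in $\sPSh(\FEmb_d)_{\inj,\loc}$ and a cofibration $g:Y\to Y'$ in $\sPSh(\cartsp)_\loc$ (injective, hence monomorphism), the induced map
$$\BBord_d\bigl(Y'\boxtimes X\cup_{Y\boxtimes X}Y\boxtimes X'\bigr)\longrightarrow\BBord_d(Y'\boxtimes X')$$
is a cofibration in $\smcatdual_{\infty,d}$, acyclic if either $f$ or $g$ is. Using the natural isomorphism of Step~2 and the fact that $\BBord_d$ preserves colimits, this map is identified with the pushout-product
$$p^*Y'\times\BBord_d(X)\cup_{p^*Y\times\BBord_d(X)}p^*Y\times\BBord_d(X')\longrightarrow p^*Y'\times\BBord_d(X').$$
Now $p^*$ is left Quillen from $\sPSh(\cartsp)_\loc$ to the \v{C}ech-localized injective model structure on $\sPSh(\cartsp\times\Gamma\times\Delta^{\times d})$, and cartesian product satisfies the pushout-product axiom in the injective model structure (monomorphisms are closed under pushout-product, and weak equivalences are pushed out along monomorphisms by left properness). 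The localizations that define $\smcatuple$, $\smcatglob$, and $\smcatdual$ preserve this: cofibrations are unchanged and acyclic cofibrations are characterized by their left lifting property against fibrant objects, which is stable under the relevant pushout-products. Combined with Step~1, this gives the pushout-product axiom in $\smcatdual_{\infty,d}$.

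The main point requiring care is Step~2: one must confirm that the factor $b^*Y$, which only sees the base $U$ of an object $(T\to U)\in\FEmb_d$, really does pass through the elaborate construction of \cref{enrichedbordstr} as a cartesian factor at the level of the cartesian space coordinate. This is so because the only place where $Y(U)$-data would interact with the bordism data is in the geometric structure slot ${\cal S}_\square(\tilde M\times U\to U)$, and there $b^*Y$ restricts to germs by pullback along the structure map to $U$, which is independent of all cut and diffeomorphism data.
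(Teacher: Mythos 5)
Your proposal is correct and follows essentially the same route as the paper: cite Grady--Pavlov for the underlying left Quillen property, then use cocontinuity of $\BBord_d$ to reduce the intertwining isomorphism $\BBord_d^{Y\boxtimes{\cal S}}\cong Y\tboxtimes\BBord_d^{\cal S}$ to representables, where it follows by unwinding \cref{enrichedbordstr} (the factor $b^*Y$ only sees the base $U$ and is untouched by the $\Delta^{\times d}$- and $\Gamma$-structure maps). Your Step~3 on the pushout-product axiom is a property of the tensorings on the two model categories (already noted after \cref{power.cat}) rather than of the functor, so it is harmless but not needed beyond Steps~1 and~2.
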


\begin{proof}
We have already established that it is left Quillen in Grady--Pavlov \cite[\ecref{EL-mainthm}]{GradyPavlov}.
Since $\BBord_d$ is cocontinuous, it suffices to prove that 
$$\BBord_d^{Y\boxtimes {\cal S}}= \BBord_d^{b^*Y\times {\cal S}}\cong p^*Y\times \BBord_d^{\cal S}=Y\tboxtimes \BBord_d^{\cal S}$$
for representable $Y\in \sPSh(\cartsp)$.
We simply unravel the definition of $\BBord_d^{b^*Y⨯{\cal S}}$.
The presheaf $b^*Y$ sends an object $T\to V$ to the set of smooth maps $\sm(V,Y)$.
For a summand of $\BBord_d^{b^*Y\times {\cal S}}$ indexed by a pair $(M,P)$, we have
$$(b^*Y⨯{\cal S})(M\times U\to U)\cong {\cal S}(M\times U\to U)\times \sm(U,Y).$$
The multisimplicial maps, which remove and duplicate cuts in $C$, are identity on the factor $\sm(U,Y)$.
Similarly, maps in $\Gamma$ induce the identity on $\sm(U,Y)$.
With these observations, the claim follows by a levelwise inspection of the bordism category with geometric structure (Grady--Pavlov \cite[\ecref{EL-bordstr}]{GradyPavlov}).
\end{proof}

\begin{lemma}
\label{MTtensor}
For any $Y\in \sPSh(\cartsp)$, we have a zig-zag of equivalences
$$\MT(Y\boxtimes^\ldf {\cal S})\simeq Y\otimes^\ldf \MT({\cal S}),$$
where on the right we have the tensoring of presheaves of connective spectra over simplicial presheaves on $\cartsp$, which is defined objectwise.
\end{lemma}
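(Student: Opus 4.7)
The plan is to reduce to the case of representable $Y\in\sPSh(\cartsp)$ by using cocontinuity of both sides in $Y$, then invoke the preceding lemma (\cref{bordtensor}) to move the tensoring past $\BBord_d$, and finally observe that the left Quillen equivalence $\ldf\cK$ commutes with the tensoring by $\sPSh(\cartsp)$ for formal reasons.

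First, I would argue that both functors $Y\mapsto \MT(Y\boxtimes^\ldf {\cal S})$ and $Y\mapsto Y\otimes^\ldf\MT({\cal S})$ from $\sPSh(\cartsp)_\loc$ to $\PSh(\cartsp,\Sp_{\geq 0})_\loc$ preserve homotopy colimits. On the left side, $\BBord_d^{(-)}$ is a left Quillen functor by Grady--Pavlov \cite[\ecref{EL-mainthm}]{GradyPavlov} and a left $\sPSh(\cartsp)$-Quillen functor by \cref{bordtensor}, while $\ldf\cK$ is a left Quillen equivalence (\cref{sheaf.spectra.models}), so their composite preserves homotopy colimits. On the right side, the tensoring $Y\otimes^\ldf(-)$ preserves homotopy colimits in $Y$ because it is a left Quillen bifunctor with respect to the objectwise tensoring of presheaves of connective spectra over simplicial presheaves on $\cartsp$. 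Since every simplicial presheaf is a homotopy colimit of representables, it suffices to establish the weak equivalence for $Y=j(V)$ with $V\in\cartsp$.

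For representable $Y=j(V)$, the preceding \cref{bordtensor} gives an isomorphism $\BBord_d^{j(V)\boxtimes {\cal S}}\cong j(V)\tboxtimes \BBord_d^{\cal S}$ in $\smcatdual_{\infty,d}$. It thus remains to exhibit a zig-zag of weak equivalences $\ldf\cK(j(V)\tboxtimes X)\simeq j(V)\otimes^\ldf \ldf\cK(X)$ in $\PSh(\cartsp,\Sp_{\geq 0})_\loc$. This compatibility is formal given the explicit description of $\cK$ from \cref{sheaf.spectra.models} as a composition of (i) taking colimits along $\Delta^{\times d}$, and (ii) applying the Bousfield--Friedlander functor objectwise over $\cartsp$. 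For step (i), the tensor $j(V)\tboxtimes X=p^*j(V)\times X$ is constant in the $\Gamma\times\Delta^{\times d}$-direction in the first factor, so the colimit over $\Delta^{\times d}$ commutes with $p^*j(V)\times(-)$. For step (ii), both the Bousfield--Friedlander functor and the tensoring by $j(V)$ act objectwise on $\cartsp$, and hence trivially commute. Composing these identifications produces the desired zig-zag.

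The main technical obstacle is ensuring that everything is correctly derived: the tensoring $Y\boxtimes {\cal S}$ on the left must be computed as a homotopy product via cofibrant replacement of $b^*Y\times {\cal S}$ in $\sPSh(\FEmb_d)_{\inj,\loc}$, and similarly $p^*j(V)\times X$ must be handled in $\smcatinv_{\infty,d}$. This is addressed by observing that $b^*$ and $p^*$ are left Quillen functors into injective (hence all-cofibrant) model structures, so cartesian product with $b^*Y$ or $p^*Y$ preserves weak equivalences between cofibrant objects; the resulting comparison maps are weak equivalences, and the zig-zag in the statement comes precisely from these intermediate cofibrant replacements.
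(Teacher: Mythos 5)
Your proof is correct and follows essentially the same route as the paper's: unfold the definition of $\MT$, move the tensoring past $\BBord_d^{(-)}$ via \cref{bordtensor}, and commute $\ldf\cK$ with the tensoring. The paper simply asserts this last compatibility, whereas you justify it by inspecting the two stages of $\cK$ from \cref{sheaf.spectra.models}; your explicit reduction to representable $Y$ is a harmless reorganization of the same argument.
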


\begin{proof}
Indeed, this follows directly from the definition of $\MT$ in \cref{madtilspec} and \cref{bordtensor}:
$$\MT(Y\boxtimes^\ldf {\cal S})
=\ldf\cK\BBord_d^{Y\boxtimes^\ldf{\cal S}}
\simeq\ldf\cK(Y\boxtimes^\ldf\BBord_d^{{\cal S}})
\simeq Y\otimes^\ldf\ldf\cK(\BBord_d^{\cal S})
\simeq Y\otimes^\ldf\MT({\cal S}).\qedhere$$
\end{proof}

Let us recall the shape functor 
\begin{equation}\label{shapeofspec}
\csp:\PSh(\cartsp,\Sp_{\geq 0})\to \Sp_{\geq 0},
\end{equation}
in \cref{cohesiveadj}
(see also \cref{shapeequivariant}), which relates sheaves of connective spectra and ordinary connective spectra.
Using the above lemmas, we can recover the usual Madsen--Tillmann spectrum  as follows. 

\begin{proposition}\label{madsengstr}
Let $G$ be a Lie group and let $\rho:G\to \GL(d)$ be a representation.
Then $G$ acts on the representables of the form $\RR^d\times U\to U$ fiberwise, via the representation $\rho$.
Explicitly, the action 
$$m:\sm(-,G)\boxtimes j(\RR^d\times U\to U)\to j(\RR^d\times U\to U),$$
is defined on an object $T\to V$ by $$m_{T→V}(g:V→G,f:T→\RR^d⨯U,h:V→U)=((ρg⨯\id_U)∘f,h).$$
We have an equivalence 
$$\ldf\csp\MT((\RR^d⨯U\to U)\hq \sm(-,G))\simeq \Sigma^d\MTcla G,$$
where on the right, we have the $d$-fold suspension of the usual Madsen–Tillmann spectrum \cite{MadsenTillmann}.
\end{proposition}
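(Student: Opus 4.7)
The plan is to reduce the computation to a bar-construction manipulation followed by a classical Thom-spectrum identification.

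First, I would express $(\RR^d\times U\to U)\hq\sm(-,G)$ as the geometric realization of the simplicial bar construction in $\sPSh(\FEmb_d)$ whose $n$-th term is $\sm(-,G)^{\times n}\boxtimes j(\RR^d\times U\to U)$, using the $\sPSh(\cartsp)$-enrichment from \cref{bordtensor}. Applying $\MT=\ldf\cK\circ\BBord_d^{(-)}$, which is homotopy cocontinuous and $\sPSh(\cartsp)$-linear by \cref{bordtensor,MTtensor,sheaf.spectra.models,def.K.functor}, converts this into the bar construction computing the homotopy orbits of $\MT(\RR^d\times U\to U)$ under the induced $G$-action. Substituting the $\O(d)$-equivariant equivalence $\MT(\RR^d\times U\to U)\simeq \mathbf{S}\otimes j(U)$ of \cref{madsensphere} (with the $J$-homomorphism action on $\mathbf{S}$ via $\rho$ and trivial action on $j(U)$), this yields
$$\MT\bigl((\RR^d\times U\to U)\hq\sm(-,G)\bigr)\simeq (\mathbf{S}\otimes j(U))\hq_{J\circ\rho}\sm(-,G).$$

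Next, I would apply $\ldf\csp$. It is left Quillen (\cref{shape.functor,cohesiveadj}), so commutes with homotopy colimits and with $\sPSh(\cartsp)$-tensorings. Moreover $\ldf\csp(j(U))\simeq \ast$ (since $U\cong\RR^n$ is smoothly contractible) and $\ldf\csp\sm(-,G)\simeq \Sing G$ (the smooth singular complex of a Lie group recovers its weak homotopy type). The computation thus collapses to
$$\ldf\csp\MT\bigl((\RR^d\times U\to U)\hq\sm(-,G)\bigr)\simeq \mathbf{S}\hq_{J\circ\rho}\Sing G,$$
the homotopy orbit spectrum of the sphere under the $J$-homomorphism action of $G$ via $\rho$.

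Finally, I would identify $\mathbf{S}\hq_{J\circ\rho}\Sing G$ with $\Sigma^d\MTcla G$. This is a classical Thom-spectrum identification: the $J$-homomorphism twist encodes suspension by the representation sphere $S^\rho$, so the homotopy orbits yield the Thom spectrum $BG^{d-V_\rho}$, which equals $\Sigma^d BG^{-V_\rho}=\Sigma^d\MTcla G$ by definition of the classical Madsen--Tillmann spectrum as the Thom spectrum of $-V_\rho$ over $BG$. The appearance of $\Sigma^d$ is consistent with $\MT(-)$ landing in connective spectra: a shift is required to access the classical nonconnective Madsen--Tillmann spectrum.

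The main obstacle will be in the last step, where careful bookkeeping is required to match the specific $J$-homomorphism twist produced by the $\O(d)$-equivariance of \cref{madsensphere} with the virtual bundle $d-V_\rho$ classifying $\Sigma^d\MTcla G$. In particular, one must confirm that the $J$-action corresponds to $S^\rho$ rather than its Spanier--Whitehead dual and correctly track the $\Sigma^d$ shift. Once this identification is pinned down, the computation reduces to the standard description of $\MTcla G$ as a homotopy orbit spectrum, for instance along the lines of the Ando--Blumberg--Gepner--Hopkins--Rezk perspective on Thom spectra as homotopy colimits of $GL_1(\mathbf{S})$-torsors.
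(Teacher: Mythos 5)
Your argument follows the paper's proof essentially step for step: both use homotopy cocontinuity of $\MT$ together with $\sPSh(\cartsp)$-linearity (\cref{MTtensor}) to pass $\MT$ through the homotopy quotient, substitute the equivalence $\MT(\RR^d\times U\to U)\simeq\mathbf{S}\otimes j(U)$ from \cref{madsensphere}, apply the shape functor $\ldf\csp$ to collapse $j(U)$ and convert $\sm(-,G)$ to $\Sing G$, and finally invoke the classical identification of $\mathbf{S}\hq_{J\circ\rho}\Sing G$ with $\Sigma^d\MTcla G$. The explicit bar-construction framing and the cautionary remarks about matching the $J$-homomorphism twist with the virtual bundle $d-V_\rho$ are welcome elaborations, but the underlying route is the same; the paper likewise leaves the final Thom-spectrum identification as a known fact.
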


\begin{proof}
In the following, we implicitly derive all functors.
By \cref{madsensphere}, we have an equivalence $$\MT(\RR^d\times U\to U)\simeq \mathbf{S}\otimes j(U),$$ which is natural in $U$.
Since $\MT$ is  homotopy cocontinuous, and $\MT$ commutes with the tensoring over $\sPSh(\cartsp)$ (\cref{MTtensor}), we have a weak equivalence
$$\MT((\RR^d⨯U\to U)\hq \sm(-,G))\simeq (\mathbf{S}⊗j(U))\hq \sm(-,G),$$
which is natural in $U$.
Applying the shape functor $\csp$ in \cref{shapeofspec}, we obtain equivalences
$$\ldf\csp\left(\MT((\RR^d⨯U\to U)\hq \sm(-,G))\right)\simeq \ldf\csp\left((\mathbf{S}⊗j(U))\hq \sm(-,G)\right)\simeq \mathbf{S}\hq \sing(G).$$
By \cref{madsensphere}, the action of $\sing(G)$ on $\mathbf{S}$ is given by the J-homomorphism.
The homotopy quotient $\mathbf{S}\hq \sing(G)$ is weakly equivalent to $Σ^d\MTcla G$.
\end{proof}

\begin{remark}
In the context of \cref{madsengstr},
a (derived) map $M\to \RR^d\hq G_\delta$ is precisely a rigid $G$-geometry on~$M$ in the sense of Stolz--Teichner (see Grady--Pavlov \cite[\ecref{EL-rigid.geometry}]{GradyPavlov}).
That is to say, the Madsen--Tillmann spectrum is the invertible bordism category for the geometric structure given by the rigid geometry $\rho:G\to\GL(d)$.
\end{remark}

\section{Examples}

In this section, we work out the dimension 1 case of the cobordism hypothesis for geometric structures like Riemannian metrics and smooth maps to a target manifold. 

\subsection{$d=1$ with Riemannian metrics}

Let ${\cal R}:\FEmb_1^\op\to \set$ be the sheaf that sends a submersion $M\to U$ with $1$-dimensional fibers to the set of fiberwise Riemannian metrics with fiberwise orientations on $M$. We now apply the cobordism hypothesis in this setting.
As expected, we recover the classification of 1-dimensional Riemannian field theories (oriented and unoriented)
in terms of 1-parameter groups of morphisms.
To the best of our knowledge, the resulting \cref{one.dimensional.riemannian} is new in this generality,
when $\mathscr{C}$ is a smooth symmetric monoidal $(∞,1)$-category with duals.
We start by treating the oriented case.
The unoriented case then follows. 

\begin{remark}
The presence of thin homotopies in the bordism category implies the existence of homotopies in the target that may seem undesirable for geometric field theories. However, after suitably interpreting the target as a complete Segal space, one sees that the homotopies themselves carry geometric information. For example, let us consider field theories of the form
$$Z:\BBord_1^{\cal R}\to {\rm Vect}.$$
For simplicity, we evaluate at $U=\ast\in \cartsp$. An interval of length $l$ in the bordism category can be deformed by a thin homotopy to an interval of arbitrary length. This would seem to imply that in the target, all intervals are mapped to the same linear map. However, this is not the case, as we now demonstrate.

The complete Segal space model for ${\rm Vect}$ is obtained by taking the Rezk nerve. The simplicial set of objects is the nerve of the groupoid ${\rm Vect}^{\times}$, i.e. taking only invertible linear maps as morphisms. The simplicial set of morphisms is the nerve of the groupoid underlying the arrow category of ${\rm Vect}$. Explicitly, a bisimplex of degree $(1,1)$ is a commutative square 
$$
\xymatrix{
V\ar[r]^-L\ar[d]^-{\cong}_-{A} & W\ar[d]_{\cong}^-{B}
\\
Y\ar[r]^-K & Z,
}
$$
where the vertical maps are isomorphisms. 

In the oriented Riemannian case, $Z$ sends an object $+\in \BBord_1^{\cal R}$ (i.e., a point) to a finite dimensional vector space $V$. The dual point $-$ is sent to $V^{\vee}$. Consider the interval $[0,l]$ of length $l$, with orientation compatible with the orientation $+$ on germs of the endpoints. The bordism $[0,l]$ is sent to an invertible linear map $L_l:V\to V$. Observe that there is a $(1,1)$-bisimplex in $\BBord_1^{\cal R}$ that contracts the interval of length $l$ to a point. However, the definition of the bordism category restricts the types of homotopies that are allowed. In particular, the homotopy $h$ that contracts the interval $[0,l]$ to a point moves the target cut of the interval to the source cut, but the ambient interval and its Riemannian length are fixed. Applying the functor $Z$ to the this $(1,1)$-bisimplex yields a commutative diagram 
$$
Z:\vcenter{\vbox{\xymatrix{
+\ar[r]_{\cong}^-{[0,l]}\ar[d]^-{\cong}_-{([0,l],h)} & +\ar[d]^-{{\rm id}}
\\
+\ar[r]^-{\rm id} & +.
}}}\mapsto \vcenter{\vbox{\xymatrix{
V\ar[r]_{\cong}^-{L_l}\ar[d]^-{\cong}_-{L_l} & V\ar[d]^-{{\rm id}}
\\
V\ar[r]^-{\rm id} & V.
}}}
$$
The left column corresponds to a nontrivial 1-simplex in the simplicial set of objects of $\BBord_1^{\cal R}$,
which moves a point inside of an interval of length~$l$.
The field theory~$Z$ maps this 1-simplex to a 1-simplex in the simplicial set of objects of $\Vect$, given by the linear automorphism~$L_l$.
In the \emph{topological} case, the length of the interval $[0,l]$ can also be rescaled by a homotopy. For any fixed $\epsilon>0$, such a rescaling can be used to define a 2-simplex $w$ in the space of objects with $d_1(w)=([0,\epsilon],h)$, $d_0(w)={\rm id}$ and $d_2(w)=([0,l],h)$. This forces $L_l=L_\epsilon$ in the diagram on the right above, so that intervals of different length are necessarily sent to the same morphism. However, in $\BBord_1^{\cal R}$, the Riemannian length of intervals cannot be rescaled and such a 2-simplex does not exist, so intervals of different length are distinguished.
\end{remark}

\begin{definition}\label{def.Cc}
Let 
$$\Cc_d:\sPSh(\FEmb_d)_\lconst\to \PSh(\cartsp,\sset^{\GL(d)})_\loc$$
denote the functor obtained
by composing the left derived left adjoint of~$\rho$ with the right derived functor of~$r$ in \cref{fiberwiseshape}.
We also have an analogous functor
$$\Cc_d:\sPSh(\FEmbCart_d)_\lconst\to \PSh(\cartsp,\sset^{\GL(d)})_\loc,$$
where $\FEmbCart_d$ is as in \cref{fembcartdef}.
\end{definition}

From now on we work with the site $\FEmbCart_1$.

\begin{lemma}\label{shapemetrics}
We have an equivalence of ${\rm GL}(1)$-equivariant simplicial presheaves 
$$\Cc_1({\cal R})\simeq \ZZ/2\times \deloop \RR,$$
where $\GL(1)$ acts on the first factor by the left action on the connected components 
$$\GL(1)\to \pi_0(\GL_1(1))\cong \ZZ/2.$$
\end{lemma}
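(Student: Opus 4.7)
The plan is to use the chain of Quillen equivalences in \cref{fiberwiseshape}---specifically \cref{fembfembdelta} and \cref{fembcartgl}---to reduce the computation of $\Cc_1({\cal R})$ to identifying a $\Sing\GL(1)$-equivariant simplicial presheaf on $\cartsp$, and then to carry out this identification directly using arc-length parametrization on representables.

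First I would describe ${\cal R}$ on representables $(\RR\times U\to U)\in\FEmbCart_1$: the value is $\ZZ/2\times\sm(\RR\times U,\RR_{>0})$, where the first factor records the orientation (locally constant on the connected~$U$) and the second factor is the positive function $g$ defining the metric $g\,dx^2$. The function space is convex, hence contractible, so $\pi_0$ collapses to the orientation $\ZZ/2$. For the higher homotopy I would analyze the germ-level automorphism groupoid coming from the simplicial enrichment $\FEmbCart_1^\Delta$: arc-length parametrization $L_g(x,u)=\int_0^x\sqrt{g(y,u)}\,dy$ is a germ-level fiberwise orientation-preserving diffeomorphism conjugating $(g\,dx^2,\mathrm{or})$ to the flat metric $(dx^2,\mathrm{or})$, and the germ-automorphisms of the flat metric over $U$ are exactly fiberwise translations parametrized by $\sm(U,\RR)$. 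The resulting groupoid thus has a contractible space of objects (over each $U$) with automorphism sheaf the representable $\RR$, so its classifying space is $\ZZ/2\times B\RR=\ZZ/2\times\deloop\RR$.

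For the $\GL(1)$-equivariance, $\lambda\in\GL(1)$ acts on $(\RR\times U\to U)$ by fiberwise multiplication, and pullback sends $(g\,dx^2,\mathrm{or})$ to $(\lambda^2 g(\lambda x)\,dx^2,\mathrm{sgn}(\lambda)\cdot\mathrm{or})$. Hence the induced action on $\pi_0=\ZZ/2$ is through $\mathrm{sgn}:\GL(1)\to\pi_0(\GL(1))\cong\ZZ/2$, as claimed.

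The main obstacle I anticipate is rigorously showing that $\sPSh(\FEmbCart_1^\Delta)_\loc$ produces no homotopy beyond the nerve of the above groupoid, i.e., that the $\Delta^n$-families of fiberwise orientation-preserving open embeddings preserving a prescribed germ-metric assemble into a presheaf weakly equivalent to the representable $\RR$ and not to something larger. I would handle this by a smooth Kister--Mazur--type contractibility argument for the space of germ-level fiberwise orientation-preserving open embeddings of $\RR$ preserving a prescribed metric and a basepoint, in the spirit of the argument used in the proof of \cref{fembcartgl}.
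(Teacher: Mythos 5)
Your geometric insight is on target — the paper's computation ultimately encodes the same arc-length observation: a Riemannian metric on a 1-dimensional fiber is classified (after the thin-homotopy localization) by the orientation and the arc-length coordinate, and the category ${\sf C}$ in the paper's proof, with morphisms $(h,\beta)$ satisfying $\alpha' h \geq \alpha + \beta$, is precisely tracking arc-length data and "translation-amount" parameters. Your $\GL(1)$-equivariance observation is also correct. However, there is a genuine gap in how you propose to get from the geometric picture to the homotopy type of $\Cc_1({\cal R})$.

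The gap: $\Cc_1$ is, by \cref{def.Cc}, a \emph{derived left Kan extension}, computed as a homotopy colimit over the comma category $j/{\cal R}$ of the corepresented spaces $\FEmbCart^\Delta_1(\RR\times W\to W,\, T\to U)$. The morphisms in $j/{\cal R}$ are \emph{arbitrary} fiberwise open embeddings that pull back one metric to another; they are not isometries and do not directly assemble into the "groupoid of metrics and germ-level isometries" you describe. Asserting that $\Cc_1({\cal R})(W)$ is the nerve of that groupoid is exactly the nontrivial step, and the proposal does not justify it. The paper deals with this by (a) first replacing ${\cal R}$ with the weakly equivalent subpresheaf ${\cal R}'$ of finite-length metrics so that the indexing category can be replaced by an explicit combinatorial category ${\sf C}$, (b) pushing the homotopy colimit through $\Cc_1$ termwise (each term being $\ZZ/2\times j(U)$), and (c) unwinding the Grothendieck construction and applying Quillen's Theorem A to the functor ${\sf E}_W\to{\cal B}(\sm(W,\RR),+)$. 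Your proposed Kister–Mazur-type contractibility argument addresses the wrong thing: the space of germ-level embeddings fixing a basepoint and preserving the flat metric is literally a point, so there is nothing to contract; what is actually needed is a cofinality/Theorem-A argument relating the full comma category (with non-isometric morphisms) to the skeletal groupoid, and that is precisely where the hard work is.

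A secondary issue: the claim that convexity of $\sm(\RR\times U,\RR_{>0})$ makes $\pi_0$ collapse is not the right argument. ${\cal R}$ is a discrete presheaf, and the localization $\Cc_1$ is not an "$\RR$-homotopy-invariance" localization that sees the contractibility of a convex parameter space — it is the fiberwise locally constant localization of \cref{flcms}. The reason every metric is connected to the flat one in $\Cc_1({\cal R})$ is the arc-length embedding (a representable localizing morphism), not convexity; you should replace the convexity remark with that.
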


\begin{proof}
It is convenient to replace ${\cal R}$
with a weakly equivalent subpresheaf ${\cal R}'⊂{\cal R}$ (which is not a sheaf)
given by fiberwise Riemannian metrics that have finite length on each fiber.
We write ${\cal R}'$ as a homotopy colimit
$${\cal R}\simeq \hocolim_{(\RR\times U→U)\to {\cal R}'}(\RR\times U→U),$$
where the homotopy colimit is taken over the comma category $j/{\cal R}$,
where $j$ is the Yoneda embedding.

Consider the category ${\sf C}$ with objects given by smooth functions $\alpha:U\to \RR_{>0}$
and morphisms given by pairs $(h:U\to U',\beta:U\to \RR_{\geq 0}):\alpha\to \alpha'$ such that $\alpha'h≥\alpha+\beta$.
The composition is defined by $$(h':U'→U'',β':U'→\RR_{≥0})∘(h:U→U',β:U→\RR_{≥0})=(h'h:U→U'',β'h+β:U→\RR_{≥0}).$$
Consider the functor 
$$\iota:{\sf C}\into j/{\cal R}$$
defined by sending $\alpha:U\to \RR_{>0}$ to the projection $$p:(0,\alpha)=\{(t,u)∈\RR⨯U\mid 0<t<α(u)\}\to U,$$ equipped with the standard metric.
A morphism $(h:U→U',\beta:U→\RR_{≥0}):α→α'$ in ${\sf C}$ is sent to the map $$(0,α)\to (0,α'),\qquad (t,u)↦(t+β(u),h(u)).$$
The functor $\iota$ is an equivalence of categories.
Hence, 
\begin{align*}
\Cc_1({\cal R})
\simeq \Cc_1({\cal R}')
&\simeq \Cc_1\left(\hocolim_{(\RR\times U→U)\to {\cal R}'}(\RR\times U→U)\right)\\
&\simeq \Cc_1\left(\hocolim_{\alpha:U\to \RR_{> 0}} ((0,α)→U)\right)\\
&\simeq \hocolim_{\alpha:U\to \RR_{>0}} \Cc_1((0,α)→U)\\
&\simeq \hocolim_{\alpha:U\to \RR_{>0}} \ZZ/2\times U.\\
\end{align*}
We compute the homotopy colimit objectwise (for each fixed $W\in \cartsp$) via the Grothendieck construction.
The construction produces a category ${\sf D}_{W}$ equipped with a $\GL(1)$-action, whose objects are triples
$$(\alpha:U\to \RR_{>0},f:W\to U,\sigma\in \ZZ/2)\in {\sf D}_W$$ and morphisms
are pairs $$(h:U\to U',\beta:U\to \RR_{\geq 0}):(\alpha:U→\RR_{≥0},f:W→U,\sigma∈\ZZ/2)\to (\alpha':U'→\RR_{>0},f':W→U',\sigma'∈\ZZ/2)$$ such that 
$$hf=f',\quad \sigma'=\sigma,\quad \alpha'h≥\alpha+\beta.$$
An element $\rho\in \GL(1)$ acts by 
$$\rho:(\alpha,f,\sigma)\mapsto (\alpha,f,{\rm sgn}(\rho)\sigma).$$ 

The category ${\sf D}_W$ is the direct product of $\ZZ/2$ and its full subcategory ${\sf D}'_W$ on $\sigma=0$.
Below, we compute the nerve of ${\sf D}'_W$.

Consider the category ${\sf E}_W$ whose objects are smooth maps $A:W→\RR_{>0}$
and morphisms $A→A'$ are smooth maps $B:W→\RR_{≥0}$ such that $A+B≤A'$.
We have a functor ${\sf D}'_W→{\sf E}_W$ that sends an object $(α,f)$ to $αf:W→\RR_{>0}$
and a morphism $(h,β):(α,f)→(α',f')$ to the morphism $βf:W→\RR_{≥0}$.
We have a functor ${\sf E}_W→{\sf D}'_W$ that sends an object $A:W→\RR_{>0}$ to $(A,\id_W)$
and a morphism $B:W→\RR_{≥0}$ to $(\id_W,B)$.
The composition ${\sf E}_W→{\sf D}'_W→{\sf E}_W$ is the identity functor.
The composition ${\sf D}'_W→{\sf E}_W→{\sf D}'_W$ admits a natural transformation to the identity functor
whose component indexed by an object $(α,f)$ is given by the morphism $(f,0):(αf,\id_W)→(α,f)$.
Thus, the nerves of ${\sf D}'_W$ and ${\sf E}_W$ are weakly equivalent.

Let ${\cal B}(\sm(W,\RR),+)$ denote the groupoid with a single object with morphisms given by smooth maps $a:W\to \RR$, with pointwise addition as composition.
We define a functor
\begin{equation}
F:{\sf E_{W}}\to {\cal B}(\sm(W,\RR),+)
\end{equation}
by $F(B)=B$.

We claim that $F$ induces a weak equivalence on nerves.
By Quillen's theorem A, it is sufficient to prove that the nerve of the comma category $\ast\downarrow F$ is contractible in each case.
For this, we observe that the comma category can be identified with the following category.
The objects are pairs
$$(A:W\to \RR_{>0},g:W→\RR)$$
and the morphisms $$(A:W→\RR_{>0},g:W→\RR)→(A':W→\RR_{>0},g':W→\RR)$$
are maps $$B:W\to \RR_{\geq 0},\qquad A'≥A+B,\quad g=g'+B.$$

Since $B=g-g'$, any two parallel morphisms are equal, so the comma category is a preorder.
It suffices to show that this preorder is filtered.
It is nonempty because $(1,0)$ is an object.
Given two objects $(A,g)$ and $(A',g')$, we can construct
a new object $(A+A',G)$ together with morphisms $B:(A,g)→(Ψ,G)$ and $B':(A',g')→(Ψ,G)$.
The conditions that must be satisfied are
$Ψ≥A+B$, $g=G+B$, $Ψ≥A'+B'$, $g'=G+B'$.
That is, $g-g'=B-B'$.
We can always choose smooth $B,B':W→\RR_{≥0}$ such that $g-g'=B-B'$.
Once this is done, set $G=g-B=g'-B'$
and pick any smooth $Ψ:W→\RR_{>0}$ such that $Ψ≥A+B$ and $Ψ≥A'+B'$.
Thus, the comma category is contractible.

We have shown that we have an equivalence 
$$\hocolim_{\alpha:U\to \RR_{>0}}\ZZ/2\times U\simeq \ZZ/2\times \deloop \RR,$$
where the $\ZZ/2$-action on the right is the left action on the first factor. This completes the proof. 
\end{proof}

Having completed the preparatory steps, we are now ready to prove the main theorem of this section,
which appears to be new in the stated generality of an arbitrary smooth symmetric monoidal $(\infty,1)$-category with duals $\mathscr{C}$.

\begin{theorem}
\label{one.dimensional.riemannian}
Let $\mathscr{C}$ be a smooth symmetric monoidal $(\infty,1)$-category with duals. We have an equivalence 
$$\FFT_{1,{\cal R},\mathscr{C}}=\Funmon(\BBord_1^{{\cal R}},\mathscr{C})\simeq \map(\deloop \RR,\mathscr{C}^{\times})$$
in the oriented case and
$$\FFT_{1,{\cal R}_{\rm un},\mathscr{C}}=\Funmon(\BBord_1^{{\cal R}_{\rm un}},\mathscr{C})\simeq \map^{\ZZ/2}(\deloop \RR,\mathscr{C}^{\times})$$
in the unoriented case, where the action of $\ZZ/2$ on $\deloop\RR$ is trivial and on $\mathscr{C}^\times$ it is given by dualization.
\end{theorem}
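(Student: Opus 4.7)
By the geometric cobordism hypothesis (\cref{mainthm.geometric}) applied to ${\cal S}={\cal R}$, we have a natural weak equivalence of smooth symmetric monoidal $(\infty,1)$-categories
$$\Funmon(\BBord_1^{{\cal R}},\mathscr{C}) \simeq \Map_{\FEmb_1}({\cal R}, \mathscr{C}^{\times}_1),$$
and the left side is in fact a smooth symmetric monoidal $\infty$-groupoid. The plan is to push this computation through the Quillen equivalences of \cref{fiberwiseshape} to obtain a derived mapping space of $\Sing\GL(1)$-equivariant simplicial presheaves on $\cartsp$, and then apply the explicit shape calculation of \cref{shapemetrics}.

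First, because $\BBord_1^{(-)}$ incorporates $1$-thin homotopies, the presheaf $\mathscr{C}^{\times}_1$ is fiberwise locally constant in the sense of \cref{flcms}. Consequently, the derived unit map ${\cal R}\to \mathfrak{C}_1({\cal R})$ from \cref{fiberwiseshapefun} induces an equivalence
$$\Map_{\FEmb_1}({\cal R}, \mathscr{C}^{\times}_1)\overset{\simeq}{\longrightarrow}\Map_{\FEmb_1}(\mathfrak{C}_1({\cal R}), \mathscr{C}^{\times}_1).$$
Restricting to $\FEmbCart_1$ and invoking the zigzag of Quillen equivalences of \cref{fiberwiseshape}, evaluating at $\RR^0\in \cartsp$ and $\langle 1\rangle\in \Gamma$, we may replace the right hand side by the derived $\Sing\GL(1)$-equivariant mapping simplicial set
$$\map^{\Sing\GL(1)}(\Cc_1({\cal R}), \mathscr{C}^{\times}),$$
where the $\Sing\GL(1)$-action on $\mathscr{C}^{\times}$ arises from the $\FEmbCart^\Delta_1$-functoriality of $\mathscr{C}^{\times}_1$ at the representable $\RR\to \RR^0$. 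Using the framed geometric cobordism hypothesis (\cref{mainthm}) to identify $\mathscr{C}^{\times}_1(\RR\to \RR^0)\simeq \mathscr{C}^{\times}$, this action factors through $\pi_0\Sing\GL(1)\cong \ZZ/2$ and coincides with the dualization action, as in \cref{lurie.formulation}.

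Substituting the computation $\Cc_1({\cal R})\simeq \ZZ/2\times \deloop\RR$ from \cref{shapemetrics}, in which $\GL(1)$ acts on $\ZZ/2$ by the sign map and trivially on $\deloop\RR$, both actions factor through $\ZZ/2$. Since $\ZZ/2\times(-)$ is left adjoint to the forgetful functor from $\ZZ/2$-spaces to spaces, $\ZZ/2$-equivariant maps out of $\ZZ/2\times \deloop\RR$ correspond bijectively to ordinary maps out of $\deloop\RR$, regardless of the $\ZZ/2$-action on the target, yielding
$$\map^{\ZZ/2}(\ZZ/2\times \deloop\RR, \mathscr{C}^{\times})\simeq \map(\deloop\RR, \mathscr{C}^{\times}),$$
which establishes the oriented case. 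For the unoriented case, ${\cal R}_{\rm un}$ is the sheafification of the $\ZZ/2$-quotient of ${\cal R}$ by orientation reversal; reproducing the argument of \cref{shapemetrics} without the $\sigma\in \ZZ/2$ summand yields $\Cc_1({\cal R}_{\rm un})\simeq \deloop\RR$ with trivial $\GL(1)$-action. The resulting derived mapping space is $\map^{\ZZ/2}(\deloop\RR, \mathscr{C}^{\times})$ with $\ZZ/2$ acting trivially on the domain and by dualization on the target, as claimed.

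The main obstacle is verifying that the $\Sing\GL(1)$-action on $\mathscr{C}^{\times}$ transported through the Quillen equivalence \cref{fiberwiseshape} indeed agrees with the dualization action, compatibly with the $\Gamma$-enrichment, so that the symmetric monoidal structure is preserved throughout the chain of equivalences; one must trace through the smooth Kister--Mazur comparison together with the framed case of \cref{mainthm}. Once this identification is pinned down, the remaining steps are formal manipulations using the free $\ZZ/2$-action, the shape computation of \cref{shapemetrics}, and the geometric cobordism hypothesis.
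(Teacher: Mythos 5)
Your proposal is correct and follows essentially the same route as the paper's proof: apply \cref{mainthm.geometric}, transport through the Quillen equivalences of \cref{fiberwiseshape}, use \cref{shapemetrics} to compute $\Cc_1({\cal R})\simeq\ZZ/2\times\deloop\RR$, and conclude via the free $\ZZ/2$-action. You spell out a few steps the paper leaves implicit (the free-forgetful adjunction for $\ZZ/2$-spaces, the unoriented computation $\Cc_1({\cal R}_{\rm un})\simeq\deloop\RR$, and the identification of the transported $\GL(1)$-action with dualization), which is useful but does not change the method.
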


\begin{proof}
By the framed geometric cobordism hypothesis (\cref{mainthm}), $\mathscr{C}^{\times}$ is promoted to a fiberwise locally constant presheaf $\mathscr{C}^{\times}_1$ on $\FEmb_1$.
Equivalently, by \cref{fiberwiseshape}, $\mathscr{C}^{\times}_1$ can be identified with a presheaf $\Cc_1(\mathscr{C}^⨯_1)$ on $\cartsp$ with values in $\GL(1)$-equivariant spaces.
By \cref{shapemetrics}, we have a weak equivalence $\Cc_1({\cal R})\simeq \ZZ/2\times\deloop\RR$.
Thus, we have weak equivalences of derived mapping spaces 
$$\map({\cal R},\mathscr{C}^{\times}_1)\simeq \map_{\GL(1)}(\Cc_1({\cal R}),\Cc_1(\mathscr{C}^{\times}_1))\simeq \map_{\GL(1)}(\ZZ/2\times \deloop \RR,\Cc_1(\mathscr{C}^{\times}_1))\simeq \map(\deloop \RR,\mathscr{C}^{\times}).$$
The last step uses \cref{mainthm.geometric} to identify $\ZZ/2$-equivariant maps $\ZZ/2⨯\deloop\RR→\Cc_1(\mathscr{C}^⨯_1)$
with maps $\deloop\RR→\mathscr{C}^⨯$, since the underlying sheaf of $\Cc_1(\mathscr{C}^⨯_1)$ is weakly equivalent to $\mathscr{C}^⨯$ after we discard the action of $\ZZ/2$.
\end{proof}

In the previous corollary, we can take $\mathscr{C}$ to smooth stack of finite dimensional vector bundles ${\sf Vect}$. Then $\mathscr{C}^{\times}={\sf Vect}^{\times}$ is the smooth stack of vector bundles with invertible maps between them. 
Then $\map(\deloop \RR,\mathscr{C}^{\times})$ in \cref{one.dimensional.riemannian} can be identified as the nerve of the category of $\RR$-representations, with invertible morphisms between them.
That is, we get a vector bundle with an endomorphism that defined the infinitesimal generator of the corresponding one-parameter group.

This description continues to apply in the infinite-dimensional setting, provided that we make an appropriate choice of the category of values.
For example, if we take $\mathscr{C}$ to be the free symmetric monoidal category with duals on the category of Hilbert spaces (possibly infinite-dimensional),
we can encode quantum mechanics on an infinite-dimensional Hilbert space as a 1-dimensional Riemannian field theory.

\subsection{Principal bundles and vector bundles with connections}
\label{principal.bundles}

In this section we consider the case of $d=1$ with $\mathscr{C}=\deloop G$ being the delooping of a Lie group~$G$.
It turns out that $\mathscr{C}^⨯_1$ is the stack $\deloop_∇ G$ of fiberwise principal $G$-bundles with connection,
and 1-dimensional functorial field theories with geometric structure ${\cal S}$
and target $\deloop G$ are precisely principal $G$-bundles with connection on ${\cal S}$.
This result appears to be new when ${\cal S}$ is an arbitrary geometric structure.

\begin{definition}
Given a Lie group~$G$,
the simplicial presheaf $\deloop G$ on the site $\cartsp$ sends $U∈\cartsp$ to the delooping $\tdeloop\sm(U,G)$.
\end{definition}

\def\lalg{\mathfrak{g}}
\def\Ad{\mathop{\rm Ad}\nolimits}

\begin{definition}
Given a Lie group~$G$,
the simplicial presheaf $\fdeloop_∇ G$ on the site $\FEmb_1$ sends $(T→U)∈\FEmb_1$ to the nerve of the following groupoid.
Objects are fiberwise differential 1-forms on $T→U$ valued in the Lie algebra $\lalg$ of the Lie group~$G$,
denoted by $Ω^1_U(T,\lalg)$.
Morphisms are pairs $(α,h)$, where $α∈Ω^1_U(T,\lalg)$ and $h∈\sm(T,G)$.
The source of $(α,h)$ is~$α$.
The target of $(α,h)$ is $α\cdot h=h^*\theta+\Ad_{h^{-1}}α$,
where $\theta$ is the Maurer--Cartan form of~$G$,
$h^*$ denotes the fiberwise pullback of a differential form,
and $\Ad$ denotes the adjoint action.
The composition is defined as $(α',h')(α,h)=(α,h'h)$.
\end{definition}

\begin{definition}
\label{bgmap}
We define a morphism $\fdeloop_∇ G→(\deloop G)^⨯_1$ of simplicial presheaves on~$\FEmb_d$ as follows.
Fix an object $(T→U)∈\FEmb_d$.
Given a fiberwise differential 1-form on $T→U$ valued in the Lie algebra $\lalg$ of the Lie group~$G$,
we send it to the induced parallel transport functor $\BBord_1^{T→U}→\deloop G$.
Given a smooth function $T→G$, we send it to the induced natural transformation of parallel transport functors.
\end{definition}

\begin{proposition}
\label{compute.bg.refinement}
The map of \cref{bgmap} is a weak equivalence.
\end{proposition}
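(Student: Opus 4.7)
The plan is to verify the hypotheses of \cref{identify.refinement} and conclude. That is, I would establish descent for $\fdeloop_\nabla G$ on the site $\FEmb_1$, and then check that at every representable $(\RR\times U\to U)\in \FEmbCart_1$, the composition
$$\fdeloop_\nabla G(\RR\times U\to U)\to (\deloop G)^\times_1(\RR\times U\to U)\to \Map(U,(\deloop G)^\times)$$
is a local weak equivalence of simplicial presheaves on $\cartsp\times Γ$. By \cref{identify.refinement}, these two facts together imply that the morphism $\fdeloop_\nabla G\to (\deloop G)^\times_1$ from \cref{bgmap} is a local weak equivalence in $\sPSh(\FEmb_1)_\loc$.

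For descent, I would argue that $\fdeloop_\nabla G$ is already a 1-stack on $\FEmb_1$. Fiberwise $\lalg$-valued 1-forms are sections of a vector bundle on the total space, hence form a sheaf; fiberwise smooth maps to $G$ likewise form a sheaf; and the gauge action is compatible with restriction. Hence the groupoid-valued presheaf glues, and because the values are 1-groupoids this automatically yields homotopy descent with respect to Čech hypercovers of objects of $\FEmb_1$.

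For the check at representables, I would compute both sides at $(V,\langle 1\rangle)\in \cartsp\times Γ$. On the right, $(\deloop G)^\times\simeq \tdeloop\sm(-,G)$ as a fibrant simplicial presheaf on $\cartsp$, so by the defining cartesian powering $\Map(U,(\deloop G)^\times)(V)\simeq \tdeloop\sm(U\times V,G)$. On the left, a $V$-family of connections on $\RR\times U\to U$ is just a connection on the submersion $\RR\times U\times V\to U\times V$, so the groupoid in question has objects $α=f(t,u,v)\,dt$ with $f\in \sm(\RR\times U\times V,\lalg)$ and morphisms smooth gauge transformations $h\in \sm(\RR\times U\times V,G)$. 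Solving the first-order linear ODE $\partial_t h_α=-f\cdot h_α$ with $h_α(0,u,v)=e$ produces a canonical gauge equivalence from~$α$ to the zero connection, and the stabilizer of the zero connection is the subgroup of $t$-independent gauge transformations, i.e., $\sm(U\times V,G)$. Thus both source and target groupoids are naturally (in~$V$) weakly equivalent to $\tdeloop\sm(U\times V,G)$. The composite map sends a connection~$α$ to the parallel transport functor $Z_α$, which when restricted to the canonical point germ lands at the basepoint of $\tdeloop\sm(U\times V,G)$, and on gauge self-equivalences of zero it reproduces the identification with $\sm(U\times V,G)$ by left multiplication; hence the composite is an equivalence. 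The case of general $\langle \ell\rangle\in Γ$ reduces to $\langle 1\rangle$ because both sides satisfy Segal's special $Γ$-condition: $\ell$-tuples of connections classify $\ell$-tuples of bundles on the disjoint union of $\ell$ copies.

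The main obstacle will be verifying that the map and the identification of both sides with $\tdeloop\sm(U\times V,G)$ are genuinely natural in~$V$ (and, for higher simplicial degrees, respect the simplicial enrichment encoding composition of gauge transformations and homotopies between them). This requires unwinding the evaluation map of \cref{thepoint} and matching it, through the weak equivalence $(\deloop G)^\times_1(\RR\times U\to U)\simeq\Map(U,(\deloop G)^\times)$ supplied by \cref{mainthm}, to the ODE-based trivialization on the connection side. In practice this boils down to checking that $h_α$ depends smoothly on~$V$ and on smooth 1-parameter families of connections, which is a standard consequence of smooth dependence on parameters for ODEs; and that composition of parallel transports on concatenated cylindrical bordisms matches multiplication in $\sm(U\times V,G)$. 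Once this naturality is in hand, \cref{identify.refinement} completes the argument.
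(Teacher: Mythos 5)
Your proposal is correct and follows essentially the same route as the paper: both reduce, via the geometric framed cobordism hypothesis (you explicitly through \cref{identify.refinement}, the paper directly through \cref{mainthm.geometric}), to checking that $\fdeloop_\nabla G\to(\deloop G)^\times_1$ is a weak equivalence on representable objects of $\FEmbCart_1$, and both then compute that the source is $1$-truncated with trivial $\pi_0$ (every fiberwise connection is gauge-trivial — your ODE parallel transport argument) and with $\pi_1$ the sheaf of fiberwise constant $G$-valued functions (your stabilizer computation). Your writeup is somewhat more explicit than the paper's terse version — in particular you spell out the descent condition for $\fdeloop_\nabla G$ needed to invoke \cref{identify.refinement} and the naturality concerns — but the mathematical content is the same.
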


\begin{proof}
We use the geometric framed cobordism hypothesis (\cref{mainthm.geometric}):
the map
$$\FFT_{1,T→U,\deloop G}=\Funmon(\BBord_1^{T→U},\deloop G)→\deloop G(U)$$
given by the evaluation on $0∈\RR$ in each fiber,
is a weak equivalence for any $(T→U)∈\FEmbCart_1$ (\cref{fembcartdef}).
The simplicial presheaf $\fdeloop_∇ G$ on the site $\FEmb_1$ has a trivial~$π_0$ because any relative principal $G$-bundle with connection
on a fibered family of 1-dimensional cartesian manifolds is trivial.
Since both sides are 1-truncated,
it suffices to show that the induced map on $π_1$ is an isomorphism.
Indeed, $π_1(\fdeloop_∇ G)$ is the sheaf of fiberwise constant $G$-valued functions.
On $(T→U)∈\FEmbCart_1$, fiberwise constant $G$-valued functions can be identified with $G$-valued smooth functions on~$U$.
(This can be summarized by saying that any principal $G$-bundle with connection becomes a flat $G$-bundle once restricted to a (family of) 1-dimensional manifolds.)
\end{proof}

\cref{compute.bg.refinement} and \cref{mainthm.geometric} immediately imply the following result, which appears to be new in the stated generality.

\begin{theorem}
For any Lie group~$G$ and geometric structure ${\cal S}∈\sPSh(\FEmb_1)$,
the space $\FFT_{1,{\cal S},\deloop G}$ is weakly equivalent
to the space of principal $G$-bundles with connection on~${\cal S}$,
defined as the derived mapping space $\rdf\map({\cal S},\fdeloop_∇ G)$.
\end{theorem}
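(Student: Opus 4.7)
The proof will be a direct combination of the two preceding results, \cref{mainthm.geometric} and \cref{compute.bg.refinement}. My plan is to invoke each of them in turn and then identify the mapping object with the desired derived mapping space.

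First, I would verify that $\deloop G$ satisfies the hypotheses of \cref{mainthm.geometric} for $d=1$: it is fibrant in $\smcatdual_{\infty,1}$ and has duals because its single object is monoidally invertible and every $1$-morphism (an element of $\sm(U,G)$) is invertible. Applying the theorem yields both the fact that $\Funmon(\BBord_1^{\cal S},\deloop G)$ is a smooth symmetric monoidal $\infty$-groupoid and a natural weak equivalence
$$\FFT_{1,{\cal S},\deloop G}=\Funmon(\BBord_1^{\cal S},\deloop G)\simeq \Map_{\FEmb_1}({\cal S},(\deloop G)^{\times}_1).$$

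Next, I would apply \cref{compute.bg.refinement}, which gives a weak equivalence of simplicial presheaves on $\FEmb_1$
$$\fdeloop_\nabla G\overset{\simeq}{\longrightarrow} (\deloop G)^{\times}_1.$$
Since this is a weak equivalence between fibrant objects, it induces a weak equivalence on the derived mapping objects in the fiberwise locally constant model structure (and, \emph{a fortiori}, in the Čech-local model structure). Hence
$$\Map_{\FEmb_1}({\cal S},(\deloop G)^{\times}_1)\simeq \Map_{\FEmb_1}({\cal S},\fdeloop_\nabla G).$$

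Finally, unwinding \cref{power.end.cat} for $M=L=R=\FEmb_1$ gives $\Map_M({\cal S},\fdeloop_\nabla G)=\map({\cal S},\fdeloop_\nabla G)$ computed as the derived mapping space of simplicial presheaves on $\FEmb_1$, which is $\rdf\map({\cal S},\fdeloop_\nabla G)$ by construction. Composing these equivalences yields the claimed identification. There is essentially no obstacle here beyond bookkeeping, since the substantive content has already been established: \cref{mainthm.geometric} reduces the computation of $\FFT_{1,{\cal S},\deloop G}$ to the mapping object into the refinement $(\deloop G)^{\times}_1$, while \cref{compute.bg.refinement} carries out the explicit identification of that refinement as the stack $\fdeloop_\nabla G$ of fiberwise principal $G$-bundles with connection via parallel transport.
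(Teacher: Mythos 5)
Your proposal is correct and follows exactly the paper's argument: the paper proves this theorem by declaring that \cref{compute.bg.refinement} and \cref{mainthm.geometric} immediately imply it, which is precisely the two-step combination you spell out. The extra bookkeeping you supply (fibrancy of $\deloop G$, invariance of derived mapping spaces under the weak equivalence $\fdeloop_\nabla G\to(\deloop G)^{\times}_1$, and the identification $\Map_{\FEmb_1}=\map$ for $M=L=R$) is accurate and consistent with the paper's conventions.
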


\def\doi#1{\href{https://doi.org/#1}{doi:#1}}
\def\arXiv#1{\href{https://arxiv.org/abs/#1}{arXiv:#1}}
\def\gen#1{\href{http://gen.lib.rus.ec/book/index.php?md5=#1}{PDF}}
\def\jstor#1{\href{https://www.jstor.org/stable/#1}{JSTOR:#1}}
\def\numdam#1{\href{http://www.numdam.org/item/#1}{numdam:#1}}



\end{document}